\colorlet{shadecolor}{blue!20}
\numberwithin{equation}{section}
\newtheorem{definition}{Definition}[section]
\newtheorem{remark}{Remark}[section]
\newtheorem{theorem}{Theorem}[section]
\newtheorem{corollary}{Corollary}[section]
\newtheorem{lemma}{Lemma}[section]
\newtheorem{proposition}{Proposition}[section]
\newenvironment{keywords}{{\noindent\it {\bf Key~words.}}\quad}{}
\newenvironment{proof}{{\noindent\it Proof.}\quad}{\hfill $\square$\par}
\DeclareMathOperator{\diag}{diag}
\DeclareMathOperator{\A}{\mathcal{A}}
\DeclareMathOperator{\B}{\mathcal{B}}
\begin{document}

\title{Noda Iteration for Computing Generalized Tensor Eigenpairs}

\author{
Wanli Ma\thanks{Email: 18110840010@fudan.edu.cn. School of Mathematical Sciences, Fudan University, Shanghai, 200433, P. R. of China. This author is supported by the National Natural Science Foundation of China under grant 12271108.}
\and
Weiyang Ding\thanks{Email: dingwy@fudan.edu.cn. Institute of Science and Technology for Brain-Inspired Intelligence, Fudan University, Shanghai, China; Shanghai Center for Brain Science and Brain-Inspired Technology, Shanghai, China; Key Laboratory of Computational Neuroscience and Brain-Inspired Intelligence (Fudan University), Ministry of Education, China; MOE Frontiers Center for Brain Science, Fudan University, Shanghai, China; Zhangjiang Fudan International Innovation Center. W. Ding’s research is supported by Shanghai Municipal Science and Technology Major Project (No. 20JC1419500, 2018SHZDZX01).}
\and
Yimin Wei\thanks{
Corresponding author (Y. Wei). Email: ymwei@fudan.edu.cn and yimin.wei@gmail.com. School of Mathematical Sciences and Shanghai Key Laboratory of Contemporary Applied Mathematics, Fudan University, Shanghai, 200433, P. R. of China. This author is supported by the National Natural Science Foundation of China under grant 12271108, the Innovation Program of Shanghai Municipal Education Committee and Shanghai Municipal Science and Technology Commission under grant 22WZ2501900.}
}

\date{}

\maketitle

\begin{abstract}
In this paper, we propose the tensor Noda iteration (NI) and its inexact version for solving the eigenvalue problem of a particular class of tensor pairs called generalized $\mathcal{M}$-tensor pairs. A generalized $\mathcal{M}$-tensor pair consists of a weakly irreducible nonnegative tensor and a nonsingular $\mathcal{M}$-tensor within a linear combination. It is shown that any generalized $\mathcal{M}$-tensor pair admits a unique positive generalized eigenvalue with a positive eigenvector. A modified tensor Noda iteration(MTNI) is developed for extending the Noda iteration for nonnegative matrix eigenproblems. In addition, the inexact generalized tensor Noda iteration method (IGTNI) and the generalized Newton-Noda iteration method (GNNI) are also introduced for more efficient implementations and faster convergence. Under a mild assumption on the initial values, the convergence of these algorithms is guaranteed. The efficiency of these algorithms is illustrated by numerical experiments.
\end{abstract}

\begin{keywords} generalized tensor eigenproblem, modified Noda iteration, generalized Noda iteration, inexact algorithm, Newton's method, nonnegative tensor, $\mathcal{M}$--tensor, positive preserving
\end{keywords}

%\clearpage

\section{Introduction}
\
Tensor spectral theory and eigenvalue problems with a vast range of applications are widely investigated \cite{MR4333576,DW,QCC,QL18}. Variant versions of tensor eigenvalues are introduced from different aspects of generalizing from the matrix counterpart. Some recent papers \cite{CPZ,CDN,Kolda} point out that this generalized eigenvalue framework unifies several definitions of tensor eigenvalues. Generalized tensor eigenvalue problems have been extensively studied due to their wide applications such as higher-order Markov chain \cite{CHNS}, quantum information processing \cite{NQB}, and multilabel learning \cite{SJY}.

A tensor $\A=(a_{i_1\ldots i_m})$ is a multi-array of entries $a_{i_1\ldots i_m}\in \mathbf{F}$, where $i_j=1,\ldots,n_j$ for $j=1,\ldots,m$ and $\mathbf{F}$ is a field. In this paper, we only consider real tensors, i.e., $\mathbf{F}=\mathbb{R}$. When $n=n_1=\cdots=n_m$, $\A$ is called an $m$th order $n$-dimensional tensor. Denote the set of all $m$th order $n$-dimensional real tensors as $T_{m,n}$. For any tensor $\A\in T_{m,n}$ and any vector $\mathbf{x}\in\mathbb{R}^n$, the tensor-vector multiplication $\A\mathbf{x}^{m-1}$ is defined by
\begin{equation*}
\A\mathbf{x}^{m-1}=\left(\sum\limits_{i_2,\ldots,i_m=1}^na_{ii_2\cdots i_m}x_{i_2}\cdots x_{i_m}\right)\in\mathbb{R}^n.
\end{equation*}
The definition of tensor eigenvalues was proposed by Qi \cite{Q05} and Lim \cite{L05} independently in 2005. Let $\A=(a_{i_1\cdots i_m})\in T_{m,n}$. We call a number $\lambda\in\mathbb{C}$ an \emph{eigenvalue} of $\A$ if there exists a nonzero vector $\mathbf{x}\in\mathbb{C}^n$ satisfying the homogeneous polynomial equations:
\begin{equation}\label{equ:eigenvalue of tensor}
\A\mathbf{x}^{m-1}=\lambda\mathbf{x}^{[m-1]},
\end{equation}
where the notation $\mathbf{x}^{[m-1]}$ for $\mathbf{x}=(x_1,\ldots,x_n)^\top\in\mathbb{C}^n$ is defined by $\mathbf{x}^{[m-1]}=(x_1^{m-1},\ldots,x_n^{m-1})^\top$. Then we call the nonzero vector $\mathbf{x}$ an \emph{eigenvector} of $\A$ associated with the eigenvalue $\lambda$ and the pair $(\mathbf{x},\lambda)$ an \emph{eigenpair} of $\A$. The set of all eigenvalues of a tensor is called its \emph{spectrum}. The largest modulus of the elements in the spectrum of $\A$ is denoted as $\rho(\A)$.

Chang, Pearson, and Zhang \cite{CPZ} first introduced the generalized eigenvalues, called the $\mathcal{B}$-eigenvalues for a tensor $\mathcal{A}$ in their paper. Let $\mathcal{A}$ and $\mathcal{B}$ be two square tensors of the same size. Supposing that $\lambda\in\mathbb{C}$ and $\mathbf{x}\in \mathbb{C}^{n}$ satisfy
\begin{equation}
\mathcal{A}\mathbf{x}^{m-1}=\lambda\mathcal{B}\mathbf{x}^{m-1},~\mathbf{x}\neq\mathbf{0},
\end{equation}
we call $\lambda$ a \emph{$\mathcal{B}$--eigenvalue} of $\mathcal{A}$ and $\mathbf{x}$ the associated \emph{$\mathcal{B}$--eigenvector}. Ding and Wei \cite{DW15} further investigated the perturbation and error analysis of the generalized eigenvalue problem systematically.

Some frequently used notations are introduced as follows. For any real tensor $\mathcal{A}=(a_{i_1\ldots i_m})\in T_{m,n}$, we say that $\mathcal{A}$ is nonnegative, and write $\mathcal{A}\geq0$, if $a_{i_1\ldots i_m}\geq0$ for all $i_1,\ldots,i_m$. The tensor $\mathcal{A}$ is called positive, $\mathcal{A}>0$, if $a_{i_1\ldots i_m}>0$ for all $i_1,\ldots,i_m$. If $\mathcal{A},\mathcal{B}\in T_{m,n}$, then $\mathcal{A}\geq\mathcal{B}$ $(\mathcal{A}>\mathcal{B})$ means that $a_{i_1\ldots i_m}\geq b_{i_1\ldots i_m}(a_{i_1\ldots i_m}>b_{i_1\ldots i_m})$ for all $i_1,\ldots,i_m$. A nonnegative (positive) vector or matrix is defined in the same way.

For real vectors $\mathbf{x}=(x_1,x_2,\ldots,x_n)^\top$ and $\mathbf{y}=(y_1,y_2,\ldots,y_n)^\top$ with $y_i\neq0$ for all $i$, we use $\frac{\mathbf{x}}{\mathbf{y}}$ to denote the column vector whose $i$-th component is $\frac{x_i}{y_i}$. We also define $\max \mathbf{x}=\max\limits_i x_i$ and $\min \mathbf{x}=\min\limits_i x_i$. We denote $\mathbf{x}^{[m-1]}=(x_1^{m-1},\ldots,x_n^{m-1})^\top$ and $[n]=\{1,2,\ldots,n\}$. For simplicity, we use $\|\cdot\|$ to denote the 2--norm for vectors and matrices in this paper.

Several numerical methods have been proposed in the literature for computing generalized eigenpairs for different classes of tensor pairs. Kolda and Mayo \cite{Kolda} proposed a power method for computing the generalized eigenpairs for symmetric tensor pair. In Cui, Dai, and Nie \cite{CDN}, a semidefinite relaxation method was developed to find all real eigenvalues of symmetric tensor pairs. In \cite{CHZ,CHZ17}, Chen, Han and Zhou presented the homotopy methods for computing the (generalized) tensor eigenpair. Yu, Yu, Xu, Song, and Zhou \cite{YYXSZ} gave an adaptive gradient method for computing generalized tensor eigenpairs. Zhao, Yang and Liu \cite{ZYL17} computed the generalized eigenvalues of weakly symmetric tensors. Che, Cichocki and Wei \cite{che2017neural} applied the neural dynamical network to compute a best rank-one approximation of a real- valued tensor and solve the tensor eigenvalue problems. Mo, Wang and Wei \cite{mo2020time} explored time-varying generalized tensor eigenanalysis via Zhang neural networks.

In this paper, inspired by the work of Chen, Vong, Li, and Xu \cite{CVLX}, we will prove an extension of the Perron-Frobenius theory for a special kind of tensor pair and present iteration methods for finding the Perron pair of this special kind of tensor pair. Chen, Vong, Li, and Xu \cite{CVLX} considered the generalized eigenvalue problem of a special type of matrix pairs that exhibits the same kind of properties of nonnegative matrices provided by the Perron-Frobenius theory. Motivated by the nonlinear extension of the Perron-Frobenius theory in \cite{MoFu}, Fujimoto \cite{Fuji} considered the matrix generalized eigenproblem $A\mathbf{x}=\lambda B\mathbf{x}$ satisfying the following conditions:
\begin{enumerate}[label=(C\arabic*)]
\item $A\geq0.$

\item $A$ is irreducible.

\item There exists a vector $\mathbf{v}>0$ such that $B\mathbf{v}>A\mathbf{v}$.

\item For all $i\neq j$, $b_{ij}\leq a_{ij}$.
\end{enumerate}
Economic interpretation of these conditions is given in \cite{Fuji}. In \cite{BOD95}, the following extension of Perron-Frobenius theory was proved.

\begin{theorem}{\rm\cite{BOD95}}\label{thm:Perron Frobenius for matrix}
Let $A$ and $B$ be $n\times n$ matrices satisfying the condition $(\mathrm{C1})-(\mathrm{C4})$. Then there exist $\lambda\in(0,1)$ and a vector $\mathbf{x}_\ast$ such that $A\mathbf{x}_\ast=\lambda B\mathbf{x}_\ast$.

Furthermore, if $A\mathbf{v}=\lambda'B\mathbf{v}$ with a nonnegative $\lambda'$ and a nonzero nonnegative $\mathbf{v}$, then $\lambda=\lambda'$ and $\mathbf{v}=\alpha\mathbf{x}_\ast$ for some $\alpha>0$.
\end{theorem}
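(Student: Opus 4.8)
The plan is to reduce the generalized eigenproblem $A\mathbf{x}=\lambda B\mathbf{x}$ to an ordinary Perron--Frobenius problem for a single nonnegative irreducible matrix, by exploiting the $M$-matrix structure hidden in the hypotheses.

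First I would note that (C3)--(C4) force $B-A$ to be a nonsingular $M$-matrix. Condition (C4) says every off-diagonal entry of $B-A$ is nonpositive, so $B-A$ is a $Z$-matrix; condition (C3) exhibits $\mathbf{v}>0$ with $(B-A)\mathbf{v}>0$, and a $Z$-matrix mapping some positive vector to a positive vector is a nonsingular $M$-matrix (a classical equivalent characterization). Writing $B-A=sI-P$ with $P\geq 0$ and $s>\rho(P)$, the Neumann series gives $(B-A)^{-1}=s^{-1}\sum_{k\geq 0}(s^{-1}P)^{k}\geq s^{-1}I$; hence $(B-A)^{-1}$ exists, is entrywise nonnegative, and has strictly positive diagonal.

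Next, set $C:=(B-A)^{-1}A$. By (C1) and the previous step, $C\geq 0$, and in fact $C\geq s^{-1}A$ entrywise, so the directed graph of $A$ is contained in that of $C$; hence irreducibility of $A$ (condition (C2)) is inherited by $C$. The Perron--Frobenius theorem then yields a simple eigenvalue $\mu:=\rho(C)>0$ with a strictly positive eigenvector $\mathbf{x}_\ast$, which is moreover the unique (up to positive scaling) nonnegative eigenvector of $C$ and corresponds to the only eigenvalue of $C$ possessing a nonzero nonnegative eigenvector. The elementary identity $A\mathbf{x}=\lambda B\mathbf{x}\iff (B-A)^{-1}A\mathbf{x}=\frac{\lambda}{1-\lambda}\mathbf{x}$ (valid for $\lambda\neq 1$, using $B=(B-A)+A$) then turns this Perron eigenpair into the asserted one: setting $\lambda:=\mu/(1+\mu)\in(0,1)$ and reversing the algebra gives $A\mathbf{x}_\ast=\lambda B\mathbf{x}_\ast$.

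For the uniqueness part, let $A\mathbf{v}=\lambda'B\mathbf{v}$ with $\lambda'\geq 0$ and $\mathbf{v}\geq 0$, $\mathbf{v}\neq\mathbf{0}$. Since an irreducible nonnegative matrix has no zero row or column, $A\mathbf{v}\neq\mathbf{0}$, ruling out $\lambda'=0$; nonsingularity of $B-A$ rules out $\lambda'=1$; and $\lambda'>1$ is impossible, since it would make $\mathbf{v}$ a nonnegative eigenvector of the nonnegative matrix $C$ for the negative eigenvalue $\lambda'/(1-\lambda')$. Thus $0<\lambda'<1$, so $\mathbf{v}$ is a nonnegative eigenvector of the irreducible nonnegative matrix $C$; by the Perron--Frobenius uniqueness recalled above, $\lambda'/(1-\lambda')=\mu$ and $\mathbf{v}=\alpha\mathbf{x}_\ast$ for some $\alpha>0$, whence $\lambda'=\lambda$. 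The step I expect to be the crux is showing that $C=(B-A)^{-1}A$ inherits irreducibility from $A$: this is precisely where the strictly positive diagonal of $(B-A)^{-1}$ — itself a consequence of the $M$-matrix structure imposed by (C3)--(C4) — is essential, and the argument genuinely uses (C4). The degenerate case $n=1$ is trivial and can be dispatched separately.
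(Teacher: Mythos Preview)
Your proof is correct. However, the paper does not actually prove this theorem: it is quoted from \cite{BOD95} and stated without proof, so there is no ``paper's own proof'' to compare against directly.

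That said, it is worth noting that your reduction to the single nonnegative irreducible matrix $C=(B-A)^{-1}A$ is precisely the linear shadow of the argument the paper gives for the tensor analog (Theorems~\ref{thm:exist of nonnegative eigenpair} and~\ref{thm:uniqueness of the eigenvalue}). There the authors define the nonlinear operator $F(\mathbf{x})=(\B-\A)_{++}^{-1}(\A\mathbf{x}^{m-1})$, show it is homogeneous and order-preserving with strongly connected associated graph, and invoke the Gaubert--Gunawardena nonlinear Perron--Frobenius theorem; uniqueness is then extracted from the Jacobian $DF(\mathbf{x}_\ast)$ via results of Friedland and Nussbaum. Your argument is exactly what this collapses to when $m=2$: the operator $F$ becomes the linear map $\mathbf{x}\mapsto C\mathbf{x}$, the graph condition becomes ordinary irreducibility of $C$ (which you correctly deduce from $(B-A)^{-1}\ge s^{-1}I$), and the nonlinear Perron--Frobenius machinery reduces to the classical theorem. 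So while the paper does not prove the matrix statement, your approach is in complete alignment with---indeed, is the natural specialization of---the method it uses for the tensor generalization.
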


Chang, Pearson, and Zhang \cite{CPZ08} extended the Perron-Frobenius theory to the nonnegative tensor case. The spectral radius of an irreducible nonnegative tensor $\A$ is actually a positive eigenvalue with a positive eigenvector. This eigenpair is called the Perron pair of $\A$. It is related to the higher-order connectivity in hypergraphs \cite{HQ,HQX} and the stationary probability distribution of higher-order Markov chains \cite{NQZ09}.

In 1971, Noda \cite{Noda} introduced a positivity-preserving method--{\em Noda Iteration (NI)}--for computing the Perron pair of a nonnegative matrix. In \cite{CVLX}, NI was modified for a matrix pair $(A,B)$ satisfying the conditions $(\mathrm{C}1)-(\mathrm{C}4)$, including a modified Noda iteration (MNI) and a generalized Noda iteration (GNI). It is guaranteed that the associated generalized eigenvector is always positive. Furthermore, Noda iteration was also considered for finding the Perron pair for weakly irreducible nonnegative tensors \cite{LGL}.

In this paper, we consider the tensor pairs satisfying conditions analogous to $(\mathrm{C}1)-(\mathrm{C}4)$, which is referred to as the generalized $\mathcal{M}$-tensor pairs. We show that any generalized $\mathcal{M}$-tensor pair has a unique positive eigenvalue with a positive eigenvector, which is an extension of the matrix Perron-Frobenius theorem. The tensor Noda iteration is also designed for finding the Perron pair for this kind of tensor pair $(\mathcal{A},\mathcal{B})$.

The rest of this paper is organized as follows. First, we give the assumptions analogous to $(\mathrm{C1})-(\mathrm{C4})$ for tensor pairs and prove Perron-Frobenius-type theory for the tensor pairs in Section 2. Based on this extended theory, we propose the tensor Noda iteration with practical modifications for finding the Perron pair of this kind of tensor pair in Section 3. Next, we analyze the convergence of these algorithms in Section 4. Finally, we present numerical examples to demonstrate the effectiveness and convergence behavior of our methods in Section 5.

\section{Tensor eigenproblem for generalized $\mathcal{M}$-tensor pair}

Similar to the matrix case in \cite{CVLX}, we investigate the tensor generalized eigenproblems with some special structures. To present the conditions in tensor case, we need to introduce several concepts of tensor irreducibility.

\begin{definition}\label{def:irreducible tensor}
A tensor $\mathcal{A}\in T_{m,n}$ is said to be {\em reducible} if there is a nonempty proper index subset $J\subset\{1,2,\ldots,n\}$ such that
$$a_{i_1\ldots i_m}=0,~\forall i_1\in J,~\forall i_2,\ldots,i_m\notin J.$$

$\mathcal{A}$ is called {\em irreducible} if it is not reducible. In addition, a tensor $\A\in T_{m,n}$ is called \emph{weakly irreducible} if for every nonempty proper index subset $S\subset\{1,2,\ldots,n\}$ there exist $i_1\in S$ and $i_2,\ldots,i_m$ with at least one $i_q\notin S$, $q=2,\ldots,m$, such that $a_{i_1i_2\ldots i_m}\neq0$.
\end{definition}

We consider the generalized eigenproblem of real tensor pair $(\mathcal{A,B})$ under the following conditions:
\begin{enumerate}[label=(C\arabic*')]
\item $\mathcal{A}\geq0$.

\item $\A$ is weakly irreducible.

\item There exists a vector $\mathbf{v}\in\mathbb{R}^n,~\mathbf{v}>0$ such that $\mathcal{B}\mathbf{v}^{m-1}>\mathcal{A}\mathbf{v}^{m-1}$.

\item For all $(i_2,\ldots,i_m)\neq(i,\ldots,i)$, $b_{ii_2\ldots i_m}\leq a_{ii_2\ldots i_m}$.
\end{enumerate}

Considering third order tensors for example, the economics interpretations of the above assumptions can be made as follows. Suppose that there are $n$ kinds of goods, $n$ industries, and $n$ kinds of techniques available to produce these goods. The elements $a_{ijk}$ of $\A$ and $b_{ijk}$ of $\B$ stand for input and output quantity of the $i$-th goods used by the $k$-th technique of the $j$-th industry. Thus, $(\mathrm{C3'})$ tells that the technology is productive enough to produce a surplus in each goods. In addition, $(\mathrm{C1'})$ and $(\mathrm{C2'})$ implies that every technique used by every industry needs every goods directly or indirectly. Besides, $(\mathrm{C4'})$ means that there are no net joint products.

For simplicity, we will call the tensor pair $(\A,\B)$ satisfying conditions $(\mathrm{C1'})-(\mathrm{C4'})$ as ``\emph{generalized $\mathcal{M}$-tensor pair}'' in the following contents.

In order to prove the extension of Perron-Frobenius theory for the generalized $\mathcal{M}$-tensor pair $(\A,\B)$, we further present some properties of $\mathcal{M}$-tensors.

First, we introduce the definition of a $\mathcal{M}$-tensor. A tensor $\mathcal{D}=(d_{i_1\ldots i_m})\in T_{m,n}$ is called a \emph{diagonal tensor} if its entries are
\begin{equation}\label{equ:diagonal tensor}
d_{i_1\ldots i_m}=\left\{
\begin{aligned}
&d_{i\ldots i}, \quad&\text{if}~(i_1,\ldots,i_m)=(i,\ldots,i)\\
&0, \quad&\text{otherwise}
\end{aligned}\right.
\end{equation}
The entries $d_{i\ldots i}~(i\in[n])$ are called \emph{diagonal entries} and the others are called \emph{off-diagonal entries}. If $d_{i\ldots i}=1$ for $i\in[n]$, then $\mathcal{D}$ is called the \emph{unit tensor}.

A real tensor $\mathcal{A}$ is a $\mathcal{Z}$--{\em tensor} if all its off--diagonal entries are nonpositive, which is equivalent to $\mathcal{A}=s\mathcal{I}-\mathcal{B}$, where $\mathcal{I}$ is the unit tensor and $\mathcal{B}$ is a nonnegative tensor. A $\mathcal{Z}$--tensor $\mathcal{A}=s\mathcal{I}-\mathcal{B} (\mathcal{B}\geq0)$ is called a $\mathcal{M}$--{\em tensor} if $s\geq\rho(\mathcal{B})$, and we call it as a {\em nonsingular} $\mathcal{M}$--{\em tensor} if $s>\rho(\mathcal{B})$.

Combining the results in \cite{DW13,ZQZ} and \cite[pages 81-96]{DW}, there are dozens of equivalent definitions for nonsingular $\mathcal{M}$-tensors. We only mention a few that will be used in this work as follows.

\begin{proposition}\label{prop:nonsingular M-tensor}
If $\mathcal{A}$ is a $\mathcal{Z}$--tensor, then the following conditions are equivalent:

$\rm(1)$ $\mathcal{A}$ is a nonsingular $\mathcal{M}$--tensor.

$\rm(2)$ There exists $\mathbf{x}>0$ with $\mathcal{A}\mathbf{x}^{m-1}>0.$

$\rm(3)$ There exists $\mathbf{x}\geq0$ with $\mathcal{A}\mathbf{x}^{m-1}>0.$
\end{proposition}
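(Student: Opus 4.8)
The plan is to reduce everything to the defining inequality $s>\rho(\mathcal{B})$ for the representation $\mathcal{A}=s\mathcal{I}-\mathcal{B}$ with $\mathcal{B}\geq0$, and then to close the cycle $(1)\Rightarrow(2)\Rightarrow(3)\Rightarrow(1)$. The implication $(2)\Rightarrow(3)$ is immediate, so the real content is $(1)\Rightarrow(2)$ and $(3)\Rightarrow(1)$.

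For $(1)\Rightarrow(2)$ the difficulty is that $\mathcal{B}$ need not be (weakly) irreducible, so a priori it has no positive eigenvector. I would get around this by a perturbation argument: let $\mathcal{J}$ be the $m$th order $n$-dimensional tensor all of whose entries equal $1$, and for $\delta>0$ put $\mathcal{B}_\delta=\mathcal{B}+\delta\mathcal{J}$. Then $\mathcal{B}_\delta>0$, hence weakly irreducible, so by the Perron--Frobenius theorem for nonnegative tensors \cite{CPZ08} there is $\mathbf{x}_\delta>0$ with $\mathcal{B}_\delta\mathbf{x}_\delta^{m-1}=\rho(\mathcal{B}_\delta)\mathbf{x}_\delta^{[m-1]}$. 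Since the spectral radius is monotone and continuous on nonnegative tensors, $\rho(\mathcal{B}_\delta)\to\rho(\mathcal{B})<s$ as $\delta\to0^+$, so $\rho(\mathcal{B}_\delta)<s$ for $\delta$ small enough. For such a $\delta$,
\begin{equation*}
\mathcal{A}\mathbf{x}_\delta^{m-1}=s\mathbf{x}_\delta^{[m-1]}-\mathcal{B}\mathbf{x}_\delta^{m-1}=\bigl(s-\rho(\mathcal{B}_\delta)\bigr)\mathbf{x}_\delta^{[m-1]}+\delta\,\mathcal{J}\mathbf{x}_\delta^{m-1}>0,
\end{equation*}
since both summands are positive, which is exactly $(2)$ with $\mathbf{x}=\mathbf{x}_\delta$.

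For $(3)\Rightarrow(1)$, suppose $\mathbf{x}\geq0$ satisfies $\mathcal{A}\mathbf{x}^{m-1}>0$. Componentwise this reads $sx_i^{m-1}>(\mathcal{B}\mathbf{x}^{m-1})_i\geq0$ for every $i$, which forces $s>0$ and $x_i^{m-1}>0$; together with $\mathbf{x}\geq0$ this gives $\mathbf{x}>0$. Hence $s>(\mathcal{B}\mathbf{x}^{m-1})_i/x_i^{m-1}$ for all $i$, so $s>\max_i(\mathcal{B}\mathbf{x}^{m-1})_i/x_i^{m-1}\geq\rho(\mathcal{B})$, the last step being the Collatz--Wielandt upper bound for the spectral radius of a nonnegative tensor evaluated at the positive vector $\mathbf{x}$. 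Thus $s>\rho(\mathcal{B})$, i.e. $\mathcal{A}$ is a nonsingular $\mathcal{M}$-tensor.

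The two external facts this proof rests on — and hence the spots needing care — are the continuity (or at least upper semicontinuity plus monotonicity) of $\rho(\cdot)$ on nonnegative tensors used in $(1)\Rightarrow(2)$, and the Collatz--Wielandt bound $\rho(\mathcal{B})\leq\max_i(\mathcal{B}\mathbf{x}^{m-1})_i/x_i^{m-1}$ for $\mathbf{x}>0$ used in $(3)\Rightarrow(1)$. Both are standard in nonnegative-tensor Perron--Frobenius theory, but the latter is the one I expect to be the main obstacle if a fully self-contained argument is wanted, since it must hold with no irreducibility hypothesis; the cleanest route to it is via the monotonicity and degree-one homogeneity of the map $\mathbf{z}\mapsto(\mathcal{B}\mathbf{z}^{m-1})^{[1/(m-1)]}$ on the positive cone, which already dominates $\rho(\mathcal{B})^{1/(m-1)}$ along iterates starting from $\mathbf{x}$.
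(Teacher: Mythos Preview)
The paper does not actually prove this proposition: it is stated as a known result, quoted from \cite{DW13,ZQZ} and \cite[pages 81--96]{DW}, so there is no ``paper's own proof'' to compare against. Your argument is correct and is essentially the standard one found in those references: the cycle $(1)\Rightarrow(2)\Rightarrow(3)\Rightarrow(1)$, with the nontrivial step $(3)\Rightarrow(1)$ handled via the Collatz--Wielandt upper bound $\rho(\mathcal{B})\leq\max_i(\mathcal{B}\mathbf{x}^{m-1})_i/x_i^{m-1}$ for $\mathbf{x}>0$, and $(1)\Rightarrow(2)$ handled by a positive perturbation to force weak irreducibility. Both external facts you flag---continuity of $\rho(\cdot)$ on nonnegative tensors and the Collatz--Wielandt bound without irreducibility---are indeed established in the cited sources; the latter can itself be obtained by the same perturbation-and-limit trick you use for $(1)\Rightarrow(2)$, so your two ``obstacles'' are really one.
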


Thus for a generalized $\mathcal{M}$-tensor pair $(\A,\B)$, $\B-\A$ is a $\mathcal{Z}$-tensor since $b_{ii_2\ldots i_m}\leq a_{ii_2\ldots i_m}$ for all $(i_2,\ldots,i_m)\neq(i,\ldots,i)$. Moreover, \Cref{prop:nonsingular M-tensor} indicates that $\B-\A$ is a nonsingular $\mathcal{M}$-tensor since $(\mathrm{C}3')$ can be rewritten to $(\B-\A)\mathbf{x}^{m-1}>0$ with a positive vector $\mathbf{x}$. Thus we can denote $\B-\A=s\mathcal{I}-\mathcal{C}$, where $\mathcal{I}$ is the unit tensor, $\mathcal{C}$ is nonnegative, and $s>\rho(\mathcal{C})$.

Let $\A$ and $\B$ be two $m$th-order tensors in $\mathbb{C}^{n\times\cdots\times n}$. We call the tensor pair $(\A,\B)$ a \emph{regular tensor pair}, if $\mathrm{det}(\beta\A-\alpha\B)\neq0$ for some $(\alpha,\beta)\in\mathbb{C}_{1,2}$. Reversely, we call $(\A,\B)$ a \emph{singular tensor pair}, if $\mathrm{det}(\beta\A-\alpha\B)=0$ for all $(\alpha,\beta)\in\mathbb{C}_{1,2}$. Here the determinant of tensor is defined by Qi \cite{HHLQ} \cite[page 23]{QL18} and $\mathbb{C}_{1,2}$ denotes a projective plane, in which $(\alpha_1,\beta_1),(\alpha_2,\beta_2)\in\mathbb{C}\times\mathbb{C}$ are regarded as the same point, if there is a nonzero scalar $\gamma\in\mathbb{C}$ such that $(\alpha_1,\beta_1)=(\gamma\alpha_2,\gamma\beta_2)$. If a tensor pair is singular, then any nonzero complex number will be its eigenvalue. Therefore, before proceeding our proof for the extension of Perron-Frobenius theory for the generalized $\mathcal{M}$-tensor pair $(\A,\B)$, we should mention that $(\A,\B)$ is a regular tensor pair. Actually, if $(\A,\B)$ is a singular tensor pair, then $\det(\B-\A)=0$. According to \cite[Theorem 3.1]{HHLQ}, there exists a vector $\mathbf{x}\in\mathbb{C}^n\backslash\{\mathbf{0}\}$ such that $(\B-\A)\mathbf{x}^{m-1}=0$. This implies that $(s\mathcal{I}-\mathcal{C})\mathbf{x}^{m-1}=0$ or equivalently, $\mathcal{C}\mathbf{x}^{m-1}=s\mathbf{x}^{[m-1]}$. Thus $s$ is a eigenvalue of $\mathcal{C}$, which contradicts the condition $s>\rho(\mathcal{C})$. So $(\A,\B)$ is a regular tensor pair.

For any tensor pair $(\mathcal{A},\mathcal{B})$, $(s,\mathbf{x})$ is an eigenpair of the tensor pair $(\mathcal{A},\mathcal{B-A})$ if and only if $(\frac{s}{1+s},\mathbf{x})$ is an eigenpair of $(\mathcal{A,B})$. Thus, we can consider the eigenproblem $\mathcal{A}\mathbf{x}^{m-1}=s(\mathcal{B}-\mathcal{A})\mathbf{x}^{m-1}$ instead of $\mathcal{A}\mathbf{x}^{m-1}=\lambda\mathcal{B}\mathbf{x}^{m-1}$. We will show later that the existence of the positive solution for the first problem is guaranteed under proper conditions. We summarize the relations between these two eigenproblems in the following lemma.

\begin{lemma}\label{lemma:relation between two eigenproblem}
For a tensor pair $(\mathcal{A,B})$, denote $\mathcal{C=B-A}$. Let $s$ be a generalized eigenvalue of tensor pair $(\mathcal{A,C})$, then $\lambda=\frac{s}{1+s}$ is a generalized eigenvalue of $\mathcal{(A,B)}$. The relation between $\lambda$ and $s$ can be characterized as follows.

$\rm(1)$ $s\geq0$ if and only if $\lambda\in[0,1)$;

$\rm(2)$$-1\neq s<0$ if and only if $\lambda\in(-\infty,0)\cup(1,+\infty)$;

$\rm(3)$ $s=-1$ if and only if $\lambda=\infty$;

$\rm(4)$ $s$ is complex and $\mathrm{Im}(s)\neq0$ if and only if $\lambda$ is complex and $\mathrm{Im}(\lambda)\neq0$, where $\mathrm{Im}(z)$ denotes the imaginary part of a complex number $z$.
\end{lemma}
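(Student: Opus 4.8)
The plan is to reduce the statement to an elementary analysis of the fractional-linear change of variables $\lambda=f(s):=\dfrac{s}{1+s}$, whose inverse is $s=f^{-1}(\lambda)=\dfrac{\lambda}{1-\lambda}$, after first recording the algebraic equivalence of the two eigenvalue equations at the level of a common eigenvector.

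First I would expand the defining equation for the pair $(\mathcal{A},\mathcal{C})$: if $s$ is a generalized eigenvalue with eigenvector $\mathbf{x}\neq\mathbf{0}$, then $\mathcal{A}\mathbf{x}^{m-1}=s\,\mathcal{C}\mathbf{x}^{m-1}=s(\mathcal{B}-\mathcal{A})\mathbf{x}^{m-1}$, which rearranges to $(1+s)\mathcal{A}\mathbf{x}^{m-1}=s\,\mathcal{B}\mathbf{x}^{m-1}$. If $s\neq-1$, dividing by $1+s$ gives $\mathcal{A}\mathbf{x}^{m-1}=\tfrac{s}{1+s}\,\mathcal{B}\mathbf{x}^{m-1}$, so $\bigl(\tfrac{s}{1+s},\mathbf{x}\bigr)$ is an eigenpair of $(\mathcal{A},\mathcal{B})$; if $s=-1$, the same identity forces $\mathcal{B}\mathbf{x}^{m-1}=\mathbf{0}$ with $\mathbf{x}\neq\mathbf{0}$, which in the projective (homogeneous) formulation of a generalized eigenvalue recalled above means precisely that $\lambda=\infty$ is an eigenvalue of $(\mathcal{A},\mathcal{B})$; this is part (3), and its converse holds because every step is reversible (from $\mathcal{B}\mathbf{x}^{m-1}=\mathbf{0}$ one recovers $\mathcal{A}\mathbf{x}^{m-1}=-\mathcal{C}\mathbf{x}^{m-1}$). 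This reversibility, together with the fact that $f$ is a genuine Möbius transformation (its coefficients satisfy $1\cdot1-0\cdot1\neq0$) and hence a bijection of the Riemann sphere carrying the eigenvector $\mathbf{x}$ along unchanged, reduces each equivalence in (1), (2), (4) to identifying the image under $f$ of the relevant region.

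I would then compute those images using $f(s)=1-\tfrac{1}{1+s}$. On $[0,\infty)$ the map $f$ is continuous and strictly increasing with $f(0)=0$ and $f(s)\to1^{-}$ as $s\to\infty$, so it maps $[0,\infty)$ bijectively onto $[0,1)$, which is (1). On $(-1,0)$ we have $1+s\in(0,1)$ and $s<0$, hence $f(s)<0$, with $f(s)\to0^{-}$ as $s\to0^{-}$ and $f(s)\to-\infty$ as $s\to-1^{+}$, so the image is $(-\infty,0)$; on $(-\infty,-1)$ we have $1+s<0$ and $s<0$, hence $f(s)>0$, with $f(s)\to1^{+}$ as $s\to-\infty$ and $f(s)\to+\infty$ as $s\to-1^{-}$, so the image is $(1,+\infty)$. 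Taking the union over these two sub-intervals gives (2). For (4), I would use that $f$ has real coefficients: for $s$ with $\mathrm{Im}(s)\neq0$ (hence automatically $s\neq-1$), a short computation gives $\mathrm{Im}\bigl(\tfrac{s}{1+s}\bigr)=\tfrac{\mathrm{Im}(s)}{|1+s|^{2}}$, so $\lambda$ is non-real with imaginary part of the same sign; the symmetric identity $\mathrm{Im}(s)=\tfrac{\mathrm{Im}(\lambda)}{|1-\lambda|^{2}}$ applied to $f^{-1}$ gives the converse.

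I do not anticipate a real obstacle: the lemma is essentially the scalar substitution $\lambda=s/(1+s)$, and the tensor structure enters only through the shared eigenvector $\mathbf{x}$. The two points that need care are (a) the boundary case $s=-1\leftrightarrow\lambda=\infty$, where one must fall back on the projective/determinant notion of a generalized eigenvalue rather than on the formula for $\lambda$, and (b) keeping the monotonicity and one-sided limits on $(-1,0)$ and on $(-\infty,-1)$ correctly separated, so that the two pieces $(-\infty,0)$ and $(1,+\infty)$ of part (2) are not conflated.
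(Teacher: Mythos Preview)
The paper states this lemma without proof, treating it as an elementary observation; your proposal correctly supplies the omitted details by rewriting $\mathcal{A}\mathbf{x}^{m-1}=s(\mathcal{B}-\mathcal{A})\mathbf{x}^{m-1}$ as $(1+s)\mathcal{A}\mathbf{x}^{m-1}=s\mathcal{B}\mathbf{x}^{m-1}$ and then analyzing the M\"obius map $s\mapsto s/(1+s)$ on each relevant subset. Your argument is complete and matches what the paper implicitly relies on.
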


Now we can extend the Perron-Frobenius theory to the generalized $\mathcal{M}$-tensor pair case. First, we prove the existence of a nonnegative solution $(s,\mathbf{x})$ for the generalized eigenproblem $\A\mathbf{x}^{m-1}=s(\B-\A)\mathbf{x}^{m-1}$. We need the following lemmas about $\mathcal{M}$-tensor equations.

\begin{lemma}\label{lemma:M-equations}{\rm\cite[Theorem 3.2]{DW16}\cite[Theorem 6.3]{DW}}
If $\mathcal{T}$ is a nonsingular $\mathcal{M}$--tensor, then for every positive vector $\mathbf{b}$, the multilinear system of equations $\mathcal{T}\mathbf{x}^{m-1}=\mathbf{b}$ has a unique positive solution.
\end{lemma}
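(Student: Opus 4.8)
The plan is to convert the multilinear system into a fixed-point equation on the nonnegative orthant, obtain existence of a positive solution by a monotone iteration, and deduce uniqueness by a maximal-ratio comparison. Write $\mathcal{T}=s\mathcal{I}-\mathcal{B}$ with $\mathcal{B}\geq0$ and $s>\rho(\mathcal{B})$ (so $s>0$), and for a nonnegative vector $\mathbf{y}$ let $\mathbf{y}^{[1/(m-1)]}$ denote the vector whose $i$-th component is $y_i^{1/(m-1)}$. Then $\mathcal{T}\mathbf{x}^{m-1}=\mathbf{b}$ is equivalent to $\mathbf{x}=F(\mathbf{x})$, where
\[
F(\mathbf{x}):=\Bigl(\tfrac{1}{s}\bigl(\mathcal{B}\mathbf{x}^{m-1}+\mathbf{b}\bigr)\Bigr)^{[1/(m-1)]}.
\]
The map $F$ is continuous on $\{\mathbf{x}\geq\mathbf{0}\}$, sends it into $\{\mathbf{x}>\mathbf{0}\}$ since $\mathbf{b}>\mathbf{0}$, and is order-preserving: if $\mathbf{0}\leq\mathbf{x}\leq\mathbf{y}$ then $\mathcal{B}\mathbf{x}^{m-1}\leq\mathcal{B}\mathbf{y}^{m-1}$ (as $\mathcal{B}\geq0$) and $t\mapsto t^{1/(m-1)}$ is nondecreasing, so $F(\mathbf{x})\leq F(\mathbf{y})$.

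For existence I would iterate $\mathbf{x}_0=\mathbf{0}$, $\mathbf{x}_{k+1}=F(\mathbf{x}_k)$. Since $\mathbf{x}_1=F(\mathbf{0})=(\mathbf{b}/s)^{[1/(m-1)]}\geq\mathbf{x}_0$, order-preservation of $F$ yields $\mathbf{x}_k\leq\mathbf{x}_{k+1}$ for all $k$. To bound the iterates from above, I would build a barrier $\mathbf{w}>\mathbf{0}$ with $F(\mathbf{w})\leq\mathbf{w}$: by \Cref{prop:nonsingular M-tensor} there is $\mathbf{u}>\mathbf{0}$ with $\mathcal{T}\mathbf{u}^{m-1}>\mathbf{0}$, and since $\mathcal{T}(t\mathbf{u})^{m-1}=t^{m-1}\mathcal{T}\mathbf{u}^{m-1}$, taking $t>0$ large enough gives $\mathbf{w}:=t\mathbf{u}$ with $\mathcal{T}\mathbf{w}^{m-1}\geq\mathbf{b}$, i.e. $s\mathbf{w}^{[m-1]}\geq\mathcal{B}\mathbf{w}^{m-1}+\mathbf{b}$, which is exactly $F(\mathbf{w})\leq\mathbf{w}$. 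As $\mathbf{x}_0\leq\mathbf{w}$, induction with order-preservation gives $\mathbf{x}_k\leq\mathbf{w}$ for all $k$; a componentwise nondecreasing sequence bounded above converges, and its limit $\mathbf{x}_\ast$ satisfies $F(\mathbf{x}_\ast)=\mathbf{x}_\ast$ by continuity and $\mathbf{x}_\ast\geq\mathbf{x}_1>\mathbf{0}$, giving a positive solution. (This iteration is essentially the numerical scheme used later, so the argument is constructive.)

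For uniqueness, let $\mathbf{x},\mathbf{y}>\mathbf{0}$ both solve $\mathcal{T}\mathbf{z}^{m-1}=\mathbf{b}$. If $\max_i x_i/y_i<1$ then $x_i<y_i$ for every $i$, whence $\max_i y_i/x_i>1$; so after possibly swapping $\mathbf{x}$ and $\mathbf{y}$ we may assume $t:=\max_i x_i/y_i\geq1$. Choose $i_0$ with $x_{i_0}=ty_{i_0}$. Since $\mathbf{x}\leq t\mathbf{y}$ and $\mathcal{B}\geq0$, we have $(\mathcal{B}\mathbf{x}^{m-1})_{i_0}\leq t^{m-1}(\mathcal{B}\mathbf{y}^{m-1})_{i_0}$. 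Using $\mathcal{T}=s\mathcal{I}-\mathcal{B}$ on both $\mathcal{T}\mathbf{x}^{m-1}=\mathbf{b}$ and $t^{m-1}\mathcal{T}\mathbf{y}^{m-1}=t^{m-1}\mathbf{b}$ at index $i_0$,
\[
b_{i_0}=sx_{i_0}^{m-1}-(\mathcal{B}\mathbf{x}^{m-1})_{i_0}\geq sx_{i_0}^{m-1}-t^{m-1}(\mathcal{B}\mathbf{y}^{m-1})_{i_0}=t^{m-1}\bigl(sy_{i_0}^{m-1}-(\mathcal{B}\mathbf{y}^{m-1})_{i_0}\bigr)=t^{m-1}b_{i_0},
\]
so $t^{m-1}\leq1$ because $b_{i_0}>0$; hence $t=1$ and $\mathbf{x}\leq\mathbf{y}$. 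Then $\max_i y_i/x_i\geq1$, and the same comparison with the roles of $\mathbf{x},\mathbf{y}$ reversed gives $\mathbf{y}\leq\mathbf{x}$, so $\mathbf{x}=\mathbf{y}$.

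The only step that is more than bookkeeping is the construction of the order barrier $\mathbf{w}$, and this is exactly where nonsingularity ($s>\rho(\mathcal{B})$) enters, through \Cref{prop:nonsingular M-tensor}; without it the iterates can be unbounded. Everything else---continuity and monotonicity of $F$, and the ratio argument---is routine. As an alternative to the monotone iteration one could apply Brouwer's fixed-point theorem to $F$ restricted to the order interval $\{\mathbf{0}\leq\mathbf{x}\leq\mathbf{w}\}$, but the iteration is cleaner and directly relevant to the algorithms developed later.
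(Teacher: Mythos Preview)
Your proposal is correct. The paper does not supply its own proof of this lemma but cites it from \cite{DW16,DW}; the approach there is precisely the monotone fixed-point iteration $\mathbf{x}_{k+1}=\bigl(s^{-1}(\mathcal{B}\mathbf{x}_k^{m-1}+\mathbf{b})\bigr)^{[1/(m-1)]}$ that you construct (indeed, the paper invokes exactly this iteration in the proof of \Cref{lemma:monotony of M equation}), together with a ratio-comparison for uniqueness, so your argument matches the intended one.
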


\begin{lemma}\label{lemma:monotony of M equation}
Let $\mathcal{T}$ be a nonsingular $\mathcal{M}$-tensor. If $\mathcal{T}\mathbf{x}^{m-1}\geq\mathcal{T}\mathbf{y}^{m-1}>0$, then we have $\mathbf{x}\geq\mathbf{y}>0$.
\end{lemma}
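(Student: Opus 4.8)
The plan is to exploit the representation $\mathcal{T}=s\mathcal{I}-\mathcal{C}$ with $\mathcal{C}\ge 0$ and $s>\rho(\mathcal{C})$, and to run a maximal-ratio comparison argument in the spirit of Collatz--Wielandt. First I would dispose of the easy part. Since $\mathcal{T}\mathbf{x}^{m-1}\ge\mathcal{T}\mathbf{y}^{m-1}>0$, both right-hand sides are positive vectors, so by \Cref{lemma:M-equations} the vectors $\mathbf{x}$ and $\mathbf{y}$ are the (unique) positive solutions of the multilinear systems $\mathcal{T}\mathbf{z}^{m-1}=\mathcal{T}\mathbf{x}^{m-1}$ and $\mathcal{T}\mathbf{z}^{m-1}=\mathcal{T}\mathbf{y}^{m-1}$, respectively; in particular $\mathbf{x}>0$ and $\mathbf{y}>0$, which already yields the assertion $\mathbf{y}>0$.

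It then remains to show $\mathbf{x}\ge\mathbf{y}$, and I would argue by contradiction. If $\mathbf{x}\not\ge\mathbf{y}$, then, using $\mathbf{x},\mathbf{y}>0$, the number $t:=\max_{i}(y_i/x_i)$ satisfies $t>1$; choose an index $k$ with $y_k=t\,x_k$, so that $\mathbf{0}<\mathbf{y}\le t\,\mathbf{x}$ componentwise. Comparing the $k$-th entries and using $\mathcal{C}\ge 0$ together with $0<y_i\le t x_i$ for all $i$ gives $(\mathcal{C}\mathbf{y}^{m-1})_k\le t^{m-1}(\mathcal{C}\mathbf{x}^{m-1})_k$, while $s\,y_k^{m-1}=t^{m-1}s\,x_k^{m-1}$, so that
\begin{equation*}
(\mathcal{T}\mathbf{y}^{m-1})_k=s\,y_k^{m-1}-(\mathcal{C}\mathbf{y}^{m-1})_k\ge t^{m-1}\bigl(s\,x_k^{m-1}-(\mathcal{C}\mathbf{x}^{m-1})_k\bigr)=t^{m-1}(\mathcal{T}\mathbf{x}^{m-1})_k .
\end{equation*}
Combined with $(\mathcal{T}\mathbf{x}^{m-1})_k\ge(\mathcal{T}\mathbf{y}^{m-1})_k$ this forces $(\mathcal{T}\mathbf{x}^{m-1})_k\ge t^{m-1}(\mathcal{T}\mathbf{x}^{m-1})_k$, and dividing by $(\mathcal{T}\mathbf{x}^{m-1})_k>0$ yields $t^{m-1}\le 1$, hence $t\le 1$---a contradiction. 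Therefore $\mathbf{x}\ge\mathbf{y}$, which together with $\mathbf{y}>0$ completes the proof.

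The estimate $(\mathcal{C}\mathbf{y}^{m-1})_k\le t^{m-1}(\mathcal{C}\mathbf{x}^{m-1})_k$ is routine once one expands the tensor--vector product and uses nonnegativity of $\mathcal{C}$, and the division step at the end is legitimate precisely because $\mathcal{T}\mathbf{x}^{m-1}>0$. The step that deserves the most care is the initial reduction to strictly positive $\mathbf{x},\mathbf{y}$ via \Cref{lemma:M-equations}: the maximal-ratio scalar $t$ is only well defined, and the final comparison is only meaningful, when $\mathbf{x}$ and $\mathbf{y}$ are genuinely positive, so this is where one must be scrupulous about the hypotheses and about which solutions of the multilinear systems are in play.
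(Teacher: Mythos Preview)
Your proof is correct, but it follows a genuinely different route from the paper's. The paper writes $\mathcal{T}=\mu\mathcal{I}-\mathcal{N}$ and invokes the Jacobi-type iteration $\mathbf{x}_k=(\mu^{-1}\mathcal{N}\mathbf{x}_{k-1}^{m-1}+\mu^{-1}\mathbf{b})^{[1/(m-1)]}$ from \cite{DW16}, which is known to converge monotonically to the unique positive solution; starting both iterations from the same seed $\mathbf{x}_0=\mathbf{y}_0$ and using $\mathbf{b}\ge\mathbf{c}$ gives $\mathbf{x}_k\ge\mathbf{y}_k$ by an easy induction, and passing to the limit yields $\mathbf{x}\ge\mathbf{y}$. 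Your argument instead runs a direct maximal-ratio (Collatz--Wielandt) contradiction: set $t=\max_i y_i/x_i$, pick an index $k$ where the maximum is attained, and compare the $k$-th components of $\mathcal{T}\mathbf{x}^{m-1}$ and $\mathcal{T}\mathbf{y}^{m-1}$. Your approach is more self-contained and elementary, since it does not rely on the convergence of an auxiliary iterative scheme; the paper's approach, on the other hand, ties the monotonicity lemma naturally to the iterative solvers used later in the algorithms and makes the order-preserving nature of the fixed-point map explicit. Both proofs share the same first step of extracting $\mathbf{x},\mathbf{y}>0$ from \Cref{lemma:M-equations}, and your caution about that step is well placed.
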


\begin{proof}
By definition, we can denote $\mathcal{T}=\mu\mathcal{I}-\mathcal{N}$, where $\mathcal{N}$ is a nonnegative tensor and $\mu>\rho(\mathcal{N})$. Denote $\mathcal{T}\mathbf{x}^{m-1}=\mathbf{b}$ and $\mathcal{T}\mathbf{y}^{m-1}=\mathbf{c}$.
By \Cref{lemma:M-equations}, we have $\mathbf{x}>0$ and $\mathbf{y}>0$.
According to \cite[page 702]{DW16}, the iteration
\begin{displaymath}
\mathbf{x}_k=(\mu^{-1}\mathcal{N}\mathbf{x}_{k-1}^{m-1}+\mu^{-1}\mathbf{b})^{[1/(m-1)]},~k=1,2,\ldots
\end{displaymath}
converges to the unique positive solution of $(\mu\mathcal{I}-\mathcal{N})\mathbf{x}^{m-1}=\mathbf{b}>0$. The same is true for $(\mu\mathcal{I}-\mathcal{N})\mathbf{y}^{m-1}=\mathbf{c}>0$. If we set $\mathbf{x}_0=\mathbf{y}_0$, then
\begin{displaymath}
\mathbf{x}_1=(\mu^{-1}\mathcal{N}\mathbf{x}_0^{m-1}+\mu^{-1}\mathbf{b})^{[1/(m-1)]}\geq(\mu^{-1}\mathcal{N}\mathbf{y}_0^{m-1}+\mu^{-1}\mathbf{c})^{[1/(m-1)]}=y_1.
\end{displaymath}
By induction, we can see that $\mathbf{x}_k\geq\mathbf{y}_k$ holds for all $k=1,2,\ldots$. Therefore $\mathbf{x}=\lim\limits_{k\rightarrow\infty}\mathbf{x}_k\geq\lim\limits_{k\rightarrow\infty}\mathbf{y}_k=\mathbf{y}$.
\end{proof}

\begin{theorem}\label{thm:exist of nonnegative eigenpair}
Let $(\A,\B)$ be a generalized $\mathcal{M}$-tensor pair, then there exists a positive eigenpair $(s,\mathbf{x}_{\ast})$ for the tensor pair $(\A,\B-\A)$.
\end{theorem}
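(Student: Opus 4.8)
The plan is to construct the positive eigenpair $(s,\mathbf{x}_\ast)$ for the pair $(\A,\B-\A)$ by a fixed-point / monotone-iteration argument that exploits the $\mathcal{M}$-tensor structure of $\mathcal{C} := \B - \A$. Recall from the discussion preceding the theorem that $\mathcal{C}$ is a nonsingular $\mathcal{M}$-tensor, so by \Cref{lemma:M-equations} the equation $\mathcal{C}\mathbf{y}^{m-1} = \mathbf{b}$ has a unique positive solution for every positive $\mathbf{b}$; denote this solution by $\mathbf{y} = \mathcal{C}^{-1}\mathbf{b}$ (an abuse of notation, understood componentwise via the $(m-1)$-st root normalization). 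The eigenproblem $\A\mathbf{x}^{m-1} = s\,\mathcal{C}\mathbf{x}^{m-1}$ with $s>0$ can be rewritten as $\mathcal{C}\mathbf{x}^{m-1} = \tfrac{1}{s}\A\mathbf{x}^{m-1}$, i.e. $\mathbf{x}^{[m-1]} = \mathcal{C}^{-1}\!\big(\tfrac{1}{s}\A\mathbf{x}^{m-1}\big)$ provided $\A\mathbf{x}^{m-1}>0$, which holds for $\mathbf{x}>0$ once we observe (from $(\mathrm{C1'})$, $(\mathrm{C2'})$, weak irreducibility) that $\A\mathbf{x}^{m-1}>0$ for all $\mathbf{x}>0$. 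This suggests defining, on the cone of positive vectors normalized so that (say) $\min(\mathbf{x}/\mathbf{v}) = 1$ where $\mathbf{v}$ is the vector from $(\mathrm{C3'})$, the map that sends $\mathbf{x}$ to the normalized positive solution of $\mathcal{C}\mathbf{z}^{m-1} = \A\mathbf{x}^{m-1}$, together with the associated scalar $s(\mathbf{x})$ recovered from the normalization.

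The key steps, in order, would be: (i) establish that $\A\mathbf{x}^{m-1}>0$ whenever $\mathbf{x}>0$, using weak irreducibility of $\A$ together with $\A\geq 0$ — this is the analogue of the matrix fact that an irreducible nonnegative matrix has no zero row and maps the positive orthant into itself; (ii) using $(\mathrm{C3'})$, namely $\mathcal{C}\mathbf{v}^{m-1} = (\B-\A)\mathbf{v}^{m-1} > 0$ together with $\mathbf{v}>0$, pin down the correct scaling so that the candidate eigenvalue $s$ lies in a bounded positive interval; concretely, if $\A\mathbf{x}^{m-1} = s\,\mathcal{C}\mathbf{x}^{m-1}$ and we compare against $\mathbf{v}$ one gets an a priori bound $0 < s \le s_{\max}$ from monotonicity (\Cref{lemma:monotony of M equation}); (iii) set up a monotone iteration $\mathbf{x}_{k+1}^{[m-1]} = \mathcal{C}^{-1}(\A\mathbf{x}_k^{m-1})$ suitably normalized, starting from a well-chosen $\mathbf{x}_0$ (e.g. a multiple of $\mathbf{v}$ such that $\A\mathbf{x}_0^{m-1} \le \mathcal{C}\mathbf{x}_0^{m-1}$, which is possible because $\mathcal{C}\mathbf{v}^{m-1}>\A\mathbf{v}^{m-1} \ge 0$ by $(\mathrm{C3'})$ and $(\mathrm{C1'})$), and show by induction (invoking \Cref{lemma:monotony of M equation}) that the sequence is monotone and bounded, hence convergent; (iv) pass to the limit to obtain a positive $\mathbf{x}_\ast$ and a positive $s$ with $\A\mathbf{x}_\ast^{m-1} = s\,\mathcal{C}\mathbf{x}_\ast^{m-1}$, using continuity of the solution map of the $\mathcal{M}$-tensor equation (which follows from the convergent iteration description in the proof of \Cref{lemma:monotony of M equation}). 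An alternative to the monotone-iteration route is a Brouwer fixed-point argument on the compact convex set $\{\mathbf{x}\ge 0 : \mathbf{1}^\top \mathbf{x} = 1\}$ applied to the continuous self-map $\mathbf{x}\mapsto \mathcal{C}^{-1}(\A\mathbf{x}^{m-1})$ normalized back to the simplex, after first extending the solution map continuously to the boundary; then positivity of the fixed point is recovered a posteriori from step (i) and weak irreducibility.

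I would lean toward the monotone-iteration proof because it keeps everything inside the positive cone and avoids boundary-continuity technicalities. The main obstacle I anticipate is step (iii): verifying that the normalized iteration is genuinely monotone. The subtlety is that the map $\mathbf{x}\mapsto \mathcal{C}^{-1}(\A\mathbf{x}^{m-1})$ is order-preserving (by \Cref{lemma:monotony of M equation} applied to $\mathcal{C}$, since $\mathbf{x}\le\mathbf{y}$ gives $\A\mathbf{x}^{m-1}\le\A\mathbf{y}^{m-1}$ as $\A\ge 0$), but the normalization step can destroy monotonicity unless it is chosen carefully — this is exactly the place where the matrix proof in \cite{BOD95,CVLX} is delicate, and the homogeneity degree $m-1\neq 1$ of the tensor maps requires re-checking each inequality rather than quoting the matrix argument. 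A secondary obstacle is ensuring the limiting scalar $s$ is strictly positive rather than $0$: this uses $(\mathrm{C3'})$ in the form of a strict lower bound, i.e. since $\A\mathbf{x}_\ast^{m-1}>0$ (step (i)) and $\mathcal{C}\mathbf{x}_\ast^{m-1}$ is finite and positive, the ratio defining $s$ cannot collapse to $0$. Uniqueness and the "positive" (as opposed to merely "nonnegative") refinement are deferred — the theorem as stated only claims existence of a positive eigenpair, and positivity of $\mathbf{x}_\ast$ comes for free from step (i) once $s>0$.
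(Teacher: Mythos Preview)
Your overall framework --- define $F(\mathbf{x}) = (\B-\A)_{++}^{-1}(\A\mathbf{x}^{m-1})$ as a homogeneous, order-preserving self-map of $\mathbb{R}_{++}^n$ and look for a positive eigenvector of $F$ --- is exactly the paper's. The divergence is in how the eigenvector is actually produced. The paper does not attempt a monotone iteration or a Brouwer argument; instead it invokes the nonlinear Perron--Frobenius theorem of Gaubert--Gunawardena \cite{Gaubert04}, which guarantees a positive eigenvector for any homogeneous monotone $F:\mathbb{R}_{++}^n\to\mathbb{R}_{++}^n$ whose associated directed graph $G(F)$ (with an edge $i\to j$ iff $\lim_{u\to\infty}(F(\mathbf{u}_{\{j\}}))_i=\infty$) is strongly connected. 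The only work left is to verify this graph condition: the paper shows that the graph of $F_\A(\mathbf{x}):=(\A\mathbf{x}^{m-1})^{[1/(m-1)]}$ is a spanning subgraph of $G(F)$, and then cites \cite{Friedland13} for the equivalence between strong connectivity of $G(F_\A)$ and weak irreducibility of $\A$.

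The obstacles you yourself flag are real and are exactly why a direct construction is awkward here. For the monotone iteration: starting from the supersolution $\mathbf{v}$ of $(\mathrm{C3'})$ gives $F(\mathbf{v})<\mathbf{v}$ and hence a decreasing sequence $\mathbf{x}_{k+1}=F(\mathbf{x}_k)$, but you supply no subsolution to bound it away from $\mathbf{0}$, and by homogeneity nothing prevents $\mathbf{x}_k\to\mathbf{0}$; any normalization that rescues boundedness will, as you note, destroy the order-monotonicity, and you offer no substitute monotone quantity. For the Brouwer route, \Cref{lemma:M-equations} only defines $(\B-\A)_{++}^{-1}\mathbf{b}$ for strictly positive $\mathbf{b}$, so continuously extending $F$ to faces of the simplex where $\A\mathbf{x}^{m-1}$ acquires zero entries is a genuine additional step you have not supplied. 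The Gaubert--Gunawardena theorem is tailored to precisely this situation (homogeneous, monotone, defined only on the open cone, with a graph condition replacing primitivity) and sidesteps both difficulties cleanly.
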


\begin{proof}
Denote $\mathbb{R}_{+}^{n}=\{\mathbf{x}\in\mathbb{R}^n|\mathbf{x}\geq0\}$ and $\mathbb{R}_{++}^{n}=\{\mathbf{x}\in\mathbb{R}^n|\mathbf{x}>0\}$. We define the operator $(\B-\A)_{++}^{-1}:\mathbb{R}_{++}^n\rightarrow\mathbb{R}_{++}^n$ to be $(\B-\A)_{++}^{-1}\mathbf{b}=\mathbf{x}$,
where $\mathbf{x}$ is the unique positive solution for the equation $(\B-\A)\mathbf{x}^{m-1}=\mathbf{b}$. According to \Cref{lemma:M-equations}, $\mathbf{x}$ exists and is unique since $(\B-\A)$ is a nonsingular $\mathcal{M}$--tensor, thus $(\B-\A)_{++}^{-1}$ is well-defined. The continuity of $(\B-\A)_{++}^{-1}$ is implied by the continuity of the roots of polynomials with respect to the coefficients.

Define the operator $F:\mathbb{R}_{++}^n\rightarrow\mathbb{R}_{++}^n$ by $F(\mathbf{x})=(\B-\A)_{++}^{-1}(\A\mathbf{x}^{m-1})$. Then it is easy to verify that $F$ is homogeneous, that is, for any $t>0$ and $\mathbf{x}\in\mathbb{R}_{++}^n$, $F(t\mathbf{x})=(\B-\A)_{++}^{-1}\A(t\mathbf{x})^{m-1}$
$=t(\B-\A)_{++}^{-1}(\A\mathbf{x}^{m-1})=tF(\mathbf{x})$. Besides, according to \Cref{lemma:monotony of M equation}, $F$ is a monotone function. Referring to \cite{Gaubert04}, we define the associated graph of $F$, $G(F)$, to be the directed graph with vertices $1,\ldots,n$ and an edge from $i$ to $j$ if and only if $\lim\limits_{u\rightarrow\infty}(F(\mathbf{u}_{\{j\}}))_i=\infty$, where $\mathbf{u}_{\{j\}}$ is defined by $(\mathbf{u}_{\{j\}})_i=u>0$ for $i=j$ and $(\mathbf{u}_{\{j\}})_i=1$ for $i\neq j$. Define the operator $F_{\A}:\mathbb{R}_{++}^n\rightarrow\mathbb{R}_{++}^n$ by $F_{\A}(\mathbf{x})=\left(\A\mathbf{x}^{m-1}\right)^{[1/(m-1)]}$, then $F_{\A}$ is homogeneous and monotone. The associated graph of $F_{\A}$, $G(F_{\A})$, is defined in the same way as $G(F)$. According to the proof of \cite[Theorem 3.2]{DW16}, $(\A\mathbf{u}_{\{j\}}^{m-1})_i\rightarrow\infty$ implies that $\big((\B-\A)_{++}^{-1}(\A\mathbf{u}_{\{j\}}^{m-1})\big)_i\rightarrow\infty$. Thus, the associated graph $G(F_{\A})$ of $F_{\A}$ is a spanning subgraph of the assoicated graph $G(F)$ of $F$. By \cite[Lemma 3.2]{Friedland13} and condition $(\mathrm{C2'})$, $G(F_{\A})$ is strongly connected and hence $G(F)$ is strongly connected. By using \cite[Theorem 2]{Gaubert04}, $F$ has an eigenvector in $\mathbb{R}_{++}^n$ with a positive eigenvalue. That is, there exist $\mathbf{x}_{\ast}\in\mathbb{R}_{++}^n$ and $s>0$ such that $(\B-\A)_{++}^{-1}(\A\mathbf{x}_{\ast}^{m-1})=s^{1/(m-1)}\mathbf{x}_{\ast}$, or equivalently, $\A\mathbf{x}_{\ast}^{m-1}=s(\B-\A)\mathbf{x}_{\ast}^{m-1}$.
\end{proof}

Based on \Cref{lemma:M-equations,lemma:monotony of M equation}, we can prove the uniqueness of the positive eigenvalue for any generalized $\mathcal{M}$-tensor pair.

\begin{remark}
We need to assume that tensors $\A$ and $\B$ are semisymmetric, i.e., $\mathcal{A}_{ii_2\ldots i_m}=\mathcal{A}_{ij_2\ldots j_m},\mathcal{B}_{ii_2\ldots i_m}$
$=\mathcal{B}_{ij_2\ldots j_m},1\leq i\leq n$, $j_2\ldots j_m$ is any permutation of $i_2\ldots i_m$, $1\leq i_2,\ldots,i_m\leq n$ so that we can compute the partial derivatives $D\A\mathbf{x}^{m-1},D\B\mathbf{x}^{m-1}$ and put the results in simple forms. For any tensor $\A$, we can get a semisymmetric tensor $\overline{\A}$ such that $\A\mathbf{x}^{m-1}=\overline{\A}\mathbf{x}^{m-1}$, by defining $\overline{\A}$ as
\begin{equation}\label{equ:definition of overline A}
\overline{a}_{i_1i_2\ldots i_m}=\frac{1}{(m-1)!}\sum\limits_{j_2\ldots j_m}a_{i_1j_2\ldots j_m},
\end{equation}
where $j_2\ldots j_m$ is any permutation of $i_2\ldots i_m$.
Note that the tensors $\A$ and $\B$ in this paper will always enter the discussion through terms in the form $\A\mathbf{x}^{m-1}$ and $\B\mathbf{x}^{m-1}$, directly or indirectly, so we can just assume that they are semisymmetric, unless specified otherwise.
\end{remark}

\begin{theorem}\label{thm:uniqueness of the eigenvalue}
Suppose that $(s,\mathbf{x}_{\ast})$ is the positive eigenpair in \Cref{thm:exist of nonnegative eigenpair}, then $s$ is the unique nonnegative generalized eigenvalue of tensor pair $(\A,\B-\A)$ with a nonnegative generalized eigenvector, and $\mathbf{x}_{\ast}$ is the unique nonnegative generalized eigenvector associated with $s$, up to a multiplicative constant.
\end{theorem}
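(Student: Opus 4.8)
The plan is to fix an arbitrary pair $(\sigma,\mathbf{y})$ with $\sigma\ge 0$, $\mathbf{0}\neq\mathbf{y}\ge 0$ and $\mathcal{A}\mathbf{y}^{m-1}=\sigma(\mathcal{B}-\mathcal{A})\mathbf{y}^{m-1}$, and to show $\sigma=s$ and $\mathbf{y}=\alpha\mathbf{x}_{\ast}$ for some $\alpha>0$; throughout I write the decomposition of the nonsingular $\mathcal{M}$-tensor as $\mathcal{B}-\mathcal{A}=\beta\mathcal{I}-\mathcal{C}$ with $\mathcal{C}\ge 0$ and $\beta>\rho(\mathcal{C})$. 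The first step is to upgrade $\mathbf{y}$ to a positive vector. Since $\sigma\ge 0$, the eigenequation is equivalent to $(\mathcal{A}+\sigma\mathcal{C})\mathbf{y}^{m-1}=\sigma\beta\,\mathbf{y}^{[m-1]}$, and $\mathcal{T}_{\sigma}:=\mathcal{A}+\sigma\mathcal{C}$ is nonnegative and still weakly irreducible (every positive entry of $\mathcal{A}$ survives in $\mathcal{T}_{\sigma}$, so $(\mathrm{C2'})$ is inherited). By the Perron--Frobenius theorem for weakly irreducible nonnegative tensors, any eigenvalue of $\mathcal{T}_{\sigma}$ admitting a nonnegative eigenvector equals $\rho(\mathcal{T}_{\sigma})>0$ and forces that eigenvector to be positive; hence $\mathbf{y}>0$ and $\sigma\beta=\rho(\mathcal{T}_{\sigma})>0$, so $\sigma>0$. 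Consequently $\mathbf{x}_{\ast}$ and $\mathbf{y}$ are both positive, and since $\mathcal{A}$ is weakly irreducible, $\mathcal{A}\mathbf{y}^{m-1}>0$, so the operator $F$ of \Cref{thm:exist of nonnegative eigenpair} is defined at $\mathbf{y}$; using \Cref{lemma:M-equations} one checks $F(\mathbf{x}_{\ast})=s^{1/(m-1)}\mathbf{x}_{\ast}$ and $F(\mathbf{y})=\sigma^{1/(m-1)}\mathbf{y}$.

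Next I would establish $\sigma=s$ by a scaling comparison that exploits only that $F$ is monotone (\Cref{lemma:monotony of M equation}) and positively homogeneous. Let $\theta=\min_i x_{\ast,i}/y_i>0$, so $\mathbf{x}_{\ast}\ge\theta\mathbf{y}$ with equality in some coordinate $i_0$. Then $s^{1/(m-1)}\mathbf{x}_{\ast}=F(\mathbf{x}_{\ast})\ge F(\theta\mathbf{y})=\theta F(\mathbf{y})=\theta\sigma^{1/(m-1)}\mathbf{y}$; comparing the $i_0$-th entries and using $\theta y_{i_0}=x_{\ast,i_0}>0$ yields $s^{1/(m-1)}\ge\sigma^{1/(m-1)}$, i.e.\ $s\ge\sigma$. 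The mirror choice $\theta'=\min_i y_i/x_{\ast,i}$ gives $\sigma\ge s$, whence $s=\sigma$.

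With $s=\sigma$, the vectors $\mathbf{x}_{\ast}$ and $\mathbf{y}$ are both Perron eigenvectors of the single weakly irreducible nonnegative tensor $\mathcal{T}_s=\mathcal{A}+s\mathcal{C}$ for the eigenvalue $s\beta$, and I would finish by the equality-case analysis of the previous step (alternatively by quoting uniqueness of the Perron eigenvector for weakly irreducible nonnegative tensors). With $\theta$ and $i_0$ as above, now $s\beta\,x_{\ast,i_0}^{m-1}=(\mathcal{T}_s\mathbf{x}_{\ast}^{m-1})_{i_0}\ge(\mathcal{T}_s(\theta\mathbf{y})^{m-1})_{i_0}=\theta^{m-1}(\mathcal{T}_s\mathbf{y}^{m-1})_{i_0}=s\beta\,x_{\ast,i_0}^{m-1}$, so equality holds; more generally the same identity holds at every index $j$ with $x_{\ast,j}=\theta y_j$. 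Writing $(\mathcal{T}_s\mathbf{x}_{\ast}^{m-1})_j-(\mathcal{T}_s(\theta\mathbf{y})^{m-1})_j=0$ as a sum of nonnegative terms (each entry of $\mathcal{T}_s$ is $\ge0$ and $x_{\ast,i_q}\ge\theta y_{i_q}>0$), every term must vanish, so $(\mathcal{T}_s)_{j i_2\ldots i_m}\neq 0$ forces $i_2,\ldots,i_m$ into the set $E:=\{i:x_{\ast,i}=\theta y_i\}$. Thus $E$ is nonempty and satisfies $(\mathcal{T}_s)_{i_1 i_2\ldots i_m}=0$ whenever $i_1\in E$ and some $i_q\notin E$; if $E$ were a proper subset this would contradict the weak irreducibility of $\mathcal{T}_s$, so $E=[n]$, i.e.\ $\mathbf{x}_{\ast}=\theta\mathbf{y}$ and $\mathbf{y}=\theta^{-1}\mathbf{x}_{\ast}$ with $\theta^{-1}>0$.

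I expect the main obstacles to lie in the use of weak (rather than full) irreducibility at two points: in the first step, passing from a merely nonnegative eigenvector to a positive one --- where the naive support/reducibility argument is inconclusive and one must lean on the weakly irreducible Perron--Frobenius theory --- and in the third step, propagating the equality $x_{\ast,i}=\theta y_i$ from one coordinate to all of $[n]$, which requires verifying carefully that the ``equality set'' $E$ is closed under the coordinate recursion defined by $\mathcal{T}_s$ so that weak irreducibility can be invoked.
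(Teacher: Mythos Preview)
Your proof is correct and, for the eigenvalue part, tracks the paper closely: both use a scaling comparison (you via $\theta=\min_i x_{\ast,i}/y_i$ and the monotonicity/homogeneity of $F$; the paper via $\sigma=\max\{\alpha:\mathbf{x}_{\ast}-\alpha\mathbf{y}\ge 0\}$ and \Cref{lemma:monotony of M equation}) to squeeze $s=\sigma$.

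Where you genuinely diverge is in the eigenvector uniqueness and in the preliminary upgrade step. The paper simply assumes the competing eigenpair $(t,\mathbf{y}_\ast)$ has $t>0$ and $\mathbf{y}_\ast>0$, so your first step---rewriting the equation as $(\mathcal{A}+\sigma\mathcal{C})\mathbf{y}^{m-1}=\sigma\beta\,\mathbf{y}^{[m-1]}$ and invoking the weakly irreducible Perron--Frobenius theorem to force $\sigma>0$ and $\mathbf{y}>0$---actually fills a gap the paper leaves open relative to the stated theorem. For eigenvector uniqueness, the paper takes a different route: it computes the Jacobian $DF(\mathbf{x}_\ast)=\big[(\mathcal{B}-\mathcal{A})\mathbf{y}^{m-2}\big]^{-1}\mathcal{A}\mathbf{x}_\ast^{m-2}$, argues that the digraph $G(F)$ is a spanning subgraph of $G(DF(\mathbf{x}_\ast))$ so that the latter is irreducible, and then cites results of Friedland and Nussbaum on homogeneous monotone maps. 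Your argument---propagating the equality set $E=\{i:x_{\ast,i}=\theta y_i\}$ through the nonzero entries of $\mathcal{T}_s=\mathcal{A}+s\mathcal{C}$ and invoking weak irreducibility of $\mathcal{T}_s$ directly---is more elementary and fully self-contained; it avoids the inverse-function-theorem computation and the external references. The paper's Jacobian approach, on the other hand, sets up machinery that is reused later in the convergence analysis (e.g., \Cref{thm:convergence rate 1 GNNI}), so it is not wasted effort in context.
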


\begin{proof}
First, we prove that $s$ is unique. Suppose that $\A\mathbf{y}_{\ast}^{m-1}=t(\B-\A)\mathbf{y}_{\ast}^{m-1}$ with $t>0$ and $\mathbf{y}_{\ast}>0$. Let $\sigma=\max\{\alpha:\mathbf{x}_{\ast}-\alpha\mathbf{y}_{\ast}\geq0\}$, then $\sigma>0$ and
\begin{displaymath}
s(\B-\A)\mathbf{x}_{\ast}^{m-1}=\A\mathbf{x}_{\ast}^{m-1}\geq\A(\sigma\mathbf{y}_{\ast})^{m-1}
=\sigma^{m-1}t(\B-\A)\mathbf{y}_{\ast}^{m-1}>0.
\end{displaymath}
By \Cref{lemma:monotony of M equation}, this implies that $\mathbf{x}_{\ast}\geq\sigma(t/s)^{1/(m-1)}\mathbf{y}_{\ast}$and hence $t\leq s$. If we exchange $s$ and $t$, $\mathbf{x}_{\ast}$ and $\mathbf{y}_{\ast}$, we have $s\leq t$. Thus, $s=t$, i.e., $s$ is unique.

Next, we show that $\mathbf{x}_{\ast}$ is unique, up to a multiplicative constant. Consider the matrix $D=DF(\mathbf{x}_{\ast})=[\frac{\partial F_i}{\partial x_j}(\mathbf{x}_{\ast})]_{i,j=1}^n$. Denote $(\B-\A)\mathbf{y}^{m-1}=\A\mathbf{x}_{\ast}^{m-1}$, by the inverse function theorem, we can deduce the expression $DF(\mathbf{x}_{\ast})=\big[(\B-\A)\mathbf{y}^{m-2}\big]^{-1}\A\mathbf{x}_{\ast}^{m-2}$. It is easy to see that $(\B-\A)\mathbf{y}^{m-2}$ is a nonsingular M-matrix. Similar to the analysis in \cite[Theorem 3.3]{Friedland13}, the di-graph $G(F)$ is a spanning subgraph of the di-graph $G(D)$, which is induced by the nonnegative entries of $D$. Thus $G(D)$ is strongly connected and according to \cite[Theorem 1]{You19}, $D$ is irreducible. Therefore, by \cite[Theorem 2.2]{Friedland13} \cite[Theorem 2.5]{Nussbaum88}, the eigenvector $\mathbf{x}_{\ast}$ is unique.
\end{proof}

\Cref{thm:exist of nonnegative eigenpair,thm:uniqueness of the eigenvalue} form a tensor pair version of the Perron-Frobenius theory, which locates at the core for constructing our algorithms and proving their convergence. Based on the above results, we can deduce another useful conclusion, which is similar to the Collatz-Wielandt theorem.

\begin{theorem}\label{thm: Collatz-Wielandt}
Suppose that $(\A,\B)$ is a generalized $\mathcal{M}$-tensor pair. Then for any $\mathbf{x}\in\mathbb{R}_+^n\backslash\{0\}$ such that $(\B-\A)\mathbf{x}^{m-1}>0$, we have
\begin{equation}\label{equ:Collatz-Wielandt s}
\min\limits_i\frac{(\A\mathbf{x}^{m-1})_i}{\big((\B-\A)\mathbf{x}^{m-1}\big)_i}\leq s\leq\max\limits_i\frac{(\A\mathbf{x}^{m-1})_i}{\big((\B-\A)\mathbf{x}^{m-1}\big)_i},
\end{equation}
where $s$ is the unique positive generalized eigenvalue of the tensor pair $(\A,\B-\A)$ corresponding to a positive generalized eigenvector.
\end{theorem}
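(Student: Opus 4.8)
The plan is to mimic the classical Collatz--Wielandt argument, using the monotonicity of the nonsingular $\mathcal{M}$-tensor $\B-\A$ established in \Cref{lemma:monotony of M equation} as the main lever, together with the uniqueness result of \Cref{thm:uniqueness of the eigenvalue}. Write $\mathcal{C}=\B-\A$, which is a nonsingular $\mathcal{M}$-tensor by the discussion following \Cref{prop:nonsingular M-tensor}, and let $(s,\mathbf{x}_\ast)$ be the positive eigenpair from \Cref{thm:exist of nonnegative eigenpair}. Fix $\mathbf{x}\in\mathbb{R}_+^n\backslash\{0\}$ with $\mathcal{C}\mathbf{x}^{m-1}>0$, and set
\[
\mu=\min_i\frac{(\A\mathbf{x}^{m-1})_i}{(\mathcal{C}\mathbf{x}^{m-1})_i},\qquad
M=\max_i\frac{(\A\mathbf{x}^{m-1})_i}{(\mathcal{C}\mathbf{x}^{m-1})_i}.
\]
By definition of $\mu$ and $M$ we have the componentwise inequalities $\mu\,\mathcal{C}\mathbf{x}^{m-1}\le \A\mathbf{x}^{m-1}\le M\,\mathcal{C}\mathbf{x}^{m-1}$. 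Note also that since $\A\ge 0$ and $\mathbf{x}\ge 0$, the vector $\A\mathbf{x}^{m-1}$ is nonnegative; and since $\A$ is weakly irreducible, one can argue (as in the proof of \Cref{thm:exist of nonnegative eigenpair}, via the strong connectivity of $G(F_\A)$) that in fact $\A\mathbf{x}^{m-1}>0$ whenever $\mathbf{x}\in\mathbb{R}_+^n\backslash\{0\}$, so $\mu>0$.

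For the upper bound, I would argue by contradiction: suppose $M<s$. From $\A\mathbf{x}^{m-1}\le M\,\mathcal{C}\mathbf{x}^{m-1}$ and the identity $\A\mathbf{x}_\ast^{m-1}=s\,\mathcal{C}\mathbf{x}_\ast^{m-1}$, I would compare $\mathbf{x}$ against a suitable scalar multiple of $\mathbf{x}_\ast$. Precisely, let $\sigma=\max\{\alpha>0:\alpha\mathbf{x}_\ast\le\mathbf{x}\}$ (finite and positive since $\mathbf{x}_\ast>0$ and $\mathbf{x}$ has at least one positive entry, but we need $\mathbf{x}>0$ here — this follows from $\A\mathbf{x}^{m-1}>0$ combined with $\mathbf{x}^{[m-1]}\le \mu^{-1}\cdot(\text{something positive})$, or more directly from $\mathcal{C}\mathbf{x}^{m-1}>0$ and an $\mathcal{M}$-tensor positivity argument). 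Then, using monotonicity of $\A$ and the eigen-relation,
\[
M\,\mathcal{C}\mathbf{x}^{m-1}\ge \A\mathbf{x}^{m-1}\ge \A(\sigma\mathbf{x}_\ast)^{m-1}=\sigma^{m-1}s\,\mathcal{C}\mathbf{x}_\ast^{m-1},
\]
so $\mathcal{C}\mathbf{x}^{m-1}\ge \sigma^{m-1}(s/M)\,\mathcal{C}\mathbf{x}_\ast^{m-1}>\mathcal{C}(\sigma\mathbf{x}_\ast)^{m-1}$ because $s/M>1$; then \Cref{lemma:monotony of M equation} gives $\mathbf{x}\ge \sigma(s/M)^{1/(m-1)}\mathbf{x}_\ast$, contradicting the maximality of $\sigma$ since $(s/M)^{1/(m-1)}>1$. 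Hence $s\le M$. The lower bound $\mu\le s$ is entirely symmetric: assume $\mu>s$, take $\tau=\max\{\alpha>0:\alpha\mathbf{x}\le\mathbf{x}_\ast\}$, and derive $s\,\mathcal{C}\mathbf{x}_\ast^{m-1}=\A\mathbf{x}_\ast^{m-1}\ge\A(\tau\mathbf{x})^{m-1}\ge\tau^{m-1}\mu\,\mathcal{C}\mathbf{x}^{m-1}$, whence $\mathbf{x}_\ast\ge\tau(\mu/s)^{1/(m-1)}\mathbf{x}$ with $(\mu/s)^{1/(m-1)}>1$, again contradicting maximality.

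The main obstacle I anticipate is the positivity bookkeeping: the statement only assumes $\mathbf{x}\in\mathbb{R}_+^n\backslash\{0\}$, but the homogeneity/monotonicity comparisons above implicitly want $\mathbf{x}>0$ (to make $\sigma,\tau$ well-defined and finite with the right strict inequalities), and they want $\A\mathbf{x}^{m-1}>0$ (so that $\mu>0$). Both should follow from $(\B-\A)\mathbf{x}^{m-1}>0$ together with weak irreducibility of $\A$ and \Cref{prop:nonsingular M-tensor} — indeed $\mathcal{C}\mathbf{x}^{m-1}>0$ forces $\mathbf{x}>0$ by the characterization that a nonsingular $\mathcal{M}$-tensor equation with positive right-hand side has a positive solution, applied to $\mathbf{x}$ itself — and I would insert a short lemma or parenthetical remark to that effect before running the two contradiction arguments. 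Once positivity is secured, the rest is a direct transcription of the matrix Collatz--Wielandt proof, with $\mathcal{M}$-tensor monotonicity (\Cref{lemma:monotony of M equation}) playing the role of $B^{-1}\ge 0$.
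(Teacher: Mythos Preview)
Your approach is essentially the same as the paper's: both use the comparison $\sigma=\max\{\alpha:\mathbf{x}_\ast-\alpha\mathbf{x}\ge0\}$ (or its dual), push the eigen-relation through the nonnegativity of $\A$, and invoke \Cref{lemma:monotony of M equation} to upgrade the inequality on $\mathcal{C}$-images to one on the vectors themselves, forcing the ratio bound. The paper phrases the lower bound directly (``if $\A\mathbf{x}^{m-1}\ge t\,\mathcal{C}\mathbf{x}^{m-1}>0$ then $t\le s$'') and dismisses the upper bound as analogous, while you phrase both halves as contradiction arguments; this is purely cosmetic.

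You are actually more careful than the paper on the positivity bookkeeping: the ``analogous'' upper-bound half does need $\mathbf{x}>0$ to make $\sigma'=\max\{\alpha:\alpha\mathbf{x}_\ast\le\mathbf{x}\}$ strictly positive, and your proposed fix is correct --- writing $\mathcal{C}=\gamma\mathcal{I}-\mathcal{R}$ with $\mathcal{R}\ge0$, the hypothesis $(\mathcal{C}\mathbf{x}^{m-1})_i>0$ reads $\gamma x_i^{m-1}>(\mathcal{R}\mathbf{x}^{m-1})_i\ge0$, so $x_i>0$ for every $i$. One small correction: your parenthetical claim that weak irreducibility alone gives $\A\mathbf{x}^{m-1}>0$ for every nonzero nonnegative $\mathbf{x}$ is false in general; you only get this once $\mathbf{x}>0$ is established (then every row of $\A$ having a nonzero entry, forced by weak irreducibility applied to singletons, suffices). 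This does not affect the argument, since $\mu\ge0$ is automatic and the lower bound is trivial when $\mu=0$.
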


\begin{proof}
Denote $\mathbf{x}_{\ast}$ as the unique positive eigenvector corresponding to $s$. First, we prove that if $t\geq0$ and nonzero vector $\mathbf{x}\in\mathbb{R}_+^n$ satisfy $\A\mathbf{x}^{m-1}\geq t(\B-\A)\mathbf{x}^{m-1}>0$, then $t\leq s$.

Let $\sigma=\max\{\alpha:\mathbf{x}_{\ast}-\alpha\mathbf{x}\geq0\}$, then $\sigma>0$. We have
\begin{displaymath}
s(\B-\A)\mathbf{x}_{\ast}^{m-1}=\A\mathbf{x}_{\ast}^{m-1}\geq\sigma^{m-1}\A\mathbf{x}^{m-1}\geq\sigma^{m-1}t(\B-\A)\mathbf{x}^{m-1}>0.
\end{displaymath}
This implies that $\mathbf{x}_{\ast}\geq(t/s)^{1/(m-1)}\sigma\mathbf{x}$ and hence $t\leq s$. We can immediately get the left side of \eqref{equ:Collatz-Wielandt s} by letting $t=\min\limits_i\frac{(\A\mathbf{x}^{m-1})_i}{\big((\B-\A)\mathbf{x}^{m-1}\big)_i}$ in the above result.

The proof for the other side is analogous.
\end{proof}

From the relationship $\lambda=\frac{s}{1+s}$, we can get the following corollary immediately.

\begin{corollary}\label{cor: Collatz-Wielandt}
Suppose that ($\A,\B$) is a generalized $\mathcal{M}$-tensor pair. Then for any $\mathbf{x}\in\mathbb{R}_+^n\backslash\{0\}$ such that $(\B-\A)\mathbf{x}^{m-1}>0$, we have
\begin{displaymath}
\min\limits_i\frac{(\A\mathbf{x}^{m-1})_i}{(\B\mathbf{x}^{m-1})_i}\leq\lambda\leq\max\limits_i\frac{(\A\mathbf{x}^{m-1})_i}{(\B\mathbf{x}^{m-1})_i},
\end{displaymath}
where $\lambda$ is the unique positive generalized eigenvalue of the tensor pair $(\A,\B)$ corresponding to a positive generalized eigenvector.
\end{corollary}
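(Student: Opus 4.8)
The plan is to reduce the statement directly to \Cref{thm: Collatz-Wielandt} via the substitution $\lambda=s/(1+s)$ recorded in \Cref{lemma:relation between two eigenproblem}. Fix $\mathbf{x}\in\mathbb{R}_+^n\setminus\{0\}$ with $(\B-\A)\mathbf{x}^{m-1}>0$, and abbreviate $a_i=(\A\mathbf{x}^{m-1})_i$, $c_i=\big((\B-\A)\mathbf{x}^{m-1}\big)_i$, $b_i=(\B\mathbf{x}^{m-1})_i$. Since $\A\geq0$ and $\mathbf{x}\geq0$ we have $a_i\geq0$, and by hypothesis $c_i>0$; because $\B=\A+(\B-\A)$ we get $b_i=a_i+c_i>0$, so all the ratios $a_i/c_i\geq0$ and $a_i/b_i\geq0$ are well defined.

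First I would record the elementary pointwise identity
\[
\frac{a_i/c_i}{1+a_i/c_i}=\frac{a_i}{a_i+c_i}=\frac{a_i}{b_i},
\]
i.e. the scalar map $\phi(t)=t/(1+t)$ carries the $i$-th ``$(\B-\A)$-ratio'' $(\A\mathbf{x}^{m-1})_i/\big((\B-\A)\mathbf{x}^{m-1}\big)_i$ to the $i$-th ``$\B$-ratio'' $(\A\mathbf{x}^{m-1})_i/(\B\mathbf{x}^{m-1})_i$. Since $\phi$ is strictly increasing on $(-1,\infty)$ (as $\phi'(t)=(1+t)^{-2}>0$), it commutes with $\min_i$ and $\max_i$ over the nonnegative numbers $a_i/c_i$, so that $\min_i a_i/b_i=\phi(\min_i a_i/c_i)$ and $\max_i a_i/b_i=\phi(\max_i a_i/c_i)$.

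Next I would invoke \Cref{thm: Collatz-Wielandt}, which gives $\min_i a_i/c_i\le s\le\max_i a_i/c_i$, where $s>0$ is the unique positive generalized eigenvalue of $(\A,\B-\A)$ with a positive eigenvector (its uniqueness coming from \Cref{thm:exist of nonnegative eigenpair,thm:uniqueness of the eigenvalue}). By \Cref{lemma:relation between two eigenproblem} the corresponding quantity for $(\A,\B)$ is exactly $\lambda=\phi(s)$. Applying the monotone map $\phi$ to the chain of inequalities $\min_i a_i/c_i\le s\le\max_i a_i/c_i$ and using the two identities above yields $\min_i a_i/b_i\le\lambda\le\max_i a_i/b_i$, which is the assertion. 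The only points requiring any care are bookkeeping ones — checking that every denominator is strictly positive (from $\A\geq0$, $\mathbf{x}\geq0$ and $(\B-\A)\mathbf{x}^{m-1}>0$) and that $s$ and each $a_i/c_i$ lie in the interval $(-1,\infty)$ on which $\phi$ is increasing — both of which are immediate since all these quantities are nonnegative; there is no genuine obstacle.
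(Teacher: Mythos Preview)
Your proof is correct and follows exactly the route the paper intends: the paper states the corollary as an immediate consequence of \Cref{thm: Collatz-Wielandt} via the relation $\lambda=s/(1+s)$, and your argument simply spells out this monotone change of variables $\phi(t)=t/(1+t)$ applied componentwise and to the inequalities. There is nothing to add.
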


% \begin{proof}
% By \cref{thm: Collatz-Wielandt}, we have
% \begin{displaymath}
% \min\limits_i\frac{(\A\mathbf{x}^{m-1})_i}{\big((\B-\A)\mathbf{x}^{m-1}\big)_i}\leq s\leq\max\limits_i\frac{(\A\mathbf{x}^{m-1})_i}{\big((\B-\A)\mathbf{x}^{m-1}\big)_i},
% \end{displaymath}

% thus
% \begin{displaymath}
% \begin{aligned}
% \lambda&=\frac{s}{1+s}\geq1-\frac{1}{1+\min\limits_i\frac{(\A\mathbf{x}^{m-1})_i}{\big((\B-\A)\mathbf{x}^{m-1}\big)_i}}\\
% &=1-\max\limits_i\frac{1}{1+\frac{(\A\mathbf{x}^{m-1})_i}{\big((\B-\A)\mathbf{x}^{m-1}\big)_i}}=\min\limits_i\frac{(\A\mathbf{x}^{m-1})_i}{(\B\mathbf{x}^{m-1})_i}.\\
% \end{aligned}
% \end{displaymath}

% The proof for the right side is analogous.
% \end{proof}

Before proposing the Noda iterative methods for computing the Perron pair of the generalized $\mathcal{M}$-tensor pair $(\A,\B)$, we need to prove that tensor $\mu\B-\A$ is a nonsingular $\mathcal{M}$-tensor for any $\mu\in(\lambda,1]$, where $\lambda$ is the unique positive generalized eigenvalue for $(\A,\B)$.

\begin{theorem}\label{thm:mu B-A is also M-tensor}
Let $(\A,\B)$ be a generalized $\mathcal{M}$-tensor pair. Suppose that $\lambda=\frac{s}{s+1}$, where $s$ is the unique positive generalized eigenvalue of tensor pair $(\A,\B-\A)$ corresponding to a positive generalized eigenvector. Then for any $\mu\in(\lambda,1]$, $\mu\B-\A$ is a nonsingular $\mathcal{M}$-tensor.
\end{theorem}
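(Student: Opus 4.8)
The plan is to apply the equivalence in \Cref{prop:nonsingular M-tensor}: once we know $\mu\mathcal{B}-\mathcal{A}$ is a $\mathcal{Z}$-tensor, it suffices to produce a single positive vector $\mathbf{z}$ with $(\mu\mathcal{B}-\mathcal{A})\mathbf{z}^{m-1}>0$, and the natural choice is the Perron eigenvector $\mathbf{x}_\ast$ furnished by \Cref{thm:exist of nonnegative eigenpair}. Thus I would split the argument into three pieces: (i) $\mu\mathcal{B}-\mathcal{A}$ is a $\mathcal{Z}$-tensor; (ii) $(\mu\mathcal{B}-\mathcal{A})\mathbf{x}_\ast^{m-1}$ is a strictly positive multiple of $\mathcal{A}\mathbf{x}_\ast^{m-1}$; (iii) $\mathcal{A}\mathbf{x}_\ast^{m-1}>0$.

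For (i), note first that $s>0$ forces $\lambda=s/(s+1)\in(0,1)$, so $\mu\in(0,1]$. For an off-diagonal position, i.e. $(i_2,\dots,i_m)\neq(i,\dots,i)$, condition $(\mathrm{C1'})$ gives $a_{ii_2\dots i_m}\geq0$ and $(\mathrm{C4'})$ gives $b_{ii_2\dots i_m}\leq a_{ii_2\dots i_m}$; since $0<\mu\leq1$ we get $\mu b_{ii_2\dots i_m}\leq\max\{b_{ii_2\dots i_m},0\}\leq a_{ii_2\dots i_m}$, hence $(\mu\mathcal{B}-\mathcal{A})_{ii_2\dots i_m}\leq0$ and $\mu\mathcal{B}-\mathcal{A}$ is a $\mathcal{Z}$-tensor. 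For (ii), \Cref{thm:exist of nonnegative eigenpair} gives $\mathcal{A}\mathbf{x}_\ast^{m-1}=s(\mathcal{B}-\mathcal{A})\mathbf{x}_\ast^{m-1}$ with $\mathbf{x}_\ast>0$, which rearranges to $\mathcal{A}\mathbf{x}_\ast^{m-1}=\lambda\mathcal{B}\mathbf{x}_\ast^{m-1}$, so
\[
(\mu\mathcal{B}-\mathcal{A})\mathbf{x}_\ast^{m-1}=(\mu-\lambda)\,\mathcal{B}\mathbf{x}_\ast^{m-1}=\frac{\mu-\lambda}{\lambda}\,\mathcal{A}\mathbf{x}_\ast^{m-1},
\]
where $\mu-\lambda>0$ and $\lambda>0$.

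I expect step (iii) to be the main obstacle. I would argue as follows: since $\mathbf{x}_\ast>0$ and $\mathcal{A}\geq0$, the index set $J=\{i:(\mathcal{A}\mathbf{x}_\ast^{m-1})_i=0\}$ consists exactly of those $i$ with $a_{ii_2\dots i_m}=0$ for all $i_2,\dots,i_m$. If $J$ were a nonempty proper subset of $[n]$, then weak irreducibility $(\mathrm{C2'})$ applied to $S=J$ would yield some $i_1\in J$ together with a multi-index having at least one entry outside $J$ for which $a_{i_1i_2\dots i_m}\neq0$, a contradiction. If $J=[n]$, then $\mathcal{A}\mathbf{x}_\ast^{m-1}=0$, hence $(\mathcal{B}-\mathcal{A})\mathbf{x}_\ast^{m-1}=0$; but a nonsingular $\mathcal{M}$-tensor cannot annihilate a nonzero vector (this is exactly the eigenvalue argument already used in the text to show $(\mathcal{A},\mathcal{B})$ is a regular tensor pair), so this is impossible since $\mathbf{x}_\ast\neq0$. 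Therefore $J=\emptyset$, i.e. $\mathcal{A}\mathbf{x}_\ast^{m-1}>0$. (Alternatively, $\mathcal{A}\mathbf{x}_\ast^{m-1}\in\mathbb{R}^n_{++}$ is already implicit in the construction of $\mathbf{x}_\ast$ in the proof of \Cref{thm:exist of nonnegative eigenpair}.) Combining (i)--(iii), $\mu\mathcal{B}-\mathcal{A}$ is a $\mathcal{Z}$-tensor with $(\mu\mathcal{B}-\mathcal{A})\mathbf{x}_\ast^{m-1}>0$ and $\mathbf{x}_\ast>0$, so \Cref{prop:nonsingular M-tensor} yields that $\mu\mathcal{B}-\mathcal{A}$ is a nonsingular $\mathcal{M}$-tensor.
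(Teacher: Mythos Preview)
Your proof is correct and follows essentially the same route as the paper: both verify that $\mu\mathcal{B}-\mathcal{A}$ is a $\mathcal{Z}$-tensor via $(\mathrm{C4'})$, then use the Perron eigenvector $\mathbf{x}_\ast$ to compute $(\mu\mathcal{B}-\mathcal{A})\mathbf{x}_\ast^{m-1}=(\mu-\lambda)\mathcal{B}\mathbf{x}_\ast^{m-1}=\frac{\mu-\lambda}{\lambda}\mathcal{A}\mathbf{x}_\ast^{m-1}>0$ and invoke \Cref{prop:nonsingular M-tensor}. The only difference is that you spell out the weak-irreducibility argument for $\mathcal{A}\mathbf{x}_\ast^{m-1}>0$ in detail, whereas the paper simply asserts it in one clause.
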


\begin{proof}
Since $s>0$, we have $0<\lambda<1$. Thus $\mu\B-\A$ is a $\mathcal{Z}$-tensor according to the condition $\mathrm{(C4')}$. Based on \Cref{thm:exist of nonnegative eigenpair,thm:uniqueness of the eigenvalue}, there exists a vector $\mathbf{x}>0$ such that $(\lambda\B-\A)\mathbf{x}^{m-1}=0$ and hence $\B\mathbf{x}^{m-1}=\frac{1}{\lambda}\A\mathbf{x}^{m-1}>0$ since $\A$ is a weakly irreducible nonnegative tensor. Furthermore, since $\mu>\lambda$, we have
\begin{displaymath}
(\mu\B-\A)\mathbf{x}^{m-1}=(\mu-\lambda)\B\mathbf{x}^{m-1}+(\lambda\B-\A)\mathbf{x}^{m-1}=(\mu-\lambda)\B\mathbf{x}^{m-1}>0.
\end{displaymath}
By \Cref{prop:nonsingular M-tensor}, $\mu\B-\A$ is a nonsingular $\mathcal{M}$-tensor.
\end{proof}

\begin{remark}
The tensor pair satisfying conditions $\mathrm{(C1')-(C4')}$ can be found in generalized eigenvalue problems of directed hypergraphs. Consider a strongly connected directed uniform hypergraph $H=(V,E)$. Suppose that $H$ is a $k$-graph, which means $|e_i|=k$ for every arc $e_i$ in the arc set $E=\{e_1,\ldots,e_m\}$. Denote $\A$ as the adjacency tensor of $H$ and $\B$ as the signless Laplacian tensor $\mathcal{D}+\A$. The \emph{adjacency tensor} $\A$ of the directed $k$-graph $H$ is defined as a $k$-order $n$-dimensional tensor whose $(i_1,\ldots,i_k)$ entry is 
\begin{displaymath}
a_{i_1\ldots i_k}=\left\{\begin{aligned}
&\frac{1}{(k-1)!} & {\rm if~} (i_1,\ldots,i_k)=e\in E {\rm~and~} i_1 {\rm~is~the~tail~of~} e,\\
&0 & {\rm otherwise}.\\
\end{aligned}\right.
\end{displaymath}
A vertex $i_1$ is called the \emph{tail} of an arc $e$ if it is in the first position of $e$, that is, $e=(i_1,i_2,\ldots,i_k)$. The diagonal tensor $\mathcal{D}$ is defined as $d_{i\ldots i}=d_i^+$, the out-degree of vertex $i$, for all $i\in[n]$. The \emph{out-degree} of a vertex $i\in V$ is defined as $d_i^+=|E_i^+|$, where $E_i^+=\{e\in E:i{\rm~is~the~tail~of~}e\}$. Interested readers can refer to \cite[page 163]{QL18} for more detailed definitions about directed uniform hypergraphs. According to \cite[Theorem 4.59]{QL18}, $\A$ is a weakly irreducible nonnegative tensor. Besides, $\B-\A=\mathcal{D}$ is a positive diagonal tensor and hence an $\mathcal{M}$-tensor. The generalized tensor eigenproblem $\A\mathbf{x}^{m-1}=\lambda\B\mathbf{x}^{m-1}$ can be regarded as a higher-order generalization of the graph embedding problems \cite{Cai07,Cai18}, similar to the matrix case discussed in \cite{SJY}. Besides, if $\B=\mathcal{E}$, the identity tensor such that $\mathcal{E}\mathbf{x}^{m-1}=\|\mathbf{x}\|^{m-2}\mathbf{x}$ for all $\mathbf{x}\in\mathbb{R}^n$ \cite{CPZ}, then the generalized eigenvalue problem $\A\mathbf{x}^{m-1}=\lambda\B\mathbf{x}^{m-1}$ is equivalent to a $Z-$eigenvalue problem \cite[page 26]{QL18}, which can be used for computing the Fieldler vector of a Laplacian tensor \cite{Chen17}.
\end{remark}

\section{Noda iteration for generalized tensor eigenproblem}

The original NI is designed for the Perron pair of an irreducible nonnegative matrix, which iterates \cite{Noda}:
\begin{enumerate}[label=(\arabic*)]
\item $\mathbf{y}_k=(\rho_{k-1}I-C)^{-1}\mathbf{x}_{k-1}$;
\item $\rho_k=\rho_{k-1}-\min\frac{\mathbf{x}_{k-1}}{\mathbf{y}_k}$;
\item $\mathbf{x}_k=\mathbf{y}_k/\|\mathbf{y}_k\|$.
\end{enumerate}

Based on \Cref{thm:Perron Frobenius for matrix}, a modified Noda iteration (MNI) for computing the Perron pair of a matrix pair $(A,B)$ satisfying conditions $\mathrm{(C1)-(C4)}$ has been proposed in \cite{CVLX}. Analogous to the step $(1)$ and step $(2)$ above, it updates $\mathbf{y}_{k+1}$ by $(\rho_kB-A)\mathbf{y}_{k+1}=(B-A)\mathbf{x}_k$ and updates $\rho_{k+1}$ by $\rho_{k+1}=\rho_k-(1-\rho_k)\frac{\tau_k}{1-\tau_k}$, where $\tau_k=\min\frac{\mathbf{x}_k}{\mathbf{y}_{k+1}}$. Based on the Collatz-Wielandt theorem, a generalized Noda iteration (GNI) has also been introduced in \cite{CVLX}. Compared with MNI, it updates $\mathbf{y}_{k+1}$ by $(\rho_kB-A)\mathbf{y}_{k+1}=A\mathbf{x}_k$ and updates $\rho_{k+1}$ by $\rho_{k+1}=\rho_k(1-\min\frac{A\mathbf{x}_k}{A\mathbf{y}_{k+1}+A\mathbf{x}_k})$. As mentioned in \cite{CVLX}, one advantage of GNI is that we can always simply select $\rho_0=1$, while in MNI $\rho_0$ must be smaller than 1. Otherwise, $\rho_k=1$ for all $k$ and MNI fails.

\subsection{Modified Tensor Noda Iteration (MTNI)}

For a generalized $\mathcal{M}$-tensor pair $\mathcal{(A,B)}$, based on \Cref{thm:exist of nonnegative eigenpair,thm:uniqueness of the eigenvalue}, its unique positive generalized eigenpair can be computed by solving the problem $\mathcal{A}\mathbf{x}^{m-1}=s(\mathcal{B-A})\mathbf{x}^{m-1}$. Analogous to MNI, we can construct the iteration scheme:
\begin{equation}\label{equ:iteration format 2}
\big[s_{k-1}(\B-\A)-\A\big]\mathbf{z}_k^{m-1}=(\B-\A)\mathbf{x}_{k-1}^{m-1}.
\end{equation}
Denote $\mathbf{y}_k=(1+s_{k-1})^{1/(m-1)}\mathbf{z}_k$ and $\rho_{k-1}=s_{k-1}/(1+s_{k-1})$, then \eqref{equ:iteration format 2} becomes
\begin{equation}\label{equ:iteration format 3}
(\rho_{k-1}\B-\A)\mathbf{y}_k^{m-1}=(\B-\A)\mathbf{x}_{k-1}^{m-1}.
\end{equation}

According to \Cref{lemma:M-equations} and \Cref{thm:mu B-A is also M-tensor}, if we can ensure that $(\B-\A)\mathbf{x}_{k-1}^{m-1}>0$ and $\rho_{k-1}>\lambda$, where $\lambda$ is the unique positive generalized eigenvalue for the tensor pair $(\A,\B)$, then we can find a unique $\mathbf{y}_k>0$ and $(\B-\A)\mathbf{y}_k^{m-1}=\big[(\B-\frac{1}{\rho_{k-1}}\A)+\frac{1-\rho_{k-1}}{\rho_{k-1}}\A\big]\mathbf{y}_k^{m-1}>0$.

According to \Cref{thm: Collatz-Wielandt}, we can update $s_k$ in a similar way to the step (2) of the original NI, that is, $s_k=\max\frac{\A\mathbf{x}_k^{m-1}}{(\B-\A)\mathbf{x}_k^{m-1}}=\max\frac{\A\mathbf{z}_k^{m-1}}{(\B-\A)\mathbf{z}_k^{m-1}}=s_{k-1}-\tau_{k-1}$, where $\tau_{k-1}=\min\frac{(\B-\A)\mathbf{x}_{k-1}^{m-1}}{(\B-\A)\mathbf{z}_k^{m-1}}$. Since $\rho_k=s_k/(1+s_k)$, we can update $\rho_k$ adaptively by
\begin{equation}\label{equ:update rho 1}
\rho_k=\frac{s_{k-1}-\tau_{k-1}}{1+(s_{k-1}-\tau_{k-1})}.
\end{equation}
\Cref{equ:update rho 1} can be rewritten to
\begin{displaymath}\label{equ:update rho 2}
\rho_k=\frac{s_{k-1}/(1+s_{k-1})-\tau_{k-1}/(1+s_{k-1})}{1-\tau_{k-1}/(1+s_{k-1})}=\frac{\rho_{k-1}-(1-\rho_{k-1})\tau_{k-1}}{1-(1-\rho_{k-1})\tau_{k-1}}=\rho_{k-1}-\frac{(1-\rho_{k-1})^2\tau_{k-1}}{1-(1-\rho_{k-1})\tau_{k-1}}.
\end{displaymath}
As $\mathbf{y}_k=(1+s_{k-1})^{1/(m-1)}\mathbf{z}_k$, if we denote
\begin{displaymath}\label{equ:tilde tau}
\tilde{\tau}_{k-1}=\min\frac{(\B-\A)\mathbf{x}_{k-1}^{m-1}}{(\B-\A)\mathbf{y}_k^{m-1}}=(1-\rho_{k-1})\min\frac{(\B-\A)\mathbf{x}_{k-1}^{m-1}}{(\B-\A)\mathbf{z}_k^{m-1}}=(1-\rho_{k-1})\tau_{k-1},
\end{displaymath}
then
\begin{equation}\label{equ:update rho 3}
\rho_k=\rho_{k-1}-\frac{(1-\rho_{k-1})\tilde{\tau}_{k-1}}{1-\tilde{\tau}_{k-1}}.
\end{equation}

Since $(\B-\A)\mathbf{x}_{k-1}^{m-1}>0$ and $(\mathcal{B-A})\mathbf{z}_k^{m-1}>0$, we obtain that $s_k<s_{k-1}$. On the other hand, $s_{k-1}=\frac{s_{k-1}\mathcal{(B-A)}\mathbf{z}_k^{m-1}}{\mathcal{(B-A)}\mathbf{z}_k^{m-1}}$ implies that $s_k=\max\frac{\mathcal{A}\mathbf{z}_{k}^{m-1}}{\mathcal{(B-A)}\mathbf{z}_k^{m-1}}\geq s=\lambda/(1-\lambda)>0$, where $\lambda$ is the unique positive eigenvalue of the tensor pair $(\A,\B)$. We summarize the above in \Cref{alg:MTNI}.

\begin{algorithm}[htb]
\caption{Modified Tensor Noda Iteration (MTNI)}\label{alg:MTNI}
\begin{algorithmic}[1]
\State Given $\mathbf{b}>0$, solve $\mathcal{(B-A)}\mathbf{x}_0^{m-1}=\mathbf{b}$
\State Given $\mathbf{x}_0=\frac{\mathbf{x}_0}{\|\mathbf{x}_0\|}$,  $\rho_0=\max\frac{\A\mathbf{x}_0^{m-1}}{\B\mathbf{x}_0^{m-1}}$, tol$>0$, $\varepsilon>0$
\For {$k = 1,2,3,\ldots$}
\State Solve $(\rho_{k-1}\B-\A)\mathbf{y}_k^{m-1}=(\B-\A)\mathbf{x}_{k-1}^{m-1}$
\State $\tau_{k-1}=\min\frac{(\B-\A)\mathbf{x}_{k-1}^{m-1}}{(\B-\A)\mathbf{y}_k^{m-1}}$
\State $\rho_k=(1+\varepsilon)(\rho_{k-1}-\frac{(1-\rho_{k-1})\tau_{k-1}}{1-\tau_{k-1}})$
\State $\mathbf{x}_k=\frac{\mathbf{y}_k}{\|\mathbf{y}_k\|}$
\State $\bar{s}_k=\max\frac{\A\mathbf{x}_k^{m-1}}{(\B-\A)\mathbf{x}_k^{m-1}}$
\State $\bar{\rho}_k=\bar{s}_k/(1+\bar{s}_k)$ \State$\underline{s}_k=\min\frac{\A\mathbf{x}_k^{m-1}}{(\B-\A)\mathbf{x}_k^{m-1}}$
\State $\underline{\rho}_k=\underline{s}_k/(1+\underline{s}_k)$
\If{$|\overline{\rho}_k-\underline{\rho}_k|/\overline{\rho}_k<$tol}
\State break
\EndIf
\EndFor
\State Output: $\lambda\leftarrow \bar{\rho}_k$, $\mathbf{x}_{\ast}\leftarrow\mathbf{x}_{k}$
\end{algorithmic}
\end{algorithm}

In \Cref{alg:MTNI}, the initialization part is designed to guarantee $(\mathcal{B-A})\mathbf{x}_0^{m-1}>0$. If we only initialize $\mathbf{x}_0>0$ without step 1, then $(\mathcal{B-A})\mathbf{x}_0^{m-1}$ may not be positive. The counter example can be found in \cite[page 93]{DW}. In step 4 of \Cref{alg:MTNI}, we use the Jacobi iteration to solve the $\mathcal{M}$-tensor equation, which can guarantee that the solution $\mathbf{y}_k$ is the unique positive solution for the $\mathcal{M}$-tensor equation. Note that the solution of $\mathcal{D}\mathbf{x}^{m-1}=\mathbf{b}$ is $x_i=(b_i/d_{i\ldots i})^{1/(m-1)}$ , where $\mathcal{D}$ is a positive diagonal tensor and $\mathbf{b}$ is a positive vector. Alternatively, we can also use the Gauss-Seidel iteration or SOR iteration. The details of these iteration methods for $\mathcal{M}$-tensor equations can be found in \cite[page 108-110]{DW}.

In step 6 of \Cref{alg:MTNI}, we introduce a parameter $\varepsilon>0$ in the $\rho$--update. This parameter avoids $\rho_k$ approaching the eigenvalue $\lambda$. Otherwise, the coefficient tensor $(\rho_k\B-\A)$ will become closer to a singular $\mathcal{M}$-tensor gradually, thus solving the equation in step 4 will become more and more difficult. Inserting the parameter $\varepsilon$ can reduce the iteration steps for solving the equation in step 4.

\subsection{Exact and Inexact Generalized Tensor Noda Iteration (GTNI, IGTNI)}

By \Cref{thm: Collatz-Wielandt} and \Cref{cor: Collatz-Wielandt}, we can develop a generalized Noda iteration for the generalized $\mathcal{M}$-tensor pair $(\A,\B)$ similar to GNI, which is displayed in \Cref{alg:GTNI}.

\begin{algorithm}[htbp]
\caption{Generalized Tensor Noda Iteration (GTNI)}\label{alg:GTNI}
\begin{algorithmic}[1]
\State Given $\mathbf{x}_0>0$, $\|\mathbf{x}_0\|=1$, $\rho_0=1$, tol$>0$, and $\varepsilon>0$
\For {$k = 1,2,3,\ldots$}
\State Compute $(\rho_{k-1}\B-\A)\mathbf{y}_k^{m-1}=\A\mathbf{x}_{k-1}^{m-1}$
\State Compute $\rho_k=(1+\varepsilon_k)\rho_{k-1}\left(1-\min\frac{\A\mathbf{x}_{k-1}^{m-1}}{\A\mathbf{y}_k^{m-1}+\A\mathbf{x}_{k-1}^{m-1}}\right)$
\State Normalize the vector $\mathbf{y}_k$: $\mathbf{x}_k=\mathbf{y}_k/\|\mathbf{y}_k\|$
\State Compute $\overline{\rho}_k=\max\limits_i\frac{\mathcal{A}\mathbf{x}_k^{m-1}}{\mathcal{B}\mathbf{x}_k^{m-1}}$
\State Compute $\underline{\rho}_k=\min\limits_i\frac{\mathcal{A}\mathbf{x}_k^{m-1}}{\mathcal{B}\mathbf{x}_k^{m-1}}$
\If {$|\overline{\rho}_k-\underline{\rho}_k|/\overline{\rho}_k<$tol}
\State break
\EndIf
\EndFor
\State Output: $\lambda\leftarrow\rho_k$ and $\mathbf{x}_{\ast}\leftarrow\mathbf{x}_k.$
\end{algorithmic}
\end{algorithm}

Inspired by \cite{GCV}, we can compute an approximate solution $\mathbf{y}_k$ in step 3 of \Cref{alg:GTNI} such that
\begin{displaymath}
(\rho_{k-1}\B-\A)\mathbf{y}_k^{m-1}=\A\mathbf{x}_{k-1}^{m-1}+\mathbf{f}_k,
\end{displaymath}
where $\mathbf{f}_k$ is the residual vector, which is bounded by the inner tolerance. Hence
\begin{displaymath}
\begin{aligned}
\rho_k&=(1+\varepsilon_k)\max\frac{\A\mathbf{x}_k^{m-1}}{\B\mathbf{x}_k^{m-1}}=(1+\varepsilon_k)\rho_{k-1}\max\frac{\A\mathbf{y}_k^{m-1}}{\B\mathbf{y}_k^{m-1}}\\
&=(1+\varepsilon_k)\rho_{k-1}\left(1-\min\frac{\A\mathbf{x}_{k-1}+\mathbf{f}_k}{\A\mathbf{y}_k^{m-1}+\A\mathbf{x}_{k-1}^{m-1}+\mathbf{f}_k}\right).\\
\end{aligned}
\end{displaymath}
Therefore, we get an inexact generalized Noda iteration for tensor. We summarize these in \Cref{alg:IGTNI}.

\begin{remark}\label{remark:choice of varepsilon IGTNI}
In step 4 of \Cref{alg:GTNI,alg:IGTNI}, the choice of $\varepsilon_k$ is determined by the following halving procedure: First, let $\varepsilon_k=1$ and check whether
\begin{equation}\label{equ:choice of varepsilon IGTNI}
(1+\varepsilon_k)\left(1-\min\frac{\A\mathbf{x}_{k-1}^{m-1}+\mathbf{f}_k}{\A\mathbf{y}_k^{m-1}+\A\mathbf{x}_{k-1}^{m-1}+\mathbf{f}_k}\right)\leq1.
\end{equation}
Otherwise, we update $\varepsilon_k$ by $\varepsilon_k\leftarrow\varepsilon_k/2$ until \eqref{equ:choice of varepsilon IGTNI} holds.
\end{remark}

\begin{algorithm}[htbp]
\caption{Inexact Generalized Tensor Noda Iteration (IGTNI)}\label{alg:IGTNI}
\begin{algorithmic}[1]
\State Given $\mathbf{x}_0>0$, $\|\mathbf{x}_0\|=1$, $\rho_0=1$, tol$>0$, and $\varepsilon>0$
\For {$k = 1,2,3,\ldots$}
\State Compute $(\rho_{k-1}\B-\A)\mathbf{y}_k^{m-1}=\A\mathbf{x}_{k-1}^{m-1}+\mathbf{f}_k$, where $|\mathbf{f}_k|\leq\beta_k\A\mathbf{x}_{k-1}^{m-1},\beta_k\in[0,1),$
\State Compute $\rho_k=(1+\varepsilon_k)\rho_{k-1}\left(1-\min\frac{\A\mathbf{x}_{k-1}^{m-1}+\mathbf{f}_k}{\A\mathbf{y}_k^{m-1}+\A\mathbf{x}_{k-1}^{m-1}+\mathbf{f}_k}\right)$
\State Normalize the vector $\mathbf{y}_k$: $\mathbf{x}_k=\mathbf{y}_k/\|\mathbf{y}_k\|$
\State Compute $\overline{\rho}_k=\max\limits_i\frac{\mathcal{A}\mathbf{x}_k^{m-1}}{\mathcal{B}\mathbf{x}_k^{m-1}}$
\State Compute $\underline{\rho}_k=\min\limits_i\frac{\mathcal{A}\mathbf{x}_k^{m-1}}{\mathcal{B}\mathbf{x}_k^{m-1}}$
\If {$|\overline{\rho}_k-\underline{\rho}_k|/\overline{\rho}_k<$tol}
\State break
\EndIf
\EndFor
\State Output: $\lambda\leftarrow\rho_k$ and $\mathbf{x}_{\ast}\leftarrow\mathbf{x}_k.$
\end{algorithmic}
\end{algorithm}

\subsection{Generalized Newton-Noda iteration (GNNI)}

If we choose Newton method to solve the multi-linear equation in step 4 of \Cref{alg:MTNI}, we can simply set $\varepsilon_k=0$. Inspired by \cite{GLL,LGL,LGL16}, we construct a Newton-Noda iterative method for the generalized $\mathcal{M}$-tensor pair $(\A,\B)$ as follows.

To apply Newton method, we define functions $\mathbf{r}:\mathbb{R}_+^{n+1}\rightarrow\mathbb{R}^n$ and $\mathbf{f}:\mathbb{R}_+^{n+1}\rightarrow\mathbb{R}^{n+1}$ by
\begin{equation}\label{equ:define r and f for Newton method}
\mathbf{r}(\mathbf{x},\rho)=(\rho\B-\A)\mathbf{x}^{m-1},~\mathbf{f}(\mathbf{x},\rho)=\left[\begin{aligned}&-\mathbf{r}(\mathbf{x},\rho)\\&\frac{1}{2}(1-\mathbf{x}^\top\mathbf{x})\end{aligned}\right],
\end{equation}
We consider using Newton's method to solve $\mathbf{f}(\mathbf{x},\rho)=0$. The Jacobian of $\mathbf{f}(\mathbf{x},\rho)$ is given by
\begin{equation}\label{equ:Jacobi of f}
\mathbf{Jf}(\mathbf{x},\rho)=-
\begin{bmatrix}
&\mathbf{J_xr}(\mathbf{x},\rho)&\B\mathbf{x}^{m-1}\\
&\mathbf{x}^\top&0\\
\end{bmatrix},
\end{equation}
where $\mathbf{J_xr}(\mathbf{x},\rho)$ is the matrix of partial derivatives of $\mathbf{r}(\mathbf{x},\rho)$ with respect to $\mathbf{x}$. Direct computation gives
\begin{equation}\label{equ:definition of Jxr}
\mathbf{J_xr}(\mathbf{x},\rho)=(m-1)(\rho\B-\A)\mathbf{x}^{m-2},
\end{equation}

We derive the Newton's method for solving $\mathbf{f}(\mathbf{x},\rho)=0$ in the followings. Given an approximation $(\hat{\mathbf{x}}_k,\hat{\rho}_k)$, Newton's method produces the next approximation $(\hat{\mathbf{x}}_{k+1},\hat{\rho}_{k+1})$ as follows:
\begin{subequations}
\begin{align}
-\begin{bmatrix}
&\mathbf{J_xr}(\hat{\mathbf{x}}_k,\hat{\rho}_k)&\B\hat{\mathbf{x}}_k^{m-1}\\
&\hat{\mathbf{x}}_k^\top&0\\
\end{bmatrix}\left[
\begin{aligned}
&\mathbf{d}_k\\
&\delta_k\\
\end{aligned}\right]&=\left[
\begin{aligned}
&\quad\mathbf{r}(\hat{\mathbf{x}}_k,\hat{\rho}_k)\\
&\frac{1}{2}(\hat{\mathbf{x}}_k^\top\hat{\mathbf{x}}_k-1)\\
\end{aligned}\right],\label{equ:Newton method for solving f 1}\\
\hat{\mathbf{x}}_{k+1}&=\hat{\mathbf{x}}_k+\mathbf{d}_k,\label{equ:Newton method for solving f 2}\\
\hat{\rho}_{k+1}&=\hat{\rho}_k+\delta_k.\label{equ:Newton method for solving f 3}
\end{align}
\end{subequations}
Using elimination in \eqref{equ:Newton method for solving f 1} and assuming $\hat{\mathbf{x}}_k^\top\hat{\mathbf{x}}_k=1$, we get
\begin{equation}\label{equ:Newton method delta k}
\delta_k=\frac{-1}{(m-1)\hat{\mathbf{x}}_k^\top\big(\mathbf{J_xr}(\hat{\mathbf{x}}_k,\hat{\rho}_k)\big)^{-1}\B\hat{\mathbf{x}}_k^{m-1}}=\frac{-1}{(m-1)\hat{\mathbf{x}}_k^\top\hat{\mathbf{w}}_k},
\end{equation}
where we denote
\begin{equation}\label{equ:Newton method w k}
\hat{\mathbf{w}}_k=(\mathbf{J_xr}\big(\hat{\mathbf{x}}_k,\hat{\rho}_k)\big)^{-1}\B\hat{\mathbf{x}}_k^{m-1}.
\end{equation}
Then a back substitution, together with \eqref{equ:definition of Jxr}, gives
\begin{equation}\label{equ:Newton method d k}
\mathbf{d}_k=\frac{-1}{m-1}\left(\hat{\mathbf{x}}_k-\frac{\hat{\mathbf{w}}_k}{\hat{\mathbf{x}}_k^\top\hat{\mathbf{w}}_k}\right).
\end{equation}
Let $\hat{\mathbf{y}}_k=\hat{\mathbf{w}}_k/\|\hat{\mathbf{w}}_k\|$. From \eqref{equ:Newton method delta k} and \eqref{equ:Newton method d k}, we have
\begin{subequations}
\begin{align}
\hat{\mathbf{x}}_{k+1}&=\hat{\mathbf{x}}_k+\mathbf{d}_k=\frac{1}{m-1}\left((m-2)\hat{\mathbf{x}}_k+\frac{1}{\hat{\mathbf{x}}_k^\top\hat{\mathbf{y}}_k}\hat{\mathbf{y}}_k\right),\label{equ:Newton method update x k+1}\\
\hat{\rho}_{k+1}&=\hat{\rho}_k+\delta_k=\hat{\rho}_k-\frac{1}{(m-1)\hat{\mathbf{x}}_k^\top\hat{\mathbf{y}}_k\|\hat{\mathbf{w}}_k\|}.\label{equ:Newton method update rho k+1}
\end{align}
\end{subequations}

Based on these discussions, we propose the generalized Newton-Noda iteration (GNNI) as \Cref{alg:GNNI}.

\begin{remark}\label{remark:choice of theta k 1}
In step 7 of \Cref{alg:GNNI}, we need to choose $\theta_k$ properly such that $(\B-\A)\mathbf{x}_{k+1}^{m-1}>0$ to make sure that we can use \Cref{thm: Collatz-Wielandt} and \Cref{cor: Collatz-Wielandt} on $\overline{\rho}_{k+1},\underline{\rho}_{k+1}$ and $\mathbf{w}_{k+1}>0$. We can achieve this by the following operation.

First, let $\theta_k=1$ and check whether $(\B-\A)\tilde{\mathbf{x}}_{k+1}^{m-1}>0$. If not, we update $\theta_k$ by $\theta_k\leftarrow\theta_k/2$ until $(\B-\A)\tilde{\mathbf{x}}_{k+1}^{m-1}>0$ holds. This can always achieve since $(\B-\A)\mathbf{x}_k^{m-1}>0$ and the multi-linear operator $(\B-\A)$ is continuous.

In order to guarantee that sequence $\{\overline{\rho}_k\}$ is monotonically decreasing, we need additional restrictions on the choice of $\theta_k$. We will discuss this in detail in Section 4.
\end{remark}

Based on \Cref{remark:choice of theta k 1}, we can show that $(\rho_k\B-\A)\mathbf{x}_k^{m-2}$ is a nonsingular M-matrix.

\begin{theorem}\label{thm:(mu B-A)x is also M-matrix}
Let $(\A,\B)$ be a generalized $\mathcal{M}$-tensor pair. Let $\{\mathbf{x}_k\}$ and $\{\rho_k\}$ be generated by \Cref{alg:GNNI}. Then $(\rho_k\B-\A)\mathbf{x}_k^{m-2}$ is a nonsingular M-matrix and $\mathbf{x}_k>0$ for all $k$.
\end{theorem}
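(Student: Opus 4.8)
The plan is to prove both conclusions simultaneously by induction on $k$, since they are intertwined: the positivity $\mathbf{x}_k>0$ is needed to make sense of the $\mathcal{M}$-matrix claim, and conversely the $\mathcal{M}$-matrix property (via its inverse being nonnegative) is what propagates positivity to $\mathbf{x}_{k+1}$ through the Newton update \eqref{equ:Newton method update x k+1}. For the base case, $\mathbf{x}_0>0$ is given in \Cref{alg:GNNI}, and one must check that the initial $\rho_0$ satisfies $\rho_0\in(\lambda,1]$; this should follow from the initialization (presumably $\rho_0=1$ together with $\overline\rho$-type Collatz--Wielandt bounds from \Cref{cor: Collatz-Wielandt}, giving $\rho_0>\lambda$ unless we are already at the solution).

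First I would establish, for the inductive step, that $\rho_k>\lambda$ and $\mathbf{x}_k>0$ imply $(\rho_k\B-\A)\mathbf{x}_k^{m-2}$ is a nonsingular $\mathcal{M}$-matrix. The matrix is a $\mathcal{Z}$-matrix: its off-diagonal $(i,j)$ entry is $(m-1)(\rho_k\B-\A)\mathbf{x}_k^{m-2}$ evaluated appropriately, and by condition $(\mathrm{C4'})$ together with $0<\lambda<\rho_k\le 1$ the relevant coefficients $\rho_k b_{ii_2\ldots i_m}-a_{ii_2\ldots i_m}\le a_{ii_2\ldots i_m}(\rho_k-1)\le 0$ for $(i_2,\ldots,i_m)\ne(i,\ldots,i)$, so contracting against the nonnegative vector $\mathbf{x}_k^{m-2}$ keeps those entries nonpositive. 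For nonsingularity I would exhibit a positive vector sent to a positive vector: by \Cref{thm:mu B-A is also M-tensor}, $\rho_k\B-\A$ is a nonsingular $\mathcal{M}$-tensor, so by \Cref{prop:nonsingular M-tensor}(2) there is $\mathbf{v}>0$ with $(\rho_k\B-\A)\mathbf{v}^{m-1}>0$; but this gives positivity of the tensor contracted with $\mathbf{v}$, not with $\mathbf{x}_k$. The cleaner route is to contract $(\rho_k\B-\A)$ with $\mathbf{x}_k^{m-2}$ and then apply it to $\mathbf{x}_k$ itself: $\big[(\rho_k\B-\A)\mathbf{x}_k^{m-2}\big]\mathbf{x}_k=(\rho_k\B-\A)\mathbf{x}_k^{m-1}=\rho_k\B\mathbf{x}_k^{m-1}-\A\mathbf{x}_k^{m-1}$, and one checks this is positive because $\rho_k\ge\overline\rho_k=\max_i (\A\mathbf{x}_k^{m-1})_i/(\B\mathbf{x}_k^{m-1})_i$ (the adaptive update forces $\rho_k\ge\overline\rho_k$, or this is maintained by the halving step in \Cref{alg:GNNI}) while $\B\mathbf{x}_k^{m-1}>0$ follows from $\A\mathbf{x}_k^{m-1}>0$ (weak irreducibility, $(\mathrm{C2'})$) and $(\B-\A)\mathbf{x}_k^{m-1}>0$ (from \Cref{remark:choice of theta k 1}). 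A $\mathcal{Z}$-matrix with a positive vector mapped to a positive vector is a nonsingular $\mathcal{M}$-matrix (the matrix analogue of \Cref{prop:nonsingular M-tensor}), so this settles the $\mathcal{M}$-matrix claim at step $k$.

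Next I would propagate positivity to $\mathbf{x}_{k+1}$. Since $(\rho_k\B-\A)\mathbf{x}_k^{m-2}=\tfrac{1}{m-1}\mathbf{J_xr}(\mathbf{x}_k,\rho_k)$ is a nonsingular $\mathcal{M}$-matrix, its inverse is entrywise nonnegative (in fact, combined with irreducibility which one gets as in the proof of \Cref{thm:uniqueness of the eigenvalue}, it is positive off the structure forced by the graph), and $\B\mathbf{x}_k^{m-1}>0$; hence $\hat{\mathbf{w}}_k=(\mathbf{J_xr})^{-1}\B\mathbf{x}_k^{m-1}>0$ and therefore $\hat{\mathbf{y}}_k=\hat{\mathbf{w}}_k/\|\hat{\mathbf{w}}_k\|>0$ and $\hat{\mathbf{x}}_k^\top\hat{\mathbf{y}}_k>0$. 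Then from \eqref{equ:Newton method update x k+1}, the Newton point $\tilde{\mathbf{x}}_{k+1}=\tfrac{1}{m-1}\big((m-2)\mathbf{x}_k+\tfrac{1}{\mathbf{x}_k^\top\hat{\mathbf{y}}_k}\hat{\mathbf{y}}_k\big)$ is a positive combination of two positive vectors, hence positive; any convex-type correction by $\theta_k\in(0,1]$ in step 7 of \Cref{alg:GNNI} keeps it positive (the update is of the form $\mathbf{x}_{k+1}\propto (1-\theta_k)\mathbf{x}_k+\theta_k(\cdots)$ or similar, with both pieces positive), and normalization preserves positivity. Finally, to close the induction I need $\rho_{k+1}>\lambda$ as well: this is where \Cref{cor: Collatz-Wielandt} enters — since $(\B-\A)\mathbf{x}_{k+1}^{m-1}>0$ by the choice of $\theta_k$ in \Cref{remark:choice of theta k 1}, the corollary gives $\overline\rho_{k+1}\ge\lambda$, and $\rho_{k+1}$ is taken to be $\overline\rho_{k+1}$ (or is pinned above it), with equality only in the degenerate case where $\mathbf{x}_{k+1}$ is already the eigenvector, which we may exclude or handle separately.

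The main obstacle I anticipate is the interface between the abstract Newton step and the positivity bookkeeping: specifically, showing that the update used in \Cref{alg:GNNI} step 7 (which I have not seen in full, but which involves the damping parameter $\theta_k$ and is designed via \Cref{remark:choice of theta k 1}) genuinely preserves $\mathbf{x}_{k+1}>0$ and $\rho_{k+1}>\lambda$ — I would need the precise form of that step to confirm that $\rho_{k+1}\ge\overline\rho_{k+1}$ is maintained (which is what keeps $(\rho_{k+1}\B-\A)\mathbf{x}_{k+1}^{m-1}\ge0$, driving the next $\mathcal{M}$-matrix verification). A secondary subtlety is proving the inverse of $\mathbf{J_xr}(\mathbf{x}_k,\rho_k)$ is nonnegative \emph{and} that $\hat{\mathbf{w}}_k$ is strictly (not just weakly) positive; strictness needs irreducibility of the $\mathcal{M}$-matrix, which I would obtain by the graph-connectivity argument already used in \Cref{thm:uniqueness of the eigenvalue} (the di-graph $G(F)$ is a spanning subgraph of $G(D)$, which is strongly connected by $(\mathrm{C2'})$), transplanted from $\mathbf{x}_\ast$ to the iterate $\mathbf{x}_k$.
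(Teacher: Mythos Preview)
Your inductive skeleton matches the paper's, but there is a genuine gap at the heart of the $\mathcal{M}$-matrix verification. In \Cref{alg:GNNI} one takes $\rho_k=\overline{\rho}_k=\max_i(\A\mathbf{x}_k^{m-1})_i/(\B\mathbf{x}_k^{m-1})_i$ exactly (step~11), so
\[
\big[(\rho_k\B-\A)\mathbf{x}_k^{m-2}\big]\mathbf{x}_k=(\rho_k\B-\A)\mathbf{x}_k^{m-1}\ge 0
\]
with \emph{equality in at least one coordinate} (the one attaining the maximum). Your sentence ``one checks this is positive because $\rho_k\ge\overline\rho_k$'' is therefore not correct: you only get a nonnegative image, which does not by itself certify nonsingularity of a $\mathcal{Z}$-matrix. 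The same issue undermines your appeal to \Cref{thm:mu B-A is also M-tensor}, which needs $\rho_k>\lambda$ strictly; again, equality $\rho_k=\overline\rho_k$ gives only $\rho_k\ge\lambda$.

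The paper closes this gap by a perturbation: with $M_k=(\rho_k\B-\A)\mathbf{x}_k^{m-2}$ and $J=\{i:(M_k\mathbf{x}_k)_i=0\}$, one chooses a small $\varepsilon>0$ so that $M_k(\mathbf{x}_k+\varepsilon\mathbf{e}_J)>0$, and then invokes the matrix analogue of \Cref{prop:nonsingular M-tensor}. Once $M_k$ is known to be a nonsingular $\mathcal{M}$-matrix, your worry about strict positivity of $\mathbf{w}_k$ dissolves without any irreducibility argument: if $M_k\mathbf{w}_k=\B\mathbf{x}_k^{m-1}>0$ and some $(\mathbf{w}_k)_i=0$, the $i$-th row of $M_k\mathbf{w}_k$ would be $\le 0$ (off-diagonals nonpositive), a contradiction. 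So the only missing ingredient in your plan is the perturbation step handling the zero coordinates of $(\rho_k\B-\A)\mathbf{x}_k^{m-1}$.
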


\begin{proof}
We prove this by induction on $k$. Denote $M_k=(\rho_k\B-\A)\mathbf{x}_k^{m-2}$. First, we consider $k=0$. It's easy to see that $M_0$ is a Z-matrix by condition $(\mathrm{C}4')$ and $M_0\mathbf{x}_0=(\rho_0\B-\A)\mathbf{x}_0^{m-1}\geq0$. Define $J\subseteq[n]$ as $J=\{i|(M_0\mathbf{x}_0)_i=0\}$ and $\mathbf{e}_{J}\in\mathbb{R}^n$ as the vector such that the $i$-th element of $\mathbf{e}_J$ equals to 1 for all $i\in J$ and the rest elements are 0. Then we can find a proper $\varepsilon>0$ such that, $M_0(\mathbf{x}_0+\varepsilon\mathbf{e}_J)>0$. Thus $M_0$ is a nonsingular M-matrix. Therefore, we can get a unique $\mathbf{w}_0>0$ and hence $\tilde{\mathbf{x}}_1=(m-2)\mathbf{x}_0+\theta_k\mathbf{w}_k/\|\mathbf{w}_k\|>0.$

Suppose that the conclusion is true for $k-1$, that is, $\rho_{k-1}<1$, $\mathbf{x}_{k-1}>0$, and $(\rho_{k-1}\B-\A)\mathbf{x}_{k-1}^{m-2}$ is a nonsingular M-matrix. By analogous analysis as above, we can see that $\mathbf{x}_k>0$. According to \Cref{remark:choice of theta k 1}, we have $(\B-\A)\mathbf{x}_k^{m-1}>0$ and hence $\rho_k=\max\frac{\A\mathbf{x}_k^{m-1}}{\B\mathbf{x}_k^{m-1}}<1$. Then we can prove that $(\rho_k\B-\A)\mathbf{x}_k^{m-2}$ is a nonsingular M-matrix and $\mathbf{x}_{k+1}>0$ by an analogous analysis as above. By the induction hypothesis, the conclusion is true for all $k$.
\end{proof}

\begin{algorithm}[htbp]
\caption{Generalized Newton-Noda Iteration (GNNI)}\label{alg:GNNI}
\begin{algorithmic}[1]
\State Given $\mathbf{b}>0$, solve $\mathcal{(B-A)}\mathbf{x}_0^{m-1}=\mathbf{b}$
\State Given $\mathbf{x}_0=\frac{\mathbf{x}_0}{\|\mathbf{x}_0\|}$,  $\rho_0=\max\limits_i\frac{\A\mathbf{x}_0^{m-1}}{\B\mathbf{x}_0^{m-1}}$, and tol$>0$
\For {$k = 0,1,2,\ldots$}
\State Compute $\mathbf{J}_\mathbf{x}\mathbf{r}(\mathbf{x}_k,\rho_k)=(m-1)(\rho_k\mathcal{B}-\mathcal{A})\mathbf{x}_k^{m-2}$
\State Solve the linear system $\mathbf{J}_\mathbf{x}\mathbf{r}(\mathbf{x}_k,\rho_k)\mathbf{w}_k=\mathcal{B}\mathbf{x}_k^{m-1}$
\State Normalize the vector $\mathbf{w}_k$: $\mathbf{y}_k=\mathbf{w}_k/\|\mathbf{w}_k\|$
\State Compute the vector $\tilde{\mathbf{x}}_{k+1}=(m-2)\mathbf{x}_k+\theta_k\mathbf{y}_k$
\State Normalize the vector $\tilde{\mathbf{x}}_{k+1}$: $\mathbf{x}_{k+1}=\tilde{\mathbf{x}}_{k+1}/\|\tilde{\mathbf{x}}_{k+1}\|$
\State Compute $\overline{\rho}_{k+1}=\max\limits_i\frac{\mathcal{A}\mathbf{x}_{k+1}^{m-1}}{\mathcal{B}\mathbf{x}_{k+1}^{m-1}}$
\State Compute $\underline{\rho}_{k+1}=\min\limits_i\frac{\mathcal{A}\mathbf{x}_{k+1}^{m-1}}{\mathcal{B}\mathbf{x}_{k+1}^{m-1}}$
\State Let $\rho_{k+1}=\overline{\rho}_{k+1}$
\If {$|\overline{\rho}_{k+1}-\underline{\rho}_{k+1}|/\overline{\rho}_{k+1}<$tol}
\State break
\EndIf
\EndFor
\State Output: $\lambda\leftarrow\rho_{k+1}$ and $\mathbf{x}_{\ast}\leftarrow\mathbf{x}_{k+1}.$
\end{algorithmic}
\end{algorithm}

\subsection{Complexity analysis}
We analyze the cost per iteration of our algorithms in this part. Assuming tensors $\A$ and $\B$ are dense, it is easy to see that the main cost of each iteration for \Cref{alg:MTNI,alg:GTNI,alg:IGTNI} is solving the $\mathcal{M}$-tensor equations. As we mentioned in Section 3.1, we use Jacobi iteration method to solve these $\mathcal{M}$-tensor equations. Denote $\mathcal{M} = \rho_{k-1}\B-\A$ and $\mathcal{D}$ is a diagonal tensor whose diagonal element $d_{i\ldots i}$ equals to $\rho_{k-1}b_{i\ldots i}-a_{i\ldots i}$. For each inner iteration, we need 3 steps of calculations: First, we calculate $(\mathcal{D-M})\mathbf{y}_k^{m-1}$. Then we compute $\mathbf{b}_k=(\mathcal{D-M})\mathbf{y}_k^{m-1}+(\B-\A)\mathbf{x}_{k-1}^{m-1}$ (or $(\mathcal{D-M})\mathbf{y}_k^{m-1}+\A\mathbf{x}_{k-1}^{m-1}$, or $(\mathcal{D-M})\mathbf{y}_k^{m-1}+(\B-\A)\mathbf{x}_{k-1}^{m-1}+\mathbf{f}_k$). Finally, we do the operation $((\mathbf{b}_k)_i/d_{i\ldots i})^{1/(m-1)}$ for $i=1,\ldots,n$. Thus, the cost for one inner iteration is $(m-1)n^m+n+2n$ operations. Therefore, the cost for \Cref{alg:MTNI,alg:GTNI,alg:IGTNI} is $O(n^m)$. For \Cref{alg:GNNI}, the main cost for every iteration consists of four products: $\A\mathbf{x}_{k+1}^{m-2}$, $\B\mathbf{x}_{k+1}^{m-2}$, $\A\mathbf{x}_{k+1}^{m-1}$, and $\B\mathbf{x}_{k+1}^{m-1}$. We can compute $\A\mathbf{x}_{k+1}^{m-1}$ by $\A\mathbf{x}_{k+1}^{m-1}=(\A\mathbf{x}_{k+1}^{m-2})\mathbf{x}$ for cost of $2n^2-n$ operations. Therefore, the main cost of each iteration for \Cref{alg:GNNI} is $2(m-1)n^m+4n^2-2n=O(n^m)$. If $\A,\B$ are symmetric, then the cost per iteration of \Cref{alg:MTNI,alg:GTNI,alg:IGTNI,alg:GNNI} reduce to $O(n^m/m!)$ \cite{Schatz}.

\section{Convergence analysis}

In this section, we will prove the convergence of \Cref{alg:MTNI,alg:IGTNI,alg:GNNI} and their convergence rates.

\subsection{The convergence of MTNI}

We analyze \Cref{alg:MTNI} in this part. First, we prove some properties of the sequence $\{(\mathbf{x}_k,\mathbf{y}_k,\rho_k)\}$ in the following lemma.

\begin{lemma}\label{lemma:rho_k bounded below MTNI}
Let $(\mathcal{A},\mathcal{B})$ be a generalized $\mathcal{M}$-tensor pair. Let the sequences $\{(\mathbf{x}_k,\mathbf{y}_k,\rho_k)\}$ be generated by \Cref{alg:MTNI}. Then $\mathbf{x}_k>0$, $\mathcal{A}\mathbf{y}_k^{m-1}>0$, and $(\mathcal{B}-\mathcal{A})\mathbf{y}_k^{m-1}>0$ for all $k\geq0$, and the sequence $\{\rho_k\}$ is bounded below by $(1+\varepsilon)\lambda$, i.e., $\rho_k\geq(1+\varepsilon)\lambda$, for all $k$.
\end{lemma}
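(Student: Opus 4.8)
The plan is to prove the three positivity claims and the lower bound simultaneously by induction on $k$, since they are intertwined: positivity of $\mathbf{x}_{k-1}$ feeds the solvability of the $\mathcal{M}$-tensor equation in step 4, which gives $\mathbf{y}_k>0$, which via the $\rho$-update keeps $\rho_k$ above $(1+\varepsilon)\lambda$, which in turn guarantees $\rho_k\mathcal{B}-\mathcal{A}$ is a nonsingular $\mathcal{M}$-tensor so the next iterate can be formed.

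For the base case $k=0$: step 1 of \Cref{alg:MTNI} solves $(\mathcal{B}-\mathcal{A})\mathbf{x}_0^{m-1}=\mathbf{b}>0$, so by \Cref{lemma:M-equations} (recall $\mathcal{B}-\mathcal{A}$ is a nonsingular $\mathcal{M}$-tensor) we get $\mathbf{x}_0>0$, and after normalization $\mathbf{x}_0>0$ still. Then $\mathcal{A}\mathbf{x}_0^{m-1}>0$ follows from weak irreducibility of the nonnegative tensor $\mathcal{A}$ (condition $(\mathrm{C2'})$): a weakly irreducible nonnegative tensor applied to a positive vector yields a positive vector. Hence $\rho_0=\max_i\frac{(\mathcal{A}\mathbf{x}_0^{m-1})_i}{(\mathcal{B}\mathbf{x}_0^{m-1})_i}$ is well-defined and, by \Cref{cor: Collatz-Wielandt} (applicable since $(\mathcal{B}-\mathcal{A})\mathbf{x}_0^{m-1}=\mathbf{b}>0$), satisfies $\rho_0\geq\lambda$; then the factor $(1+\varepsilon)$ could be used to note $\rho_0$ itself need not yet exceed $(1+\varepsilon)\lambda$, so I should phrase the invariant carefully — really the bound $\rho_k\geq(1+\varepsilon)\lambda$ is what step 6's $(1+\varepsilon)$ multiplier enforces, so the cleanest statement of the induction is: for $k\geq1$, $\rho_k\geq(1+\varepsilon)\lambda$, while $\rho_0\geq\lambda$ suffices to start.

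For the inductive step, assume $\mathbf{x}_{k-1}>0$, $(\mathcal{B}-\mathcal{A})\mathbf{x}_{k-1}^{m-1}>0$, and $\rho_{k-1}\geq\lambda$ (with $\rho_{k-1}<1$, which holds since $\rho_{k-1}=\bar s_{k-1}/(1+\bar s_{k-1})$-type expressions lie in $[0,1)$, or from the $\mathcal{M}$-tensor property below). Since $\rho_{k-1}\in(\lambda,1]$ — here I need $\rho_{k-1}>\lambda$ strictly, which is exactly what the $(1+\varepsilon)$ factor from the previous iteration provides — \Cref{thm:mu B-A is also M-tensor} gives that $\rho_{k-1}\mathcal{B}-\mathcal{A}$ is a nonsingular $\mathcal{M}$-tensor. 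Because $\mathbf{x}_{k-1}>0$ and $\mathcal{A}$ is weakly irreducible nonnegative, $\mathcal{A}\mathbf{x}_{k-1}^{m-1}>0$, and then $(\mathcal{B}-\mathcal{A})\mathbf{x}_{k-1}^{m-1}>0$ by the inductive hypothesis, so the right-hand side of step 4, $(\mathcal{B}-\mathcal{A})\mathbf{x}_{k-1}^{m-1}$, is a positive vector; \Cref{lemma:M-equations} then yields a unique $\mathbf{y}_k>0$. Positivity of $\mathbf{y}_k$ gives $\mathcal{A}\mathbf{y}_k^{m-1}>0$ again by weak irreducibility, and $(\mathcal{B}-\mathcal{A})\mathbf{y}_k^{m-1}>0$ can be seen by writing it as $\big[(\mathcal{B}-\tfrac1{\rho_{k-1}}\mathcal{A})+\tfrac{1-\rho_{k-1}}{\rho_{k-1}}\mathcal{A}\big]\mathbf{y}_k^{m-1}$ (as in the text preceding the algorithm): the first term is $\tfrac1{\rho_{k-1}}(\rho_{k-1}\mathcal{B}-\mathcal{A})\mathbf{y}_k^{m-1}=\tfrac1{\rho_{k-1}}(\mathcal{B}-\mathcal{A})\mathbf{x}_{k-1}^{m-1}>0$ and the second is $\geq0$. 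After normalization $\mathbf{x}_k>0$, and $(\mathcal{B}-\mathcal{A})\mathbf{x}_k^{m-1}>0$ by homogeneity. Finally, for the lower bound: by \Cref{cor: Collatz-Wielandt} applied to $\mathbf{x}_k$ (legitimate since $(\mathcal{B}-\mathcal{A})\mathbf{x}_k^{m-1}>0$), the quantity $\rho_{k-1}-\frac{(1-\rho_{k-1})\tau_{k-1}}{1-\tau_{k-1}}$, which as computed in the text equals $\bar\rho_k=\bar s_k/(1+\bar s_k)$ with $\bar s_k=\max_i\frac{(\mathcal{A}\mathbf{x}_k^{m-1})_i}{((\mathcal{B}-\mathcal{A})\mathbf{x}_k^{m-1})_i}\geq s$, satisfies $\bar\rho_k\geq\lambda$; multiplying by $(1+\varepsilon)$ in step 6 gives $\rho_k\geq(1+\varepsilon)\lambda$, completing the induction.

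The main obstacle I anticipate is the bookkeeping around whether $\rho_{k-1}$ is \emph{strictly} greater than $\lambda$ — this is needed to invoke \Cref{thm:mu B-A is also M-tensor} and hence to guarantee solvability of step 4 — and this is precisely where the $(1+\varepsilon)$ safeguard must be used: at step $k-1$ we only get $\bar\rho_{k-1}\geq\lambda$ from Collatz–Wielandt, but the multiplier pushes $\rho_{k-1}=(1+\varepsilon)\bar\rho_{k-1}\geq(1+\varepsilon)\lambda>\lambda$. I also need to check $\rho_{k-1}\leq1$ so that $\mu=\rho_{k-1}$ falls in the interval $(\lambda,1]$ of \Cref{thm:mu B-A is also M-tensor}; this should follow because $\bar\rho_{k-1}<1$ strictly (as $\bar s_{k-1}<\infty$) and one must ensure $(1+\varepsilon)\bar\rho_{k-1}\le 1$, which is an implicit standing assumption on how $\varepsilon$ is chosen relative to the iterates (or is enforced by an additional safeguard as in the IGTNI halving procedure); I would state this requirement explicitly. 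Everything else is a routine chaining of \Cref{lemma:M-equations}, weak irreducibility, and \Cref{cor: Collatz-Wielandt}.
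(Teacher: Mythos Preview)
Your proposal is correct and follows essentially the same inductive argument as the paper: use \Cref{thm:mu B-A is also M-tensor} and \Cref{lemma:M-equations} to get $\mathbf{y}_k>0$, rewrite $(\mathcal{B}-\mathcal{A})\mathbf{y}_k^{m-1}$ to show it is positive, and then apply the Collatz--Wielandt bound (\Cref{thm: Collatz-Wielandt}) to get $\bar s_k\geq s$, hence $\rho_k\geq(1+\varepsilon)\lambda$. Your bookkeeping is in fact slightly more careful than the paper's: you correctly flag that one needs $\rho_{k-1}\in(\lambda,1]$ to invoke \Cref{thm:mu B-A is also M-tensor}, whereas the paper's proof tacitly assumes $\rho_{k-1}\leq 1$ without comment (and, like you, glosses over the $k=0$ case of the lower bound, where only $\rho_0\geq\lambda$ is available).
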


\begin{proof}
Since $(\B-\A)\mathbf{x}_0^{m-1}>0$, we have $(\rho_0\B-\A)\mathbf{y}_1^{m-1}=(\B-\A)\mathbf{x}_0^{m-1}>0$. By \Cref{thm:mu B-A is also M-tensor}, $(\rho_0\mathcal{B}-\mathcal{A})$ is a nonsingular $\mathcal{M}$-tensor. Hence $\mathbf{y}_1>0$ by \Cref{lemma:M-equations}. Then from \eqref{equ:iteration format 2}, we have
\begin{displaymath}
s_0(\B-\A)\mathbf{y}_1^{m-1}=\A\mathbf{y}_1^{m-1}+(\B-\A)\mathbf{x}_0^{m-1}>0.
\end{displaymath}
Since $s_0=\rho_0/(1-\rho_0)>0$, we have $(\B-\A)\mathbf{y}_1^{m-1}>0$. Thus $\tau_0=\min\frac{(\B-\A)\mathbf{x}_0^{m-1}}{(\B-\A)\mathbf{y}_1^{m-1}}>0$ and $s_1=s_0-\tau_0<s_0$. In addition,
\begin{displaymath}
s_1=s_0-\tau_0=\max\frac{\A\mathbf{y}_1^{m-1}}{(\B-\A)\mathbf{y}_1^{m-1}}\geq s=\lambda/(1-\lambda)>0.
\end{displaymath}
Since the function $f(t)=t/(1+t)$ is monotonically increasing, we get that
\begin{displaymath}
\rho_1=(1+\varepsilon)\frac{s_0-\tau_0}{1+(s_0-\tau_0)}\geq(1+\varepsilon)\frac{s}{1+s}=(1+\varepsilon)\lambda.
\end{displaymath}
Now suppose $\mathbf{y}_k>0$ and $(\B-\A)\mathbf{y}_k^{m-1}>0$. Then $\mathbf{x}_k=\frac{\mathbf{y}_k}{\|\mathbf{y}_k\|}>0$ and
\begin{displaymath}
(\rho_k\B-\A)\mathbf{y}_{k+1}^{m-1}=(\B-\A)\mathbf{x}_k^{m-1}>0.
\end{displaymath}
Similarly, we have $(\B-\A)\mathbf{y}_{k+1}^{m-1}>0$. Thus $\tau_k=\min\frac{(\B-\A)\mathbf{x}_k^{m-1}}{(\B-\A)\mathbf{y}_{k+1}^{m-1}}>0$ and $s_{k+1}=s_k-\tau_k\geq s$. That is,
\begin{displaymath}
\tau_{k+1}=(1+\varepsilon)\frac{s_{k+1}}{1+s_{k+1}}\geq(1+\varepsilon)\frac{s}{1+s}=(1+\varepsilon)\lambda.
\end{displaymath}
By the induction, the statement holds for any $k$.
\end{proof}

We prove that the sequence $\{\overline{s}_k\}$ is monotonically decreasing in the following lemma.

\begin{lemma}\label{lemma:convergence of s_max MTNI}
Let $(\mathcal{A},\mathcal{B})$ be a generalized $\mathcal{M}$-tensor pair. Let the sequences $\{(\mathbf{x}_k,\mathbf{y}_k,\bar{s}_k)\}$ be generated by \Cref{alg:MTNI}. Then the sequence $\{\bar{s}_k\}$ is monotonically decreasing and bounded below by $s$, i.e., $\bar{s}_1\geq\cdots\geq\bar{s}_k\geq\bar{s}_{k+1}\geq\cdots\geq s$.
\end{lemma}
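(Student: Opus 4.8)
The plan is to treat the two assertions separately: the lower bound is routine, and the monotonicity is the real content. First I would observe that \Cref{lemma:rho_k bounded below MTNI} already supplies all the positivity needed: $\mathbf x_k>0$, $\A\mathbf y_k^{m-1}>0$ and $(\B-\A)\mathbf y_k^{m-1}>0$ for every $k$, and since $\mathbf x_k$ is a positive scalar multiple of $\mathbf y_k$ we also get $(\B-\A)\mathbf x_k^{m-1}>0$ and $\B\mathbf y_k^{m-1}=\A\mathbf y_k^{m-1}+(\B-\A)\mathbf y_k^{m-1}>0$. Hence $\mathbf x_k$ is admissible in the Collatz--Wielandt estimate \Cref{thm: Collatz-Wielandt}, which gives at once
\[
\bar s_k=\max_i\frac{(\A\mathbf x_k^{m-1})_i}{\bigl((\B-\A)\mathbf x_k^{m-1}\bigr)_i}\ \ge\ s ,
\]
so the lower bound holds for all $k$, and only $\bar s_{k+1}\le\bar s_k$ remains.

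For the monotone part I would argue through an auxiliary $\mathcal M$-tensor. Put $\bar\rho_k=\bar s_k/(1+\bar s_k)$ and $\mathcal T_k=\bar s_k(\B-\A)-\A=(1+\bar s_k)(\bar\rho_k\B-\A)$. Since $(\B-\A)\mathbf x_k^{m-1}>0$ forces $\bar\rho_k<1$, and $\bar\rho_k\ge\lambda$ by \Cref{thm: Collatz-Wielandt} (the case $\bar\rho_k=\lambda$ being trivial, as then $\mathbf x_k$ is already the Perron vector), \Cref{thm:mu B-A is also M-tensor} shows $\mathcal T_k$ is a nonsingular $\mathcal M$-tensor, and by the definition of $\bar s_k$ we have $\mathcal T_k\mathbf x_k^{m-1}\ge0$. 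By homogeneity of the quotient, $\bar s_{k+1}=\max_i(\A\mathbf y_{k+1}^{m-1})_i/\bigl((\B-\A)\mathbf y_{k+1}^{m-1}\bigr)_i$, so it suffices to show $\mathcal T_k\mathbf y_{k+1}^{m-1}\ge0$, because taking the coordinatewise maximum then yields $\bar s_{k+1}\le\bar s_k$. To get this I would feed the defining relation of step~4, $(\rho_k\B-\A)\mathbf y_{k+1}^{m-1}=(\B-\A)\mathbf x_k^{m-1}$, into
\[
\mathcal T_k\mathbf y_{k+1}^{m-1}=\bar s_k\bigl[(1-\rho_k)\B\mathbf y_{k+1}^{m-1}+(\B-\A)\mathbf x_k^{m-1}\bigr]-\bigl[\rho_k\B\mathbf y_{k+1}^{m-1}-(\B-\A)\mathbf x_k^{m-1}\bigr],
\]
and use the identity (read off from the derivation preceding \Cref{alg:MTNI}, and re-checkable by substituting $\mathbf y_k$ into the formula for $\tau_{k-1}$ in step~5) that the update in step~6 is exactly the Rayleigh bound inflated by $1+\varepsilon$, namely $\rho_k=(1+\varepsilon)\bar\rho_k=(1+\varepsilon)\bar s_k/(1+\bar s_k)$. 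This collapses the coefficient of $\B\mathbf y_{k+1}^{m-1}$ to $-\varepsilon\bar s_k$, giving
\[
\mathcal T_k\mathbf y_{k+1}^{m-1}=(1+\bar s_k)(\B-\A)\mathbf x_k^{m-1}-\varepsilon\,\bar s_k\,\B\mathbf y_{k+1}^{m-1}.
\]

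The hard part will be showing that this quantity is nonnegative. When $\varepsilon=0$ it is immediate, since the right-hand side is then $(1+\bar s_k)(\B-\A)\mathbf x_k^{m-1}>0$ (equivalently, $\rho_k\B-\A=\bar\rho_k\B-\A$ and $\mathcal T_k\mathbf y_{k+1}^{m-1}=(1+\bar s_k)(\B-\A)\mathbf x_k^{m-1}$). For $\varepsilon>0$, substituting $(\B-\A)\mathbf x_k^{m-1}=\rho_k\B\mathbf y_{k+1}^{m-1}-\A\mathbf y_{k+1}^{m-1}$ reveals that $\mathcal T_k\mathbf y_{k+1}^{m-1}\ge0$ is equivalent to the ``no overshoot'' bound $\A\mathbf y_{k+1}^{m-1}\le\bar\rho_k\B\mathbf y_{k+1}^{m-1}$, i.e.\ that the shift-$\rho_k$ solve does not push the next Rayleigh quotient above $\bar\rho_k=\rho_k/(1+\varepsilon)$. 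I would try to establish this from the comparison properties of $\mathcal M$-tensor equations: both $\rho_k\B-\A$ and $\bar\rho_k\B-\A$ are nonsingular $\mathcal M$-tensors by \Cref{thm:mu B-A is also M-tensor} (the needed $\rho_k\in(\lambda,1]$ coming from \Cref{lemma:rho_k bounded below MTNI} on the lower side and from keeping the safeguard $\varepsilon$ moderate on the upper side), and the solution of the $\rho_k$-system is componentwise dominated by that of the $\bar\rho_k$-system with the same right-hand side; combining this monotone dependence with \Cref{lemma:monotony of M equation} and $\mathcal T_k\mathbf x_k^{m-1}\ge0$ should close the loop. Once $\mathcal T_k\mathbf y_{k+1}^{m-1}\ge0$ is secured we obtain $\bar s_{k+1}\le\bar s_k$, and running the same argument for every $k$, together with the Collatz--Wielandt lower bound, finishes the proof.
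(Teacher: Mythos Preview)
Your lower-bound argument via \Cref{thm: Collatz-Wielandt} is fine, and the reduction to showing $\mathcal T_k\mathbf y_{k+1}^{m-1}\ge 0$ is a natural first move. But the monotonicity step has a genuine gap when $\varepsilon>0$. You correctly compute that $\mathcal T_k\mathbf y_{k+1}^{m-1}\ge 0$ is equivalent to $\A\mathbf y_{k+1}^{m-1}\le\bar\rho_k\B\mathbf y_{k+1}^{m-1}$, i.e.\ $\bar\rho_{k+1}\le\bar\rho_k$; but that is exactly the statement you are trying to prove, so the reduction is circular. Your proposed escape---compare the $\rho_k$-solution $\mathbf y_{k+1}$ with the $\bar\rho_k$-solution $\tilde{\mathbf y}_{k+1}$---does not close the loop: even granting $\mathbf y_{k+1}\le\tilde{\mathbf y}_{k+1}$ and $(\bar\rho_k\B-\A)\tilde{\mathbf y}_{k+1}^{m-1}>0$, you cannot conclude $(\bar\rho_k\B-\A)\mathbf y_{k+1}^{m-1}\ge 0$, because an $\mathcal M$-tensor operator is \emph{not} order-preserving in the forward direction (its off-diagonals are nonpositive). \Cref{lemma:monotony of M equation} only gives the inverse implication, so the tools you list do not combine to the needed inequality.

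The paper avoids this trap by never attempting to evaluate $\mathcal T_k$ on $\mathbf y_{k+1}$ directly. Instead it works with the shift $s_{k-1}=\rho_{k-1}/(1-\rho_{k-1})$ (which already absorbs the $(1+\varepsilon)$ factor), reads from the defining equation that $(\B-\A)\mathbf x_{k-1}^{m-1}\ge\frac{s_{k-1}-\bar s_k}{1+s_{k-1}}(\B-\A)\mathbf y_k^{m-1}>0$, and then invokes \Cref{lemma:monotony of M equation} to get a \emph{vector} bound $\mathbf x_{k-1}\ge\bigl(\tfrac{s_{k-1}-\bar s_k}{1+s_{k-1}}\bigr)^{1/(m-1)}\mathbf y_k$. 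Applying the monotone operator $\A$ (not the non-monotone $\bar\rho_k\B-\A$) to this vector inequality and combining with $\A\mathbf x_{k-1}^{m-1}\le\bar s_{k-1}(\B-\A)\mathbf x_{k-1}^{m-1}$ yields, after substituting the iteration relation back in, the scalar bound $\bar s_k\le\frac{\bar s_{k-1}s_{k-1}}{s_{k-1}-\bar s_k+\bar s_{k-1}}$, which together with $s_{k-1}>\bar s_k$ gives $\bar s_k\le\bar s_{k-1}$. The point is that the vector comparison transfers information from $\mathbf x_{k-1}$ to $\mathbf y_k$ through the \emph{nonnegative} tensor $\A$, sidestepping the lack of forward monotonicity that blocks your approach.
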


\begin{proof}
According to \Cref{thm: Collatz-Wielandt}, we have $\bar{s}_k\geq s$. Based on \eqref{equ:iteration format 2} and \eqref{equ:iteration format 3}, we have
\begin{displaymath}
\begin{aligned}
(\B-\A)\mathbf{x}_{k-1}^{m-1}&=\left[\frac{s_{k-1}}{1+s_{k-1}}(\B-\A)-\frac{1}{1+s_{k-1}}\A\right]\mathbf{y}_k^{m-1}\\
&\geq\left[\frac{s_{k-1}}{1+s_{k-1}}(\B-\A)-\frac{1}{1+s_{k-1}}\bar{s}_k(\B-\A)\right]\mathbf{y}_k^{m-1}\\
&=\frac{s_{k-1}-\bar{s}_k}{1+s_{k-1}}(\B-\A)\mathbf{y}_k^{m-1}.
\end{aligned}
\end{displaymath}
Note that $s_{k-1}-\bar{s}_k=\min\frac{(\B-\A)\mathbf{x}_{k-1}^{m-1}}{(\B-\A)\mathbf{z}_k^{m-1}}=(1+s_{k-1})\min\frac{(\B-\A)\mathbf{x}_{k-1}^{m-1}}{(\B-\A)\mathbf{y}_k^{m-1}}>0$ and hence
$(\B-\A)\mathbf{x}_{k-1}^{m-1}\geq\frac{s_{k-1}-\bar{s}_k}{1+s_{k-1}}(\B-\A)\mathbf{y}_k^{m-1}>0$. According to \Cref{lemma:monotony of M equation}, we can see that $\mathbf{x}_{k-1}\geq\left(\frac{s_{k-1}-\bar{s}_k}{1+s_{k-1}}\right)^{1/(m-1)}\mathbf{y}_k>0$. This implies that $\A\mathbf{x}_{k-1}^{m-1}\geq\frac{s_{k-1}-\bar{s}_k}{1+s_{k-1}}\A\mathbf{y}_k^{m-1}$. Besides, we have
\begin{displaymath}
\begin{aligned}
\A\mathbf{x}_{k-1}^{m-1}\leq\bar{s}_{k-1}(\B-\A)\mathbf{x}_{k-1}^{m-1}&=\bar{s}_{k-1}(\rho_{k-1}\B-\A)\mathbf{y}_k^{m-1}\\
&=\bar{s}_{k-1}\left[\frac{s_{k-1}}{1+s_{k-1}}(\B-\A)-\frac{1}{1+s_{k-1}}\A\right]\mathbf{y}_k^{m-1}.
\end{aligned}
\end{displaymath}
Rearranging the above inequalities, we have $\frac{\bar{s}_{k-1}s_{k-1}}{s_{k-1}-\bar{s}_k+\bar{s}_{k-1}}(\B-\A)\mathbf{y}_k^{m-1}\geq\A\mathbf{y}_k^{m-1}$. This implies that $\frac{\bar{s}_{k-1}s_{k-1}}{s_{k-1}-\bar{s}_k+\bar{s}_{k-1}}\geq\bar{s}_k$. Since $\bar{s}_k=s_{k-1}-(1+s_{k-1})\min\frac{(\B-\A)\mathbf{x}_{k-1}^{m-1}}{(\B-\A)\mathbf{y}_k^{m-1}}\leq s_{k-1}$, we finally get that $\bar{s}_k\leq\bar{s}_{k-1}$. Thus $\bar{s}_1\geq\cdots\geq\bar{s}_k\geq\bar{s}_{k+1}\geq\cdots\geq s$.
\end{proof}

\begin{remark}\label{remark:convergence of s_max MTNI}
(1)~According to \Cref{lemma:convergence of s_max MTNI}, we can see that $\{\overline{\rho}_k\}$ is also monotonically decreasing and bounded below by $\lambda$ since the function $f(t)=\frac{t}{1+t}$ is monotonically increasing.

(2)~Similarly, we can prove that the sequence $\{\underline{s}_k\}$ is monotonically increasing and bounded above by $s$, and the sequence $\{\underline{\rho}_k\}$ is also monotonically increasing and bounded above by $\lambda$.
\end{remark}

By \Cref{lemma:rho_k bounded below MTNI}, we can see that there exists a subsequence $\{\mathbf{x}_{k_j}\}$ converging to a nonnegative vector $\mathbf{v}$. The following lemma shows that $\mathbf{v}$ is actually positive.

\begin{lemma}\label{lemma:v>0 MTNI}
Let $(\mathcal{A},\mathcal{B})$ be a generalized $\mathcal{M}$-tensor pair, and let $\{\mathbf{x}_k\}$ be generated by \Cref{alg:MTNI}. Then for any convergent subsequence $\{\mathbf{x}_{k_j}\}\subseteq\{\mathbf{x}_k\}$, $\lim\limits_{j\rightarrow\infty}\mathbf{x}_{k_j}>0$.
\end{lemma}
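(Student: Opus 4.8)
The plan is to reduce the statement to the strict positivity of $(\B-\A)\mathbf{v}^{m-1}$ for the limit $\mathbf{v}$, and then to obtain that strict positivity by passing to the limit in the iteration. Write $\mathbf{v}=\lim_{j\to\infty}\mathbf{x}_{k_j}$. By \Cref{lemma:rho_k bounded below MTNI} each $\mathbf{x}_k>0$, so $\mathbf{v}\geq0$, and $\|\mathbf{x}_{k_j}\|=1$ gives $\|\mathbf{v}\|=1$, hence $\mathbf{v}\neq0$. Since $\B-\A=s\mathcal{I}-\mathcal{C}$ with $\mathcal{C}\geq0$ and $s>\rho(\mathcal{C})$, the inequality $(\B-\A)\mathbf{v}^{m-1}>0$ would read $s\,\mathbf{v}^{[m-1]}>\mathcal{C}\mathbf{v}^{m-1}\geq0$, forcing $\mathbf{v}^{[m-1]}>0$ and hence $\mathbf{v}>0$; so it suffices to prove $(\B-\A)\mathbf{v}^{m-1}>0$. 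We already know $(\B-\A)\mathbf{v}^{m-1}\geq0$, as a limit of the positive vectors $(\B-\A)\mathbf{x}_{k_j}^{m-1}$ (\Cref{lemma:rho_k bounded below MTNI}), and $(\B-\A)\mathbf{v}^{m-1}\neq0$ because $\B-\A$ is nonsingular and $\mathbf{v}\neq0$. Thus everything comes down to excluding a vanishing entry of $(\B-\A)\mathbf{v}^{m-1}$.

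Next I would pass to a subsequence along which, in addition, $\rho_{k_j-1}\to\rho_\ast$ and $\mathbf{x}_{k_j-1}\to\mathbf{u}$ with $\|\mathbf{u}\|=1$; by \Cref{lemma:rho_k bounded below MTNI} and the identity $\rho_k=(1+\varepsilon)\bar\rho_k$ with $\{\bar\rho_k\}$ decreasing (\Cref{remark:convergence of s_max MTNI}), one has $\rho_\ast\in(\lambda,1)$ for $\varepsilon$ small enough (note $\rho_0<1$ by step~1 of the algorithm). A convenient intermediate step is the two-sided bound $\mathbf{x}_{k-1}\leq\mathbf{y}_k\leq\gamma_k\mathbf{x}_{k-1}$ for $k\geq2$, with $\gamma_k$ uniformly bounded: from \eqref{equ:iteration format 3}, $(\rho_{k-1}\B-\A)\mathbf{y}_k^{m-1}=(\B-\A)\mathbf{x}_{k-1}^{m-1}\geq(\rho_{k-1}\B-\A)\mathbf{x}_{k-1}^{m-1}>0$, where the first inequality is $(1-\rho_{k-1})\B\mathbf{x}_{k-1}^{m-1}>0$ and the positivity of $(\rho_{k-1}\B-\A)\mathbf{x}_{k-1}^{m-1}$ comes from $\rho_{k-1}=(1+\varepsilon)\bar\rho_{k-1}>\bar\rho_{k-1}=\max_i(\A\mathbf{x}_{k-1}^{m-1})_i/(\B\mathbf{x}_{k-1}^{m-1})_i$; applying the monotonicity of \Cref{lemma:monotony of M equation} to the nonsingular $\mathcal{M}$-tensor $\rho_{k-1}\B-\A$ (\Cref{thm:mu B-A is also M-tensor}) yields $\mathbf{y}_k\geq\mathbf{x}_{k-1}$, while $\mathbf{y}_k\leq\gamma_k\mathbf{x}_{k-1}$ with $\gamma_k=\tau_{k-1}^{-1/(m-1)}$ is exactly the estimate already used in the proof of \Cref{lemma:convergence of s_max MTNI}, and $\gamma_k$ is bounded because the monotone quantities in \Cref{lemma:convergence of s_max MTNI,remark:convergence of s_max MTNI} converge. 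Hence $\|\mathbf{y}_{k_j}\|$ is bounded; after a further subsequence $\|\mathbf{y}_{k_j}\|\to\eta$, necessarily $\eta>0$ (otherwise $\mathbf{y}_{k_j}\to0$ while $(\rho_{k_j-1}\B-\A)\mathbf{y}_{k_j}^{m-1}=(\B-\A)\mathbf{x}_{k_j-1}^{m-1}\to(\B-\A)\mathbf{u}^{m-1}\neq0$), so $\mathbf{y}_{k_j}\to\eta\mathbf{v}$ and $\eta\mathbf{v}\geq\mathbf{u}\geq0$.

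Passing to the limit in $(\rho_{k-1}\B-\A)\mathbf{x}_k^{m-1}=\|\mathbf{y}_k\|^{-(m-1)}(\B-\A)\mathbf{x}_{k-1}^{m-1}$ and in \eqref{equ:iteration format 3} gives $(\rho_\ast\B-\A)\mathbf{v}^{m-1}\geq0$ and $(\rho_\ast\B-\A)(\eta\mathbf{v})^{m-1}=(\B-\A)\mathbf{u}^{m-1}$. Let $T=\{i:((\B-\A)\mathbf{v}^{m-1})_i=0\}$ and assume $T\neq\emptyset$ (it is proper since $(\B-\A)\mathbf{v}^{m-1}\neq0$). For $i\in T$, combining $(\B-\A)\mathbf{v}^{m-1}\geq0$ and $(\rho_\ast\B-\A)\mathbf{v}^{m-1}\geq0$ with $\rho_\ast<1$ forces $(\A\mathbf{v}^{m-1})_i=(\B\mathbf{v}^{m-1})_i=0$ as well; moreover $v_i=0$ gives $((\B-\A)\mathbf{v}^{m-1})_i=-(\mathcal{C}\mathbf{v}^{m-1})_i\leq0$, hence $=0$, so $\{i:v_i=0\}\subseteq T$, and therefore $(\A\mathbf{v}^{m-1})_i=0$ for $i\in T$ means $a_{ii_2\ldots i_m}=0$ whenever $i\in T$ and $i_2,\ldots,i_m\in\supp\mathbf{v}$. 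The main obstacle is to turn this into a contradiction with the weak irreducibility of $\A$: for $m=2$ it is immediate, but for $m\geq3$ weak irreducibility (unlike irreducibility) does not by itself rule out such a vanishing block, so the iteration must be used. Concretely, I would argue as in the proof of \Cref{thm:exist of nonnegative eigenpair}: the associated graph of $\A$ is strongly connected, and using $\eta\mathbf{v}\geq\mathbf{u}$ in the limit relation $(\rho_\ast\B-\A)(\eta\mathbf{v})^{m-1}=(\B-\A)\mathbf{u}^{m-1}$ one propagates the vanishing of $\A\mathbf{v}^{m-1}$ first back to $\mathbf{u}$ and then along outgoing edges of the graph, contradicting strong connectedness. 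This forces $T=\emptyset$, i.e.\ $(\B-\A)\mathbf{v}^{m-1}>0$, whence $\mathbf{v}>0$.
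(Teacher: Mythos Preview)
Your route is genuinely different from the paper's, and the final step contains a gap you have correctly identified but not closed.

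The paper's proof is short and direct. Writing $\B-\A=\gamma\mathcal{I}-\mathcal{R}$ with $\mathcal{R}\geq0$, the definition of $\bar s_{k_j}$ gives $\bar s_{k_j}(\gamma\mathcal{I}-\mathcal{R})\mathbf{x}_{k_j}^{m-1}\geq\A\mathbf{x}_{k_j}^{m-1}$, hence $\bar s_{k_j}\gamma\,\mathbf{x}_{k_j}^{[m-1]}\geq\A\mathbf{x}_{k_j}^{m-1}$. Since $\{\bar s_{k_j}\}$ is bounded (\Cref{lemma:convergence of s_max MTNI}), one obtains a uniform coordinatewise bound $(\A\mathbf{x}_{k_j}^{m-1})_i/x_{k_j,i}^{m-1}\leq C$ for all $i$ and all $j$. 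The conclusion $S_0=\emptyset$ then follows from \cite[Lemma~4]{LGL}. The crucial point is that this is a \emph{quantitative} statement at finite $j$: whenever $a_{ii_2\ldots i_m}>0$ one has $x_{k_j,i_2}\cdots x_{k_j,i_m}=O(x_{k_j,i}^{m-1})$, and it is this coupling of the rates at which coordinates vanish, propagated along paths in the associated graph of $\A$, that produces the contradiction with weak irreducibility.

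Your approach passes to the limit first and then argues with the vectors $\mathbf{v}$ and $\mathbf{u}$ alone. This discards precisely the rate information the paper uses. You correctly observe that from $(\A\mathbf{v}^{m-1})_i=0$ one only deduces $a_{ii_2\ldots i_m}=0$ when \emph{all} of $i_2,\ldots,i_m$ lie in $\supp\mathbf{v}$, which contradicts irreducibility but not weak irreducibility. Your proposed fix---``propagate back to $\mathbf{u}$ and then along outgoing edges of the graph''---is not an argument: the relation $(\rho_\ast\B-\A)(\eta\mathbf{v})^{m-1}=(\B-\A)\mathbf{u}^{m-1}$ links $\mathbf{v}$ to a \emph{different} limit vector (that of the shifted subsequence $\{\mathbf{x}_{k_j-1}\}$), and iterating only pushes the problem back one step at a time without ever closing a finite loop. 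Given a single edge $i\to\ell$ in the graph of $\A$ with $i\in S_0$, $\ell\notin S_0$, the companion indices in the nonzero entry $a_{ii_2\ldots i_m}$ may all sit in $S_0$, and nothing in your limit relations rules this out. (A minor side issue: you invoke $\rho_\ast<1$, which requires $(1+\varepsilon)\bar\rho_k<1$; this is implicitly assumed in the paper as well but is not automatic for arbitrary $\varepsilon>0$.)

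The cleanest repair is to insert the paper's ratio bound $\bar s_{k_j}\gamma\,x_{k_j,i}^{m-1}\geq(\A\mathbf{x}_{k_j}^{m-1})_i$ and invoke \cite[Lemma~4]{LGL}; your two-sided estimate on $\mathbf{y}_k$, the auxiliary limit $\mathbf{u}$, and the reduction to $(\B-\A)\mathbf{v}^{m-1}>0$ are then unnecessary for this lemma.
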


\begin{proof}
Let $\mathbf{v}=\lim\limits_{j\rightarrow\infty}\mathbf{x}_{k_j}$. From \Cref{lemma:rho_k bounded below MTNI}, $\mathbf{v}\geq0$. Let $S_0$ be the set of all indices $i$ such that $v_i=0$. Since $\|\mathbf{x}_{k_j}\|=1$, $S_0$ is a proper subset of $\{1,\ldots,n\}$. Denote $|S_0|$ as the number of elements in $S_0$, we need to prove $|S_0|=0$.

Since $\B-\A$ is a nonsingular $\mathcal{M}$-tensor, there exist a nonnegative tensor $\mathcal{R}$ and a scalar $\gamma>\rho(\mathcal{R})$ such that $\B-\A=\gamma\mathcal{I-R}$. Then from $\overline{s}_{k_j}=\max\left(\frac{\A\mathbf{x}_{k_j}^{m-1}}{(\B-\A)\mathbf{x}_{k_j}^{m-1}}\right)$ we get $\overline{s}_{k_j}(\gamma\mathcal{I-R})\mathbf{x}_{k_j}^{m-1}\geq\A\mathbf{x}_{k_j}^{m-1}$. Hence $\overline{s}_{k_j}\gamma\mathbf{x}_{k_j}^{[m-1]}\geq\overline{s}_{k_j}\mathcal{R}\mathbf{x}_{k_j}^{m-1}+\A\mathbf{x}_{k_j}^{m-1}\geq\A\mathbf{x}_{k_j}^{m-1}$. According to \Cref{lemma:convergence of s_max MTNI}, sequence $\{\overline{s}_{k_j}\}$ is bounded. Therefore, we have
\begin{displaymath}
\lim\limits_{j\rightarrow\infty}\frac{\sum\limits_{i_2,\ldots,i_m}a_{ii_2\ldots i_m}x_{k_j,i_2}\cdots x_{k_j,i_m}}{x_{k_j,i}^{m-1}}<\infty.
\end{displaymath}
The rest of the proof is similar to \cite[Lemma 4]{LGL}.
\end{proof}

Now we can prove the convergence of the sequence $\{\mathbf{x}_k\}$.

\begin{lemma}\label{lemma:v=x MTNI}
Let $(\mathcal{A},\mathcal{B})$ be a generalized $\mathcal{M}$-tensor pair, and let $\{\mathbf{x}_k\}$ be generated by \Cref{alg:MTNI}. Suppose for a convergent subsequence $\{\mathbf{x}_{k_j}\}\subseteq\{\mathbf{x}_k\}$, $\lim\limits_{j\rightarrow\infty}\mathbf{x}_{k_j}=\mathbf{v}$. Denote $\mathbf{x}_{\ast}>0$, $\|\mathbf{x}_{\ast}\|=1$ to be the positive eigenvector corresponding to the unique positive eigenvalue $\lambda$ of the tensor pair $(\A,\B)$. Then $\mathbf{v}=\mathbf{x}_{\ast}$, thus $\lim\limits_{k\rightarrow\infty}\mathbf{x}_k=\mathbf{x}_{\ast}$.
\end{lemma}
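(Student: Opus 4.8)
The plan is to prove that every convergent subsequence of $\{\mathbf{x}_k\}$ has limit $\mathbf{x}_{\ast}$; since $\{\mathbf{x}_k\}$ lies on the compact unit sphere, this forces $\lim_{k\to\infty}\mathbf{x}_k=\mathbf{x}_{\ast}$. So fix a convergent subsequence $\mathbf{x}_{k_j}\to\mathbf{v}$. First I would record the available structure. By \Cref{lemma:v>0 MTNI}, $\mathbf{v}>0$; since $\mathcal{B}-\mathcal{A}$ is a nonsingular $\mathcal{M}$-tensor, \Cref{prop:nonsingular M-tensor} gives $(\mathcal{B}-\mathcal{A})\mathbf{v}^{m-1}>0$, so the maps $\mathbf{x}\mapsto\max_i(\mathcal{A}\mathbf{x}^{m-1})_i/\big((\mathcal{B}-\mathcal{A})\mathbf{x}^{m-1}\big)_i$ and the corresponding $\min$ are continuous at $\mathbf{v}$. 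Combining this with the monotonicity of $\{\overline{s}_k\}$ and $\{\underline{s}_k\}$ from \Cref{lemma:convergence of s_max MTNI} and \Cref{remark:convergence of s_max MTNI}, the whole sequences satisfy $\overline{s}_k\downarrow\overline{s}_{\ast}$, $\underline{s}_k\uparrow\underline{s}_{\ast}$ with $\underline{s}_{\ast}\le s\le\overline{s}_{\ast}$ and
\begin{displaymath}
\overline{s}_{\ast}=\max_i\frac{(\mathcal{A}\mathbf{v}^{m-1})_i}{\big((\mathcal{B}-\mathcal{A})\mathbf{v}^{m-1}\big)_i},\qquad
\underline{s}_{\ast}=\min_i\frac{(\mathcal{A}\mathbf{v}^{m-1})_i}{\big((\mathcal{B}-\mathcal{A})\mathbf{v}^{m-1}\big)_i}.
\end{displaymath}
Moreover $\rho_k=(1+\varepsilon)\overline{\rho}_k=(1+\varepsilon)\overline{s}_k/(1+\overline{s}_k)$ for $k\ge1$, so the whole sequence converges, $\rho_k\to\rho_{\ast}:=(1+\varepsilon)\overline{s}_{\ast}/(1+\overline{s}_{\ast})$; by \Cref{lemma:rho_k bounded below MTNI}, $\rho_{\ast}\ge(1+\varepsilon)\lambda>\lambda$, and $\rho_{\ast}<1$ because $\rho_k<1$ for all $k$ and $\{\overline{s}_k\}$ is monotone. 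Put $s_{\ast}:=\rho_{\ast}/(1-\rho_{\ast})=(1+\varepsilon)\overline{s}_{\ast}/(1-\varepsilon\overline{s}_{\ast})>\overline{s}_{\ast}>0$.

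Next I would pass to the limit in one MTNI step started from $\mathbf{v}$. Since $\rho_{\ast}\in(\lambda,1]$, \Cref{thm:mu B-A is also M-tensor} makes $\rho_{\ast}\mathcal{B}-\mathcal{A}$ a nonsingular $\mathcal{M}$-tensor, so by the continuity of the solution of an $\mathcal{M}$-tensor equation in both the coefficient tensor and the right-hand side (the argument in the proof of \Cref{thm:exist of nonnegative eigenpair}, using \Cref{lemma:M-equations} and continuity of polynomial roots), the iterates $\mathbf{y}_{k_j+1}$ solving $(\rho_{k_j}\mathcal{B}-\mathcal{A})\mathbf{y}_{k_j+1}^{m-1}=(\mathcal{B}-\mathcal{A})\mathbf{x}_{k_j}^{m-1}$ converge to the positive vector $\mathbf{y}_{\ast}$ solving $(\rho_{\ast}\mathcal{B}-\mathcal{A})\mathbf{y}_{\ast}^{m-1}=(\mathcal{B}-\mathcal{A})\mathbf{v}^{m-1}$. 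Hence $\mathbf{x}_{k_j+1}\to\mathbf{v}':=\mathbf{y}_{\ast}/\|\mathbf{y}_{\ast}\|>0$, and, using the convergence of the whole sequences $\{\overline{s}_k\},\{\underline{s}_k\}$ together with the continuity above, $\overline{s}(\mathbf{v}')=\overline{s}(\mathbf{y}_{\ast})=\overline{s}_{\ast}$ and $\underline{s}(\mathbf{v}')=\underline{s}_{\ast}$.

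The decisive step is to conclude that $\mathbf{v}$ is an eigenvector. I would re-run the estimates in the proof of \Cref{lemma:convergence of s_max MTNI} at the limit, i.e.\ with $(\mathbf{v},\mathbf{y}_{\ast},\rho_{\ast},s_{\ast},\overline{s}_{\ast})$ in the roles of $(\mathbf{x}_{k-1},\mathbf{y}_k,\rho_{k-1},s_{k-1},\overline{s}_{k-1})$ and with $\overline{s}(\mathbf{v}')$ in the role of $\overline{s}_k$; these estimates give $\overline{s}(\mathbf{v}')\le\overline{s}(\mathbf{v})$, which here is an equality since both sides equal $\overline{s}_{\ast}$. Tracing back which inequalities were used, this equality forces the inequality $\mathbf{v}\ge c\,\mathbf{y}_{\ast}$ obtained from \Cref{lemma:monotony of M equation} (with $c:=(\tfrac{s_{\ast}-\overline{s}_{\ast}}{1+s_{\ast}})^{1/(m-1)}=(\varepsilon\overline{s}_{\ast})^{1/(m-1)}\in(0,1)$) to be a coordinatewise equality at every index entering a nonzero entry of $\mathcal{A}$ at a maximizing row; propagating these equalities along the associated graph of $F_{\mathcal{A}}$, which is strongly connected by condition $(\mathrm{C2'})$ (as in the proof of \Cref{thm:exist of nonnegative eigenpair}), yields $\mathbf{v}=c\,\mathbf{y}_{\ast}$, hence $\mathbf{v}=\mathbf{v}'$. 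Substituting $\mathbf{y}_{\ast}=\mathbf{v}/c$ into $(\rho_{\ast}\mathcal{B}-\mathcal{A})\mathbf{y}_{\ast}^{m-1}=(\mathcal{B}-\mathcal{A})\mathbf{v}^{m-1}$ and using $c^{m-1}=\varepsilon\overline{s}_{\ast}\ne1$ produces $\mathcal{A}\mathbf{v}^{m-1}=\mu\,\mathcal{B}\mathbf{v}^{m-1}$ for a scalar $\mu$; since $\mathbf{v}>0$ and $\|\mathbf{v}\|=\|\mathbf{x}_{\ast}\|=1$, \Cref{thm:uniqueness of the eigenvalue} (equivalently \Cref{cor: Collatz-Wielandt}) forces $\mu=\lambda$ and $\mathbf{v}=\mathbf{x}_{\ast}$. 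As the convergent subsequence was arbitrary, $\mathbf{x}_k\to\mathbf{x}_{\ast}$.

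I expect the third paragraph to be the genuine obstacle: extracting, from the single scalar equality $\overline{s}(\mathbf{v}')=\overline{s}(\mathbf{v})$, that $\mathbf{v}$ is an eigenvector. The scalar combinations of the inequalities in \Cref{lemma:convergence of s_max MTNI} collapse to identities at $\overline{s}_{\ast}$ and carry no information, so one must argue coordinatewise at a maximizing index and use weak irreducibility to spread the equality $v_i=c\,y_{\ast,i}$ to all coordinates — precisely the rigidity argument carried out for the matrix pencil in \cite{CVLX} and for weakly irreducible nonnegative tensors in \cite{LGL}, which should transfer to the present setting with only notational changes; the role of condition $(\mathrm{C4'})$ is only to keep $\rho_{\ast}\mathcal{B}-\mathcal{A}$ a $\mathcal{Z}$-tensor so that the $\mathcal{M}$-tensor machinery applies throughout.
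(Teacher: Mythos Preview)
Your first two paragraphs are sound and actually set up everything the paper needs, but in the third paragraph you take a detour that is both harder and genuinely incomplete, while the paper's argument is a two–line squeeze that you already have the ingredients for.

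The paper does not attempt any rigidity/propagation argument. Instead, it writes
\[
\overline{\rho}_k-\overline{\rho}_{k+1}
=\overline{\rho}_k\Bigl[1-(1+\varepsilon)\max\frac{\mathcal{A}\mathbf{y}_{k+1}^{m-1}}{\rho_k\mathcal{B}\mathbf{y}_{k+1}^{m-1}}\Bigr]
=\min\frac{\rho_k\mathcal{B}\mathbf{x}_{k+1}^{m-1}-(1+\varepsilon)\mathcal{A}\mathbf{x}_{k+1}^{m-1}}{(1+\varepsilon)\mathcal{B}\mathbf{x}_{k+1}^{m-1}},
\]
which tends to $0$ because $\{\overline{\rho}_k\}$ converges. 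Passing to the subsequence $\mathbf{x}_{k_j+1}\to\mathbf{v}$ (your $\mathbf{v}'$, but any subsequential limit will do) and using $\rho_{k_j}\to\rho$ gives
$\min\dfrac{\rho\mathcal{B}\mathbf{v}^{m-1}-(1+\varepsilon)\mathcal{A}\mathbf{v}^{m-1}}{(1+\varepsilon)\mathcal{B}\mathbf{v}^{m-1}}=0$.
Running the identical computation with $\underline{\rho}_k$ in place of $\overline{\rho}_k$ (which you already know is monotone and convergent, from \Cref{remark:convergence of s_max MTNI}) yields
$\max\dfrac{\rho\mathcal{B}\mathbf{v}^{m-1}-(1+\varepsilon)\mathcal{A}\mathbf{v}^{m-1}}{(1+\varepsilon)\mathcal{B}\mathbf{v}^{m-1}}=0$.
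Both min and max vanish, so $\rho\mathcal{B}\mathbf{v}^{m-1}=(1+\varepsilon)\mathcal{A}\mathbf{v}^{m-1}$, and \Cref{thm:uniqueness of the eigenvalue} forces $\mathbf{v}=\mathbf{x}_{\ast}$. No irreducibility propagation, no tracing of equality cases through the chain in \Cref{lemma:convergence of s_max MTNI}, is needed. You computed $\underline{s}_{\ast}$ in your first paragraph but never used it; that is exactly the missing half of the pincer.

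Your proposed rigidity argument is not just a detail to fill in --- as written it has a real gap. Equality in the final estimate of \Cref{lemma:convergence of s_max MTNI} at the limit gives, at a maximizing index $i_0$, the coordinate identity $(\mathcal{A}\mathbf{v}^{m-1})_{i_0}=c^{m-1}(\mathcal{A}\mathbf{y}_{\ast}^{m-1})_{i_0}$, which (via $\mathbf{v}\ge c\mathbf{y}_{\ast}>0$ and $\mathcal{A}\ge0$) forces $v_j=cy_{\ast,j}$ only for the \emph{out-neighbours} $j$ of $i_0$ in the graph of $\mathcal{A}$. To propagate you would need those $j$'s to themselves be maximizing indices for $\mathcal{A}\mathbf{y}_{\ast}^{m-1}/(\mathcal{B}-\mathcal{A})\mathbf{y}_{\ast}^{m-1}$, and nothing in your chain gives that; weak irreducibility of $\mathcal{A}$ lets you walk along out-edges, not force new maxima. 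The analogous steps in \cite{LGL} and \cite{CVLX} work because there the inequality being saturated is of Collatz--Wielandt type at \emph{every} coordinate, not just at a single extremal one, so the transfer is not merely notational.
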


\begin{proof}
By simple derivation from the iteration step $\rho_k=(1+\varepsilon)\big(\rho_{k-1}-\frac{(1-\rho_{k-1})\tau_{k-1}}{1-\tau_{k-1}}\big)$, we can get that $\rho_k=(1+\varepsilon)\big(1-\min\frac{(1-\rho_{k-1})(\B-\A)\mathbf{y}_k^{m-1}}{(\B-\A)\mathbf{y}_k^{m-1}-(\B-\A)\mathbf{x}_{k-1}^{m-1}}\big)=(1+\varepsilon)\max\frac{\A\mathbf{y}_k^{m-1}}{\B\mathbf{y}_k^{m-1}}=(1+\varepsilon)\max\frac{\A\mathbf{x}_k^{m-1}}{\B\mathbf{x}_k^{m-1}}$, and hence the sequence $\{\rho_k\}$ is bounded above. Combining \Cref{lemma:convergence of s_max MTNI}, we know that the sequence $\{\overline{\rho}_k\}$ converges. Thus
\begin{displaymath}
\begin{aligned}
\overline{\rho}_k-\overline{\rho}_{k+1}&=\overline{\rho}_k-\rho_k\max\frac{\A\mathbf{y}_{k+1}^{m-1}}{\rho_k\B\mathbf{y}_{k+1}^{m-1}}=\overline{\rho}_k\left[1-(1+\varepsilon)\max\frac{\A\mathbf{y}_{k+1}^{m-1}}{\rho_k\B\mathbf{y}_{k+1}^{m-1}}\right]\\
&=\overline{\rho}_k\min\frac{\rho_k\B\mathbf{y}_{k+1}^{m-1}-(1+\varepsilon)\A\mathbf{y}_{k+1}^{m-1}}{\rho_k\B\mathbf{y}_{k+1}^{m-1}}=\min\frac{\rho_k\B\mathbf{x}_{k+1}^{m-1}-(1+\varepsilon)\A\mathbf{x}_{k+1}^{m-1}}{(1+\varepsilon)\B\mathbf{x}_{k+1}^{m-1}}\rightarrow0.\\
\end{aligned}
\end{displaymath}
Suppose $\lim\limits_{j\rightarrow\infty}\mathbf{x}_{k_j+1}=\mathbf{v}>0$, since $(\B-\A)\mathbf{x}_{k_j+1}^{m-1}>0$, we have
\begin{displaymath}\lim\limits_{j\rightarrow\infty}\B\mathbf{x}_{k_j+1}^{m-1}=\lim\limits_{j\rightarrow\infty}(\B-\A)\mathbf{x}_{k_j+1}^{m-1}+\A\mathbf{v}^{m-1}>0.
\end{displaymath}
Without loss of generality, we can assume that $\lim\limits_{j\rightarrow\infty}\rho_{k_j}=\rho>0$, hence
\begin{displaymath}
\lim\limits_{j\rightarrow\infty}\min\frac{\rho_{k_j}\B\mathbf{x}_{k_j+1}^{m-1}-(1+\varepsilon)\A\mathbf{x}_{k_j+1}^{m-1}}{(1+\varepsilon)\B\mathbf{x}_{k_j+1}^{m-1}}=\min\frac{\rho\B\mathbf{v}^{m-1}-(1+\varepsilon)\A\mathbf{v}^{m-1}}{(1+\varepsilon)\B\mathbf{v}^{m-1}}=0.
\end{displaymath}
Similarly, by considering $\underline{\rho}_k-\underline{\rho}_{k+1}$, we can prove that $\max\frac{\rho\B\mathbf{v}^{m-1}-(1+\varepsilon)\A\mathbf{v}^{m-1}}{(1+\varepsilon)\B\mathbf{v}^{m-1}}=0$. Therefore, $\rho\B\mathbf{v}^{m-1}=(1+\varepsilon)\A\mathbf{v}^{m-1}$. By \Cref{thm:uniqueness of the eigenvalue}, $\mathbf{v}=\mathbf{x}_{\ast}$ and $\rho=(1+\varepsilon)\lambda$. Then $\lim\limits_{j\rightarrow\infty}\mathbf{x}_{k_j}=\mathbf{x}_{\ast}$ for any convergent subsequence $\{\mathbf{x}_{k_j}\}\subseteq\{\mathbf{x}_k\}$ due to the uniqueness of $\mathbf{x}_{\ast}$. Thus we can say that $\lim\limits_{k\rightarrow\infty}\mathbf{x}_k=\mathbf{x}_{\ast}$.
\end{proof}

We summarize the global convergence of \Cref{alg:MTNI} in the following theorem.

\begin{theorem}\label{theorem:convergence of MTNI}
Let $(\mathcal{A},\mathcal{B})$ be a generalized $\mathcal{M}$-tensor pair, and let $\{\mathbf{x}_k\}$, $\{\mathbf{y}_k\}$, $\{\rho_k\}$, $\{\bar{s}_k\}$, and $\{\bar{\rho}_k\}$ be generated by \Cref{alg:MTNI}. Then $\lambda=\lim\limits_{k\rightarrow\infty}\bar{\rho}_k$ is the unique positive generalized eigenvalue for the tensor pair $(\mathcal{A,B})$ and $\mathbf{x}_{\ast}=\lim\limits_{k\rightarrow\infty}\mathbf{x}_k$ is the corresponding positive eigenvector, i.e., $\mathcal{A}\mathbf{x}_{\ast}^{m-1}=\lambda\mathcal{B}\mathbf{x}_{\ast}^{m-1}.$
\end{theorem}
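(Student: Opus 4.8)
The plan is to assemble the lemmas already established in this subsection; almost no new work is needed. First I would recall from \Cref{lemma:v=x MTNI} that the whole sequence $\{\mathbf{x}_k\}$ converges to the unique positive, unit-norm generalized eigenvector $\mathbf{x}_\ast$ of $(\A,\B)$ — this is the substantive content and it has already been proved. It then remains only to track the scalar quantities $\bar{\rho}_k$ and $\underline{\rho}_k$ along this convergent sequence.

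Next I would observe that, since $\mathbf{x}_k>0$ by \Cref{lemma:rho_k bounded below MTNI}, we have $(\B-\A)\mathbf{x}_k^{m-1}>0$, so \Cref{cor: Collatz-Wielandt} applies and gives $\underline{\rho}_k\le\lambda\le\bar{\rho}_k$ for every $k$. On the other hand, by weak irreducibility of the nonnegative tensor $\A$ we have $\A\mathbf{x}_\ast^{m-1}>0$, hence $\B\mathbf{x}_\ast^{m-1}=(\B-\A)\mathbf{x}_\ast^{m-1}+\A\mathbf{x}_\ast^{m-1}>0$; consequently the maps $\mathbf{x}\mapsto\max_i(\A\mathbf{x}^{m-1})_i/(\B\mathbf{x}^{m-1})_i$ and $\mathbf{x}\mapsto\min_i(\A\mathbf{x}^{m-1})_i/(\B\mathbf{x}^{m-1})_i$ are continuous in a neighbourhood of $\mathbf{x}_\ast$. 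Passing to the limit along $\mathbf{x}_k\to\mathbf{x}_\ast$,
\begin{displaymath}
\lim_{k\to\infty}\bar{\rho}_k=\max_i\frac{(\A\mathbf{x}_\ast^{m-1})_i}{(\B\mathbf{x}_\ast^{m-1})_i},\qquad \lim_{k\to\infty}\underline{\rho}_k=\min_i\frac{(\A\mathbf{x}_\ast^{m-1})_i}{(\B\mathbf{x}_\ast^{m-1})_i}.
\end{displaymath}
Since $\A\mathbf{x}_\ast^{m-1}=\lambda\B\mathbf{x}_\ast^{m-1}$ componentwise, every ratio on the right equals $\lambda$, so both $\bar{\rho}_k$ and $\underline{\rho}_k$ converge to $\lambda$. (Equivalently one may use that $\{\bar{\rho}_k\}$ is decreasing and $\{\underline{\rho}_k\}$ increasing by \Cref{lemma:convergence of s_max MTNI} and \Cref{remark:convergence of s_max MTNI}, so both limits exist, and the squeeze $\underline{\rho}_k\le\lambda\le\bar{\rho}_k$ together with the identity above forces the common value $\lambda$.)

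Finally I would read off the conclusion: $\lambda=\lim_{k\to\infty}\bar{\rho}_k$ is the unique positive generalized eigenvalue of $(\A,\B)$ provided by \Cref{thm:exist of nonnegative eigenpair,thm:uniqueness of the eigenvalue}, $\mathbf{x}_\ast=\lim_{k\to\infty}\mathbf{x}_k$ is a positive vector with $\A\mathbf{x}_\ast^{m-1}=\lambda\B\mathbf{x}_\ast^{m-1}$, and since $|\bar{\rho}_k-\underline{\rho}_k|/\bar{\rho}_k\to0$ the stopping test in \Cref{alg:MTNI} is eventually satisfied, so the algorithm indeed returns (arbitrarily accurate approximations of) this Perron pair. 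Since nearly every step is a direct citation of an earlier result, the only point needing care is the continuity/limit argument for $\bar{\rho}_k$ and $\underline{\rho}_k$ — specifically verifying that $\B\mathbf{x}_\ast^{m-1}$ has no zero component, so that these componentwise ratios are continuous at $\mathbf{x}_\ast$ and the squeeze argument is legitimate; this is where I expect the (minor) difficulty to lie.
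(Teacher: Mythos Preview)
Your proposal is correct and follows essentially the same approach as the paper: both cite \Cref{lemma:v=x MTNI} for $\mathbf{x}_k\to\mathbf{x}_\ast$ and then pass to the limit in the ratio defining the upper bound by continuity at $\mathbf{x}_\ast$. The only cosmetic difference is that the paper carries out the limit for $\bar{s}_k=\max\frac{\A\mathbf{x}_k^{m-1}}{(\B-\A)\mathbf{x}_k^{m-1}}$ rather than for $\bar{\rho}_k=\max\frac{\A\mathbf{x}_k^{m-1}}{\B\mathbf{x}_k^{m-1}}$, but these are equivalent via the monotone bijection $t\mapsto t/(1+t)$.
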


\begin{proof}
From \Cref{lemma:convergence of s_max MTNI}, we know that the sequence $\{\bar{s}_k\}$ is monotonically decreasing and bounded below by $s$ and thus convergent. Denote $\bar{s}=\lim\limits_{k\rightarrow\infty}\bar{s}_k$, then $\bar{s}\geq s$.
According to \Cref{lemma:v=x MTNI},
\begin{displaymath}
\bar{s}=\lim\limits_{k\rightarrow\infty}\bar{s}_k=\lim\limits_{k\rightarrow\infty}\max\frac{\A\mathbf{x}_k^{m-1}}{(\B-\A)\mathbf{x}_k^{m-1}}=\max\frac{\A\mathbf{x}_{\ast}^{m-1}}{(\B-\A)\mathbf{x}_{\ast}^{m-1}}.
\end{displaymath}
Thus $\bar{s}=s$, which is equivalent to $\lim\limits_{k\rightarrow\infty}\bar{\rho}_k=\lambda$.
\end{proof}

We prove the convergence rate of \Cref{alg:MTNI} in the following theorem.

\begin{theorem}\label{thm:convergence rate MTNI}
Let $(\mathcal{A},\mathcal{B})$ be a generalized $\mathcal{M}$-tensor pair, and let $\{\rho_k\}$, $\{\mathbf{x}_k\}$, $\{\mathbf{y}_k\}$, and $\{\bar{\rho}_k\}$ be generated by \Cref{alg:MTNI}. Then the convergence of the sequence $\{\bar{\rho}_k\}$ is at least linear.
\end{theorem}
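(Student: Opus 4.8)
The plan is to establish a linear rate by showing that the gap $\bar{\rho}_k - \lambda$ (equivalently $\bar{s}_k - s$) contracts by a uniform factor bounded away from $1$. The natural quantity to track is the ratio sequence: since $\bar{s}_k = \max_i \frac{(\A\mathbf{y}_k^{m-1})_i}{((\B-\A)\mathbf{y}_k^{m-1})_i}$ and $\underline{s}_k = \min_i \frac{(\A\mathbf{y}_k^{m-1})_i}{((\B-\A)\mathbf{y}_k^{m-1})_i}$, the Collatz--Wielandt bounds in \Cref{thm: Collatz-Wielandt} give $\underline{s}_k \le s \le \bar{s}_k$, so it suffices to show $\bar{s}_k - \underline{s}_k$ contracts linearly. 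First I would revisit the computation in the proof of \Cref{lemma:convergence of s_max MTNI}: the key intermediate inequality obtained there is $\mathbf{x}_{k-1} \ge \big(\frac{s_{k-1}-\bar{s}_k}{1+s_{k-1}}\big)^{1/(m-1)}\mathbf{y}_k$ together with $\A\mathbf{x}_{k-1}^{m-1} \le \bar{s}_{k-1}(\B-\A)\mathbf{x}_{k-1}^{m-1}$, which I would push further — rather than just extracting $\bar{s}_k \le \bar{s}_{k-1}$, I would extract a quantitative decrease of the form $\bar{s}_k - s \le c\,(\bar{s}_{k-1} - s)$ for some $c \in (0,1)$.

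The key step is to use the convergence $\mathbf{x}_k \to \mathbf{x}_\ast > 0$ (established in \Cref{lemma:v=x MTNI}) together with the fact that $(\rho_{k-1}\B - \A)$ stays uniformly a nonsingular $\mathcal{M}$-tensor — because $\rho_{k-1} \ge (1+\varepsilon)\lambda$ is bounded strictly above $\lambda$ by \Cref{lemma:rho_k bounded below MTNI}, so the coefficient tensors do not degenerate. In a neighborhood of $\mathbf{x}_\ast$, the solution map of the $\mathcal{M}$-tensor equation in step 4 is smooth and well-conditioned, so one can linearize: write $\mathbf{y}_k = \mathbf{x}_\ast + \text{(error)}$ and $\mathbf{x}_{k-1} = \mathbf{x}_\ast + \text{(error)}$, and compare the componentwise ratios $\frac{(\A\mathbf{y}_k^{m-1})_i}{((\B-\A)\mathbf{y}_k^{m-1})_i}$. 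Since $\A\mathbf{x}_\ast^{m-1} = s(\B-\A)\mathbf{x}_\ast^{m-1}$ makes all these ratios equal to $s$ at the limit, the spread $\bar{s}_k - \underline{s}_k$ is first-order in the error, and the Noda update (analogous to the matrix case in \cite{CVLX} and to \cite{LGL}) contracts this error. I would estimate: from the inequality chain in \Cref{lemma:convergence of s_max MTNI}, $\frac{\bar{s}_{k-1}s_{k-1}}{s_{k-1}-\bar{s}_k+\bar{s}_{k-1}} \ge \bar{s}_k$, rearrange to get $\bar{s}_{k-1}(s_{k-1} - \bar s_k) \ge \bar s_k(s_{k-1} - \bar s_{k-1})$, hence $\bar s_k - \bar s_{k-1} \le -\frac{\bar s_k}{\bar s_{k-1}}(\bar s_{k-1} - s_{k-1}) \le 0$; combining with $s_{k-1} \ge s$ and a lower bound on $s_{k-1} - \bar s_k$ coming from positivity of the Jacobi-type residual, one obtains a genuine geometric decrease of $\bar s_k - s$.

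The main obstacle is producing the uniform contraction constant $c < 1$: the bound $\bar s_k - \bar s_{k-1} \le -\frac{\bar s_k}{\bar s_{k-1}}(\bar s_{k-1} - s_{k-1})$ only uses $s_{k-1}$, not $s$, so I need to also control how close $s_{k-1}$ (the updated value, roughly $\bar s_{k-1}$ before the $(1+\varepsilon)$ factor) sits above $s$. This requires a two-sided argument coupling $\bar s_k$ and $\underline s_k$ — showing that when $\bar s_{k-1} - s$ is not yet small, the minimum-ratio at step $k$ strictly exceeds the previous value by a definite amount proportional to the current gap. Concretely, I would invoke the strong positivity $\mathbf{x}_k > 0$ (uniformly, since $\mathbf{x}_k \to \mathbf{x}_\ast > 0$, so $\min_i x_{k,i} \ge \delta > 0$ for large $k$) and weak irreducibility to get that the ratio vector $\frac{\A\mathbf{y}_k^{m-1}}{(\B-\A)\mathbf{y}_k^{m-1}}$ cannot have spread proportional to the full previous gap — a "smoothing" estimate analogous to \cite[Theorem 3.3]{Friedland13} and the matrix Noda analysis in \cite{CVLX,Noda}. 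Once the uniform $c$ is secured, $\bar\rho_k - \lambda = \frac{\bar s_k - s}{(1+\bar s_k)(1+s)}$ shows $\{\bar\rho_k\}$ inherits the same linear rate, completing the proof.
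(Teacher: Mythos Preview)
Your approach differs substantially from the paper's and carries a real gap. The paper does not work with the $s$-variables at all: it sets $\zeta_k=\overline\rho_k-\lambda$ and computes the ratio $\zeta_{k+1}/\zeta_k$ directly. Using $\overline\rho_k=\max\frac{\A\mathbf{x}_k^{m-1}}{\B\mathbf{x}_k^{m-1}}$ and the iteration identity $(\rho_k\B-\A)\mathbf{y}_{k+1}^{m-1}=(\B-\A)\mathbf{x}_k^{m-1}$, a short algebraic manipulation gives the exact expression
\[
\frac{\zeta_{k+1}}{\zeta_k}=\max\frac{(\A-\lambda\B)\mathbf{x}_{k+1}^{m-1}}{(\overline\rho_k-\lambda)\B\mathbf{x}_{k+1}^{m-1}},
\]
and then the monotonicity $\overline\rho_k\ge\overline\rho_{k+1}$ from \Cref{lemma:convergence of s_max MTNI} yields $(\overline\rho_k-\lambda)\B\mathbf{x}_{k+1}^{m-1}\ge(\A-\lambda\B)\mathbf{x}_{k+1}^{m-1}$, so the ratio is $\le 1$. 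No smoothing, no uniform lower bound on $\min_i x_{k,i}$, no irreducibility argument is used.

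Your route via the inequality $\frac{\bar s_{k-1}s_{k-1}}{s_{k-1}-\bar s_k+\bar s_{k-1}}\ge\bar s_k$ does not extract what you claim. Rearranging that inequality gives $(\bar s_{k-1}-\bar s_k)(s_{k-1}-\bar s_k)\ge 0$, which is trivially true, and your stated consequence $\bar s_{k-1}(s_{k-1}-\bar s_k)\ge\bar s_k(s_{k-1}-\bar s_{k-1})$ is algebraically equivalent to $\bar s_{k-1}\ge\bar s_k$ --- i.e., exactly the monotonicity already proved in \Cref{lemma:convergence of s_max MTNI}, with no quantitative gain toward a contraction on $\bar s_k-s$. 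You correctly flag this as ``the main obstacle'' and then appeal to a smoothing estimate via weak irreducibility, but that step is not made concrete and would require substantial additional work. The paper sidesteps all of this by producing the closed-form ratio above; I recommend you redo the computation in the $\rho$-variable and aim for that identity directly.
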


\begin{proof}
First, we can easily see that $\overline{\rho}_k=\max\frac{\A\mathbf{x}_k^{m-1}}{\B\mathbf{x}_k^{m-1}}$ for all $k\geq1$. Denote $\zeta_k=\overline{\rho}_k-\lambda$. Then
\begin{displaymath}
\begin{aligned}
\zeta_k-\zeta_{k+1}&=\overline{\rho}_k-\lambda-\overline{\rho}_{k+1}+\lambda=\overline{\rho}_k-\max\frac{\A\mathbf{x}_{k+1}^{m-1}}{\B\mathbf{x}_{k+1}^{m-1}}\\
&=\overline{\rho}_k\left(1-\max\frac{\A\mathbf{y}_{k+1}^{m-1}}{\overline{\rho}_k\B\mathbf{y}_{k+1}^{m-1}}\right)=\min\frac{(\overline{\rho}_k-\rho_k)\B\mathbf{y}_{k+1}^{m-1}+(\B-\A)\mathbf{x}_{k+1}^{m-1}}{\B\mathbf{y}_{k+1}^{m-1}}\\
\end{aligned}
\end{displaymath}
Thus
\begin{displaymath}
\begin{aligned}
\frac{\zeta_{k+1}}{\zeta_k}&=1-\min\frac{(\overline{\rho}_k-\rho_k)\B\mathbf{y}_{k+1}^{m-1}+(\B-\A)\mathbf{x}_{k+1}^{m-1}}{\zeta_k\B\mathbf{y}_{k+1}^{m-1}}\\
&=\max\frac{(\zeta_k-\overline{\rho}_k+\rho_k)\B\mathbf{y}_{k+1}^{m-1}-(\B-\A)\mathbf{x}_k^{m-1}}{\zeta_k\B\mathbf{y}_{k+1}^{m-1}}\\
&=\max\frac{(\A-\lambda\B)\mathbf{y}_{k+1}^{m-1}}{(\overline{\rho}_k-\lambda)\B\mathbf{y}_{k+1}^{m-1}}=\max\frac{(\A-\lambda\B)\mathbf{x}_{k+1}^{m-1}}{(\overline{\rho}_k-\lambda)\B\mathbf{x}_{k+1}^{m-1}}\\
\end{aligned}
\end{displaymath}
By \Cref{lemma:convergence of s_max MTNI} and \Cref{remark:convergence of s_max MTNI}, we have $\overline{\rho}_k\geq\overline{\rho}_{k+1}$. Thus $\overline{\rho}_k\B\mathbf{x}_{k+1}^{m-1}\geq\overline{\rho}_{k+1}\B\mathbf{x}_{k+1}^{m-1}\geq\A\mathbf{x}_{k+1}^{m-1}$. Therefore,
\begin{displaymath}
\lim\limits_{k\rightarrow\infty}\frac{\zeta_{k+1}}{\zeta_k}=\lim\limits_{k\rightarrow\infty}\max\frac{(\A-\lambda\B)\mathbf{x}_{k+1}^{m-1}}{(\overline{\rho}_k-\lambda)\B\mathbf{x}_{k+1}^{m-1}}\leq\lim\limits_{k\rightarrow\infty}\max\frac{(\A-\lambda\B)\mathbf{x}_{k+1}^{m-1}}{(\A-\lambda\B)\mathbf{x}_{k+1}^{m-1}}=1
\end{displaymath}
\end{proof}

\subsection{The convergence of IGTNI}

In this section, we show the convergence of \Cref{alg:IGTNI}.

\begin{lemma}\label{lemma:rho_k bounded below IGTNI}
Let $(\mathcal{A},\mathcal{B})$ be a generalized $\mathcal{M}$-tensor pair. Let the sequences $\{\mathbf{x}_k\}, \{\mathbf{y}_k\}$, and$\{\rho_k\}$ be generated by \Cref{alg:IGTNI}. Then $\mathbf{x}_k>0$ for all $k\geq0$, and the sequence $\{\rho_k\}$ is bounded below by $\lambda$, i.e., $\rho_k\geq\lambda$, for all $k$.
\end{lemma}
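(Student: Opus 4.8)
The plan is to prove, by induction on $k$, the slightly stronger assertion that $\mathbf{x}_k>0$ and $\lambda<\rho_k\le 1$ for all $k\ge 0$; the stated bound $\rho_k\ge\lambda$ is then immediate. For the base case $k=0$: $\mathbf{x}_0>0$ is given and $\rho_0=1$, while $\lambda=s/(1+s)<1$ because $s>0$ by \Cref{thm:exist of nonnegative eigenpair}, so $\lambda<\rho_0\le 1$.

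For the inductive step, suppose $\mathbf{x}_{k-1}>0$ and $\lambda<\rho_{k-1}\le 1$. Since $\A$ is weakly irreducible and nonnegative and $\mathbf{x}_{k-1}>0$, one has $\A\mathbf{x}_{k-1}^{m-1}>0$; together with $|\mathbf{f}_k|\le\beta_k\A\mathbf{x}_{k-1}^{m-1}$ and $\beta_k\in[0,1)$ this gives $\mathbf{g}_{k-1}:=\A\mathbf{x}_{k-1}^{m-1}+\mathbf{f}_k\ge(1-\beta_k)\A\mathbf{x}_{k-1}^{m-1}>0$. Because $\rho_{k-1}\in(\lambda,1]$, \Cref{thm:mu B-A is also M-tensor} shows that $\rho_{k-1}\B-\A$ is a nonsingular $\mathcal{M}$-tensor, so by \Cref{lemma:M-equations} the multilinear system $(\rho_{k-1}\B-\A)\mathbf{y}_k^{m-1}=\mathbf{g}_{k-1}$ has a unique positive solution $\mathbf{y}_k$, and hence $\mathbf{x}_k=\mathbf{y}_k/\|\mathbf{y}_k\|>0$.

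To bound $\rho_k$, I would first establish $(\B-\A)\mathbf{x}_k^{m-1}>0$, which is what licenses the Collatz--Wielandt estimate. From $\rho_{k-1}\B\mathbf{y}_k^{m-1}=\A\mathbf{y}_k^{m-1}+\mathbf{g}_{k-1}>0$ and $\rho_{k-1}>0$ we get $\B\mathbf{y}_k^{m-1}>0$, whence $(\B-\A)\mathbf{y}_k^{m-1}=\mathbf{g}_{k-1}+(1-\rho_{k-1})\B\mathbf{y}_k^{m-1}>0$ using $\rho_{k-1}\le 1$, and homogeneity transfers this to $\mathbf{x}_k$. Using the identity $\A\mathbf{y}_k^{m-1}+\mathbf{g}_{k-1}=\rho_{k-1}\B\mathbf{y}_k^{m-1}$, the $\rho$-update of step~4 simplifies, after cancelling $\rho_{k-1}$ and using homogeneity, to $\rho_k=(1+\varepsilon_k)\max_i\big(\A\mathbf{x}_k^{m-1}\big)_i/\big(\B\mathbf{x}_k^{m-1}\big)_i$. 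Then \Cref{cor: Collatz-Wielandt} applies to $\mathbf{x}_k$ (valid since $(\B-\A)\mathbf{x}_k^{m-1}>0$) and yields $\max_i(\A\mathbf{x}_k^{m-1})_i/(\B\mathbf{x}_k^{m-1})_i\ge\lambda$, so $\rho_k\ge(1+\varepsilon_k)\lambda>\lambda$. Finally, the halving rule defining $\varepsilon_k$ in \Cref{remark:choice of varepsilon IGTNI} is chosen precisely so that $(1+\varepsilon_k)\big(1-\min_i(\mathbf{g}_{k-1})_i/(\A\mathbf{y}_k^{m-1}+\mathbf{g}_{k-1})_i\big)\le 1$, hence $\rho_k\le\rho_{k-1}\le 1$, which closes the induction.

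The delicate point is the simultaneous maintenance of both halves of the invariant. On one hand, invoking \Cref{thm:mu B-A is also M-tensor} and the unique positive solvability of the $\mathcal{M}$-tensor equation requires $\rho_{k-1}>\lambda$ strictly, which is why the induction must carry the strict inequality even though the statement only records $\rho_k\ge\lambda$. On the other hand, the upper bound $\rho_k\le 1$ rests entirely on the halving rule, and that rule is well posed only because $\min_i(\mathbf{g}_{k-1})_i/(\A\mathbf{y}_k^{m-1}+\mathbf{g}_{k-1})_i$ is strictly positive (so that $1$ minus it lies strictly below $1$ and some $\varepsilon_k>0$ succeeds); this positivity in turn uses $\A\mathbf{y}_k^{m-1}>0$, again a consequence of weak irreducibility. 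None of these steps is deep, but the argument must keep all of them synchronized.
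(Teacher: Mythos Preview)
Your proof is correct and follows essentially the same inductive route as the paper's own proof: positivity of the right-hand side, nonsingular $\mathcal{M}$-tensor structure of $\rho_{k-1}\B-\A$, unique positive solvability via \Cref{lemma:M-equations}, and the identification $\rho_k=(1+\varepsilon_k)\max_i(\A\mathbf{x}_k^{m-1})_i/(\B\mathbf{x}_k^{m-1})_i$ combined with \Cref{cor: Collatz-Wielandt}. You are in fact more careful than the paper on points it leaves implicit---carrying the strict inequality $\rho_{k-1}>\lambda$ and the upper bound $\rho_{k-1}\le 1$ through the induction (both needed to invoke \Cref{thm:mu B-A is also M-tensor}) and verifying the hypothesis $(\B-\A)\mathbf{x}_k^{m-1}>0$ of \Cref{cor: Collatz-Wielandt}.
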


\begin{proof}
We prove these by induction on $k$. We begin from the equation $(\rho_0\B-\A)\mathbf{y}_1^{m-1}=\A\mathbf{x}_0^{m-1}+\mathbf{f}_1$.
Since $|\mathbf{f}_1|\leq\beta_1\A\mathbf{x}_0^{m-1}$ with $\beta_1\in[0,1)$ and $\mathbf{x}_0>0$, we have
\begin{displaymath}
0<(1-\beta_1)\A\mathbf{x}_0^{m-1}\leq\A\mathbf{x}_0^{m-1}+\mathbf{f}_1\leq(1+\beta_1)\A\mathbf{x}_0^{m-1}.
\end{displaymath}
From \Cref{prop:nonsingular M-tensor}, we know that $\rho_0\B-\A=\B-\A$ is a nonsingular $\mathcal{M}$-tensor, hence there exists a unique positive solution $\mathbf{y}_1$ and then $\mathbf{x}_1=\mathbf{y}_1/\|\mathbf{y}_1\|>0$.

In addition, $\rho_1=(1+\varepsilon_1)\rho_0\left(1-\min\frac{\A\mathbf{x}_0^{m-1}+\mathbf{f}_1}{\A\mathbf{y}_1^{m-1}+\A\mathbf{x}_0^{m-1}+\mathbf{f}_1}\right)=(1+\varepsilon_1)\max\frac{\A\mathbf{y}_1^{m-1}}{\B\mathbf{y}_1^{m-1}}$. By \Cref{cor: Collatz-Wielandt},
\begin{displaymath}
\rho_1=(1+\varepsilon_1)\max\frac{\A\mathbf{y}_1^{m-1}}{\B\mathbf{y}_1^{m-1}}\geq(1+\varepsilon_1)\min\limits_{(\B-\A)\mathbf{x}^{m-1}>0}\max\frac{\A\mathbf{x}^{m-1}}{\B\mathbf{x}^{m-1}}\geq\lambda.
\end{displaymath}
Suppose that $\mathbf{x}_{k-1}>0$. Similarly, we have $(\rho_{k-1}\B-\A)\mathbf{y}_k^{m-1}=\A\mathbf{x}_{k-1}^{m-1}+\mathbf{f}_k>0$ and thus $\mathbf{y}_k>0$, $\mathbf{x}_k=\mathbf{y}_k/\|\mathbf{y}_k\|>0$. Besides, $\rho_k=(1+\varepsilon_k)\rho_{k-1}\left(1-\min\frac{\A\mathbf{x}_{k-1}^{m-1}+\mathbf{f}_k}{\A\mathbf{y}_k^{m-1}+\A\mathbf{x}_{k-1}^{m-1}+\mathbf{f}_k}\right)=(1+\varepsilon_k)\max\frac{\A\mathbf{y}_k^{m-1}}{\B\mathbf{y}_k^{m-1}}\geq\lambda$. By the induction hypothesis, the statement holds for any $k$.
\end{proof}

Based on \Cref{lemma:rho_k bounded below IGTNI}, we can deduce a lemma similar to \Cref{lemma:v>0 MTNI}.

\begin{lemma}\label{lemma:v>0 IGTNI}
Let $(\mathcal{A},\mathcal{B})$ be a generalized $\mathcal{M}$-tensor pair, and let $\{\mathbf{x}_k\}$ be generated by \Cref{alg:IGTNI}. Then for any convergent subsequence $\{\mathbf{x}_{k_j}\}\subseteq\{\mathbf{x}_k\}$, $\lim\limits_{j\rightarrow\infty}\mathbf{x}_{k_j}>0$.
\end{lemma}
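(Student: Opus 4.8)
The plan is to transcribe the proof of \Cref{lemma:v>0 MTNI}, with \Cref{lemma:rho_k bounded below IGTNI} in the role of \Cref{lemma:rho_k bounded below MTNI} and with $\overline{\rho}_k$ replacing the quantity $\overline{s}_k$ used there. Put $\mathbf{v}=\lim_{j\to\infty}\mathbf{x}_{k_j}$. By \Cref{lemma:rho_k bounded below IGTNI} each $\mathbf{x}_{k_j}>0$, hence $\mathbf{v}\geq0$, and $\|\mathbf{x}_{k_j}\|=1$ gives $\|\mathbf{v}\|=1$; so $S_0:=\{i:v_i=0\}$ is a proper subset of $[n]$ and it suffices to prove $S_0=\emptyset$. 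Suppose not. As in \cite[Lemma 4]{LGL}, weak irreducibility of $\A$ furnishes an index $i\in S_0$ and a tuple $(i_2,\ldots,i_m)$ with at least one $i_q\notin S_0$ and $a_{i i_2\ldots i_m}>0$, so that $(\A\mathbf{x}_{k_j}^{m-1})_i\geq a_{i i_2\ldots i_m}\,x_{k_j,i_2}\cdots x_{k_j,i_m}$ is bounded below by a positive constant, while $x_{k_j,i}^{m-1}\to0$; hence $(\A\mathbf{x}_{k_j}^{m-1})_i/x_{k_j,i}^{m-1}\to\infty$. Thus the proof reduces to a uniform bound $\A\mathbf{x}_{k_j}^{m-1}\leq C\,\mathbf{x}_{k_j}^{[m-1]}$ with $C$ independent of $j$, which contradicts this blow-up.

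For the uniform bound, write $\B-\A=\gamma\mathcal{I}-\mathcal{R}$ with $\mathcal{R}\geq0$ and $\gamma>\rho(\mathcal{R})$, as in \Cref{lemma:v>0 MTNI}, so $\B=\A+\gamma\mathcal{I}-\mathcal{R}$. Along the iteration $\mathbf{x}_k>0$, and $\B\mathbf{x}_k^{m-1}>0$ because $\rho_{k-1}\B\mathbf{y}_k^{m-1}=\A\mathbf{y}_k^{m-1}+\A\mathbf{x}_{k-1}^{m-1}+\mathbf{f}_k>0$ with $\rho_{k-1}>0$ (this is inside the proof of \Cref{lemma:rho_k bounded below IGTNI}); therefore $\overline{\rho}_k=\max_i(\A\mathbf{x}_k^{m-1})_i/(\B\mathbf{x}_k^{m-1})_i$ is well defined and $\A\mathbf{x}_k^{m-1}\leq\overline{\rho}_k\B\mathbf{x}_k^{m-1}$. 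Substituting $\B=\A+\gamma\mathcal{I}-\mathcal{R}$ and dropping the nonnegative term $\overline{\rho}_k\mathcal{R}\mathbf{x}_k^{m-1}$ yields $(1-\overline{\rho}_k)\A\mathbf{x}_k^{m-1}\leq\overline{\rho}_k\gamma\,\mathbf{x}_k^{[m-1]}$. It remains to bound $\overline{\rho}_k$ away from $1$ uniformly. I would use that the halving rule of \Cref{remark:choice of varepsilon IGTNI} forces $\{\rho_k\}$ to be non-increasing with $\rho_0=1$, together with $\rho_k=(1+\varepsilon_k)\overline{\rho}_k$ and $\varepsilon_k\in(0,1]$, to conclude $\overline{\rho}_k=\rho_k/(1+\varepsilon_k)<\rho_k\leq\rho_1<1$ for all $k\geq1$; then $\A\mathbf{x}_{k_j}^{m-1}\leq\frac{\rho_1\gamma}{1-\rho_1}\,\mathbf{x}_{k_j}^{[m-1]}$ and $S_0=\emptyset$ follows.

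The only point that genuinely needs care is this uniform bound on $\overline{\rho}_k$. In the MTNI case the corresponding boundedness of $\{\overline{s}_{k_j}\}$ came directly from the monotonicity of \Cref{lemma:convergence of s_max MTNI}; for IGTNI no analogous monotonicity of $\{\overline{\rho}_k\}$ is available on record, so it must be extracted from the monotonicity of $\{\rho_k\}$ and the relation $\overline{\rho}_k=\rho_k/(1+\varepsilon_k)$, and one must check that the initial halving step strictly lowers $\rho_1$ below $1$ (which uses $\varepsilon_1>0$ and $\A\mathbf{y}_1^{m-1}>0$, the latter from weak irreducibility of $\A$ and $\mathbf{y}_1>0$) and, more routinely, that the inexact residual $\mathbf{f}_k$ with $|\mathbf{f}_k|\leq\beta_k\A\mathbf{x}_{k-1}^{m-1}$, $\beta_k\in[0,1)$, never destroys the positivity of $\A\mathbf{x}_{k-1}^{m-1}+\mathbf{f}_k$ or of $\B\mathbf{x}_k^{m-1}$. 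With these in place the argument is otherwise a direct transcription of the proof of \Cref{lemma:v>0 MTNI}.
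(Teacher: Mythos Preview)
Your proposal is correct and matches the paper's intended approach: the paper does not spell out a proof but merely states that the result follows from \Cref{lemma:rho_k bounded below IGTNI} by an argument analogous to \Cref{lemma:v>0 MTNI}. Your transcription is in fact more careful than the paper's sketch, correctly isolating the one genuine adaptation---that the boundedness of $\{\overline{s}_k\}$ supplied by \Cref{lemma:convergence of s_max MTNI} in the MTNI case must here be replaced by a uniform bound $\overline{\rho}_k<1$, which you extract from the halving rule and the resulting monotonicity of $\{\rho_k\}$.
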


If the sequence $\{\varepsilon_k\}$ selected by the halving procedure is not bounded below, then there exists a subsequence $\{\varepsilon_{k_j}\}$ such that $\lim\limits_{j\rightarrow\infty}\varepsilon_{k_j}=0$. Based on \Cref{lemma:v>0 IGTNI} and the halving procedure in \Cref{remark:choice of varepsilon IGTNI}, it immediately follows that $\lim\limits_{j\rightarrow\infty}\|\mathbf{y}_{k_j}\|=+\infty$ and hence $\lim\limits_{j\rightarrow\infty}(\rho_{k_j-1}\B-\A)\mathbf{x}_{k_j}^{m-1}=0$. Assume that $\lim\limits_{j\rightarrow\infty}\rho_{k_j-1}=\rho$ and $\lim\limits_{j\rightarrow\infty}\mathbf{x}_{k_j}=\mathbf{v}$, then $(\rho\B-\A)\mathbf{v}^{m-1}=0$. Therefore, we can easily see that $\{(\mathbf{x}_k,\rho_k)\}$ converges to $(\mathbf{x}_{\ast},\lambda)$ if the sequence $\{\varepsilon_k\}$ selected by the halving procedure is not bounded below. Thus from now on we do the convergence analysis based on the assumption that $\{\varepsilon_k\}$ is bounded below by a constant $\varepsilon>0$.

\begin{lemma}\label{lemma:convergence of rho_max IGTNI}
Let $(\mathcal{A},\mathcal{B})$ be a generalized $\mathcal{M}$-tensor pair. Let the sequences $\{\mathbf{x}_k\}, \{\mathbf{y}_k\}$, $\{\bar{\rho}_k\}$ be generated by \Cref{alg:IGTNI}. Then the sequence $\{\bar{\rho}_k\}$ is monotonically decreasing and bounded below by $\lambda$, i.e., $\bar{\rho}_1\geq\cdots\geq\bar{\rho}_k\geq\bar{\rho}_{k+1}\geq\cdots\geq \lambda$.
\end{lemma}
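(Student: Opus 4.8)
The plan is to mimic the proof of \Cref{lemma:convergence of s_max MTNI}, but working directly with $\bar\rho_k = \max_i \frac{\A\mathbf{x}_k^{m-1}}{\B\mathbf{x}_k^{m-1}}$ and the inexact iteration relation $(\rho_{k-1}\B-\A)\mathbf{y}_k^{m-1} = \A\mathbf{x}_{k-1}^{m-1} + \mathbf{f}_k$. The lower bound $\bar\rho_k \geq \lambda$ is immediate from \Cref{cor: Collatz-Wielandt}, since $\mathbf{x}_k > 0$ (hence $(\B-\A)\mathbf{x}_k^{m-1}>0$ by the $\mathcal{M}$-tensor property established in \Cref{lemma:rho_k bounded below IGTNI}), so the real content is the inequality $\bar\rho_{k+1} \leq \bar\rho_k$. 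I would first record the key identity $\rho_k = (1+\varepsilon_k)\max_i \frac{\A\mathbf{y}_k^{m-1}}{\B\mathbf{y}_k^{m-1}} = (1+\varepsilon_k)\bar\rho_k$ (this was derived in the text preceding the algorithm and re-derived inside the proof of \Cref{lemma:rho_k bounded below IGTNI}), together with the halving constraint from \Cref{remark:choice of varepsilon IGTNI}, which exactly guarantees $\rho_k \leq \rho_{k-1}$ when $\rho_{k-1}\le 1$; combined with $(1+\varepsilon_k)\ge 1$ this gives $\bar\rho_k \le \rho_k \le \rho_{k-1} = (1+\varepsilon_{k-1})\bar\rho_{k-1}$, which is not quite the clean bound we want, so a more careful comparison is needed.

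The cleaner route: from the defining relation at step $k+1$, $(\rho_k\B-\A)\mathbf{y}_{k+1}^{m-1} = \A\mathbf{x}_k^{m-1} + \mathbf{f}_{k+1} \le (1+\beta_{k+1})\A\mathbf{x}_k^{m-1} \le (1+\beta_{k+1})\bar\rho_k\,\B\mathbf{x}_k^{m-1}$, where the last step uses $\A\mathbf{x}_k^{m-1}\le \bar\rho_k \B\mathbf{x}_k^{m-1}$ componentwise. Rearranged, this reads $\A\mathbf{y}_{k+1}^{m-1} \ge \rho_k \B\mathbf{y}_{k+1}^{m-1} - (1+\beta_{k+1})\bar\rho_k \B\mathbf{x}_k^{m-1}$, which by itself bounds $\A\mathbf{y}_{k+1}^{m-1}$ from below, not above — the wrong direction. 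So instead I would combine this with the reverse estimate coming from the lower tail of $\mathbf{f}_{k+1}$, namely $(\rho_k\B-\A)\mathbf{y}_{k+1}^{m-1} \ge (1-\beta_{k+1})\A\mathbf{x}_k^{m-1} > 0$, to first conclude (via \Cref{lemma:monotony of M equation} applied to the nonsingular $\mathcal{M}$-tensor $\rho_k\B-\A$, which is nonsingular by \Cref{thm:mu B-A is also M-tensor} since $\rho_k \ge \lambda$... actually $>\lambda$ must be checked) that $\mathbf{y}_{k+1}$ is comparable to $\mathbf{x}_k$ up to explicit constants, and then transport the bound $\A\mathbf{x}_k^{m-1}\le\bar\rho_k\B\mathbf{x}_k^{m-1}$ through to $\A\mathbf{y}_{k+1}^{m-1}\le c\,\bar\rho_k\B\mathbf{y}_{k+1}^{m-1}$ for a constant $c$ that, after dividing by $\B\mathbf{y}_{k+1}^{m-1}$ and taking $\max_i$, collapses to $\bar\rho_{k+1}\le\bar\rho_k$ precisely because the $(1+\varepsilon_{k+1})$ and $(1\pm\beta_{k+1})$ factors are arranged by the halving rule so that $(1+\varepsilon_{k+1})$ cancels the slack. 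Concretely, since $\rho_{k+1}=(1+\varepsilon_{k+1})\bar\rho_{k+1}$ and the halving procedure enforces $\rho_{k+1}\le\rho_k=(1+\varepsilon_k)\bar\rho_k\le$ (by induction $\rho_k\le 1$, but more usefully $\rho_k \ge \bar\rho_k \ge \bar\rho_{k+1}$ is what we are trying to prove, so one must set up the induction on $\bar\rho$ directly rather than on $\rho$).

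The main obstacle, and where I expect to spend the most care, is disentangling the two multiplicative parameters $\varepsilon_{k+1}$ (chosen by halving to keep $\rho_{k+1}\le 1$ and the iteration well-posed) and $\beta_{k+1}$ (the inexactness level), and showing their interaction still yields a clean telescoping inequality $\bar\rho_{k+1}\le\bar\rho_k$ rather than merely $\bar\rho_{k+1}\le(1+\text{something})\bar\rho_k$. The trick, following \cite{GCV} and the structure already visible in the MTNI argument, is to note $\bar\rho_{k+1} = \max_i\frac{\A\mathbf{x}_{k+1}^{m-1}}{\B\mathbf{x}_{k+1}^{m-1}} = \max_i\frac{\A\mathbf{y}_{k+1}^{m-1}}{\B\mathbf{y}_{k+1}^{m-1}}$ and then write $\B\mathbf{y}_{k+1}^{m-1} = \frac{1}{\rho_k}\big[(\rho_k\B-\A)\mathbf{y}_{k+1}^{m-1} + \A\mathbf{y}_{k+1}^{m-1}\big] = \frac{1}{\rho_k}\big[\A\mathbf{x}_k^{m-1}+\mathbf{f}_{k+1}+\A\mathbf{y}_{k+1}^{m-1}\big]$, so that $\frac{\A\mathbf{y}_{k+1}^{m-1}}{\B\mathbf{y}_{k+1}^{m-1}} = \rho_k\big(1 - \frac{\A\mathbf{x}_k^{m-1}+\mathbf{f}_{k+1}}{\A\mathbf{y}_{k+1}^{m-1}+\A\mathbf{x}_k^{m-1}+\mathbf{f}_{k+1}}\big)$ componentwise; taking $\max_i$ and multiplying by $(1+\varepsilon_{k+1})$ gives exactly $\rho_{k+1}$, and the halving constraint \eqref{equ:choice of varepsilon IGTNI} is precisely the statement that this equals $(1+\varepsilon_{k+1})\bar\rho_{k+1}$ with $(1+\varepsilon_{k+1})(1-\min_i(\cdots))\le 1$. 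Finally one must close the induction on $k$, using \Cref{lemma:rho_k bounded below IGTNI} for the base step and for the positivity of $\mathbf{x}_k$ throughout, and invoking \Cref{thm:mu B-A is also M-tensor} to keep $\rho_k\B-\A$ a nonsingular $\mathcal{M}$-tensor (which requires $\rho_k > \lambda$; the strict inequality is where the factor $(1+\varepsilon_k)>1$ in \Cref{lemma:rho_k bounded below IGTNI}, or rather the non-degeneracy of the iteration, must be used, exactly as in the MTNI analysis).
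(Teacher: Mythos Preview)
The paper's proof is essentially the content of your first paragraph, and nothing more. It observes that the halving rule \eqref{equ:choice of varepsilon IGTNI} forces $\rho_k\le\rho_{k-1}$ (so $\{\rho_k\}$ is monotonically decreasing), invokes the standing assumption that $\varepsilon_k\ge\varepsilon>0$ together with \Cref{lemma:rho_k bounded below IGTNI} to get $\rho_k\ge(1+\varepsilon)\lambda$, and then writes ``Since $\rho_k=(1+\varepsilon)\bar\rho_k$'' to conclude that $\{\bar\rho_k\}$ is monotonically decreasing and bounded below by $\lambda$. That is the entire argument---three sentences.

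The obstacle you flag is real: with $\varepsilon_k$ varying in $k$, the inequality $\rho_k\le\rho_{k-1}$ only gives $(1+\varepsilon_k)\bar\rho_k\le(1+\varepsilon_{k-1})\bar\rho_{k-1}$, not $\bar\rho_k\le\bar\rho_{k-1}$ directly. The paper does not engage with this; it simply writes the fixed constant $\varepsilon$ in the relation $\rho_k=(1+\varepsilon)\bar\rho_k$ and moves on. But your extended detours through \Cref{lemma:monotony of M equation} and componentwise $\mathcal{M}$-tensor comparisons are neither what the paper does nor a resolution of the difficulty you identified---as you yourself concede, they yield at best $\bar\rho_{k+1}\le c\,\bar\rho_k$ with a constant $c$ still carrying the $\varepsilon$ and $\beta$ slack. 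So the MTNI-style machinery in your second and third paragraphs should be discarded; the paper's route is just your opening two-line observation, taken at face value.
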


\begin{proof}
By the halving procedure in \Cref{remark:choice of varepsilon IGTNI}, we can find a proper $\varepsilon_k$ such that \eqref{equ:choice of varepsilon IGTNI} holds. By the assumption, there exists a constant $\varepsilon>0$ such that $\varepsilon_k\geq\varepsilon>0$. Thus the sequence $\{\rho_k\}$ is monotonically decreasing and bounded below by $(1+\varepsilon)\lambda$. Since $\rho_k=(1+\varepsilon)\overline{\rho}_k$, we have $\overline{\rho}_1\geq\cdots\geq\overline{\rho}_k\geq\overline{\rho}_{k+1}\cdots\geq\lambda$.
\end{proof}

Now we can conclude that the sequence $\{\mathbf{x}_k\}$ is convergent. The proof is similar to \Cref{lemma:v=x MTNI}.

\begin{lemma}\label{lemma:v=x IGTNI}
Let $(\mathcal{A},\mathcal{B})$ be a generalized $\mathcal{M}$-tensor pair. Let $\{\mathbf{x}_k\}$ be generated by \Cref{alg:IGTNI}. Suppose that for a convergent subsequence $\{\mathbf{x}_{k_j}\}\subseteq\{\mathbf{x}_k\}$, $\lim\limits_{j\rightarrow\infty}\mathbf{x}_{k_j}=\mathbf{v}$. Denote $\mathbf{x}_{\ast}>0$, $\|\mathbf{x}_{\ast}\|=1$ to be the unique positive eigenvector corresponding to the unique positive eigenvalue $\lambda$ of the tensor pair $(\A,\B)$. Then $\mathbf{v}=\mathbf{x}_{\ast}$, thus $\lim\limits_{k\rightarrow\infty}\mathbf{x}_k=\mathbf{x}_{\ast}$.
\end{lemma}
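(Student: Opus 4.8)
The plan is to transcribe the proof of \Cref{lemma:v=x MTNI} to the inexact setting, carefully tracking the residual vectors $\mathbf{f}_k$; throughout we keep the standing assumption (made just before the statement) that $\{\varepsilon_k\}$ is bounded below by a constant $\varepsilon>0$. First I would collect the structural facts. From the derivation preceding \Cref{alg:IGTNI} one has $\rho_k=(1+\varepsilon_k)\overline{\rho}_k$, where $\overline{\rho}_k=\max_i(\A\mathbf{x}_k^{m-1})_i/(\B\mathbf{x}_k^{m-1})_i$; the halving procedure of \Cref{remark:choice of varepsilon IGTNI} forces $\rho_k\le\rho_{k-1}$ together with $\rho_0=1$, so $0<\rho_k\le1$. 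Since $\mathbf{x}_k>0$ and $\A\mathbf{x}_{k-1}^{m-1}+\mathbf{f}_k>0$ for $k\ge1$ (by \Cref{lemma:rho_k bounded below IGTNI} and $|\mathbf{f}_k|\le\beta_k\A\mathbf{x}_{k-1}^{m-1}$ with $\beta_k<1$), the relation $(\B-\A)\mathbf{y}_k^{m-1}=(\A\mathbf{x}_{k-1}^{m-1}+\mathbf{f}_k)+(1-\rho_{k-1})\B\mathbf{y}_k^{m-1}$ shows $(\B-\A)\mathbf{x}_k^{m-1}>0$ for all $k\ge1$. By \Cref{lemma:convergence of rho_max IGTNI}, $\{\overline{\rho}_k\}$ decreases to some $\overline{\rho}_\infty\ge\lambda$ and $\{\rho_k\}$ converges, hence $\{\varepsilon_k\}=\{\rho_k/\overline{\rho}_k-1\}$ converges, say $\varepsilon_k\to\varepsilon_\ast\ge\varepsilon$; by the argument of \Cref{lemma:convergence of rho_max IGTNI} applied to $\underline{\rho}$ (cf.\ \Cref{remark:convergence of s_max MTNI}(2)), $\{\underline{\rho}_k\}$ increases to some $\underline{\rho}_\infty\le\lambda$.

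Next I would establish the telescoping identity. Using $\rho_k\B\mathbf{y}_{k+1}^{m-1}=\A\mathbf{y}_{k+1}^{m-1}+\A\mathbf{x}_k^{m-1}+\mathbf{f}_{k+1}$, the homogeneity of $\mathbf{x}\mapsto\A\mathbf{x}^{m-1}$ and $\mathbf{x}\mapsto\B\mathbf{x}^{m-1}$, and $\rho_k=(1+\varepsilon_k)\overline{\rho}_k$, exactly the MTNI computation gives
\[
\overline{\rho}_k-\overline{\rho}_{k+1}=\overline{\rho}_k\Bigl[1-(1+\varepsilon_k)\max_i\tfrac{(\A\mathbf{y}_{k+1}^{m-1})_i}{\rho_k(\B\mathbf{y}_{k+1}^{m-1})_i}\Bigr]=\min_i\frac{\rho_k(\B\mathbf{x}_{k+1}^{m-1})_i-(1+\varepsilon_k)(\A\mathbf{x}_{k+1}^{m-1})_i}{(1+\varepsilon_k)(\B\mathbf{x}_{k+1}^{m-1})_i}\longrightarrow0,
\]
the crucial point being that the inexact data $\A\mathbf{x}_k^{m-1}+\mathbf{f}_{k+1}$ drops out of the final expression, which depends only on $\mathbf{x}_{k+1},\rho_k,\varepsilon_k$. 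An analogous computation for $\underline{\rho}_k-\underline{\rho}_{k+1}$ yields the companion identity with $\min_i$ replaced by $\max_i$, which tends to $0$ because $\{\underline{\rho}_k\}$ converges.

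Then I would pass to the limit. Let $\{\mathbf{x}_{k_j}\}$ be any convergent subsequence (it exists by compactness of the unit sphere), $\mathbf{x}_{k_j}\to\mathbf{v}$, with $k_j\ge1$; by \Cref{lemma:v>0 IGTNI}, $\mathbf{v}>0$, so $\A\mathbf{v}^{m-1}>0$ (weak irreducibility of $\A$) and $\B\mathbf{v}^{m-1}=\lim_j(\B-\A)\mathbf{x}_{k_j}^{m-1}+\A\mathbf{v}^{m-1}>0$ since $(\B-\A)\mathbf{x}_{k_j}^{m-1}>0$. Denoting $\rho=\lim_k\rho_k$ and applying the two identities above at index $k_j-1$, letting $j\to\infty$ gives $\min_i\tfrac{\rho(\B\mathbf{v}^{m-1})_i-(1+\varepsilon_\ast)(\A\mathbf{v}^{m-1})_i}{(1+\varepsilon_\ast)(\B\mathbf{v}^{m-1})_i}=\max_i(\cdots)=0$, whence $\rho\B\mathbf{v}^{m-1}=(1+\varepsilon_\ast)\A\mathbf{v}^{m-1}$, i.e.\ $\A\mathbf{v}^{m-1}=\mu\B\mathbf{v}^{m-1}$ with $\mu:=\rho/(1+\varepsilon_\ast)=\overline{\rho}_\infty\in[\lambda,1)$. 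Rewriting, $\A\mathbf{v}^{m-1}=\tfrac{\mu}{1-\mu}(\B-\A)\mathbf{v}^{m-1}$ with $\tfrac{\mu}{1-\mu}>0$, so $\mathbf{v}$ is a positive eigenvector of $(\A,\B-\A)$ with a positive eigenvalue; by \Cref{thm:uniqueness of the eigenvalue} and $\|\mathbf{v}\|=\|\mathbf{x}_\ast\|=1$, $\mathbf{v}=\mathbf{x}_\ast$ (and $\mu=\lambda$). Since every convergent subsequence has limit $\mathbf{x}_\ast$ and $\{\mathbf{x}_k\}$ lies on the compact unit sphere, $\mathbf{x}_k\to\mathbf{x}_\ast$.

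The main obstacle I anticipate is the handling of the inexact residual: because $\beta_k$ is only assumed to lie in $[0,1)$, $\mathbf{f}_k$ need not vanish, so one cannot simply take the limit in the linear system $(\rho_{k-1}\B-\A)\mathbf{y}_k^{m-1}=\A\mathbf{x}_{k-1}^{m-1}+\mathbf{f}_k$ and read off an eigen-equation. The resolution---which must be checked with care---is that $\mathbf{f}_k$ enters the iteration only through the normalized iterate $\mathbf{x}_k$ and the scalars $\overline{\rho}_k,\underline{\rho}_k$, and in particular cancels from the telescoping identity for $\overline{\rho}_k-\overline{\rho}_{k+1}$, so that the limiting relation genuinely reads $\rho\B\mathbf{v}^{m-1}=(1+\varepsilon_\ast)\A\mathbf{v}^{m-1}$. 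A secondary technical point is verifying the monotone convergence of $\{\underline{\rho}_k\}$ for \Cref{alg:IGTNI}, in parallel with \Cref{remark:convergence of s_max MTNI}(2), which supplies the ``$\max_i(\cdots)=0$'' half needed to promote the one-sided inequality to a full eigen-relation.
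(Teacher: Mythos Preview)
Your proposal is correct and follows essentially the same route as the paper, which simply states that the proof is ``similar to \Cref{lemma:v=x MTNI}''. You reproduce the key telescoping identity $\overline{\rho}_k-\overline{\rho}_{k+1}=\min_i\bigl(\rho_k(\B\mathbf{x}_{k+1}^{m-1})_i-(1+\varepsilon_k)(\A\mathbf{x}_{k+1}^{m-1})_i\bigr)/\bigl((1+\varepsilon_k)(\B\mathbf{x}_{k+1}^{m-1})_i\bigr)$, pass to the limit along a subsequence using \Cref{lemma:v>0 IGTNI}, and invoke \Cref{thm:uniqueness of the eigenvalue}---exactly the paper's template. Your explicit observation that the residual $\mathbf{f}_{k+1}$ cancels from this identity, and your tracking of the variable $\varepsilon_k\to\varepsilon_\ast$ rather than treating it as a fixed constant, are in fact more careful than the paper's own exposition.
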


The global convergence of \Cref{alg:IGTNI} is summarized in the following theorem.

\begin{theorem}\label{theorem:convergence of IGTNI}
Let $(\mathcal{A},\mathcal{B})$ be a generalized $\mathcal{M}$-tensor pair, and let $\{\rho_k\}$, $\{\mathbf{x}_k\}$, $\{\mathbf{y}_k\}$, and $\{\bar{\rho}_k\}$ be generated by \Cref{alg:IGTNI}. Then $\lambda=\lim\limits_{k\rightarrow\infty}\bar{\rho}_k$ is the unique positive generalized eigenvalue for the tensor pair $(\mathcal{A,B})$ and $\mathbf{x}_{\ast}=\lim\limits_{k\rightarrow\infty}\mathbf{x}_k$ is the corresponding positive eigenvector, i.e., $\mathcal{A}\mathbf{x}_{\ast}^{m-1}=\lambda\mathcal{B}\mathbf{x}_{\ast}^{m-1}.$
\end{theorem}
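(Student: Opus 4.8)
The plan is to follow the same template used for \Cref{theorem:convergence of MTNI}: once the accompanying lemmas for \Cref{alg:IGTNI} are in place, the theorem is an assembly step. First I would separate two regimes according to the behaviour of the halving parameters $\{\varepsilon_k\}$. If $\{\varepsilon_k\}$ is \emph{not} bounded below, then by the argument given just before \Cref{lemma:convergence of rho_max IGTNI} there is a subsequence along which $\|\mathbf{y}_{k_j}\|\to\infty$, so $(\rho_{k_j-1}\B-\A)\mathbf{x}_{k_j}^{m-1}\to 0$; passing to a further subsequence with $\mathbf{x}_{k_j}\to\mathbf{v}$ (positive by \Cref{lemma:v>0 IGTNI}) and $\rho_{k_j-1}\to\rho$ yields $(\rho\B-\A)\mathbf{v}^{m-1}=0$, and \Cref{thm:uniqueness of the eigenvalue} forces $(\mathbf{v},\rho)=(\mathbf{x}_\ast,\lambda)$; monotonicity of $\{\overline{\rho}_k\}$ then upgrades this to convergence of the whole sequences. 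Hence the substantive case is the standing assumption $\varepsilon_k\ge\varepsilon>0$, already adopted for \Cref{lemma:convergence of rho_max IGTNI,lemma:v=x IGTNI}.

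Under that assumption I would argue as follows. By \Cref{lemma:convergence of rho_max IGTNI} the sequence $\{\overline{\rho}_k\}$ is monotonically decreasing and bounded below by $\lambda$, hence convergent; write $\overline{\rho}=\lim_{k\to\infty}\overline{\rho}_k\ge\lambda$. By \Cref{lemma:v=x IGTNI}, $\mathbf{x}_k\to\mathbf{x}_\ast$, the unique positive generalized eigenvector of $(\A,\B)$, which by construction satisfies $\A\mathbf{x}_\ast^{m-1}=\lambda\B\mathbf{x}_\ast^{m-1}$; moreover $\B\mathbf{x}_\ast^{m-1}=\tfrac1\lambda\A\mathbf{x}_\ast^{m-1}>0$ because $\A$ is weakly irreducible nonnegative and $\mathbf{x}_\ast>0$. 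Since $\overline{\rho}_k=\max_i (\A\mathbf{x}_k^{m-1})_i/(\B\mathbf{x}_k^{m-1})_i$ (step 6 of \Cref{alg:IGTNI}), and both the tensor–vector products and the componentwise ratio are continuous near $\mathbf{x}_\ast$ where the denominator is strictly positive, I may pass to the limit to get $\overline{\rho}=\max_i (\A\mathbf{x}_\ast^{m-1})_i/(\B\mathbf{x}_\ast^{m-1})_i=\lambda$. This gives $\lim_{k\to\infty}\overline{\rho}_k=\lambda$ together with $\lim_{k\to\infty}\mathbf{x}_k=\mathbf{x}_\ast$ and $\A\mathbf{x}_\ast^{m-1}=\lambda\B\mathbf{x}_\ast^{m-1}$, which is exactly the claim.

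The step I expect to carry the real weight is \Cref{lemma:v=x IGTNI}, which is invoked above essentially as a black box; within it, the delicate point — and the only genuine departure from the exact MTNI analysis — is controlling the residual vectors $\mathbf{f}_k$. One rewrites the telescoping gap $\overline{\rho}_k-\overline{\rho}_{k+1}$ (and symmetrically $\underline{\rho}_{k+1}-\underline{\rho}_k$) as a $\min$ (resp.\ $\max$) of a ratio whose numerator is $\rho_k\B\mathbf{y}_{k+1}^{m-1}-(1+\varepsilon_k)(\A\mathbf{x}_k^{m-1}+\mathbf{f}_{k+1})$ and denominator $\B\mathbf{y}_{k+1}^{m-1}$, shows it tends to $0$ from convergence of $\{\overline{\rho}_k\}$, and then uses $|\mathbf{f}_{k+1}|\le\beta_{k+1}\A\mathbf{x}_k^{m-1}$ with $\beta_{k+1}\in[0,1)$ (and the normalization $\mathbf{y}_{k+1}/\|\mathbf{y}_{k+1}\|=\mathbf{x}_{k+1}$) to show that, along a convergent subsequence $\mathbf{x}_{k_j}\to\mathbf{v}>0$, the limiting identity becomes $\rho\B\mathbf{v}^{m-1}=(1+\varepsilon)\A\mathbf{v}^{m-1}$, whence \Cref{thm:uniqueness of the eigenvalue} identifies $\mathbf{v}=\mathbf{x}_\ast$ and uniqueness promotes subsequential to full convergence. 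Apart from that bookkeeping with the inexactness parameters, the proof of the theorem itself is routine.
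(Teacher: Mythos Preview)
Your proposal is correct and mirrors the paper's approach: the paper does not spell out a separate proof for this theorem but treats it as immediate from \Cref{lemma:convergence of rho_max IGTNI} and \Cref{lemma:v=x IGTNI}, exactly as \Cref{theorem:convergence of MTNI} followed from its MTNI analogues, and your write-up carries out precisely that assembly (including the case split on $\{\varepsilon_k\}$ already discussed in the paper before \Cref{lemma:convergence of rho_max IGTNI}). The only cosmetic difference is that you pass to the limit directly in $\overline{\rho}_k=\max_i(\A\mathbf{x}_k^{m-1})_i/(\B\mathbf{x}_k^{m-1})_i$, whereas the MTNI template in the paper routes through $\bar{s}_k$ first; for IGTNI your direct route is the natural one since \Cref{lemma:convergence of rho_max IGTNI} is already phrased in terms of $\overline{\rho}_k$.
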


We can also prove the linear convergence of IGTNI similar to \Cref{thm:convergence rate MTNI}.

\begin{theorem}\label{thm:convergence rate IGTNI}
Let $(\mathcal{A},\mathcal{B})$ be a generalized $\mathcal{M}$-tensor pair, and let $\{\rho_k\}$, $\{\mathbf{x}_k\}$, $\{\mathbf{y}_k\}$, and $\{\bar{\rho}_k\}$ be generated by \Cref{alg:IGTNI}. Then the convergence of the sequence $\bar{\rho}_k$ is at least linear.
\end{theorem}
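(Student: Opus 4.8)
The plan is to mirror the argument of \Cref{thm:convergence rate MTNI}, tracking the ratio $\zeta_{k+1}/\zeta_k$ with $\zeta_k=\overline{\rho}_k-\lambda$, but carrying along the residual vector $\mathbf{f}_k$ from the inexact solve. First I would record the basic identity produced by step~4 of \Cref{alg:IGTNI}: since $\rho_k=(1+\varepsilon_k)\overline{\rho}_k=(1+\varepsilon_k)\max\frac{\A\mathbf{x}_k^{m-1}}{\B\mathbf{x}_k^{m-1}}$ and $(\rho_{k-1}\B-\A)\mathbf{y}_k^{m-1}=\A\mathbf{x}_{k-1}^{m-1}+\mathbf{f}_k$, I can write $\overline{\rho}_k=\max\frac{\A\mathbf{y}_k^{m-1}}{\B\mathbf{y}_k^{m-1}}$ and, after normalization, $\overline{\rho}_k=\max\frac{\A\mathbf{x}_k^{m-1}}{\B\mathbf{x}_k^{m-1}}$ for all $k\ge1$. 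Then, exactly as in the proof of \Cref{thm:convergence rate MTNI}, I would compute $\zeta_k-\zeta_{k+1}=\overline{\rho}_k-\max\frac{\A\mathbf{x}_{k+1}^{m-1}}{\B\mathbf{x}_{k+1}^{m-1}}=\min\frac{(\overline{\rho}_k-\rho_k)\B\mathbf{y}_{k+1}^{m-1}+(\B-\A)\mathbf{x}_{k+1}^{m-1}+\mathbf{f}_{k+1}}{\B\mathbf{y}_{k+1}^{m-1}}$, using $\rho_k\B\mathbf{y}_{k+1}^{m-1}-\A\mathbf{y}_{k+1}^{m-1}=\A\mathbf{x}_k^{m-1}+\mathbf{f}_{k+1}$ to substitute.

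Next I would form the quotient $\zeta_{k+1}/\zeta_k$ and simplify. The key algebraic step is that $1-\min(\cdot)=\max(-(\cdot))$, which after substituting $\overline{\rho}_k-\rho_k=-\varepsilon_k\overline{\rho}_k$ and $\zeta_k=\overline{\rho}_k-\lambda$ should collapse the numerator to $(\A-\lambda\B)\mathbf{x}_{k+1}^{m-1}$ plus a controlled error term coming from $\mathbf{f}_{k+1}$; concretely I expect something of the form
\begin{displaymath}
\frac{\zeta_{k+1}}{\zeta_k}=\max\frac{(\A-\lambda\B)\mathbf{x}_{k+1}^{m-1}-\widetilde{\mathbf{f}}_{k+1}}{(\overline{\rho}_k-\lambda)\B\mathbf{x}_{k+1}^{m-1}},
\end{displaymath}
where $\widetilde{\mathbf{f}}_{k+1}$ is $\mathbf{f}_{k+1}$ rescaled by the normalization constant. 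By the bound $|\mathbf{f}_{k+1}|\le\beta_{k+1}\A\mathbf{x}_k^{m-1}$ with $\beta_{k+1}\in[0,1)$ together with \Cref{cor: Collatz-Wielandt} (which controls $\A\mathbf{x}_k^{m-1}$ against $\B\mathbf{x}_k^{m-1}$, hence against $\A\mathbf{x}_{k+1}^{m-1}$ and $\B\mathbf{x}_{k+1}^{m-1}$ via the convergence $\mathbf{x}_k\to\mathbf{x}_\ast$ from \Cref{theorem:convergence of IGTNI}), the error contribution is comparable to $\zeta_k$ times a factor that I can make strictly less than~$1$ in the limit. Then I would invoke $\overline{\rho}_k\ge\overline{\rho}_{k+1}\ge\lambda$ from \Cref{lemma:convergence of rho_max IGTNI}, which gives $(\overline{\rho}_k-\lambda)\B\mathbf{x}_{k+1}^{m-1}\ge(\overline{\rho}_{k+1}-\lambda)\B\mathbf{x}_{k+1}^{m-1}\ge(\A-\lambda\B)\mathbf{x}_{k+1}^{m-1}$ componentwise (the last inequality because $\overline{\rho}_{k+1}\B\mathbf{x}_{k+1}^{m-1}\ge\A\mathbf{x}_{k+1}^{m-1}$ by definition of $\overline{\rho}_{k+1}$ as a max), so the principal term of the quotient is bounded by~$1$ and $\limsup_{k\to\infty}\zeta_{k+1}/\zeta_k\le1$, with the strict gap supplied by the fixed $\varepsilon>0$ (recall $\rho_k=(1+\varepsilon_k)\overline{\rho}_k$ with $\varepsilon_k$ bounded below by $\varepsilon$ under the standing assumption), yielding at-least-linear convergence.

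The main obstacle I anticipate is handling the residual term $\mathbf{f}_{k+1}$ cleanly: unlike the exact case, the numerator no longer equals $(\A-\lambda\B)\mathbf{x}_{k+1}^{m-1}$ on the nose, and one must verify that the perturbation does not destroy the sign structure that makes the $\min$/$\max$ manipulations valid (in particular that $\B\mathbf{y}_{k+1}^{m-1}>0$ and $(\B-\A)\mathbf{x}_{k+1}^{m-1}+\mathbf{f}_{k+1}>0$, which follows from $|\mathbf{f}_{k+1}|\le\beta_{k+1}\A\mathbf{x}_k^{m-1}$ with $\beta_{k+1}<1$ as in the proof of \Cref{lemma:rho_k bounded below IGTNI}). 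A secondary technical point is that the convergence factor now involves both the spectral-gap quantity $\max\frac{(\A-\lambda\B)\mathbf{x}_\ast^{m-1}}{(\A-\lambda\B)\mathbf{x}_\ast^{m-1}}$-type ratio and the inner-tolerance parameters $\beta_k$; if the $\beta_k$ are merely bounded away from~$1$ one still gets linear convergence, but the rate constant degrades, so I would state the result with that caveat. Modulo these bookkeeping issues the argument is a direct transcription of \Cref{thm:convergence rate MTNI}.
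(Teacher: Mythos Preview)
Your approach is exactly what the paper does (it simply says ``similar to \Cref{thm:convergence rate MTNI}''), and the overall plan of tracking $\zeta_k=\overline{\rho}_k-\lambda$ and bounding $\zeta_{k+1}/\zeta_k$ via the monotonicity $\overline{\rho}_k\ge\overline{\rho}_{k+1}$ is correct.

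Two remarks. First, there is a slip in your displayed formula for $\zeta_k-\zeta_{k+1}$: after substituting $\rho_k\B\mathbf{y}_{k+1}^{m-1}-\A\mathbf{y}_{k+1}^{m-1}=\A\mathbf{x}_k^{m-1}+\mathbf{f}_{k+1}$ the numerator is $(\overline{\rho}_k-\rho_k)\B\mathbf{y}_{k+1}^{m-1}+\A\mathbf{x}_k^{m-1}+\mathbf{f}_{k+1}$, not the $(\B-\A)\mathbf{x}_{k+1}^{m-1}$ term you wrote (that term belongs to the MTNI equation, not the IGTNI one). Second, and more importantly, the ``main obstacle'' you anticipate does not occur: once you form $\zeta_{k+1}/\zeta_k$ and simplify, the numerator becomes $(\rho_k-\lambda)\B\mathbf{y}_{k+1}^{m-1}-\A\mathbf{x}_k^{m-1}-\mathbf{f}_{k+1}$, and since $\A\mathbf{x}_k^{m-1}+\mathbf{f}_{k+1}=(\rho_k\B-\A)\mathbf{y}_{k+1}^{m-1}$ this collapses exactly to $(\A-\lambda\B)\mathbf{y}_{k+1}^{m-1}$. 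The residual $\mathbf{f}_{k+1}$ cancels completely, so you recover the identical formula
\[
\frac{\zeta_{k+1}}{\zeta_k}=\max\frac{(\A-\lambda\B)\mathbf{x}_{k+1}^{m-1}}{(\overline{\rho}_k-\lambda)\B\mathbf{x}_{k+1}^{m-1}}
\]
as in the MTNI case, and the rest of the argument goes through verbatim with no error term $\widetilde{\mathbf{f}}_{k+1}$ to track and no dependence on the $\beta_k$.
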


\begin{remark}\label{remark:convergence of GTNI}
Obviously, the conclusions in this section also hold for \Cref{alg:GTNI} if we let $\mathbf{f}_k=0$.
\end{remark}

\subsection{The convergence of GNNI}

Finally, we give the convergence analysis for \Cref{alg:GNNI}. First, we need to discuss how to choose a proper $\theta_k$.

\begin{lemma}\label{lemma:wk(yk-xk) bounded}
Assume that the sequences $\{\rho_k\}$, $\{\mathbf{x}_k\}$, and $\{\mathbf{y}_k\}$ are generated by \Cref{alg:GNNI}, with $\{\rho_k\}$ bounded. Then the sequence $\{\|\mathbf{w}_k\|\|\mathbf{y}_k-\mathbf{x}_k\|\}$ is bounded, that is, there is a constant $\alpha_1>0$ such that $\|\mathbf{w}_k\|\|\mathbf{y}_k-\mathbf{x}_k\|\leq\alpha_1$ for all $k$.
\end{lemma}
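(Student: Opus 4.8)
Set $M_k:=(\rho_k\B-\A)\mathbf{x}_k^{m-2}$. By \Cref{thm:(mu B-A)x is also M-matrix}, $\mathbf{x}_k>0$ and $M_k$ is a nonsingular $\mathcal{M}$-matrix for all $k$, and since $\A$ is weakly irreducible and $\mathbf{x}_k>0$ it is in fact irreducible. Step~5 of \Cref{alg:GNNI} is $(m-1)M_k\mathbf{w}_k=\B\mathbf{x}_k^{m-1}$, and $\B\mathbf{x}_k^{m-1}=(\B-\A)\mathbf{x}_k^{m-1}+\A\mathbf{x}_k^{m-1}>0$ by \Cref{remark:choice of theta k 1}, so $\mathbf{w}_k=\tfrac{1}{m-1}M_k^{-1}\B\mathbf{x}_k^{m-1}>0$. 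As $\|\mathbf{x}_k\|=\|\mathbf{y}_k\|=1$ one always has $\|\mathbf{w}_k\|\,\|\mathbf{y}_k-\mathbf{x}_k\|\le 2\|\mathbf{w}_k\|$, so the statement is immediate unless $\{\|\mathbf{w}_k\|\}$ is unbounded, which can only happen when $M_k$ becomes near-singular, i.e.\ as $(\mathbf{x}_k,\rho_k)$ approaches the Perron pair $(\mathbf{x}_\ast,\lambda)$. (Starting from the equivalent identity $(m-1)\rho_k\mathbf{w}_k=\mathbf{x}_k+M_k^{-1}\A\mathbf{x}_k^{m-1}$ reduces the claim, after normalization, to boundedness of $\|M_k^{-1}\A\mathbf{x}_k^{m-1}-(\|\mathbf{x}_k+M_k^{-1}\A\mathbf{x}_k^{m-1}\|-1)\mathbf{x}_k\|$; this only relocates the cancellation, so I would argue differently.)

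Suppose the sequence is unbounded; along a subsequence it diverges, forcing $\|\mathbf{w}_{k_j}\|\to\infty$. Since $\lambda\le\rho_k\le\rho_{\max}:=\sup_k\rho_k<\infty$ (by \Cref{cor: Collatz-Wielandt}, as $(\B-\A)\mathbf{x}_k^{m-1}>0$, together with the hypothesis) and $\mathbf{x}_k,\mathbf{y}_k$ lie on the unit sphere, pass to a further subsequence with $\rho_{k_j}\to\rho$, $\mathbf{x}_{k_j}\to\mathbf{v}$, $\mathbf{y}_{k_j}\to\mathbf{y}$, and $\mathbf{q}_{k_j}\to\mathbf{q}_\infty$, where $\mathbf{q}_{k_j}>0$ is the unit left Perron vector of $M_{k_j}$ and $\nu_{k_j}>0$ the corresponding smallest eigenvalue. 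Dividing $(m-1)M_{k_j}\mathbf{w}_{k_j}=\B\mathbf{x}_{k_j}^{m-1}$ by $\|\mathbf{w}_{k_j}\|$ gives $M_\infty\mathbf{y}=0$ with $M_\infty:=(\rho\B-\A)\mathbf{v}^{m-2}$, so $M_\infty$ is singular and $\nu_{k_j}\to 0$. By an argument analogous to \Cref{lemma:v>0 MTNI} (which uses only $\rho_k\in[\lambda,\rho_{\max}]$), $\mathbf{v}>0$, so $(\B-\A)\mathbf{v}^{m-2}$ is a nonsingular $\mathcal{M}$-matrix; feeding this into the identity $(m-1)(\B-\A)\mathbf{x}_k^{m-2}\mathbf{w}_k=\B\mathbf{x}_k^{m-1}+(m-1)(1-\rho_k)(\B\mathbf{x}_k^{m-2})\mathbf{w}_k$ shows $\rho<1$, for otherwise $\|\mathbf{w}_{k_j}\|$ would remain bounded. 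With $\mathbf{v}>0$ and $\rho<1$ the off-diagonal support of $M_\infty$ still contains the strongly connected digraph of $\A$, so $M_\infty$ is an irreducible singular $\mathcal{M}$-matrix: its zero eigenvalue is simple and $\mathbf{q}_\infty>0$.

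Put $\mathbf{g}_k:=M_k\mathbf{x}_k=\rho_k\B\mathbf{x}_k^{m-1}-\A\mathbf{x}_k^{m-1}$. Because $\rho_k=\overline{\rho}_k=\max_i(\A\mathbf{x}_k^{m-1})_i/(\B\mathbf{x}_k^{m-1})_i$ we have $\mathbf{g}_k\ge 0$, and the left-eigenvector identity $\mathbf{q}_k^\top\mathbf{g}_k=\nu_k\,\mathbf{q}_k^\top\mathbf{x}_k\le\nu_k$ together with $\mathbf{g}_k\ge 0$ gives $\|\mathbf{g}_k\|_1\le\nu_k/\min_i(\mathbf{q}_k)_i$; as $\min_i(\mathbf{q}_{k_j})_i\to\min_i(\mathbf{q}_\infty)_i>0$, this yields $\|\mathbf{g}_{k_j}\|=O(\nu_{k_j})$ (and, in the limit, $M_\infty\mathbf{v}=0$, so $\rho=\lambda$, $\mathbf{v}=\mathbf{x}_\ast$ by \Cref{thm:uniqueness of the eigenvalue}). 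Let $P_{k_j}$ be the spectral projection of $M_{k_j}$ onto its $\nu_{k_j}$-eigenspace and $\mathbf{p}_{k_j}>0$ the corresponding unit eigenvector; since $0$ is a simple eigenvalue of $M_\infty$, for large $j$ the $P_{k_j}$ are uniformly bounded and $M_{k_j}$ has a uniformly bounded inverse on $\operatorname{range}(I-P_{k_j})$. Hence $\|(I-P_{k_j})\mathbf{x}_{k_j}\|=\big\|\big(M_{k_j}|_{\operatorname{range}(I-P_{k_j})}\big)^{-1}(I-P_{k_j})\mathbf{g}_{k_j}\big\|=O(\nu_{k_j})$, so $\mathbf{x}_{k_j}=\mathbf{p}_{k_j}+O(\nu_{k_j})$; likewise $\mathbf{w}_{k_j}=\tfrac{1}{(m-1)\nu_{k_j}}P_{k_j}\B\mathbf{x}_{k_j}^{m-1}+O(1)$ is a positive multiple of $\mathbf{p}_{k_j}$ of size $\Theta(1/\nu_{k_j})$ plus an $O(1)$ term, so $\|\mathbf{w}_{k_j}\|=\Theta(1/\nu_{k_j})$ and $\mathbf{y}_{k_j}=\mathbf{w}_{k_j}/\|\mathbf{w}_{k_j}\|=\mathbf{p}_{k_j}+O(\nu_{k_j})$. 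Therefore $\|\mathbf{y}_{k_j}-\mathbf{x}_{k_j}\|=O(\nu_{k_j})$ and $\|\mathbf{w}_{k_j}\|\,\|\mathbf{y}_{k_j}-\mathbf{x}_{k_j}\|=O(1)$, contradicting its divergence.

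The main obstacle, as I see it, is exactly the perturbation estimate of the third paragraph: one must show that the direction along which $\mathbf{w}_k$ blows up (the smallest-eigenvalue eigenvector of $M_k$) lies within $O(\nu_k)$ of $\mathbf{x}_k$, so that the $\Theta(1/\nu_k)$ growth of $\|\mathbf{w}_k\|$ is precisely offset by the $O(\nu_k)$ smallness of $\mathbf{y}_k-\mathbf{x}_k$. This hinges on the bound $\|M_k\mathbf{x}_k\|=O(\nu_k)$, which in turn uses two features special to \Cref{alg:GNNI}: that $\rho_k$ is chosen as the Collatz--Wielandt upper bound $\overline{\rho}_k$ (making $M_k\mathbf{x}_k\ge 0$), and that the positive left Perron vector of the irreducible $\mathcal{M}$-matrix $M_k$ stays bounded away from zero near $(\mathbf{x}_\ast,\lambda)$.
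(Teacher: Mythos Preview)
Your argument is correct, but it takes a genuinely different and considerably more quantitative route than the paper's. The paper's proof is a short ``soft'' contradiction: from $\mathbf{J_xr}(\mathbf{x}_k,\rho_k)\mathbf{x}_k=(m-1)\mathbf{r}(\mathbf{x}_k,\rho_k)\ge 0$ it obtains the single inequality
\[
\mathbf{J_xr}(\mathbf{x}_k,\rho_k)\,\mathbf{p}_k\;\le\;\frac{\B\mathbf{x}_k^{m-1}}{\|\mathbf{w}_k\|\,\|\mathbf{y}_k-\mathbf{x}_k\|},\qquad \mathbf{p}_k:=\frac{\mathbf{y}_k-\mathbf{x}_k}{\|\mathbf{y}_k-\mathbf{x}_k\|},
\]
passes to a subsequential limit (showing, as you do, that $\mathbf{v}>0$ and that the limit matrix is the irreducible singular $\mathcal{M}$-matrix $\mathbf{J_xr}(\mathbf{x}_\ast,\lambda)$), and concludes $\mathbf{J_xr}(\mathbf{x}_\ast,\lambda)\mathbf{p}=0$, whence $\mathbf{p}=\pm\mathbf{x}_\ast$---contradicting the fact that $\mathbf{p}_k=(\mathbf{y}_k-\mathbf{x}_k)/\|\mathbf{y}_k-\mathbf{x}_k\|$ has mixed signs. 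No spectral projections, no rates.

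Your argument instead makes the cancellation explicit: the left-Perron-vector bound $\|M_k\mathbf{x}_k\|_1\le \nu_k/\min_i(\mathbf{q}_k)_i$ and spectral perturbation near the simple eigenvalue $0$ of $M_\infty$ pin down both $\mathbf{x}_{k_j}$ and $\mathbf{y}_{k_j}$ as $\mathbf{p}_{k_j}+O(\nu_{k_j})$, yielding $\|\mathbf{w}_{k_j}\|\,\|\mathbf{y}_{k_j}-\mathbf{x}_{k_j}\|=\Theta(1/\nu_{k_j})\cdot O(\nu_{k_j})=O(1)$. This is heavier machinery (simplicity of the zero eigenvalue, continuity of spectral projections, uniform invertibility on the complementary subspace), but it gives more information than mere boundedness and makes transparent the mechanism the paper later exploits in \Cref{thm:convergence rate 3 GNNI}. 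Two minor remarks: you need not argue $\rho<1$ separately, since \Cref{thm:(mu B-A)x is also M-matrix} already gives $\rho_k<1$ for sequences produced by \Cref{alg:GNNI}; and the step $\mathbf{q}_\infty>0$ should be justified (as you implicitly do) by noting that any subsequential limit of $\mathbf{q}_{k_j}$ is a nonnegative unit left null vector of the irreducible singular $\mathcal{M}$-matrix $M_\infty$, hence the Perron vector.
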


\begin{proof}
By the definition of $\mathbf{J_xr}(\mathbf{x}_k,\rho_k)$, we have
\begin{displaymath}
\mathbf{J_xr}(\mathbf{x}_k,\rho_k)\mathbf{y}_k-\mathbf{J_xr}(\mathbf{x}_k,\rho_k)(\mathbf{y}_k-\mathbf{x}_k)=\mathbf{J_xr}(\mathbf{x}_k,\rho_k)\mathbf{x}_k=(m-1)\mathbf{r}(\mathbf{x}_k,\rho_k)\geq0,
\end{displaymath}
thus $\mathbf{J_xr}(\mathbf{x}_k,\rho_k)(\mathbf{y}_k-\mathbf{x}_k)\leq \mathbf{J_xr}(\mathbf{x}_k,\rho_k)\mathbf{y}_k$. Assume $\mathbf{x}_k\neq\mathbf{y}_k$ and then $\|\mathbf{y}_k-\mathbf{x}_k\|\neq0$. Let $\mathbf{p}_k=(\mathbf{y}_k-\mathbf{x}_k)/\|\mathbf{y}_k-\mathbf{x}_k\|$, then $\|\mathbf{p}_k\|=1$ and
\begin{equation}\label{equ:|wk||yk-xk| bounded}
\mathbf{J_xr}(\mathbf{x}_k,\rho_k)\mathbf{p}_k\leq\frac{\B\mathbf{x}_k^{m-1}}{\|\mathbf{w}_k\|\|\mathbf{y}_k-\mathbf{x}_k\|}.
\end{equation}
Suppose that $\{\|\mathbf{w}_k\|\|\mathbf{y}_k-\mathbf{x}_k\|\}$ is not bounded. Since $\{\mathbf{x}_k\}$, $\{\mathbf{y}_k\}$, $\{\rho_k\}$, and $\{\mathbf{p}_k\}$ are bounded, we can find a subsequence $\{k_j\}$ such that $\lim\limits_{j\rightarrow\infty}\|\mathbf{w}_{k_j}\|\|\mathbf{y}_{k_j}-\mathbf{x}_{k_j}\|=\infty$ with $\lim\limits_{j\rightarrow\infty}\|\mathbf{w}_{k_j}\|=\infty$, $\lim\limits_{j\rightarrow\infty}\mathbf{x}_{k_j}=\mathbf{v}$, $\lim\limits_{j\rightarrow\infty}\rho_{k_j}=\rho$, and $\lim\limits_{j\rightarrow\infty}\mathbf{p}_{k_j}=\mathbf{p}$. Similar to \Cref{lemma:v>0 MTNI}, we have $\mathbf{v}>0$. Since $\rho_{k_j}\B\mathbf{x}_{k_j}^{m-1}\geq\A\mathbf{x}_{k_j}^{m-1}$ for all $j$, let $j$ approach infinity, we have $\rho\B\mathbf{v}^{m-1}\geq\A\mathbf{v}^{m-1}$.

Consider the matrix pair $(\A\mathbf{v}^{m-2},\B\mathbf{v}^{m-2})$. We can easily see that $\A\mathbf{v}^{m-2}$ satisfies conditions $\mathrm{(C1)}$ and $\mathrm{(C2)}$. Besides, $(\B\mathbf{v}^{m-2})_{ij}\leq(\A\mathbf{v}^{m-2})_{ij}, \forall i\neq j$, hence it also satisfies condition $\mathrm{(C4)}$. For condition $\mathrm{(C3)}$, note that $\rho_0<1$ and sequence $\{\rho_k\}$ is monotonically decreasing and thus $(\B\mathbf{v}^{m-2})\mathbf{v}>\rho\B\mathbf{v}^{m-1}\geq(\A\mathbf{v}^{m-2})\mathbf{v}$. Thus the matrix pair $(\A\mathbf{v}^{m-2},\B\mathbf{v}^{m-2})$ satisfies conditions $\mathrm{(C1)-(C4)}$.

According to \cite[Theorem 2.3]{BOD95}, this matrix pair has a unique positive generalized eigenvalue, we denote it as $\rho(\A\mathbf{v}^{m-2},\B\mathbf{v}^{m-2})$. Besides, we have $\rho=\max\frac{(\A\mathbf{v}^{m-2})\mathbf{v}}{(\B\mathbf{v}^{m-2})\mathbf{v}}\geq\rho(\A\mathbf{v}^{m-2},\B\mathbf{v}^{m-2})$.

If $\rho>\rho(\A\mathbf{v}^{m-2},\B\mathbf{v}^{m-2})$, then $(\rho\B-\A)\mathbf{v}^{m-2}$ is a nonsingular M-matrix, by step 5 in \Cref{alg:GNNI} we have $\lim\limits_{j\rightarrow\infty}\mathbf{w}_{k_j}=\mathbf{w}>0$, which leads to a contradiction. Thus $\rho=\rho(\A\mathbf{v}^{m-2},\B\mathbf{v}^{m-2})$. By a similar analysis as \cite[Theorem 3.2]{BOD95}, we can see that this equality holds if and only if $\max\frac{(\A\mathbf{v}^{m-2})\mathbf{v}}{(\B\mathbf{v}^{m-2})\mathbf{v}}=\min\frac{(\A\mathbf{v}^{m-2})\mathbf{v}}{(\B\mathbf{v}^{m-2})\mathbf{v}}$and hence $\rho\B\mathbf{v}^{m-1}=\A\mathbf{v}^{m-1}$. By \Cref{thm:uniqueness of the eigenvalue}, $\rho$ is the unique positive generalized eigenvalue for the tensor pair $(\A,\B)$ and $\mathbf{v}$ is the corresponding unique positive eigenvector, that is, $\rho=\lambda$ and $\mathbf{v}=\mathbf{x}_{\ast}$. Then by \eqref{equ:|wk||yk-xk| bounded} we have $\lim\limits_{j\rightarrow\infty}\mathbf{J_xr}(\mathbf{x}_{k_j},\rho_{k_j})\mathbf{p}_{k_j}=\mathbf{J_xr}(\mathbf{x},\lambda)\mathbf{p}\leq0$. On the other hand, $\mathbf{J_xr}(\mathbf{x}_{k_j},\rho_{k_j})\mathbf{p}_{k_j}>0$ for all $j$. Thus $\mathbf{J_xr}(\mathbf{x},\lambda)\mathbf{p}=0$. That is, $\A\mathbf{x}^{m-2}\mathbf{p}=\lambda\B\mathbf{x}^{m-2}\mathbf{p}.$

However, we know that $\A\mathbf{x}^{m-2}\mathbf{x}=\lambda\B\mathbf{x}^{m-2}\mathbf{x}$. Hence by \Cref{thm:uniqueness of the eigenvalue}, we have $\mathbf{p}=\pm\mathbf{x}$. Recall the definition of $\mathbf{p}_k$, since $\|\mathbf{x}_k\|=\|\mathbf{y}_k\|=1$ and $\mathbf{x}_k,\mathbf{y}_k>0$, the elements of $\mathbf{p}_k$ cannot be all nonnegative or all non-positive and then $\mathbf{p}$ is neither positive nor negative. This leads to a contradiction. Therefore, the sequence $\{\|\mathbf{w}_k\|\|\mathbf{y}_k-\mathbf{x}_k\|\}$ is bounded.
\end{proof}

As mentioned in \Cref{remark:choice of theta k 1}, we prove that proper $\{\theta_k\}$ can be found such that the sequence $\{\rho_k\}$ is monotonically decreasing. We begin from the following equation
\begin{equation}\label{equ:rhok - rhok+1}
\overline{\rho}_k-\overline{\rho}_{k+1}=\overline{\rho}_k-\max\frac{\A\mathbf{x}_{k+1}^{m-1}}{\B\mathbf{x}_{k+1}^{m-1}}=\overline{\rho}_k-\max\frac{\A\tilde{\mathbf{x}}_{k+1}^{m-1}}{\B\tilde{\mathbf{x}}_{k+1}^{m-1}}=\min\frac{\mathbf{r}(\tilde{\mathbf{x}}_{k+1},\overline{\rho}_k)}{\B\tilde{\mathbf{x}}_{k+1}^{m-1}}.
\end{equation}
Denote $\mathbf{h}_k(\theta)=\mathbf{r}\big((m-2)\mathbf{x}_k+\theta\mathbf{y}_k,\overline{\rho}_k\big)$, then we have $\mathbf{r}(\tilde{\mathbf{x}}_{k+1},\overline{\rho}_k)=\mathbf{h}_k(\theta_k)$.

\begin{lemma}\label{lemma:choice of theta 1}
For any given constant $\eta>0$, there are scalars $\theta_k\in(0,1]$ such that
\begin{equation}\label{equ:h(theta)}
\mathbf{h}_k(\theta_k)\geq\frac{\theta_k\B\mathbf{x}_k^{m-1}}{(1+\eta)\|\mathbf{w}_k\|}.
\end{equation}
\end{lemma}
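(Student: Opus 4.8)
The plan is to view $\mathbf{h}_k(\theta)$ as a vector-valued polynomial in $\theta$ of degree $m-1$ and to force \eqref{equ:h(theta)} by controlling $\mathbf{h}_k$ near $\theta=0$ through its value and its first derivative there. First I would record what the surrounding induction supplies: by \Cref{alg:GNNI} we have $\rho_k=\overline{\rho}_k=\max_i(\A\mathbf{x}_k^{m-1})_i/(\B\mathbf{x}_k^{m-1})_i$, so that $\mathbf{r}(\mathbf{x}_k,\rho_k)=(\rho_k\B-\A)\mathbf{x}_k^{m-1}\ge 0$ componentwise; and by \Cref{thm:(mu B-A)x is also M-matrix} together with the choice of $\theta_{k-1}$ in \Cref{remark:choice of theta k 1} we have $\mathbf{x}_k>0$, $\mathbf{w}_k>0$, and $(\B-\A)\mathbf{x}_k^{m-1}>0$, whence $\B\mathbf{x}_k^{m-1}=(\B-\A)\mathbf{x}_k^{m-1}+\A\mathbf{x}_k^{m-1}>0$ and $\|\mathbf{w}_k\|>0$.

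Next I would compute $\mathbf{h}_k(0)$ and $\mathbf{h}_k'(0)$. From $\mathbf{h}_k(\theta)=(\rho_k\B-\A)\big((m-2)\mathbf{x}_k+\theta\mathbf{y}_k\big)^{m-1}$ one gets $\mathbf{h}_k(0)=(m-2)^{m-1}(\rho_k\B-\A)\mathbf{x}_k^{m-1}=(m-2)^{m-1}\mathbf{r}(\mathbf{x}_k,\rho_k)\ge 0$, and differentiating the (semisymmetric) multilinear form at $\mathbf{u}=(m-2)\mathbf{x}_k$ in the direction $\mathbf{y}_k$ gives $\mathbf{h}_k'(0)=(m-1)(m-2)^{m-2}(\rho_k\B-\A)\mathbf{x}_k^{m-2}\mathbf{y}_k=(m-2)^{m-2}\mathbf{J_xr}(\mathbf{x}_k,\rho_k)\mathbf{y}_k$ by \eqref{equ:definition of Jxr}. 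Now I would invoke step 5 of \Cref{alg:GNNI}, $\mathbf{J_xr}(\mathbf{x}_k,\rho_k)\mathbf{w}_k=\B\mathbf{x}_k^{m-1}$, together with $\mathbf{y}_k=\mathbf{w}_k/\|\mathbf{w}_k\|$, to conclude $\mathbf{h}_k'(0)=(m-2)^{m-2}\B\mathbf{x}_k^{m-1}/\|\mathbf{w}_k\|$, which is strictly positive componentwise.

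Then I would run a componentwise continuity argument on a closed interval. For each $i$, set $\psi_i(\theta)=\big((\mathbf{h}_k(\theta))_i-(\mathbf{h}_k(0))_i\big)/\theta$ for $\theta\in(0,1]$ and $\psi_i(0)=(\mathbf{h}_k'(0))_i$; then $\psi_i$ is a polynomial, hence continuous on $[0,1]$, and $(\mathbf{h}_k(\theta))_i=(\mathbf{h}_k(0))_i+\theta\psi_i(\theta)$. Writing $b_i=(\B\mathbf{x}_k^{m-1})_i>0$, we have $\psi_i(0)=(m-2)^{m-2}b_i/\|\mathbf{w}_k\|$, and since $m\ge 3$ forces $(m-2)^{m-2}\ge 1>1/(1+\eta)$, it follows that $\psi_i(0)>b_i/\big((1+\eta)\|\mathbf{w}_k\|\big)$. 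By continuity there is $\theta_i^\ast\in(0,1]$ with $\psi_i(\theta)\ge b_i/\big((1+\eta)\|\mathbf{w}_k\|\big)$ for all $\theta\in[0,\theta_i^\ast]$. Taking $\theta_k=\min_i\theta_i^\ast\in(0,1]$ and using $(\mathbf{h}_k(0))_i\ge 0$ yields $(\mathbf{h}_k(\theta_k))_i=(\mathbf{h}_k(0))_i+\theta_k\psi_i(\theta_k)\ge\theta_k b_i/\big((1+\eta)\|\mathbf{w}_k\|\big)$ for every $i$, which is exactly \eqref{equ:h(theta)}.

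The step I expect to be the main obstacle is the derivative computation: correctly differentiating $(\rho_k\B-\A)\mathbf{u}^{m-1}$ at $\mathbf{u}=(m-2)\mathbf{x}_k$ along $\mathbf{y}_k$, pinning down the constant $(m-2)^{m-2}$, and matching it against the Jacobian identity \eqref{equ:definition of Jxr} and step 5 of the algorithm; everything after that is a routine closed-interval continuity estimate. I would also flag two standing points: the argument uses $m\ge 3$ (so that $(m-2)\mathbf{x}_k$ is a genuine positive vector and no $0^0$ appears), and the positivity $\B\mathbf{x}_k^{m-1}>0$ invoked throughout is inherited from the induction hypothesis of the convergence proof in which this lemma is applied.
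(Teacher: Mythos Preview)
Your proposal is correct and follows essentially the same approach as the paper: both compute $\mathbf{h}_k(0)=(m-2)^{m-1}\mathbf{r}(\mathbf{x}_k,\rho_k)\ge 0$ and $\mathbf{h}_k'(0)=(m-2)^{m-2}\B\mathbf{x}_k^{m-1}/\|\mathbf{w}_k\|$, then use $(m-2)^{m-2}>1/(1+\eta)$ together with continuity to find a small admissible $\theta_k$. The paper packages the continuity step by setting $\mathbf{g}_k(\theta)=\mathbf{h}_k(\theta)-\theta\B\mathbf{x}_k^{m-1}/((1+\eta)\|\mathbf{w}_k\|)$ and noting $\mathbf{g}_k(0)\ge 0$, $\mathbf{g}_k'(0)>0$, whereas you use the componentwise divided difference $\psi_i$; these are equivalent. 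The one point you flag but do not treat is $m=2$: the paper handles it separately by checking directly that $\mathbf{h}_k(1)=(\overline\rho_k\B-\A)\mathbf{y}_k=\B\mathbf{x}_k/\|\mathbf{w}_k\|\ge \B\mathbf{x}_k/((1+\eta)\|\mathbf{w}_k\|)$, so $\theta_k=1$ works.
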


\begin{proof}
For $m=2$, \eqref{equ:h(theta)} holds for $\theta_k=1$ since
$$\mathbf{h}_k(1)=(\overline{\rho}_k\B-\A)\mathbf{y}_k=\frac{\B\mathbf{x}_k}{\|\mathbf{w}_k\|}\geq\frac{\B\mathbf{x}_k}{(1+\eta)\|\mathbf{w}_k\|}.$$
For $m\geq3$, let
\begin{equation}\label{equ:g(theta)}
\mathbf{g}_k(\theta)=\mathbf{h}_k(\theta)-\frac{\theta\B\mathbf{x}_k^{m-1}}{(1+\eta)\|\mathbf{w}_k\|}.
\end{equation}
Then $\mathbf{g}_k(0)=\mathbf{r}\big((m-2)\mathbf{x}_k,\overline{\rho}_k\big)=(m-2)^{m-1}(\overline{\rho}_k\B-\A)\mathbf{x}_k^{m-1}\geq0$ and
\begin{displaymath}
\begin{aligned}
\mathbf{g}_k'(0)&=(m-1)(\overline{\rho}_k\B-\A)\big((m-2)\mathbf{x}_k\big)^{m-2}\mathbf{y}_k-\frac{\B\mathbf{x}_k^{m-1}}{(1+\eta)\|\mathbf{w}_k\|}\\
&=(m-2)^{m-2}\mathbf{J}_\mathbf{x}\mathbf{r}(\mathbf{x}_k,\overline{\rho}_k)\mathbf{y}_k-\frac{\B\mathbf{x}_k^{m-1}}{(1+\eta)\|\mathbf{w}_k\|}\\
&=\left[(m-2)^{m-2}-\frac{1}{1+\eta}\right]\frac{\B\mathbf{x}_k^{m-1}}{\|\mathbf{w}_k\|}>0.
\end{aligned}
\end{displaymath}
Hence, there are scalars $\theta_k\in(0,1]$ such that $\mathbf{g}_k(\theta_k)\geq0$.
\end{proof}

According to \Cref{lemma:choice of theta 1} we can choose $\theta_k\in(0,1]$ in \Cref{alg:GNNI} such that the sequence $\{\overline{\rho}_k\}$ is strictly decreasing. Besides, from the proof of \Cref{lemma:choice of theta 1}, we take $\eta=0$ if $m\geq4$. Similar to \cite{LGL}, we have the following analysis. Using Taylor's Expansion, we have
\begin{equation}\label{equ:Taylor Expansion of h(theta)}
\begin{aligned}
\mathbf{h}_k(\theta_k)&=\mathbf{r}(\tilde{\mathbf{x}}_{k+1},\rho_k)\\
&=\mathbf{r}\big((m-1)\mathbf{x}_k,\rho_k\big)+\mathbf{J_xr}\big((m-1)\mathbf{x}_k,\rho_k\big)(\theta_k\mathbf{y}_k-\mathbf{x}_k)+\mathbf{R}(\theta_k\mathbf{y}_k,\mathbf{x}_k,\rho_k),\\
\end{aligned}
\end{equation}
where for some constant $\alpha_2$ (independent of $k$)
\begin{equation}\label{equ:Remainder of Taylor of h(theta)}
\|\mathbf{R}(\theta_k\mathbf{y}_k,\mathbf{x}_k,\rho_k)\|\leq\alpha_2\|\theta_k\mathbf{y}_k-\mathbf{x}_k\|^2.
\end{equation}
Besides,
\begin{displaymath}
\begin{aligned}
\mathbf{J}_{\mathbf{x}}\mathbf{r}((m-1)\mathbf{x}_k,\rho_k)&=(m-1)^{m-2}\mathbf{J}_{\mathbf{x}}\mathbf{r}(\mathbf{x}_k,\rho_k)\mathbf{x}_k\\
&=(m-1)^{m-2}\mathbf{r}(\mathbf{x}_k,\rho_k)=\mathbf{r}((m-1)\mathbf{x}_k,\rho_k).\\
\end{aligned}
\end{displaymath}
Then by \eqref{equ:Taylor Expansion of h(theta)}
\begin{equation}\label{equ:Taylor h(theta) 2}
\begin{aligned}
\mathbf{h}_k(\theta_k)&=(m-1)^{m-2}\mathbf{J}_{\mathbf{x}}\mathbf{r}(\mathbf{x}_k,\rho_k)\theta_k\mathbf{y}_k+\mathbf{R}(\theta_k\mathbf{y}_k,\mathbf{x}_k,\rho_k)\\
&=(m-1)^{m-2}\frac{\theta_k\B\mathbf{x}_k^{m-1}}{\|\mathbf{w}_k\|}+\mathbf{R}(\theta_k\mathbf{y}_k,\mathbf{x}_k,\rho_k).\\
\end{aligned}
\end{equation}

\begin{lemma}\label{lemma:choice of theta 2}
Let the sequence $\{\overline{\rho}_k,\mathbf{x}_k,\mathbf{y}_k\}$ be generated by \Cref{alg:GNNI}, with $\{\overline{\rho}_k\}$ bounded. Assume that $\|\mathbf{y}_k-\mathbf{x}_k\|\leq\frac{\min\big((\B-\A)\mathbf{x}_k^{m-1}\big)}{\alpha_1\alpha_2}$, where $\alpha_1$ and $\alpha_2$ are as in \eqref{equ:|wk||yk-xk| bounded} and \eqref{equ:Remainder of Taylor of h(theta)}. Then \eqref{equ:h(theta)} holds with $\theta_k=1$.
\end{lemma}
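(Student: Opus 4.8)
The plan is to read off $\mathbf{h}_k(1)$ from the Taylor expansion already established in \eqref{equ:Taylor h(theta) 2} and then show that the remainder is negligible under the stated hypothesis. Setting $\theta_k=1$ in \eqref{equ:Taylor h(theta) 2} and \eqref{equ:Remainder of Taylor of h(theta)} gives
$$\mathbf{h}_k(1)=(m-1)^{m-2}\,\frac{\B\mathbf{x}_k^{m-1}}{\|\mathbf{w}_k\|}+\mathbf{R}(\mathbf{y}_k,\mathbf{x}_k,\rho_k),\qquad \|\mathbf{R}(\mathbf{y}_k,\mathbf{x}_k,\rho_k)\|\le\alpha_2\|\mathbf{y}_k-\mathbf{x}_k\|^2.$$
The cases $m=2$ and $\mathbf{x}_k=\mathbf{y}_k$ are trivial: for $m=2$ the map $\mathbf{r}(\cdot,\rho)$ is linear, so $\mathbf{R}\equiv0$ and $\mathbf{h}_k(1)=\B\mathbf{x}_k/\|\mathbf{w}_k\|$ already dominates $\B\mathbf{x}_k/\big((1+\eta)\|\mathbf{w}_k\|\big)$, and if $\mathbf{x}_k=\mathbf{y}_k$ then $\mathbf{R}$ vanishes as well. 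Hence I would assume $m\ge3$ and $\mathbf{x}_k\neq\mathbf{y}_k$, so that $(m-1)^{m-2}\ge2$; note also $\mathbf{w}_k>0$ and $\|\mathbf{w}_k\|>0$, since $\mathbf{J_xr}(\mathbf{x}_k,\rho_k)=(m-1)(\rho_k\B-\A)\mathbf{x}_k^{m-2}$ is a nonsingular $M$-matrix by \Cref{thm:(mu B-A)x is also M-matrix} and $\B\mathbf{x}_k^{m-1}>0$.

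The core is a short chain of estimates bounding $\alpha_2\|\mathbf{y}_k-\mathbf{x}_k\|^2$. Since $\{\overline{\rho}_k\}$ (equivalently $\{\rho_k\}$) is bounded, \Cref{lemma:wk(yk-xk) bounded} yields $\|\mathbf{w}_k\|\,\|\mathbf{y}_k-\mathbf{x}_k\|\le\alpha_1$, and therefore $\alpha_2\|\mathbf{y}_k-\mathbf{x}_k\|^2\le \alpha_1\alpha_2\,\|\mathbf{y}_k-\mathbf{x}_k\|/\|\mathbf{w}_k\|$. Inserting the hypothesis $\|\mathbf{y}_k-\mathbf{x}_k\|\le\min\big((\B-\A)\mathbf{x}_k^{m-1}\big)/(\alpha_1\alpha_2)$ makes the factor $\alpha_1\alpha_2$ cancel exactly and gives $\alpha_2\|\mathbf{y}_k-\mathbf{x}_k\|^2\le \min\big((\B-\A)\mathbf{x}_k^{m-1}\big)/\|\mathbf{w}_k\|$. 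Finally, $\B\mathbf{x}_k^{m-1}=(\B-\A)\mathbf{x}_k^{m-1}+\A\mathbf{x}_k^{m-1}\ge(\B-\A)\mathbf{x}_k^{m-1}>0$ because $\A\ge0$ and $\mathbf{x}_k>0$, so componentwise $(\B\mathbf{x}_k^{m-1})_i\ge\min\big((\B-\A)\mathbf{x}_k^{m-1}\big)$ for every $i$.

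Combining these, for each index $i$ one gets
$$\big(\mathbf{h}_k(1)\big)_i\ \ge\ (m-1)^{m-2}\,\frac{(\B\mathbf{x}_k^{m-1})_i}{\|\mathbf{w}_k\|}-\frac{(\B\mathbf{x}_k^{m-1})_i}{\|\mathbf{w}_k\|}\ =\ \big((m-1)^{m-2}-1\big)\frac{(\B\mathbf{x}_k^{m-1})_i}{\|\mathbf{w}_k\|}\ \ge\ \frac{(\B\mathbf{x}_k^{m-1})_i}{(1+\eta)\|\mathbf{w}_k\|},$$
where the last step uses $(m-1)^{m-2}-1\ge1\ge 1/(1+\eta)$. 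This is precisely \eqref{equ:h(theta)} with $\theta_k=1$, completing the argument. I expect the whole proof to be essentially bookkeeping once \Cref{lemma:wk(yk-xk) bounded} is available; the only points needing care are the exact cancellation of $\alpha_1\alpha_2$, the passage from $(\B-\A)\mathbf{x}_k^{m-1}$ to $\B\mathbf{x}_k^{m-1}$, and the trivial degenerate cases. The genuinely hard ingredient — the uniform bound $\|\mathbf{w}_k\|\,\|\mathbf{y}_k-\mathbf{x}_k\|\le\alpha_1$, which ultimately rests on the Perron–Frobenius uniqueness of \Cref{thm:uniqueness of the eigenvalue} — has already been established, so no new obstacle arises here.
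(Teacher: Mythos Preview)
Your proposal is correct and follows essentially the same route as the paper: use the Taylor expansion \eqref{equ:Taylor h(theta) 2} at $\theta_k=1$, bound the remainder via $\|\mathbf{w}_k\|\|\mathbf{y}_k-\mathbf{x}_k\|\le\alpha_1$ together with the hypothesis so that it is dominated componentwise by $\B\mathbf{x}_k^{m-1}/\|\mathbf{w}_k\|$, and then finish with an elementary numerical inequality on $(m-1)^{m-2}$. The only cosmetic difference is that the paper writes $(m-1)^{m-2}\ge 1+(m-2)^{m-2}$ and then invokes $(m-2)^{m-2}\ge 1/(1+\eta)$, whereas you use $(m-1)^{m-2}-1\ge 1\ge 1/(1+\eta)$ directly; both are valid for $m\ge3$.
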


\begin{proof}
From the proof of \Cref{lemma:choice of theta 1} we can see that \eqref{equ:h(theta)} always holds when $m=2$ and $\theta_k=1$. So we assume $m\geq3$. By \Cref{lemma:wk(yk-xk) bounded} and the assumption,
\begin{equation}\label{equ:|yk-xk| bounded}
\frac{\B\mathbf{x}_k^{m-1}}{\alpha_2\|\mathbf{w}_k\|\|\mathbf{y}_k-\mathbf{x}_k\|}\geq\frac{\min(\B\mathbf{x}_k^{m-1})}{\alpha_1\alpha_2}\geq\|\mathbf{y}_k-\mathbf{x}_k\|\mathbf{e},
\end{equation}
where $\mathbf{e}=[1,\ldots,1]^\top$. It follows from \eqref{equ:Remainder of Taylor of h(theta)} and \eqref{equ:|yk-xk| bounded} that
\begin{equation}\label{equ:(B-A)xk/wk geq R}
\frac{\B\mathbf{x}_k^{m-1}}{\|\mathbf{w}_k\|}\geq\alpha_2\|\mathbf{y}_k-\mathbf{x}_k\|^2\mathbf{e}\geq\|\mathbf{R}(\mathbf{y}_k,\mathbf{x}_k,\rho_k)\|.
\end{equation}
Then by \eqref{equ:Taylor h(theta) 2} and \eqref{equ:(B-A)xk/wk geq R} we have
\begin{displaymath}
\begin{aligned}
\mathbf{h}_k(1)&=(m-1)^{m-2}\frac{\B\mathbf{x}_k^{m-1}}{\|\mathbf{w}_k\|}+\mathbf{R}(\mathbf{y}_k,\mathbf{x}_k,\rho_k)\\
&\geq\big(1+(m-2)^{m-2}\big)\frac{\B\mathbf{x}_k^{m-1}}{\|\mathbf{w}_k\|}+\mathbf{R}(\mathbf{y}_k,\mathbf{x}_k,\rho_k)\\
&\geq(m-2)^{m-2}\frac{\B\mathbf{x}_k^{m-1}}{\|\mathbf{w}_k\|}\geq\frac{\B\mathbf{x}_k^{m-1}}{(1+\eta)\|\mathbf{w}_k\|}.
\end{aligned}
\end{displaymath}
Thus \eqref{equ:h(theta)} holds with $\theta_k=1$.
\end{proof}

We can now strengthen \Cref{lemma:choice of theta 1} as follows.

\begin{lemma}\label{lemma:choice of theta 3}
The condition \eqref{equ:h(theta)} holds for a sequence $\{\theta_k\}$ with $\theta_k\in[\xi,1]$ for a fixed $\xi>0$.
\end{lemma}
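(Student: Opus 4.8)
The plan is to bound $\theta_k$ below by a case split governed by the size of $\|\mathbf{y}_k-\mathbf{x}_k\|$, feeding the three preceding lemmas. When $m=2$ the proof of \Cref{lemma:choice of theta 1} already produces $\theta_k=1$, so assume $m\ge 3$. First I would record the uniform facts that make the earlier lemmas applicable: by \Cref{thm:(mu B-A)x is also M-matrix} and \Cref{cor: Collatz-Wielandt}, $\rho_k=\overline{\rho}_k\in[\lambda,1)$, so $\{\rho_k\}$ is bounded; moreover, repeating the argument used for $\mathbf{v}>0$ in \Cref{lemma:v>0 MTNI}, every limit point of $\{\mathbf{x}_k\}$ is strictly positive, so by compactness of $\{\mathbf{x}\ge0:\|\mathbf{x}\|=1\}$ there is a constant $c>0$ with $\min\big((\B-\A)\mathbf{x}_k^{m-1}\big)\ge c$, hence also $\min(\B\mathbf{x}_k^{m-1})\ge c$, for all $k$.

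\textbf{Case 1:} $\|\mathbf{y}_k-\mathbf{x}_k\|\le \min\big((\B-\A)\mathbf{x}_k^{m-1}\big)/(\alpha_1\alpha_2)$. Then \Cref{lemma:choice of theta 2} applies and \eqref{equ:h(theta)} holds with $\theta_k=1$. \textbf{Case 2:} $\|\mathbf{y}_k-\mathbf{x}_k\|> \min\big((\B-\A)\mathbf{x}_k^{m-1}\big)/(\alpha_1\alpha_2)\ge c/(\alpha_1\alpha_2)$. Combined with \Cref{lemma:wk(yk-xk) bounded} this gives the uniform bound $\|\mathbf{w}_k\|\le\alpha_1/\|\mathbf{y}_k-\mathbf{x}_k\|<\alpha_1^2\alpha_2/c=:W$. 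I would then return to the auxiliary function $\mathbf{g}_k(\theta)$ of \eqref{equ:g(theta)}: from the proof of \Cref{lemma:choice of theta 1}, $\mathbf{g}_k(0)\ge0$ and $\mathbf{g}_k'(0)=c_\eta\,\B\mathbf{x}_k^{m-1}/\|\mathbf{w}_k\|$ with $c_\eta:=(m-2)^{m-2}-\tfrac1{1+\eta}>0$ for the fixed $\eta>0$ (here $m\ge3$ is used), so $\mathbf{g}_k'(0)\ge(c_\eta c/W)\,\mathbf{e}$ entrywise. Since $\mathbf{g}_k$ is a polynomial of degree $m-1$ in $\theta$ whose second derivative coincides with $\mathbf{h}_k''(\theta)$ and is bounded on $[0,1]$ in norm by a constant $M$ independent of $k$ (as $\mathbf{x}_k,\mathbf{y}_k$ are unit vectors and $\rho_k<1$), Taylor's theorem at $\theta=0$ gives $\mathbf{g}_k(\theta)\ge \theta(c_\eta c/W)\mathbf{e}-\tfrac{\theta^2}{2}M\mathbf{e}\ge0$ for all $\theta\in(0,2c_\eta c/(WM)]$. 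A similar but simpler Lipschitz estimate for $\theta\mapsto(\B-\A)\tilde{\mathbf{x}}_{k+1}^{m-1}$, whose value at $\theta=0$ is $(m-2)^{m-1}(\B-\A)\mathbf{x}_k^{m-1}\ge(m-2)^{m-1}c\,\mathbf{e}$, shows this quantity stays positive for all $\theta$ below a further uniform threshold $\theta'>0$. Hence the positivity requirement of \Cref{remark:choice of theta k 1} and \eqref{equ:h(theta)} hold simultaneously for every $\theta\in(0,\theta^\ast]$, where $\theta^\ast:=\min\{2c_\eta c/(WM),\theta',1\}$ does not depend on $k$.

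To conclude, I would observe that the halving rule of \Cref{remark:choice of theta k 1}, with the added test \eqref{equ:h(theta)}, returns the largest $2^{-j}\le1$ at which both requirements hold; since every $2^{-j}\le\theta^\ast$ qualifies, it returns some $\theta_k\ge 2^{-\lceil-\log_2\theta^\ast\rceil}$ in Case 2, while $\theta_k=1$ in Case 1 (and for $m=2$). Setting $\xi:=\min\{1,\,2^{-\lceil-\log_2\theta^\ast\rceil}\}>0$ then yields $\theta_k\in[\xi,1]$ for all $k$. I expect the main obstacle to be Case 2: one has to (i) secure the uniform positive lower bound $c$ for $\min\big((\B-\A)\mathbf{x}_k^{m-1}\big)$, which rests on the non-degeneracy of all limit points of $\{\mathbf{x}_k\}$ (argued as in \Cref{lemma:v>0 MTNI}), and (ii) bound the second-order behaviour of $\mathbf{g}_k$ uniformly in $k$ so that the interval on which $\mathbf{g}_k\ge0$ does not shrink to $\{0\}$ as $k\to\infty$; the remaining bookkeeping with the halving rule is routine.
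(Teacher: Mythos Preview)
Your approach and the paper's rest on the same dichotomy: either \Cref{lemma:choice of theta 2} applies and $\theta_k=1$, or $\|\mathbf{w}_k\|$ stays bounded, which forces a uniform positive lower bound on $\mathbf{g}_k'(0)$ and hence, via a Lipschitz/Taylor estimate on $\mathbf{g}_k'$, a $k$-independent interval on which $\mathbf{g}_k\ge 0$. The paper wraps this as a proof by contradiction, defining $\theta_k=\eta_k:=\sup\{\xi_k:\mathbf{g}_k'\ge 0\text{ on }[0,\xi_k]\}$ and deriving a contradiction from $\eta_{k_j}\to 0$ along a subsequence where $\mathbf{x}_{k_j}\to\mathbf{v}>0$; you argue the same thing constructively by an explicit case split. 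The payoff of your direct version is an explicit formula for $\xi$, while the paper's subsequence argument lets it work locally at a single limit point $\mathbf{v}$ rather than needing global uniform bounds up front.

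One caution on your Case~2: the compactness step for the uniform lower bound $c>0$ on $\min\big((\B-\A)\mathbf{x}_k^{m-1}\big)$ is not fully justified as written, since $\mathbf{v}>0$ alone does not force $(\B-\A)\mathbf{v}^{m-1}>0$ for an $\mathcal{M}$-tensor. The paper sidesteps this by passing to a convergent subsequence and using only $\B\mathbf{v}^{m-1}\ge\A\mathbf{v}^{m-1}>0$ (which \emph{does} follow from $\mathbf{v}>0$ and weak irreducibility of $\A$). Your direct argument is easily repaired by running the case split on $\min(\B\mathbf{x}_k^{m-1})$ instead of $\min\big((\B-\A)\mathbf{x}_k^{m-1}\big)$, which is in fact what the key inequality \eqref{equ:|yk-xk| bounded} in the proof of \Cref{lemma:choice of theta 2} actually uses.
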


\begin{proof}
The condition \eqref{equ:h(theta)} holds with $\theta_k=1$ when $m=2$. For $m\geq3$, a particular sequence $\{\theta_k\}$ can be defined by
\begin{equation}\label{equ:definition of theta 2}
\theta_k=\left\{
\begin{aligned}
&1,\quad&{\rm if~}\mathbf{h}_k(1)\geq\frac{\B\mathbf{x}_k^{m-1}}{(1+\eta)\|\mathbf{w}_k\|};\\
&\eta_k,\quad&{\rm otherwise};
\end{aligned}\right.
\end{equation}
where $\eta_k=\sup\{\xi_k:\mathbf{g}_k'(\theta)\geq0{\rm~on~}[0,\xi_k]\}$ with $\mathbf{g}_k(\theta)$ given by \eqref{equ:g(theta)}. Recall that $\mathbf{g}_k(0)\geq0$ and
\begin{equation}\label{equ:g'(0)}
\mathbf{g}_k'(0)=\left[(m-2)^{m-2}-\frac{1}{1+\eta}\right]\frac{\B\mathbf{x}_k^{m-1}}{\|\mathbf{w}_k\|}>0.
\end{equation}
We can easily see that $0<\eta_k\leq1$, $\mathbf{g}_k(\eta_k)\geq0$, and $\mathbf{g}_k'(\eta_k)\ngtr0$.

Suppose $\theta_k$ is not bounded below by any $\xi>0$, then there exists a subsequence $\{\theta_{k_j}\}$ such that $\lim\limits_{j\rightarrow\infty}\eta_{k_j}=0$. Since $\{\mathbf{x}_k\}$ is bounded, we may assume that $\lim\limits_{j\rightarrow\infty}\mathbf{x}_{k_j}=\mathbf{v}$ exists. Note that we have $\mathbf{v}>0$. Since the sequence $\{\rho_k\}$ is decreasing and bounded below by $\lambda$, we know from \Cref{lemma:wk(yk-xk) bounded} that $\|\mathbf{w}_k\|\|\mathbf{y}_k-\mathbf{x}_k\|\leq K$ for some constant $K>0$. Now the sequence $\{\mathbf{w}_{k_j}\}$ must be bounded, otherwise $\mathbf{h}_{k_j}(1)\geq\frac{\B\mathbf{x}_{k_j}^{m-1}}{(1+\eta)\|\mathbf{w}_{k_j}\|}$ for some $k_j$ by \Cref{lemma:choice of theta 2} and $\eta_{k_j}$ would be undefined. Thus by \eqref{equ:g'(0)} we have $\mathbf{g}_{k_j}(0)\geq\mathbf{q}$ for some $\mathbf{q}>0$ and all $j$ sufficiently large. Note that $|\mathbf{g}_{k_j}'(\eta_{k_j})-\mathbf{g}_{k_j}'(0)|=|\mathbf{h}_{k_j}'(\eta_{k_j})-\mathbf{h}_{k_j}'(0)|\leq M\eta_{k_j}$ for a constant $M>0$ since $\mathbf{h}_k(\theta)=\mathbf{r}\big((m-2)\mathbf{x}_k+\theta\mathbf{y}_k,\rho_k\big)$ has high order derivatives and $\{\mathbf{x}_k\}$, $\{\mathbf{y}_k\}$, $\{\rho_k\}$ are all bounded. Since $\lim\limits_{j\rightarrow\infty}\eta_{k_j}=0$, we then have $\mathbf{g}_{k_j}'(\eta_{k_j})>0$ for $j$ sufficiently large, which leads to a contradiction.
\end{proof}

By \Cref{lemma:choice of theta 3}, it is easy to see that we can always find proper $\{\theta_k\}$ satisfying condition \eqref{equ:h(theta)} by halving procedure. We prove the convergence of \Cref{alg:GNNI} when $\{\theta_k\}$ satisfies condition \eqref{equ:h(theta)} in the following.

\begin{theorem}\label{thm:covergence of GNNI}
Let $\mathcal{A},\mathcal{B}\in T_{m,n}$ be a generalized $\mathcal{M}$-tensor pair, and let $\{\rho_k\}$, $\{\mathbf{x}_k\}$, and $\{\mathbf{y}_k\}$ be generated by \Cref{alg:GNNI}. Then the monotonically decreasing sequence $\{\rho_k\}$ converges to $\lambda$, and $\{\mathbf{x}_k\}$ converges to $\mathbf{x}_{\ast}$. Moreover, $\{\mathbf{y}_k\}$ converges to $\mathbf{x}_{\ast}$ as well.
\end{theorem}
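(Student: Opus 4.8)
The plan is to mirror the convergence proofs for \Cref{alg:MTNI} and \Cref{alg:IGTNI}, reusing most of the analytical work already done in \Cref{lemma:wk(yk-xk) bounded,lemma:choice of theta 1,lemma:choice of theta 2,lemma:choice of theta 3}. First I would fix, via \Cref{lemma:choice of theta 3}, a sequence $\{\theta_k\}\subset[\xi,1]$ with a fixed $\xi>0$ for which \eqref{equ:h(theta)} holds at every step. Together with the identity \eqref{equ:rhok - rhok+1} this gives $\overline{\rho}_k-\overline{\rho}_{k+1}=\min_i(\mathbf{h}_k(\theta_k))_i/(\mathcal{B}\tilde{\mathbf{x}}_{k+1}^{m-1})_i>0$, so $\{\rho_k\}=\{\overline{\rho}_k\}$ is strictly decreasing. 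Since $\mathbf{x}_k>0$ by \Cref{thm:(mu B-A)x is also M-matrix} and $(\mathcal{B}-\mathcal{A})\mathbf{x}_k^{m-1}>0$ by the halving rule of \Cref{remark:choice of theta k 1}, \Cref{cor: Collatz-Wielandt} gives $\rho_k\geq\lambda$. Hence $\rho_k\downarrow\rho_\ast$ for some $\rho_\ast\geq\lambda$, and the whole theorem reduces to proving $\rho_\ast=\lambda$ (note also that $\rho_0<1$ by construction of $\mathbf{x}_0$, so $\overline{\rho}_k<1$ for all $k$).

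Next I would take any convergent subsequence $\mathbf{x}_{k_j}\to\mathbf{v}$ of the bounded sequence $\{\mathbf{x}_k\}$ and, after refining, also arrange $\mathbf{y}_{k_j}\to\mathbf{y}_\ast$ with $\|\mathbf{y}_\ast\|=1$, $\theta_{k_j}\to\theta_\ast\in[\xi,1]$, and $\rho_{k_j}\to\rho_\ast$. That $\mathbf{v}>0$ follows exactly as in \Cref{lemma:v>0 MTNI} (the very step already invoked inside the proof of \Cref{lemma:wk(yk-xk) bounded}), using $\{\overline{\rho}_{k_j}\}$ bounded with $\overline{\rho}_{k_j}<1$. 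I would then record two positivity facts for later: $\mathcal{B}\mathbf{v}^{m-1}=\lim_j(\mathcal{B}-\mathcal{A})\mathbf{x}_{k_j}^{m-1}+\mathcal{A}\mathbf{v}^{m-1}\geq\mathcal{A}\mathbf{v}^{m-1}>0$, the last inequality using the weak irreducibility of $\mathcal{A}$ with $\mathbf{v}>0$; and, with $\tilde{\mathbf{v}}:=(m-2)\mathbf{v}+\theta_\ast\mathbf{y}_\ast>0$, likewise $\mathcal{B}\tilde{\mathbf{v}}^{m-1}>0$, since $(\mathcal{B}-\mathcal{A})\tilde{\mathbf{x}}_{k_j+1}^{m-1}>0$ (the halving condition) passes to the limit as $\geq0$ while $\mathcal{A}\tilde{\mathbf{v}}^{m-1}>0$.

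I would then split into two cases according to whether $\{\|\mathbf{w}_{k_j}\|\}$ is bounded. If it is \emph{unbounded}, the argument in the proof of \Cref{lemma:wk(yk-xk) bounded} applies almost verbatim up to its conclusion that $\rho_\ast=\lambda$ and $\mathbf{v}=\mathbf{x}_\ast$: the matrix pair $(\mathcal{A}\mathbf{v}^{m-2},\mathcal{B}\mathbf{v}^{m-2})$ satisfies $(\mathrm{C1})$--$(\mathrm{C4})$; $\rho_\ast\mathcal{B}\mathbf{v}^{m-1}\geq\mathcal{A}\mathbf{v}^{m-1}$ forces $\rho_\ast\geq\rho(\mathcal{A}\mathbf{v}^{m-2},\mathcal{B}\mathbf{v}^{m-2})$; strict inequality would make $(\rho_\ast\mathcal{B}-\mathcal{A})\mathbf{v}^{m-2}$ a nonsingular $M$-matrix and hence $\{\mathbf{w}_{k_j}\}$ bounded, a contradiction; so equality holds, which as in \Cref{lemma:wk(yk-xk) bounded} forces $\rho_\ast\mathcal{B}\mathbf{v}^{m-1}=\mathcal{A}\mathbf{v}^{m-1}$, and \Cref{thm:uniqueness of the eigenvalue} identifies $\rho_\ast=\lambda$, $\mathbf{v}=\mathbf{x}_\ast$. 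If $\{\|\mathbf{w}_{k_j}\|\}$ is \emph{bounded}, then combining \eqref{equ:rhok - rhok+1}, \eqref{equ:h(theta)}, $\theta_{k_j}\geq\xi$, and the positivity facts above,
\begin{displaymath}
0=\lim_{j\to\infty}\big(\overline{\rho}_{k_j}-\overline{\rho}_{k_j+1}\big)\geq\frac{\xi}{(1+\eta)\sup_j\|\mathbf{w}_{k_j}\|}\,\min_i\frac{(\mathcal{B}\mathbf{v}^{m-1})_i}{(\mathcal{B}\tilde{\mathbf{v}}^{m-1})_i}>0,
\end{displaymath}
a contradiction, so this case cannot occur. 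Thus $\rho_\ast=\lambda$ in every case; since every convergent subsequence of $\{\mathbf{x}_k\}$ then has limit $\mathbf{x}_\ast$ by uniqueness (\Cref{thm:uniqueness of the eigenvalue}), $\mathbf{x}_k\to\mathbf{x}_\ast$ and $\rho_k\to\lambda$. Finally $\mathbf{y}_k\to\mathbf{x}_\ast$: along any subsequence with $\theta_k\to\theta_\ast$ and $\mathbf{y}_k\to\mathbf{y}_\ast$, $\tilde{\mathbf{x}}_{k+1}=(m-2)\mathbf{x}_k+\theta_k\mathbf{y}_k$ converges to $(m-2)\mathbf{x}_\ast+\theta_\ast\mathbf{y}_\ast$, which must be a positive multiple of $\mathbf{x}_\ast$ because $\mathbf{x}_{k+1}\to\mathbf{x}_\ast$; with $\mathbf{y}_\ast\geq0$, $\theta_\ast>0$, and $\|\mathbf{y}_\ast\|=\|\mathbf{x}_\ast\|=1$ this forces $\mathbf{y}_\ast=\mathbf{x}_\ast$.

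I expect the main obstacle to be pinning down $\rho_\ast=\lambda$, and in particular the ``bounded $\|\mathbf{w}_k\|$'' branch, where the contradiction hinges on knowing $\mathcal{B}\mathbf{v}^{m-1}>0$ and $\mathcal{B}\tilde{\mathbf{v}}^{m-1}>0$ in the limit; this is precisely where the weak irreducibility condition $(\mathrm{C2'})$ re-enters and where the halving rule of \Cref{remark:choice of theta k 1} is needed to transfer strict positivity of $(\mathcal{B}-\mathcal{A})\tilde{\mathbf{x}}_{k+1}^{m-1}$ to the limit. The ``unbounded'' branch, by contrast, is essentially a repackaging of the matrix-pair analysis already carried out in \Cref{lemma:wk(yk-xk) bounded}, and the remaining normalization arguments for $\{\mathbf{x}_k\}$ and $\{\mathbf{y}_k\}$ are routine.
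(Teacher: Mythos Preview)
Your proposal is correct and rests on the same ingredients as the paper's proof---the lower bound $\theta_k\geq\xi$ from \Cref{lemma:choice of theta 3}, the inequality \eqref{equ:h(theta)}, the identity \eqref{equ:rhok - rhok+1}, and the matrix-pair argument from the proof of \Cref{lemma:wk(yk-xk) bounded}. The paper organizes the argument slightly more directly: instead of splitting on whether $\{\|\mathbf{w}_{k_j}\|\}$ is bounded along a subsequence, it first shows that $\min(\mathcal{B}\mathbf{x}_k^{m-1})$ is uniformly bounded below by a positive constant (via the same ``$\mathbf{v}>0$'' lemma you invoke) and then reads off from \eqref{equ:rhok - rhok+1} and \eqref{equ:h(theta)} that $\|\mathbf{w}_k\|^{-1}\to 0$ for the \emph{full} sequence---this collapses your bounded case to an immediate contradiction and leaves only the unbounded case. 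For $\mathbf{y}_k\to\mathbf{x}_\ast$, the paper simply invokes \Cref{lemma:wk(yk-xk) bounded} once more: $\|\mathbf{y}_k-\mathbf{x}_k\|\leq\alpha_1\|\mathbf{w}_k\|^{-1}\to 0$, which is shorter than your normalization argument and avoids tracking $\theta_\ast$ and $\mathbf{y}_\ast$ separately. Your route works, but the paper's avoids both the case split and the auxiliary positivity check on $\mathcal{B}\tilde{\mathbf{v}}^{m-1}$.
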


\begin{proof}
By \eqref{equ:rhok - rhok+1} and \eqref{equ:h(theta)} we have
\begin{equation}\label{equ:rhok-rhok+1 (convergence thm)}
\rho_k-\rho_{k+1}=\min\frac{\mathbf{h}_k(\theta_k)}{\B\tilde{\mathbf{x}}_{k+1}^{m-1}}\geq\min\frac{\theta_k\B\mathbf{x}_k^{m-1}}{(1+\eta)\|\mathbf{w}_k\|\B\tilde{\mathbf{x}}_{k+1}^{m-1}}\geq\min\frac{\xi\B\mathbf{x}_k^{m-1}}{(1+\eta)\|\mathbf{w}_k\|\B\tilde{\mathbf{x}}_{k+1}^{m-1}}.
\end{equation}

Since $\theta_k\in(0,1]$, we have $\|\tilde{\mathbf{x}}_{k+1}\|=\|(m-2)\mathbf{x}_k+\theta_k\mathbf{y}_k\|\leq m-1.$ Since the sequence $\{\rho_k\}$ converges, we obtain from \eqref{equ:rhok-rhok+1 (convergence thm)} that $\lim\limits_{k\rightarrow\infty}\|\mathbf{w}_k\|^{-1}\min(\B\mathbf{x}_k^{m-1})=0$.

Suppose that $\min(\B\mathbf{x}_k^{m-1})$ is not bounded below by a positive constant, then there exists a subsequence $\{k_j\}$ such that $\lim\limits_{j\rightarrow\infty}\min(\B\mathbf{x}_{k_j}^{m-1})=0$. We may assume that $\lim\limits_{j\rightarrow\infty}\mathbf{x}_{k_j}=\mathbf{v}$. Hence $\min(\B\mathbf{v}^{m-1})=\lim\limits_{j\rightarrow\infty}\min(\B\mathbf{x}_{k_j}^{m-1})=0$. However, we have $\mathbf{v}>0$ and $\B\mathbf{v}^{m-1}=(\B-\A)\mathbf{v}^{m-1}+\A\mathbf{v}^{m-1}>0$, which leads to a contradiction. So $\min(\B\mathbf{x}_k^{m-1})$ is bounded below by a positive constant. Thus $\lim\limits_{j\rightarrow\infty}\|\mathbf{w}_k\|^{-1}=0$.

Let $\mathbf{v}$ be any limit point of $\{\mathbf{x}_k\}$ with $\lim\limits_{j\rightarrow\infty}\mathbf{x}_{k_j}=\mathbf{v}>0$. If $\lim\limits_{j\rightarrow\infty}\rho_{k_j}=\rho>\rho(\A\mathbf{v}^{m-2},\B\mathbf{v}^{m-2})$, then $\lim\limits_{j\rightarrow\infty}\mathbf{w}_{k_j}$ exists, contradictory to $\lim\limits_{j\rightarrow\infty}\|\mathbf{w}_k\|^{-1}=0$. Thus $\rho=\rho(\A\mathbf{v}^{m-2},\B\mathbf{v}^{m-2})$, we then have $\mathbf{v}=\mathbf{x}_{\ast}$ as in the proof of \Cref{lemma:wk(yk-xk) bounded}. Therefore, any convergent subsequence of $\{\mathbf{x}_k\}$ converges to the same limit $\mathbf{x}_{\ast}$, hence $\lim\limits_{k\rightarrow\infty}\mathbf{x}_k=\mathbf{x}_{\ast}$. And thus $\lim\limits_{k\rightarrow\infty}\rho_k=\max\frac{\A\mathbf{x}^{m-1}}{\B\mathbf{x}^{m-1}}=\lambda$. By \Cref{lemma:wk(yk-xk) bounded}, there exists a constant $\alpha_1>0$ such that $\|\mathbf{y}_k-\mathbf{x}_k\|\leq\alpha_1\|\mathbf{w}_k\|^{-1}$. Since $\lim\limits_{k\rightarrow\infty}\|\mathbf{w}_k\|^{-1}=0$ and $\lim\limits_{k\rightarrow\infty}\mathbf{x}_k=\mathbf{x}_{\ast}$, we have $\lim\limits_{k\rightarrow\infty}\mathbf{y}_k=\mathbf{x}_{\ast}$.
\end{proof}

To perform the convergence rate analysis for \Cref{alg:GNNI}, we start with a result about Newton's method.

\begin{theorem}\label{thm:convergence rate 1 GNNI}
Let $\mathbf{f}(\mathbf{x},\rho)$  be defined by \eqref{equ:define r and f for Newton method} with $\mathbf{f}(\mathbf{x}_{\ast},\lambda)=0$. Then the Jacobian $\mathbf{Jf}(\mathbf{x},\lambda)$ given by \eqref{equ:Jacobi of f} is nonsingular. Let $\{\mathbf{x}_k\}$ and $\{\rho_k\}$ be generated by \Cref{alg:GNNI}, with $\{\theta_k\}$ as in \Cref{lemma:choice of theta 2}. Then there exists a constant $\beta$ such that for all $(\mathbf{x}_k,\rho_k)$ sufficiently close to $(\mathbf{x}_{\ast},\lambda)$
\begin{equation}\label{equ:xk rhok quadratically converge}
\left\|\left[\begin{aligned}
&\hat{\mathbf{x}}_{k+1}\\
&\hat{\rho}_{k+1}\\
\end{aligned}\right]-\left[
\begin{aligned}
&\mathbf{x}_{\ast}\\
&\lambda\\
\end{aligned}\right]\right\|\leq\beta\left\|\left[
\begin{aligned}
&\mathbf{x}_k\\
&\rho_k\\
\end{aligned}\right]-\left[
\begin{aligned}
&\mathbf{x}_{\ast}\\
&\lambda\\
\end{aligned}\right]\right\|^2,
\end{equation}
where $\{\hat{\mathbf{x}}_{k+1},\hat{\rho}_{k+1}\}$ is generated by Newton step \eqref{equ:Newton method for solving f 1}-\eqref{equ:Newton method for solving f 3} from $\{\mathbf{x}_k,\rho_k\}$, instead of $\{\hat{\mathbf{x}}_k,\hat{\rho}_k\}$.
\end{theorem}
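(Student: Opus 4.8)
The plan is to split the claim into two parts. The first is the purely linear-algebraic assertion that the bordered Jacobian $\mathbf{Jf}(\mathbf{x}_\ast,\lambda)$ in \eqref{equ:Jacobi of f} is nonsingular; the second, the quadratic bound \eqref{equ:xk rhok quadratically converge}, will then be obtained from it through the classical local theory of Newton's method. For the first part I would begin by analysing the matrix $M_\ast:=(\lambda\B-\A)\mathbf{x}_\ast^{m-2}$. By \eqref{equ:definition of Jxr} we have $\mathbf{J_xr}(\mathbf{x}_\ast,\lambda)=(m-1)M_\ast$, and $(\lambda\B-\A)\mathbf{x}_\ast^{m-1}=0$ gives $M_\ast\mathbf{x}_\ast=\mathbf{0}$. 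Condition $(\mathrm{C4'})$ makes $M_\ast$ a $\mathcal{Z}$-matrix; since for every $\mu\in(\lambda,1]$ one has $(\mu\B-\A)\mathbf{x}_\ast^{m-1}=(\mu-\lambda)\B\mathbf{x}_\ast^{m-1}>0$ (here $\B\mathbf{x}_\ast^{m-1}=\frac{1}{\lambda}\A\mathbf{x}_\ast^{m-1}>0$ as in the proof of \Cref{thm:mu B-A is also M-tensor}), the matrix version of \Cref{prop:nonsingular M-tensor} shows $(\mu\B-\A)\mathbf{x}_\ast^{m-2}$ is a nonsingular $\mathcal{M}$-matrix, and letting $\mu\downarrow\lambda$ identifies $M_\ast$ as a singular $\mathcal{M}$-matrix with $\mathbf{x}_\ast>0$ in its right null space. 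Moreover, off the diagonal the entries of $-M_\ast$ dominate $(1-\lambda)$ times the corresponding entries of $\A\mathbf{x}_\ast^{m-2}$, and the latter matrix is irreducible because $\A$ is weakly irreducible and $\mathbf{x}_\ast>0$ (cf. \cite[Lemma 3.2]{Friedland13} and the argument behind \Cref{thm:exist of nonnegative eigenpair}, where the semisymmetry of $\A$ is used); hence the digraph of $M_\ast$ is strongly connected and $M_\ast$ is an irreducible singular $\mathcal{M}$-matrix. Consequently $0$ is a simple eigenvalue of $M_\ast$, its right null space is spanned by $\mathbf{x}_\ast$, and there is a strictly positive left null vector $\mathbf{u}_\ast>0$ with $\mathbf{u}_\ast^\top M_\ast=\mathbf{0}$.

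Granting this structure, nonsingularity of $\mathbf{Jf}(\mathbf{x}_\ast,\lambda)$ is a one-line computation. If $(\mathbf{p}^\top,t)^\top$ lies in the kernel, then $(m-1)M_\ast\mathbf{p}+t\,\B\mathbf{x}_\ast^{m-1}=\mathbf{0}$ and $\mathbf{x}_\ast^\top\mathbf{p}=0$. Left-multiplying the first relation by $\mathbf{u}_\ast^\top$ and using $\mathbf{u}_\ast^\top\B\mathbf{x}_\ast^{m-1}>0$ forces $t=0$; then $M_\ast\mathbf{p}=\mathbf{0}$ gives $\mathbf{p}=c\,\mathbf{x}_\ast$, and the constraint $\mathbf{x}_\ast^\top\mathbf{p}=0$ forces $c=0$. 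Hence $\mathbf{Jf}(\mathbf{x}_\ast,\lambda)$ is nonsingular.

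For the quadratic estimate I would observe that $\mathbf{r}$, and hence $\mathbf{f}$, are polynomial maps in $(\mathbf{x},\rho)$, so $\mathbf{f}$ is $C^\infty$ and $\mathbf{Jf}$ is Lipschitz on bounded sets; together with the nonsingularity just proved and the continuity of $\mathbf{Jf}$, there is a neighbourhood $\mathcal{U}$ of $(\mathbf{x}_\ast,\lambda)$ on which $\mathbf{Jf}$ is invertible with uniformly bounded inverse. Because step 8 of \Cref{alg:GNNI} enforces $\|\mathbf{x}_k\|=1$, the elimination leading to \eqref{equ:Newton method delta k}--\eqref{equ:Newton method d k} is valid, so $(\hat{\mathbf{x}}_{k+1},\hat\rho_{k+1})$ is exactly one Newton step for $\mathbf{f}$ started from $(\mathbf{x}_k,\rho_k)$. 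The standard local analysis of Newton's method — Taylor-expanding $\mathbf{f}$ at $(\mathbf{x}_k,\rho_k)$, using $\mathbf{f}(\mathbf{x}_\ast,\lambda)=\mathbf{0}$, the bound on $\|\mathbf{Jf}^{-1}\|$ over $\mathcal{U}$, and a Lipschitz bound on $D^2\mathbf{f}$ — then yields \eqref{equ:xk rhok quadratically converge} with a constant $\beta$ depending only on these bounds, whenever $(\mathbf{x}_k,\rho_k)\in\mathcal{U}$. By \Cref{thm:covergence of GNNI}, $(\mathbf{x}_k,\rho_k)\to(\mathbf{x}_\ast,\lambda)$, so the iterates eventually enter $\mathcal{U}$ and the estimate holds for all sufficiently large $k$.

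The main obstacle is the first part, namely establishing that $M_\ast=(\lambda\B-\A)\mathbf{x}_\ast^{m-2}$ is an \emph{irreducible} singular $\mathcal{M}$-matrix, so that $0$ is simple with strictly positive left and right null vectors. This needs the limiting argument from \Cref{thm:mu B-A is also M-tensor} to pin down the $\mathcal{M}$-matrix and kernel structure, together with a digraph comparison transferring the weak irreducibility of $\A$ to the flattened matrix $\A\mathbf{x}_\ast^{m-2}$. Once that structural fact is in hand, the bordered-matrix computation and the appeal to classical Newton theory are routine.
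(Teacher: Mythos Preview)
Your proposal is correct and follows essentially the same approach as the paper: establish nonsingularity of the bordered Jacobian by a null-space argument relying on the Perron--Frobenius structure of $M_\ast=(\lambda\B-\A)\mathbf{x}_\ast^{m-2}$, then invoke the standard local quadratic convergence of Newton's method using smoothness of $\mathbf{f}$. The only cosmetic difference is that the paper analyses the \emph{left} kernel of $\mathbf{Jf}(\mathbf{x}_\ast,\lambda)$ (postmultiplying by $\mathbf{x}_\ast$ to kill the scalar component, then citing \Cref{thm:Perron Frobenius for matrix} for the matrix pair $(\A\mathbf{x}_\ast^{m-2},\B\mathbf{x}_\ast^{m-2})$), whereas you analyse the \emph{right} kernel and explicitly derive the irreducible singular $\mathcal{M}$-matrix structure of $M_\ast$ to obtain the positive left null vector; these are dual and equally valid.
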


\begin{proof}
Assume that $(\mathbf{z}^\top,\zeta)^\top\in\mathbb{R}^{n+1}$ satisfies
\begin{equation}\label{equ:z zeta 1}
0=(\mathbf{z}^\top,\zeta)\mathbf{Jf}(\mathbf{x}_{\ast},\lambda)=(\mathbf{z}^\top,\zeta)
\begin{bmatrix}
&-\mathbf{J_xr}(\mathbf{x}_{\ast},\lambda)&-\B\mathbf{x}_{\ast}^{m-1}\\
&-\mathbf{x}_{\ast}^\top&0\\
\end{bmatrix}.
\end{equation}
We need to show that $(\mathbf{z}^\top,\zeta)=0$. By \eqref{equ:definition of Jxr},
\begin{equation}\label{equ:z zeta 2}
(m-1)\mathbf{z}^\top(\lambda\B-\A)\mathbf{x}_{\ast}^{m-2}+\zeta\mathbf{x}_{\ast}^\top=0.
\end{equation}
Postmultiplying both sides by $\mathbf{x}_{\ast}$, we have $\zeta=-(m-1)\mathbf{z}^\top(\lambda\B-\A)\mathbf{x}_{\ast}^{m-1}=0$. Then by \eqref{equ:z zeta 2}, $\lambda\mathbf{z}^\top\B\mathbf{x}_{\ast}^{m-2}=\mathbf{z}^\top\A\mathbf{x}_{\ast}^{m-2}$. Thus by Perron-Frobenius \Cref{thm:Perron Frobenius for matrix} we have $\mathbf{z}=c\mathbf{x}_{\ast}$ for some $c$. By the equation $\mathbf{z}^\top\B\mathbf{x}_{\ast}^{m-1}=0$ from \eqref{equ:z zeta 1}, we have $c=0$ and hence $\mathbf{z}=0$, $(\mathbf{z}^\top,\zeta)=0$. Thus $\mathbf{Jf}(\mathbf{x}_{\ast},\lambda)$ is nonsingular. We also know that $\mathbf{Jf}(\mathbf{x}_{\ast},\lambda)$ satisfies a Lipschitz condition at $(\mathbf{x}_{\ast},\lambda)$ since its Fr$\rm{\acute{e}}$chet derivative is continuous in a neighborhood of $(\mathbf{x}_{\ast},\lambda)$. Then the inequality \eqref{equ:xk rhok quadratically converge} is a basic result of Newton's method, referring \cite[Theorem 5.1.2]{Kelley} for example.
\end{proof}

Next we examine how $\|\mathbf{x}_k-\mathbf{x}\|$ and $|\rho_k-\lambda|$ are related in \Cref{thm:convergence rate 1 GNNI}.

\begin{theorem}\label{thm:convergence rate 2 GNNI}
Let $\{\mathbf{x}_k,\rho_k\}$ be generated by \Cref{alg:GNNI}, with $\{\theta_k\}$ as in \Cref{lemma:choice of theta 3}. Then there exist constants $c_1,c_2>0$ such that $c_1\|\mathbf{x}_k-\mathbf{x}_{\ast}\|\leq|\rho_k-\lambda|\leq c_2\|\mathbf{x}_k-\mathbf{x}_{\ast}\|$ for all $k\geq0$.
\end{theorem}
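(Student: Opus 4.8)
The plan is to prove the two estimates separately; the upper one, $|\rho_k-\lambda|\le c_2\|\mathbf{x}_k-\mathbf{x}_{\ast}\|$, is routine, and the lower one is the crux. For the upper bound, note first that $\rho_k=\overline{\rho}_k=\max_i(\A\mathbf{x}_k^{m-1})_i/(\B\mathbf{x}_k^{m-1})_i$ for all $k\ge0$ (steps 9--11 of \Cref{alg:GNNI}), and that the choice of $\theta_k$ guarantees $(\B-\A)\mathbf{x}_k^{m-1}>0$, hence $\B\mathbf{x}_k^{m-1}=(\B-\A)\mathbf{x}_k^{m-1}+\A\mathbf{x}_k^{m-1}>0$. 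Since also $\B\mathbf{x}_{\ast}^{m-1}>0$, the function $\mathbf{x}\mapsto\max_i(\A\mathbf{x}^{m-1})_i/(\B\mathbf{x}^{m-1})_i$ is a maximum of finitely many rational functions whose denominators stay bounded away from zero on a neighbourhood of $\mathbf{x}_{\ast}$, so it is locally Lipschitz there; as its value at $\mathbf{x}_{\ast}$ is $\lambda$ and $\mathbf{x}_k\to\mathbf{x}_{\ast}$ by \Cref{thm:covergence of GNNI}, the upper bound holds for all large $k$, and it extends to every $k$ since only finitely many indices remain and $\mathbf{x}_k=\mathbf{x}_{\ast}$ forces $\rho_k=\lambda$.

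For the lower bound I would first reduce it to a residual estimate. By \Cref{thm:convergence rate 1 GNNI} the Jacobian $\mathbf{Jf}(\mathbf{x}_{\ast},\lambda)$ of the map $\mathbf{f}$ in \eqref{equ:define r and f for Newton method} is nonsingular, so the inverse function theorem makes $\mathbf{f}$ bi-Lipschitz near $(\mathbf{x}_{\ast},\lambda)$; since $\|\mathbf{x}_k\|=1$ gives $\mathbf{f}(\mathbf{x}_k,\rho_k)=\big(-\mathbf{r}(\mathbf{x}_k,\rho_k);\,0\big)$ while $\mathbf{f}(\mathbf{x}_{\ast},\lambda)=0$, there is a constant $C_0$ with $\|\mathbf{x}_k-\mathbf{x}_{\ast}\|+|\rho_k-\lambda|\le C_0\|\mathbf{r}(\mathbf{x}_k,\rho_k)\|$ for all large $k$. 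It therefore suffices to show $\|\mathbf{r}(\mathbf{x}_k,\rho_k)\|\le C_1|\rho_k-\lambda|+C_2\|\mathbf{x}_k-\mathbf{x}_{\ast}\|^2$: combining the two inequalities and absorbing the quadratic term (legitimate once $\|\mathbf{x}_k-\mathbf{x}_{\ast}\|$ is small) yields $\tfrac12\|\mathbf{x}_k-\mathbf{x}_{\ast}\|\le C_0C_1|\rho_k-\lambda|$, and the finitely many remaining indices are handled as in the upper-bound case.

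To prove this residual bound I would exploit the sign structure produced by the Noda update. Since $\rho_k=\overline{\rho}(\mathbf{x}_k)$ and $\B\mathbf{x}_k^{m-1}>0$, the residual satisfies $\mathbf{r}(\mathbf{x}_k,\rho_k)=(\rho_k\B-\A)\mathbf{x}_k^{m-1}\ge0$ componentwise. Writing $\mathbf{r}(\mathbf{x}_k,\rho_k)=\mathbf{r}(\mathbf{x}_k,\lambda)+(\rho_k-\lambda)\B\mathbf{x}_k^{m-1}$ and Taylor-expanding $\mathbf{r}(\cdot,\lambda)$ at $\mathbf{x}_{\ast}$, using $\mathbf{r}(\mathbf{x}_{\ast},\lambda)=0$ and $\mathbf{J_xr}(\mathbf{x}_{\ast},\lambda)=L:=(m-1)(\lambda\B-\A)\mathbf{x}_{\ast}^{m-2}$ from \eqref{equ:definition of Jxr}, gives $\mathbf{r}(\mathbf{x}_k,\rho_k)=L(\mathbf{x}_k-\mathbf{x}_{\ast})+(\rho_k-\lambda)\B\mathbf{x}_k^{m-1}+O(\|\mathbf{x}_k-\mathbf{x}_{\ast}\|^2)$. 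The matrix $L$ is an irreducible singular M-matrix: it is a $\mathcal{Z}$-matrix by $(\mathrm{C4'})$, it annihilates $\mathbf{x}_{\ast}>0$ because $L\mathbf{x}_{\ast}=(m-1)(\lambda\B-\A)\mathbf{x}_{\ast}^{m-1}=0$, and its off-diagonal support contains that of $\A\mathbf{x}_{\ast}^{m-2}$, which is an irreducible matrix by weak irreducibility of $\A$ and positivity of $\mathbf{x}_{\ast}$ (as used in the proofs of \Cref{thm:exist of nonnegative eigenpair,thm:uniqueness of the eigenvalue}). Consequently $L^{\top}$ is also an irreducible singular M-matrix and has a strictly positive null vector $\mathbf{z}_{\ast}$, i.e. $\mathbf{z}_{\ast}^{\top}L=0$. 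Pre-multiplying the expansion by $\mathbf{z}_{\ast}^{\top}$ kills the term $L(\mathbf{x}_k-\mathbf{x}_{\ast})$, leaving $0\le\sum_i(\mathbf{z}_{\ast})_i\,\mathbf{r}(\mathbf{x}_k,\rho_k)_i=(\rho_k-\lambda)\,\mathbf{z}_{\ast}^{\top}\B\mathbf{x}_k^{m-1}+O(\|\mathbf{x}_k-\mathbf{x}_{\ast}\|^2)$, which is $\le C_1|\rho_k-\lambda|+C_2\|\mathbf{x}_k-\mathbf{x}_{\ast}\|^2$ for large $k$ since $\rho_k\ge\lambda$ (\Cref{thm:covergence of GNNI}) and $\mathbf{z}_{\ast}^{\top}\B\mathbf{x}_k^{m-1}\to\mathbf{z}_{\ast}^{\top}\B\mathbf{x}_{\ast}^{m-1}>0$; because each summand $(\mathbf{z}_{\ast})_i\,\mathbf{r}(\mathbf{x}_k,\rho_k)_i$ is nonnegative, every component $\mathbf{r}(\mathbf{x}_k,\rho_k)_i$ is bounded by $(\min_j(\mathbf{z}_{\ast})_j)^{-1}\big(C_1|\rho_k-\lambda|+C_2\|\mathbf{x}_k-\mathbf{x}_{\ast}\|^2\big)$, which gives the required bound on $\|\mathbf{r}(\mathbf{x}_k,\rho_k)\|$ (after relabelling the constants).

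The main obstacle is exactly this last residual estimate. Its content is that the residual $\mathbf{r}(\mathbf{x}_k,\rho_k)$ cannot be large "in the eigenvector direction": it is squeezed between $0$ and a multiple of $|\rho_k-\lambda|$, modulo a genuinely quadratic term, and this squeezing relies on both the nonnegativity of $\mathbf{r}(\mathbf{x}_k,\rho_k)$ forced by the Collatz--Wielandt choice $\rho_k=\overline{\rho}(\mathbf{x}_k)$ and the existence of a strictly positive left null vector of the singular Jacobian $L$. The remaining ingredients---the bi-Lipschitz reduction, the local Lipschitz bound for the upper estimate, and the treatment of the finitely many initial indices---are standard.
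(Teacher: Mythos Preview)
Your proof is correct, and it takes a genuinely different route from the paper's for the lower bound. For the upper bound both arguments are essentially the same Lipschitz/Taylor estimate on the componentwise Rayleigh quotient. For the lower bound, the paper Taylor-expands the vector $\frac{\A\mathbf{x}^{m-1}}{\B\mathbf{x}^{m-1}}$ at $\mathbf{x}_\ast$, writes $|\rho_k-\lambda|\ge\max\big(-D(\B\mathbf{x}_\ast^{m-1})^{-1}L(\mathbf{x}_k-\mathbf{x}_\ast)\big)-c_3\|\mathbf{x}_k-\mathbf{x}_\ast\|^2$, normalises $\mathbf{q}_k=(\mathbf{x}_k-\mathbf{x}_\ast)/\|\mathbf{x}_k-\mathbf{x}_\ast\|$, and then argues by \emph{compactness and contradiction}: if $\max(-D(\B\mathbf{x}_\ast^{m-1})^{-1}L\mathbf{q}_k)$ were not bounded below by some $c_4>0$, a limit direction $\mathbf{q}$ (necessarily of mixed sign because $\|\mathbf{x}_k\|=\|\mathbf{x}_\ast\|=1$) would satisfy $L\mathbf{q}\ge0$; the M-matrix structure of $L$ then forces either $L$ nonsingular (impossible since $L\mathbf{x}_\ast=0$) or $\mathbf{q}=\pm\mathbf{x}_\ast$ (impossible by the sign constraint). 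Your approach instead routes the lower bound through the bi-Lipschitz inverse of $\mathbf{f}$ at $(\mathbf{x}_\ast,\lambda)$, reducing to a residual bound $\|\mathbf{r}(\mathbf{x}_k,\rho_k)\|\le C_1|\rho_k-\lambda|+C_2\|\mathbf{x}_k-\mathbf{x}_\ast\|^2$, and obtains that bound by pairing the nonnegative residual with the \emph{positive left Perron vector} $\mathbf{z}_\ast$ of the irreducible singular M-matrix $L$. Both arguments ultimately rest on the same M-matrix structure of $L$; the paper uses it to rule out sign-indefinite null/sub-null vectors, while you use it to produce $\mathbf{z}_\ast>0$ with $\mathbf{z}_\ast^\top L=0$. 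Your route is more constructive---it yields an explicit constant in terms of $\min_j(\mathbf{z}_\ast)_j$ and $\mathbf{z}_\ast^\top\B\mathbf{x}_\ast^{m-1}$---and avoids the compactness argument; the paper's route is slightly more direct in that it works entirely with the Rayleigh quotient and does not invoke the full Jacobian $\mathbf{Jf}$ (though its invertibility is established anyway in \Cref{thm:convergence rate 1 GNNI}).
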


\begin{proof}
For an arbitrary $\mathbf{x}\in\mathbb{R}_+^n$, $\mathbf{x}>0$, consider the equation
\begin{displaymath}
\frac{\A\mathbf{x}^{m-1}}{\B\mathbf{x}^{m-1}}=\frac{(\lambda\B-\lambda\B+\A)\mathbf{x}^{m-1}}{\B\mathbf{x}^{m-1}}=\lambda\mathbf{e}-\frac{\mathbf{r}(\mathbf{x},\lambda)}{\B\mathbf{x}^{m-1}}.
\end{displaymath}
The Fr$\rm{\acute{e}}$chet derivative of $\frac{\A\mathbf{x}^{m-1}}{\B\mathbf{x}^{m-1}}$ is then given by
\begin{displaymath}
-D(\B\mathbf{x}^{m-1})^{-1}\mathbf{J_xr}(\mathbf{x},\lambda)+(m-1)(D(\B\mathbf{x}^{m-1}))^{-2}\B\mathbf{x}^{m-2}D(\mathbf{r}(\mathbf{x},\lambda)),
\end{displaymath}
where $D(\mathbf{v})$ is defined as
\begin{displaymath}
D(\mathbf{v})=
\begin{bmatrix}
v_1 &     &        &     \\
    & v_2 &        &     \\
    &     & \ddots &     \\
    &     &        & v_n \\
\end{bmatrix}
\end{displaymath}
for any $\mathbf{v}=(v_1,v_2,\ldots,v_n)^\top\in\mathbb{R}^{n}$.
By Taylor's Formula, we have
\begin{displaymath}
\frac{\A\mathbf{x}_k^{m-1}}{\B\mathbf{x}_k^{m-1}}-\frac{\A\mathbf{x}_{\ast}^{m-1}}{\B\mathbf{x}_{\ast}^{m-1}}=-D(\B\mathbf{x}_{\ast}^{m-1})\mathbf{J_xr}(\mathbf{x}_{\ast},\lambda)(\mathbf{x}_k-\mathbf{x}_{\ast})+O(\|\mathbf{x}_k-\mathbf{x}_{\ast}\|^2)
\end{displaymath}
since $\mathbf{r}(\mathbf{x}_{\ast},\lambda)=0$. Therefore, there exists a constant $c_2>0$ such that
\begin{displaymath}
|\rho_k-\lambda|=\max\left(\frac{\A\mathbf{x}_k^{m-1}}{\B\mathbf{x}_k^{m-1}}-\frac{\A\mathbf{x}_{\ast}^{m-1}}{\B\mathbf{x}_{\ast}^{m-1}}\right)\leq\left\|\frac{\A\mathbf{x}_k^{m-1}}{\B\mathbf{x}_k^{m-1}}-\frac{\A\mathbf{x}_{\ast}^{m-1}}{\B\mathbf{x}_{\ast}^{m-1}}\right\|\leq c_2\|\mathbf{x}_k-\mathbf{x}_{\ast}\|.
\end{displaymath}
On the other hand,
\begin{displaymath}
|\rho_k-\lambda|=\max\left(\frac{\A\mathbf{x}_k^{m-1}}{\B\mathbf{x}_k^{m-1}}-\frac{\A\mathbf{x}_{\ast}^{m-1}}{\B\mathbf{x}_{\ast}^{m-1}}\right)\geq\max\left(-D(\B\mathbf{x}_{\ast}^{m-1})\mathbf{J_xr}(\mathbf{x}_{\ast},\lambda)(\mathbf{x}_k-\mathbf{x}_{\ast})\right)-c_3\|\mathbf{x}_k-\mathbf{x}_{\ast}\|^2
\end{displaymath}
for some $c_3>0$. Denote $\mathbf{q}_k=(\mathbf{x}_k-\mathbf{x}_{\ast})/\|\mathbf{x}_k-\mathbf{x}_{\ast}\|$, then $\|\mathbf{q}_k\|=1$. Since $\mathbf{x}_k,\mathbf{x}_{\ast}>0$ and $\|\mathbf{x}_k\|=\|\mathbf{x}_{\ast}\|=1$, we have $\mathbf{q}_k\ngeq0$ and $\mathbf{q}_k\nleq0$ for each $k$, that is, the elements of $\mathbf{q}_k$ are neither all nonnegative nor all non-positive. We claim that there exists a constant $c_4>0$ such that $\max\big(-D(\B\mathbf{x}_{\ast}^{m-1})\mathbf{J_xr}(\mathbf{x}_{\ast},\lambda)\mathbf{q}_k\big)\geq c_4$. If not, then there exists a subsequence $\{\mathbf{q}_{k_j}\}$ such that $\lim\limits_{j\rightarrow\infty}\mathbf{q}_{k_j}=\mathbf{q}$ with $\mathbf{q}\ngeq0$ and $\mathbf{q}\nleq0$, $\|\mathbf{q}\|=1$, and $\max\big(-D(\B\mathbf{x}_{\ast}^{m-1})\mathbf{J_xr}(\mathbf{x}_{\ast},\lambda)\mathbf{q}\big)\leq0$. Thus we have $\mathbf{J_xr}(\mathbf{x}_{\ast},\lambda)\mathbf{q}\geq0$. If $\mathbf{J_xr}(\mathbf{x}_{\ast},\lambda)\mathbf{q}\neq0$ then we can find $s>0$ large enough such that $s\mathbf{x}_{\ast}+\mathbf{q}>0$, $\mathbf{J_xr}(\mathbf{x}_{\ast},\lambda)(s\mathbf{x}_{\ast}+\mathbf{q})\geq0$, and $\mathbf{J_xr}(\mathbf{x}_{\ast},\lambda)(s\mathbf{x}_{\ast}+\mathbf{q})\neq0$. Thus $\mathbf{J_xr}(\mathbf{x}_{\ast},\lambda)$ is a nonsingular M-matrix, contradicting the fact that it is actually singular since $\mathbf{J_xr}(\mathbf{x}_{\ast},\lambda)\mathbf{x}_{\ast}=0$. Thus $\mathbf{J_xr}(\mathbf{x}_{\ast},\lambda)\mathbf{q}=0$ and $\mathbf{q}=\pm\mathbf{x}_{\ast}$, which leads to a contradiction.
\end{proof}

Now we can prove that the convergence of \Cref{alg:GNNI} is quadratic when $k$ is large enough.

\begin{theorem}\label{thm:convergence rate 3 GNNI}
Let $\{\mathbf{x}_k,\rho_k\}$ be generated by \Cref{alg:GNNI}, with $\{\theta_k\}$ as in \Cref{lemma:choice of theta 3}. Then, for $k$ sufficiently large, $\rho_k$ converges to $\lambda$ quadratically and $\mathbf{x}_k$ converges to $\mathbf{x}_{\ast}$ quadratically.
\end{theorem}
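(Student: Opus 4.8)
The plan is to prove that, once $k$ is large, the GNNI update $\mathbf{x}_{k+1}$ coincides up to an $O(\|\mathbf{x}_k-\mathbf{x}_{\ast}\|^2)$ error with the pure Newton iterate obtained by one step \eqref{equ:Newton method for solving f 1}--\eqref{equ:Newton method for solving f 3} at $(\mathbf{x}_k,\rho_k)$, so that the quadratic estimate \eqref{equ:xk rhok quadratically converge} of \Cref{thm:convergence rate 1 GNNI} carries over to $\{\mathbf{x}_k\}$, and then to $\{\rho_k\}$ through the two-sided bound of \Cref{thm:convergence rate 2 GNNI}.

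First I would check that $\theta_k=1$ for all large $k$. By \Cref{thm:covergence of GNNI}, $\mathbf{x}_k\to\mathbf{x}_{\ast}>0$, $\mathbf{y}_k\to\mathbf{x}_{\ast}$ and $\|\mathbf{w}_k\|^{-1}\to0$, and by \Cref{lemma:wk(yk-xk) bounded}, $\|\mathbf{y}_k-\mathbf{x}_k\|\le\alpha_1\|\mathbf{w}_k\|^{-1}\to0$; since $(\mathcal{B}-\mathcal{A})\mathbf{x}_{\ast}^{m-1}>0$, the number $\min\big((\mathcal{B}-\mathcal{A})\mathbf{x}_k^{m-1}\big)$ is bounded below by a positive constant for $k$ large, so the hypothesis of \Cref{lemma:choice of theta 2} eventually holds and \eqref{equ:h(theta)} is satisfied at $\theta_k=1$. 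As moreover $(\mathcal{B}-\mathcal{A})\tilde{\mathbf{x}}_{k+1}^{m-1}\to(m-1)^{m-1}(\mathcal{B}-\mathcal{A})\mathbf{x}_{\ast}^{m-1}>0$ when $\theta_k=1$, the halving procedure behind \Cref{lemma:choice of theta 3} stops at $\theta_k=1$. From here on I restrict to such $k$ and write $\epsilon_k=\|\mathbf{x}_k-\mathbf{x}_{\ast}\|$, $a_k=\mathbf{x}_k^\top\mathbf{y}_k\in(0,1]$; note that by \Cref{thm:convergence rate 1 GNNI} (whose underlying Newton estimate is valid at every point close to $(\mathbf{x}_{\ast},\lambda)$, hence at $(\mathbf{x}_k,\rho_k)$) together with \Cref{thm:convergence rate 2 GNNI} one has $\|\hat{\mathbf{x}}_{k+1}-\mathbf{x}_{\ast}\|=O(\epsilon_k^2)$, where $\hat{\mathbf{x}}_{k+1}$ denotes the pure Newton iterate from $(\mathbf{x}_k,\rho_k)$.

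The core estimate is a comparison of the two updates. Since the linear solve in step~5 of \Cref{alg:GNNI} is precisely \eqref{equ:Newton method w k} evaluated at $(\mathbf{x}_k,\rho_k)$, formula \eqref{equ:Newton method update x k+1} gives $\hat{\mathbf{x}}_{k+1}=\frac{1}{m-1}\big((m-2)\mathbf{x}_k+a_k^{-1}\mathbf{y}_k\big)$, whereas the unnormalized GNNI step (with $\theta_k=1$) is $\tilde{\mathbf{x}}_{k+1}=(m-2)\mathbf{x}_k+\mathbf{y}_k$, so $(m-1)\hat{\mathbf{x}}_{k+1}-\tilde{\mathbf{x}}_{k+1}=(a_k^{-1}-1)\mathbf{y}_k$, and $1-a_k=\frac{1}{2}\|\mathbf{x}_k-\mathbf{y}_k\|^2$ because $\|\mathbf{x}_k\|=\|\mathbf{y}_k\|=1$. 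Substituting $\|\hat{\mathbf{x}}_{k+1}-\mathbf{x}_{\ast}\|=O(\epsilon_k^2)$ into $a_k^{-1}\mathbf{y}_k=\mathbf{x}_{\ast}-(m-2)(\mathbf{x}_k-\mathbf{x}_{\ast})+O(\epsilon_k^2)$ and taking norms gives $a_k^{-1}=1+O(\epsilon_k)$, hence $\|\mathbf{y}_k-\mathbf{x}_{\ast}\|=O(\epsilon_k)$ and therefore $\|\mathbf{x}_k-\mathbf{y}_k\|=O(\epsilon_k)$; re-injecting this, $1-a_k=\frac{1}{2}\|\mathbf{x}_k-\mathbf{y}_k\|^2=O(\epsilon_k^2)$, so $|a_k^{-1}-1|=O(\epsilon_k^2)$ and $\tilde{\mathbf{x}}_{k+1}=(m-1)\hat{\mathbf{x}}_{k+1}+O(\epsilon_k^2)=(m-1)\mathbf{x}_{\ast}+O(\epsilon_k^2)$. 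Since $\|\tilde{\mathbf{x}}_{k+1}\|=(m-1)+O(\epsilon_k^2)$ is bounded away from zero, normalization preserves the order, giving $\epsilon_{k+1}=\|\mathbf{x}_{k+1}-\mathbf{x}_{\ast}\|=O(\epsilon_k^2)$; this is quadratic convergence of $\{\mathbf{x}_k\}$. Finally, \Cref{thm:convergence rate 2 GNNI} yields $c_1\epsilon_{k+1}\le|\rho_{k+1}-\lambda|\le c_2\epsilon_{k+1}$ and $\epsilon_k\le c_1^{-1}|\rho_k-\lambda|$, whence $|\rho_{k+1}-\lambda|=O(\epsilon_k^2)=O(|\rho_k-\lambda|^2)$, so $\{\rho_k\}$ converges quadratically as well.

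I expect the main obstacle to be the bootstrap bound $1-a_k=O(\epsilon_k^2)$: the elementary norm identity only delivers $O(\epsilon_k)$ on the first pass, and one has to re-inject the $O(\epsilon_k)$ estimate for $\mathbf{y}_k-\mathbf{x}_{\ast}$ — which itself is extracted from the already-proved Newton quadratic bound \Cref{thm:convergence rate 1 GNNI} — back into $1-a_k=\frac{1}{2}\|\mathbf{x}_k-\mathbf{y}_k\|^2$ to promote it to second order. The remaining work is careful accounting of which error terms are genuinely quadratic, using \Cref{thm:convergence rate 2 GNNI} to keep $|\rho_k-\lambda|$ and $\epsilon_k$ comparable throughout, and verifying that the normalization step in \Cref{alg:GNNI} does not degrade the order.
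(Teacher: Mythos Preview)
Your proof is correct and takes a genuinely different route from the paper's. Both arguments rest on the same three ingredients—\Cref{thm:convergence rate 1 GNNI}, \Cref{thm:convergence rate 2 GNNI}, and \Cref{lemma:choice of theta 2} (to force $\theta_k=1$ eventually)—but the core estimate is organised differently. The paper treats $\zeta_k=\rho_k-\lambda$ as the primary variable: it reads off $\|\mathbf{w}_k\|^{-1}=(m-1)\mathbf{x}_k^\top\mathbf{y}_k\,\zeta_k\bigl(1+O(\zeta_k)\bigr)$ from the Newton $\rho$-update \eqref{equ:Newton method update rho k+1}, Taylor-expands $\mathbf{r}(\tilde{\mathbf{x}}_{k+1},\rho_k)$ as in \eqref{equ:Taylor h(theta) 2}, and then carries out a somewhat lengthy algebraic expansion of $\zeta_k\mathbf{e}-\mathbf{r}(\tilde{\mathbf{x}}_{k+1},\rho_k)/\mathcal{B}\tilde{\mathbf{x}}_{k+1}^{m-1}$ (equation \eqref{equ:epsilon k+1 and epsilon k 2}) to conclude $\zeta_{k+1}=O(\zeta_k^2)$; quadratic convergence of $\mathbf{x}_k$ then follows from \Cref{thm:convergence rate 2 GNNI}. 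You instead take $\epsilon_k=\|\mathbf{x}_k-\mathbf{x}_\ast\|$ as primary and compare the unnormalised GNNI step $\tilde{\mathbf{x}}_{k+1}$ directly with $(m-1)\hat{\mathbf{x}}_{k+1}$, controlling the discrepancy via the unit-vector identity $1-\mathbf{x}_k^\top\mathbf{y}_k=\tfrac{1}{2}\|\mathbf{x}_k-\mathbf{y}_k\|^2$ and a short bootstrap. Your route is more geometric and sidesteps the expansion \eqref{equ:epsilon k+1 and epsilon k 2} entirely; the paper's route, on the other hand, produces the intermediate relation $\zeta_{k+1}=O(\zeta_k^2)$ explicitly, which it immediately reuses in the proof of \Cref{thm:convergence rate 4 GNNI} on the stopping criterion.
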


\begin{proof}
We assume that $(\mathbf{x}_k,\rho_k)$ is sufficiently close to $(\mathbf{x}_{\ast},\lambda)$. Let $\{\hat{\mathbf{x}}_{k+1},\hat{\rho}_{k+1}\}$ be generated by Newton step \eqref{equ:Newton method for solving f 1}-\eqref{equ:Newton method for solving f 3} from $\{\mathbf{x}_k,\lambda\}$, instead of $\{\hat{\mathbf{x}}_k,\hat{\rho}_k\}$, and assume that \eqref{equ:xk rhok quadratically converge} holds.

Let $\zeta_k=\rho_k-\lambda$. By \eqref{equ:xk rhok quadratically converge} and \Cref{thm:convergence rate 2 GNNI}, we have $\hat{\rho}_{k+1}-\lambda=O(\zeta_k^2)$. From \eqref{equ:Newton method update rho k+1}, we have $\rho_k-\lambda-\frac{1}{(m-1)\mathbf{x}_k^\top\mathbf{y}_k\|\mathbf{w}_k\|}=\hat{\rho}_{k+1}-\lambda=O(\zeta_k^2)$. It follows that
\begin{equation}\label{equ:norm of w k}
\|\mathbf{w}_k\|=\frac{1}{(m-1)\mathbf{x}_k^\top\mathbf{y}_k\zeta_k(1-O(\zeta_k))}.
\end{equation}
Then by \Cref{lemma:wk(yk-xk) bounded}, we have $\|\mathbf{y}_k-\mathbf{x}_k\|=O(\|\mathbf{w}_k\|^{-1})=O(\zeta_k).$
Thus
\begin{equation}\label{equ:epsilon k+1 and epsilon k 1}
\zeta_{k+1}=\zeta_k+\max\frac{\A\tilde{\mathbf{x}}_{k+1}^{m-1}}{\B\tilde{\mathbf{x}}_{k+1}^{m-1}}-\rho_k=\zeta_k-\min\frac{\mathbf{r}(\tilde{\mathbf{x}}_{k+1},\rho_k)}{\B\tilde{\mathbf{x}}_{k+1}^{m-1}}.
\end{equation}
By \Cref{lemma:choice of theta 2}, we have $\theta_k=1$ near convergence. Thus when $k$ is large enough, we have by \eqref{equ:Taylor Expansion of h(theta)}-\eqref{equ:Taylor h(theta) 2}
\begin{displaymath}
\mathbf{r}(\tilde{\mathbf{x}}_{k+1},\rho_k)=(m-1)^{m-2}\frac{\B\mathbf{x}_k^{m-1}}{\|\mathbf{w}_k\|}+O(\|\mathbf{y}_k-\mathbf{x}_k\|^2)=(m-1)^{m-2}\frac{\B\mathbf{x}_k^{m-1}}{\|\mathbf{w}_k\|}+O(\zeta_k^2).
\end{displaymath}
Then by \eqref{equ:norm of w k} we have
\begin{equation}\label{equ:epsilon k+1 and epsilon k 2}
\begin{aligned}
\zeta_k\mathbf{e}-\frac{\mathbf{r}(\tilde{\mathbf{x}}_{k+1},\rho_k)}{\B\tilde{\mathbf{x}}_{k+1}^{m-1}}&=\zeta_k\mathbf{e}-\frac{\mathbf{x}_k^\top\mathbf{y}_k\zeta_k\big(1-O(\zeta_k)\big)(m-1)^{m-1}\B\mathbf{x}_k^{m-1}}{\B\tilde{\mathbf{x}}_{k+1}^{m-1}}+O(\zeta_k^2)\\
&=\zeta_k\mathbf{e}-\frac{\mathbf{x}_k^\top(\mathbf{x}_k+\mathbf{y}_k-\mathbf{x}_k)\zeta_k\big(1-O(\zeta_k)\big)(m-1)^{m-1}\B\mathbf{x}_k^{m-1}}{\B\big((m-1)\mathbf{x}_k+\mathbf{y}_k-\mathbf{x}_k\big)^{m-1}}+O(\zeta_k^2)\\
&=\zeta_k\mathbf{e}-\frac{\big(1+O(\zeta_k)\big)\zeta_k\big(1-O(\zeta_k)\big)(m-1)^{m-1}\B\mathbf{x}_k^{m-1}}{(m-1)^{m-1}\big(1+O(\zeta_k)\big)^{m-1}\B\mathbf{x}_k^{m-1}}+O(\zeta_k^2)\\
&=\zeta_k\mathbf{e}-\frac{\big(1+O(\zeta_k)\big)\zeta_k\big(1-O(\zeta_k)\big)\mathbf{e}}{\big(1+O(\zeta_k)\big)^{m-1}\mathbf{e}}+O(\zeta_k^2)\\
&=\frac{\zeta_k\big(1+O(\zeta_k)\big)^{m-1}-\big(\zeta_k-O(\zeta_k^3)\big)}{\big(1+O(\zeta_k)\big)^{m-1}}\mathbf{e}+O(\zeta_k^2)\\
&=O(\zeta_k^2).\\
\end{aligned}
\end{equation}
Using \eqref{equ:epsilon k+1 and epsilon k 2} in \eqref{equ:epsilon k+1 and epsilon k 1}, we get $\zeta_{k+1}=O(\zeta_k^2)$. Therefore, $\rho_k$ converges to $\lambda$ quadratically and then $\mathbf{x}_k$ converges to $\mathbf{x}_{\ast}$ quadratically by \Cref{thm:convergence rate 2 GNNI}.
\end{proof}

Since $|\overline{\rho}_k-\underline{\rho}_k|/\overline{\rho}_k$ is used in the stopping criterion in \Cref{alg:GNNI}, we also present the following result.

\begin{theorem}\label{thm:convergence rate 4 GNNI}
Let $\{\mathbf{x}_k,\rho_k\}$ be generated by \Cref{alg:GNNI}, with $\{\theta_k\}$ as in \Cref{lemma:choice of theta 3}. Then, with $\theta_k=1$ for $k$ sufficiently large, $|\overline{\rho}_k-\underline{\rho}_k|/\overline{\rho}_k$ converges to 0 quadratically.
\end{theorem}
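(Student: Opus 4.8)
The plan is to bound the relative gap $|\overline{\rho}_k-\underline{\rho}_k|/\overline{\rho}_k$ in terms of the scalar error $\zeta_k:=\rho_k-\lambda=\overline{\rho}_k-\lambda$, whose quadratic decay is already established in \Cref{thm:convergence rate 3 GNNI}. First I would record that $\rho_k=\overline{\rho}_k$ for every $k$ (step 11 of \Cref{alg:GNNI}) and that replacing $\tilde{\mathbf{x}}_{k+1}$ by its normalization $\mathbf{x}_{k+1}$ does not change any ratio $(\A\mathbf{x}^{m-1})_i/(\B\mathbf{x}^{m-1})_i$. Writing $\tfrac{\A\tilde{\mathbf{x}}_{k+1}^{m-1}}{\B\tilde{\mathbf{x}}_{k+1}^{m-1}}=\overline{\rho}_k\mathbf{e}-u$ with $u:=\mathbf{r}(\tilde{\mathbf{x}}_{k+1},\overline{\rho}_k)/\B\tilde{\mathbf{x}}_{k+1}^{m-1}$, the constant vector $\overline{\rho}_k\mathbf{e}$ drops out of $\max-\min$, so
\[
\overline{\rho}_{k+1}-\underline{\rho}_{k+1}=\max_i u_i-\min_i u_i .
\]
Everything then reduces to showing that $u$ is a scalar multiple of $\mathbf{e}$ up to a componentwise perturbation of size $O(\zeta_k^2)$.

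For this I would reuse the machinery already in place for \Cref{thm:convergence rate 3 GNNI}: for $k$ large we have $\theta_k=1$ (\Cref{lemma:choice of theta 2}), and from the Taylor expansion \eqref{equ:Taylor Expansion of h(theta)}--\eqref{equ:Taylor h(theta) 2}, \Cref{lemma:wk(yk-xk) bounded}, and \eqref{equ:norm of w k},
\[
\mathbf{r}(\tilde{\mathbf{x}}_{k+1},\overline{\rho}_k)=\tfrac{(m-1)^{m-2}}{\|\mathbf{w}_k\|}\B\mathbf{x}_k^{m-1}+O(\zeta_k^2),\qquad \|\mathbf{w}_k\|^{-1}=\Theta(\zeta_k),\qquad \|\mathbf{y}_k-\mathbf{x}_k\|=O(\zeta_k).
\]
Since $\tilde{\mathbf{x}}_{k+1}=(m-1)\mathbf{x}_k+(\mathbf{y}_k-\mathbf{x}_k)$ and $\B(\cdot)^{m-1}$ is locally Lipschitz, $\B\tilde{\mathbf{x}}_{k+1}^{m-1}=(m-1)^{m-1}\B\mathbf{x}_k^{m-1}+O(\zeta_k)$, while $\min_i(\B\mathbf{x}_k^{m-1})_i$ is bounded below by a positive constant (from the proof of \Cref{thm:covergence of GNNI}). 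Dividing componentwise and factoring, the coordinate-dependent factor $(\B\mathbf{x}_k^{m-1})_i$ cancels from the leading terms, giving
\[
u_i=\frac{\|\mathbf{w}_k\|^{-1}}{m-1}\,\bigl(1+O(\zeta_k)\bigr),
\]
where I used $\|\mathbf{w}_k\|^{-1}=\Theta(\zeta_k)$ so that the $O(\zeta_k^2)$ numerator perturbation contributes only a relative $O(\zeta_k)$. Hence $u_i-u_j=\tfrac{\|\mathbf{w}_k\|^{-1}}{m-1}\,O(\zeta_k)=O(\zeta_k)\cdot O(\zeta_k)=O(\zeta_k^2)$, so $\overline{\rho}_{k+1}-\underline{\rho}_{k+1}=O(\zeta_k^2)$.

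Finally, \Cref{cor: Collatz-Wielandt} gives $\underline{\rho}_k\le\lambda\le\overline{\rho}_k$, hence $\zeta_k\le\overline{\rho}_k-\underline{\rho}_k$; together with $\overline{\rho}_k\to\lambda>0$ (so $\overline{\rho}_k$ stays in a fixed compact subinterval of $(0,\infty)$) this upgrades $\overline{\rho}_{k+1}-\underline{\rho}_{k+1}=O(\zeta_k^2)$ to
\[
\frac{\overline{\rho}_{k+1}-\underline{\rho}_{k+1}}{\overline{\rho}_{k+1}}=O\!\left(\Bigl(\tfrac{\overline{\rho}_k-\underline{\rho}_k}{\overline{\rho}_k}\Bigr)^{2}\right),
\]
which is the claimed quadratic convergence to $0$.

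The step I expect to be the main obstacle is the cancellation in the third paragraph: one must carefully propagate the $O(\zeta_k^2)$ error in the numerator and the $O(\zeta_k)$ error in the denominator through the componentwise division by $\B\mathbf{x}_k^{m-1}$, and observe that these produce only a \emph{relative} perturbation $O(\zeta_k)$ on a leading term whose magnitude is itself $\Theta(\zeta_k)$, so that the spread $\max_i u_i-\min_i u_i$ is genuinely second order rather than merely $O(\zeta_k)$. All the remaining estimates are verbatim reuses of those established for \Cref{thm:covergence of GNNI,thm:convergence rate 3 GNNI}.
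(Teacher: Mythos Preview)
Your proposal is correct and follows essentially the same route as the paper. The paper writes $\overline{\rho}_{k+1}-\underline{\rho}_{k+1}=\zeta_{k+1}-\min\bigl(\zeta_k\mathbf{e}-\tfrac{\mathbf{r}(\tilde{\mathbf{x}}_{k+1},\overline{\rho}_k)}{\B\tilde{\mathbf{x}}_{k+1}^{m-1}}\bigr)$ and then directly invokes the estimate \eqref{equ:epsilon k+1 and epsilon k 2} (that $\zeta_k\mathbf{e}-u=O(\zeta_k^2)$) together with $\zeta_{k+1}=O(\zeta_k^2)$ and $\zeta_k\le\overline{\rho}_k-\underline{\rho}_k$; your computation of $\max_i u_i-\min_i u_i$ via the cancellation of $(\B\mathbf{x}_k^{m-1})_i$ is exactly an equivalent re-derivation of \eqref{equ:epsilon k+1 and epsilon k 2}, so the two arguments coincide up to bookkeeping.
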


\begin{proof}
From $\frac{\A\tilde{\mathbf{x}}_{k+1}^{m-1}}{\B\tilde{\mathbf{x}}_{k+1}^{m-1}}=\overline{\rho}_k\mathbf{e}-\frac{\mathbf{r}(\tilde{\mathbf{x}}_{k+1},\overline{\rho}_k)}{\B\tilde{\mathbf{x}}_{k+1}^{m-1}}$ we have
\begin{displaymath}
\begin{aligned}
\frac{|\overline{\rho}_{k+1}-\underline{\rho}_{k+1}|}{\overline{\rho}_{k+1}}=\frac{\Big|\overline{\rho}_k-\min\big(\overline{\rho}_k\mathbf{e}-\frac{\mathbf{r}(\tilde{\mathbf{x}}_{k+1},\overline{\rho}_k)}{\B\tilde{\mathbf{x}}_{k+1}^{m-1}}\big)\Big|}{\overline{\rho}_{k+1}}&=\frac{\Big|\zeta_{k+1}-\min\big(\zeta_k\mathbf{e}-\frac{\mathbf{r}(\tilde{\mathbf{x}}_{k+1},\overline{\rho}_k)}{\B\tilde{\mathbf{x}}_{k+1}^{m-1}}\big)\Big|}{\overline{\rho}_{k+1}}\\
&=O(\zeta_k^2)=O\big(|\overline{\rho}_k-\underline{\rho}_k|^2/\overline{\rho}_k^2\big),
\end{aligned}
\end{displaymath}
where we have used $\zeta_{k+1}=O(\zeta_k^2)$ and $\zeta_k\leq\overline{\rho}_k-\underline{\rho}_k$.
\end{proof}

\section{Numerical experiments}
In this section, we present some numerical examples to verify our theory for these algorithms, and to illustrate their effectiveness. All numerical tests were done using MATLAB version R2021a and the Tensor Toolbox version 3.2.1 \cite{TensorToolbox}. This toolbox defines a new data type ``tensor''. For a ``double'' type data A, we can convert it to a ``tensor'' $\A$ by the function $\A$=tensor(A). For a ``tensor'' $\A$ and a vector $\mathbf{x}$, we can compute the tensor-vector product $\A\mathbf{x}^{m-1}$ by the function ttsv($\A$,$\mathbf{x}$,$-1$) and compute the product $\A\mathbf{x}^{m-2}$ by ttsv($\A$,$\mathbf{x}$,$-2$). In addition, we can use the function symtensor($\A$) to symmetrize a ``tensor'' $\A$. The experiments were performed on a laptop computer with an Intel Quad-Core i5-5287U CPU (2.90GHz) and 8 GB of RAM.

In all numerical experiments, we use the following settings. We set the maximum iterations to be 300. The tolerance parameter ``tol'' is set to be $10^{-13}$. For \Cref{alg:GNNI}, we take $\eta=1$ for $m=3$ and $\eta=0$ for $m=4$. For \Cref{alg:IGTNI}, referring to \cite[page 19]{Ch20}, we define the inner tolerance for $\mathbf{f}_k$ by $\|\mathbf{f}_k\|\leq\max\{\beta_k\min(\A\mathbf{x}_{k-1}^{m-1}),10^{-12}\}$ for $k\geq2$ and $\|\mathbf{f}_1\|\leq10^{-3}\min(\A\mathbf{x}_0^{m-1})$, where $\beta_k=\min\{10^{-3},\frac{\overline{\rho}_k-\underline{\rho}_k}{\overline{\rho}_k}\}$. The parameter $\varepsilon$ in step 6 of \Cref{alg:MTNI} is selected to be $0.01$ for Example 1 and Example 4 and selected to be $0.005$ for Example 2 and Example 3. We set $\mathbf{b}=[1,\ldots,1]^\top$ for \Cref{alg:MTNI,alg:GNNI} and $\mathbf{x}_0=[1,\ldots,1]^\top$ for \Cref{alg:GTNI,alg:IGTNI}. The notation ``Residual'' is defined by Residual=$\|\A\mathbf{x}^{m-1}-\lambda\B\mathbf{x}^{m-1}\|_2$.

\subsection{Generate nonsingular $\mathcal{M}$-tensors}

In our numerical tests, we need to construct a positive tensor $\mathcal{A}$ and a nonsingular $\mathcal{M}$--tensor $\mathcal{C}$, then $\mathcal{B}$ will be $\mathcal{A+C}$. We construct the nonsingular $\mathcal{M}$--tensor $\mathcal{C}$ as follows \cite[page 112]{DW}. First, we generate a nonnegative tensor $\mathcal{R}\in T_{m,n}$ containing random values drawn from the standard uniform distribution on $(0,1)$. Next, set the scalar $\gamma=(1+\omega)\cdot\max\limits_{i=1,2,\ldots,n}(\mathcal{R}\mathbf{1}^{m-1})_i$, where $\mathbf{1}=(1,1,\ldots,1)^\top$ and $\omega=0.01$. Obviously, $\mathcal{C}=\gamma\mathcal{I-R}$ is a diagonally dominant $\mathcal{Z}$--tensor, that is, $\mathcal{C}\mathbf{1}^{m-1}>0$. Thus $\mathcal{C}$ is a nonsingular $\mathcal{M}$-tensor.

\subsection{Randomly generated generalized $\mathcal{M}$-tensor pairs}

In this part, we test our algorithms on randomly generated tensor pairs $(\mathcal{A,B})$ satisfying the conditions $(\mathrm{C}1')-(\mathrm{C}4')$.

{\bf Example 1} In this example, we generate a positive tensor $\mathcal{A}\in T_{3,3}$ containing random values drawn from the standard uniform distribution on $(0,1)$ and construct the nonsingular $\mathcal{M}$--tensor $\mathcal{B-A}$ by the method in Section 5.1. The results are shown in \Cref{fig:Differ_rho(random 3x3x3),fig:Res(random 3x3x3)}.

In \Cref{fig:Differ_rho(random 3x3x3)}, the $y$ axis is $\overline{\rho}_{k}-\overline{\rho}_{k+1}$, we can see that the sequence $\{\bar{\rho}_k\}$ is monotonically decreasing for all of \Cref{alg:MTNI,alg:GTNI,alg:IGTNI,alg:GNNI}, just as we proved in \Cref{lemma:convergence of s_max MTNI,lemma:convergence of rho_max IGTNI} and \Cref{thm:covergence of GNNI}.

In \Cref{fig:Res(random 3x3x3)}, as we proved in \Cref{thm:convergence rate 4 GNNI}, we can see that \Cref{alg:GNNI} converges fastest. We show the other results in \Cref{table:example 1}. Here ``Inner Iter'' means the total number of inner iterations for solving the $\mathcal{M}$-tensor equations.

{\bf Example 2} We consider a larger example. Construct $\A,\B\in T_{4,50}$ using the same method as Example 1. For saving time, we set the maximum number of inner iterations of the first outer iteration in \Cref{alg:MTNI} to be 3000. The results are shown in \Cref{fig:Res(random 50x50x50x50)} and \Cref{table:example 2}. In \Cref{fig:Res(random 50x50x50x50)}, we can see that \Cref{alg:GTNI,alg:IGTNI,alg:GNNI} converge much faster than \Cref{alg:MTNI}.

{\bf Example 3} In this example, we compare our algorithms with GEAP (generalized eigenproblem adaptive power) method \cite{Kolda} and AG (adaptive gradient) method \cite{YYXSZ}. These two methods are designed for tensor pair $(\A,\B)$ such that $\A$ is symmetric and $\B$ is symmetric and positive definite. A real-valued tensor $\A\in T_{m,n}$ is \emph{symmetric} if $a_{i_{p(1)}\ldots i_{p(m)}}=a_{i_1\ldots i_m}$ for all $i_1,\ldots,i_m\in\{1,\ldots,n\}$ and $p\in\Pi_m$, where $\Pi_m$ denotes the space of all $m$-permutations. We let $\mathbb{S}^{[m,n]}$ denotes the space of all symmetric, real-valued, $m$-th order, $n$-dimensional tensors. A tensor $\B\in\mathbb{S}^{[m,n]}$ is \emph{positive definite} if $\B\mathbf{x}^m>0$ for all $\mathbf{x}\in\mathbb{R}^n$, $\mathbf{x}\neq0$. We denote $\mathbb{S}_+^{[m,n]}$ as the space of positive definite tensors in $\mathbb{S}^{[m,n]}$. We construct our example as follows. First, we generate two symmetric positive tensors $\mathcal{A},\mathcal{R}\in \mathbb{S}^{[6,4]}$ containing random values drawn from the standard uniform distribution on $(0,1)$. Then we set the scalar $\gamma=1.01\cdot\max\left(\max\limits_{i=1,\ldots,n}(\mathcal{R}\mathbf{1}^{m-1})_i,n^{m-1}\max\limits_{i_1,\ldots,i_m}(\mathcal{R-A})\right)$ and $\B=\gamma\mathcal{I-R+A}$. Therefore, $\B\in\mathbb{S}_+^{[6,4]}$ and $\B-\A$ is a nonsingular $\mathcal{M}$-tensor. The stopping criterion for this example is $\|\A\mathbf{x}^{m-1}-\lambda\B\mathbf{x}^{m-1}\|_2<$tol. The results are shown in \Cref{fig:Res(random 4x4x4x4x4x4 symmetric)} and \Cref{table:example 3}.

\subsection{Computing the Perron pair for a weakly irreducible nonnegative tensor}
\

{\bf Example 4 \cite[Example 2]{Ch20}} According to the Perron-Frobenius Theorem for irreducible nonnegative tensors in \cite[Theorem 3.26]{QL18}, for a weakly irreducible nonnegative tensor $\A$, $\rho(\A)$ is the unique eigenvalue with a positive eigenvector $\mathbf{x}$, and $\mathbf{x}$ is the unique nonnegative eigenvector associated with $\rho(\A)$, up to a multiplicative constant. $(\rho(\A),\mathbf{x})$ is called the Perron pair of $\A$.

Consider tensor $\A\in T_{4,50}$ such that $\A=\omega\mathcal{D}+\mathcal{C}$ of an 4-uniform connected hypergraph \cite{HQ,HQX}, where $\mathcal{D}$ is the diagonal tensor with diagonal element $d_{i\ldots i}$ equal to the degree of vertex $i$ for each $i$, and $\mathcal{C}$ is the adjacency tensor defined in \cite{Cooper,HQ,HQX}. According to \cite[Theorem 4.1]{QL18}, $\A$ is weakly irreducible. We consider the hypergraph with edge set $E=\{(i,j,j+1,j+2)\}$ for $i=1,2,3,4,5$ and $j=i+1,\ldots,n-2$. We choose $w=1$ and set $\B=\rho\mathcal{I}$, where $\rho>\rho(\A)$. The stopping criterion for this example is also $\|\A\mathbf{x}^{m-1}-\lambda\B\mathbf{x}^{m-1}\|_2<$tol. The results of this example are shown in \Cref{fig:Res(hypergraph50x50x50x50)1,fig:Res(hypergraph50x50x50x50)2} and \Cref{table:example 4}. We compare our algorithms with GEAP, AG, and NQZ \cite{NQZ09} here.

\begin{figure}[htbp]
\centering
\subfloat[\scriptsize Monotonically decreasing of $\bar{\rho}_k$\\ \centering (Example 1)]{\label{fig:Differ_rho(random 3x3x3)}
\includegraphics[width=0.31\linewidth]{./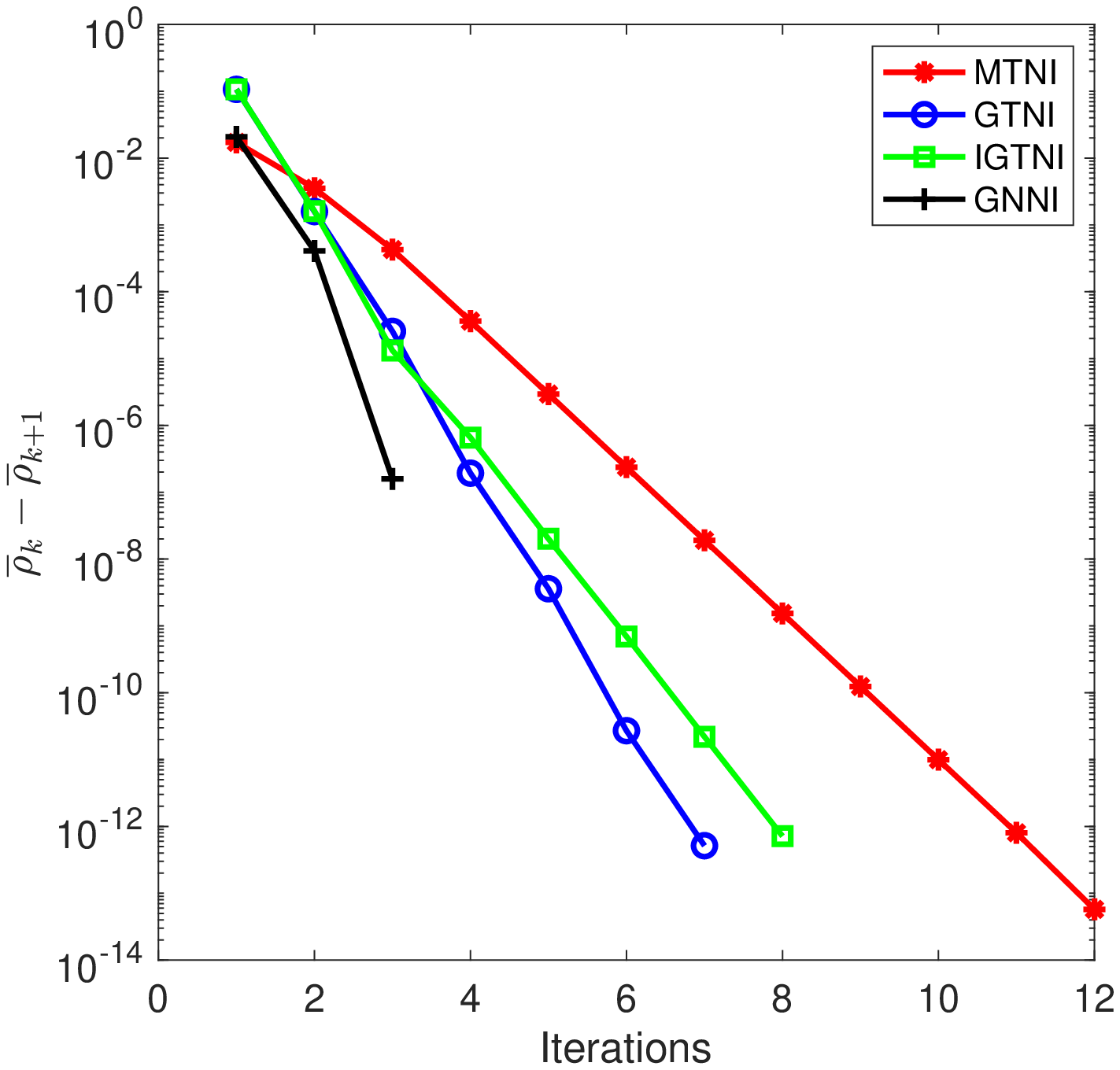}
}%\hspace{3em}
\subfloat[\scriptsize Residuals (Example 1)]{\label{fig:Res(random 3x3x3)}
\includegraphics[width=0.31\linewidth]{./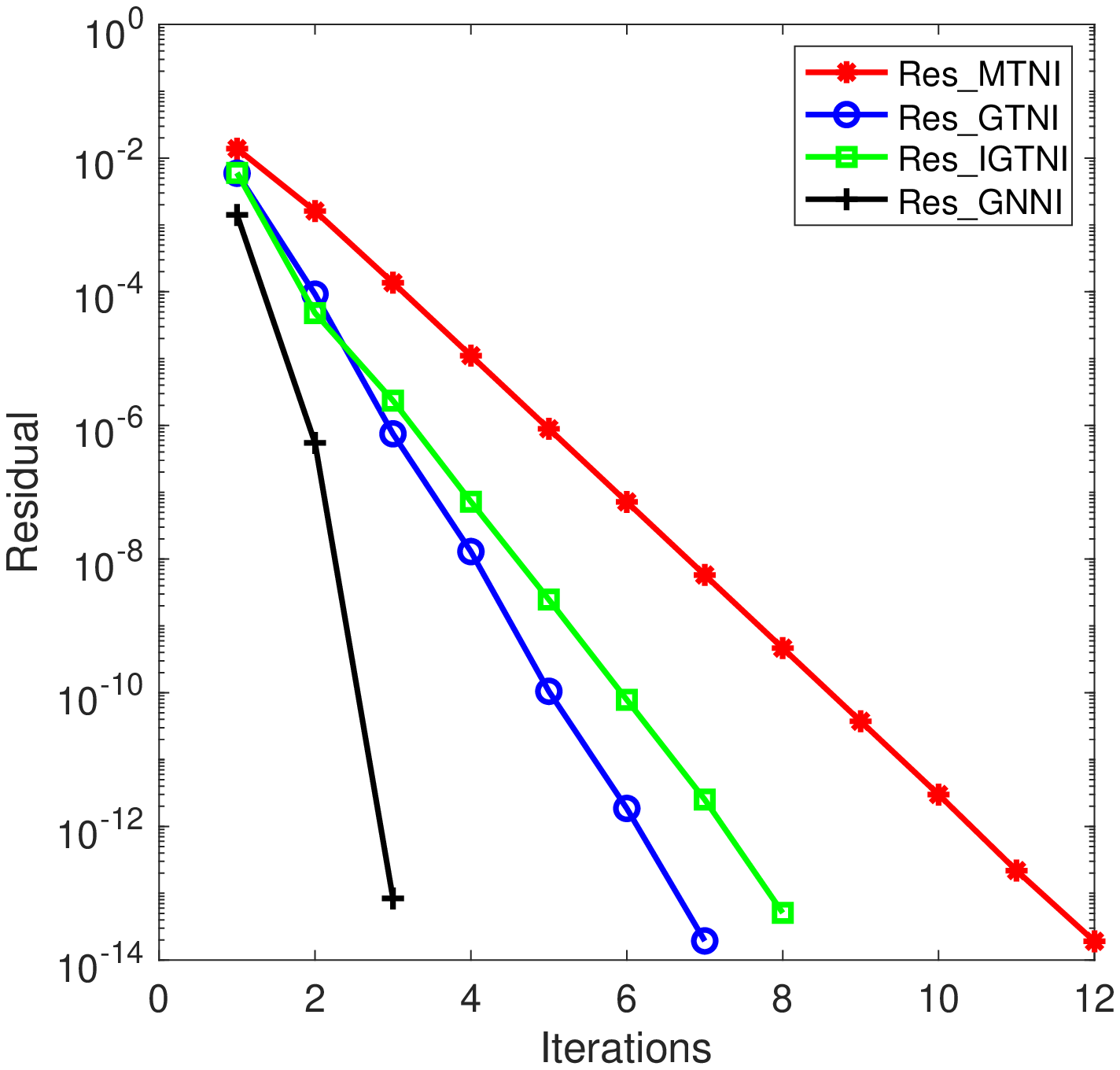}
}
\subfloat[{\scriptsize Residuals (Example 2)}]{\label{fig:Res(random 50x50x50x50)}
\includegraphics[width=0.31\linewidth]{./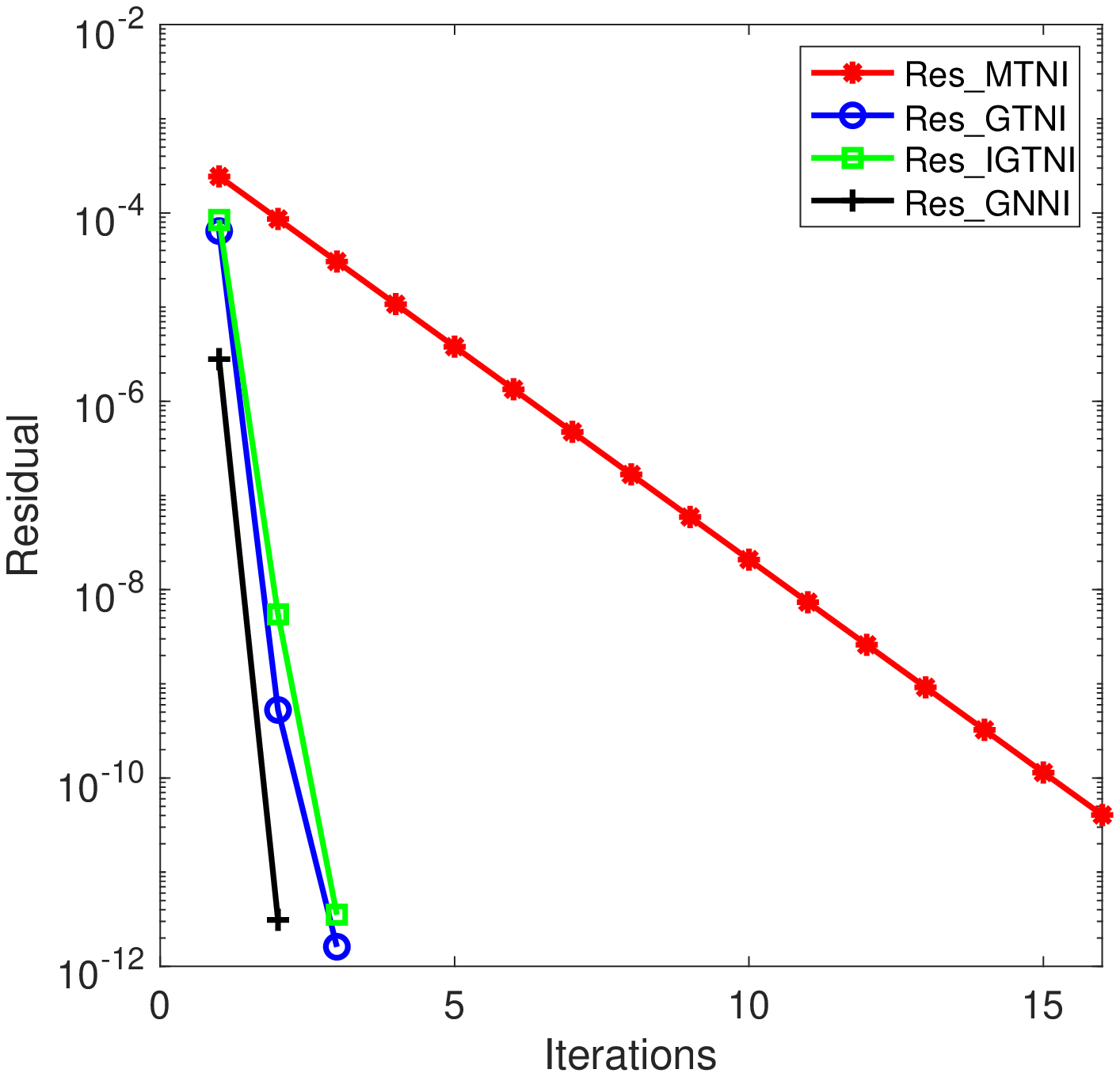}
}\\%\hspace{3em}
\subfloat[{\scriptsize Residuals (Example 3)}]{\label{fig:Res(random 4x4x4x4x4x4 symmetric)}
\includegraphics[width=0.31\linewidth]{./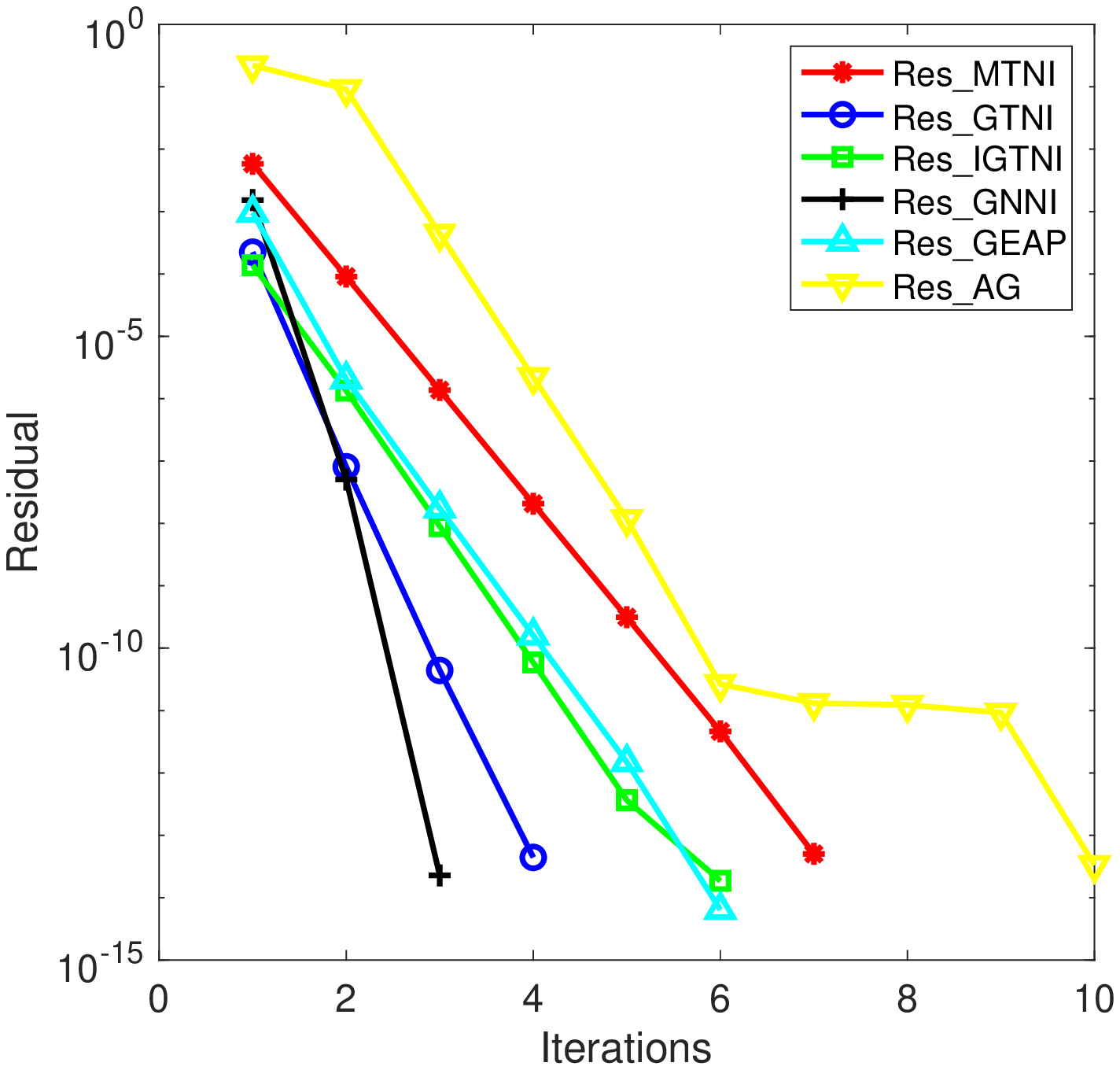}
}
\subfloat[\scriptsize Residuals (Example 4)]{\label{fig:Res(hypergraph50x50x50x50)1}
\includegraphics[width=0.31\linewidth]{./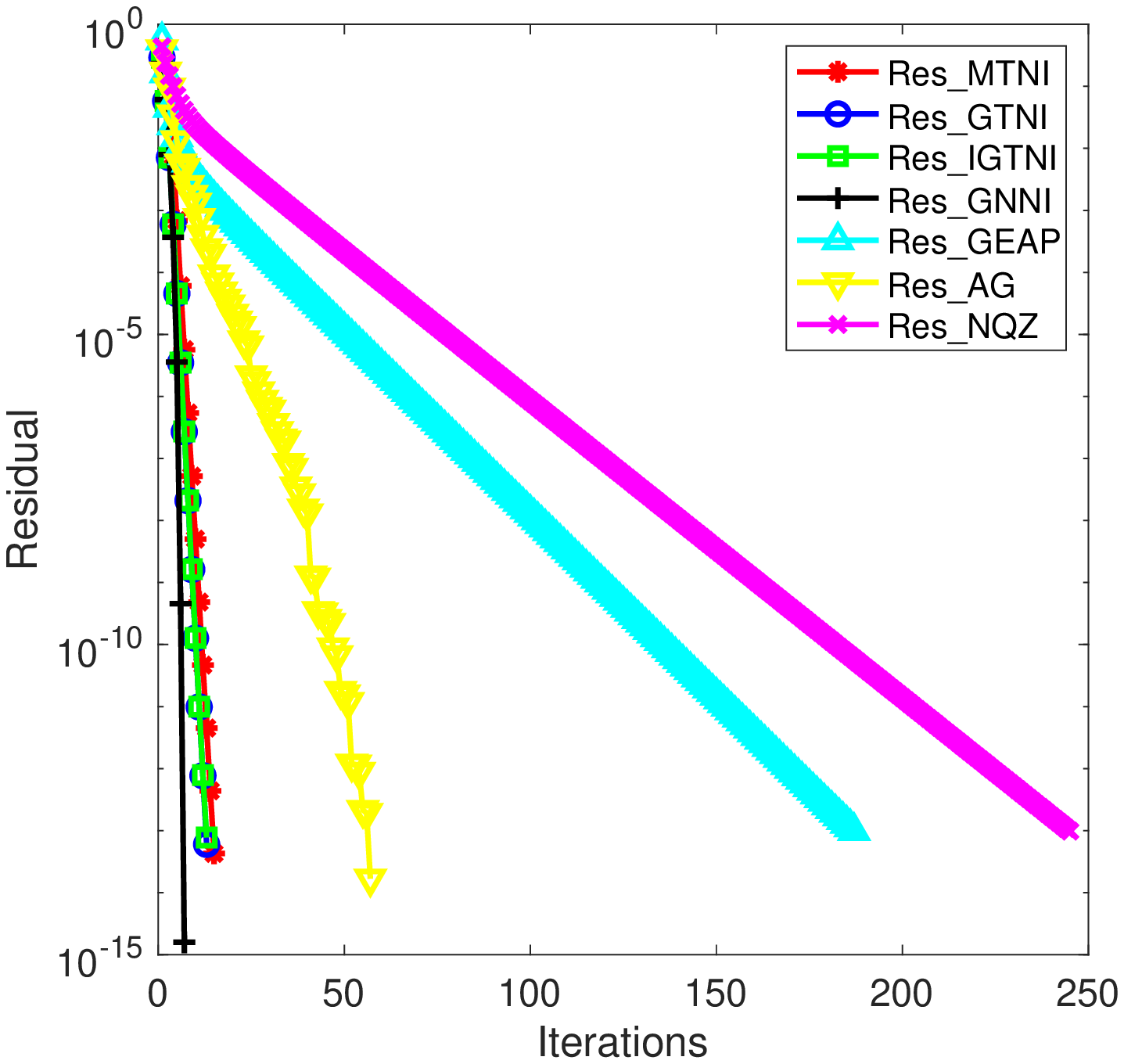}
}%\hspace{3em}
\subfloat[\scriptsize Residuals (Example 4)]{\label{fig:Res(hypergraph50x50x50x50)2}
\includegraphics[width=0.31\linewidth]{./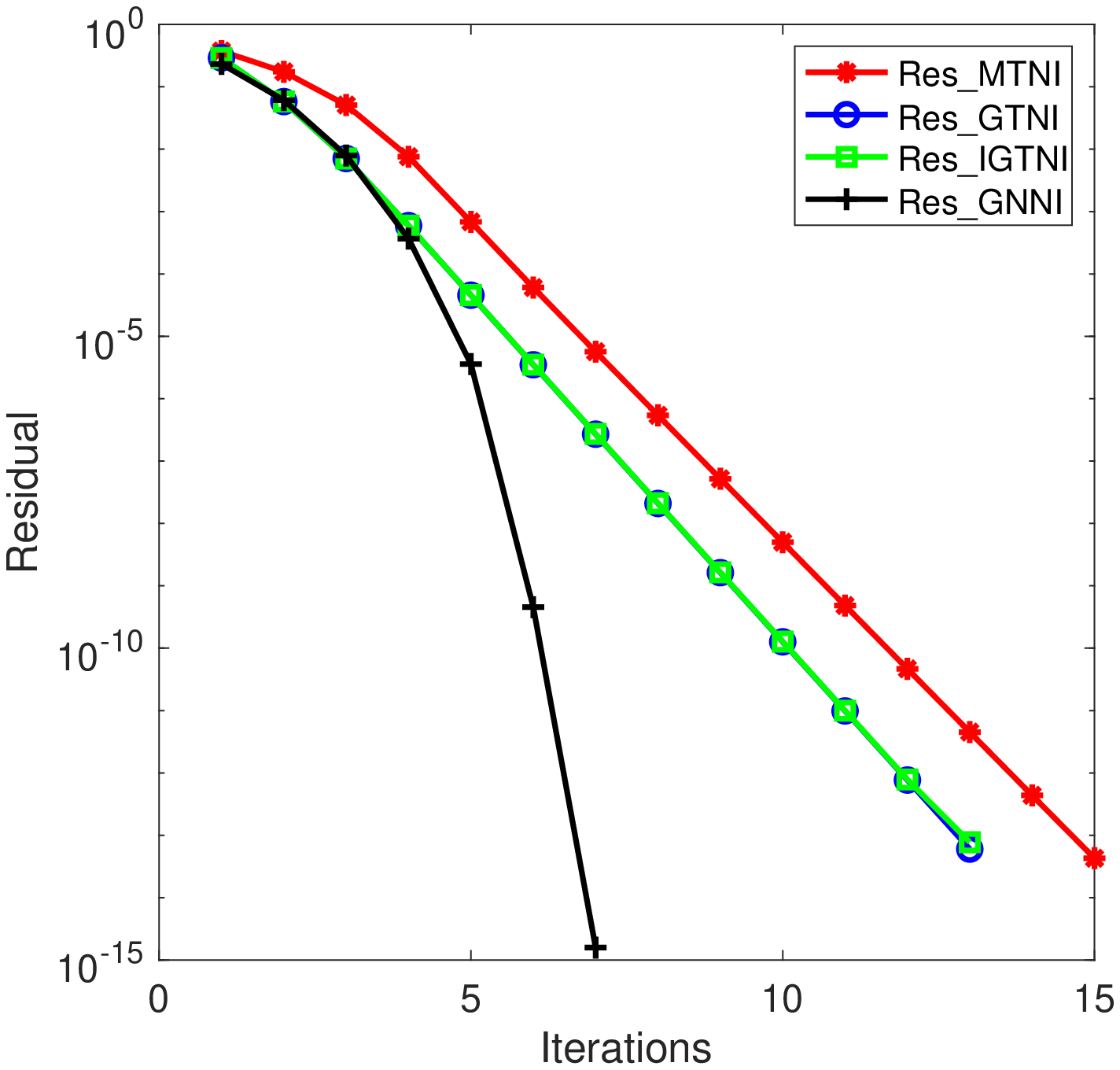}
}
\caption{Results of Example 1,2,3,4}
\end{figure}

\begin{table}[htbp]
{\footnotesize
\caption{Result of Example 1}\label{table:example 1}
\begin{center}
\begin{tabular}{|c|c|c|c|c|c|c|}\hline
method & $\lambda$ & $\mathbf{x}$ & Outer Iter & Inner Iter & Residual & Time(s)\\ \hline
MTNI & 0.8774 & [0.6028, 0.5234, 0.6023] & 12 & 17086 & 1.9082e-14 & 0.7588\\ \hline
GTNI & 0.8774 & [0.6028, 0.5234, 0.6023] & 7 & 1096 & 1.9347e-14 & 0.0757\\ \hline
IGTNI & 0.8774 & [0.6028, 0.5234, 0.6023] & 8 & 1184 & 5.0898e-14 & 0.0684\\ \hline
GNNI & 0.8774 & [0.6028, 0.5234, 0.6023] & 3 & / & 8.3544e-14 & 0.0115\\ \hline
\end{tabular}
\end{center}
}
\end{table}

\begin{table}[htbp]
{\footnotesize
\caption{Result of Example 2}\label{table:example 2}
\begin{center}
\begin{tabular}{|c|c|c|c|c|c|}\hline
method & $\lambda$ & Outer Iter & Inner Iter & Residual & Time(s)\\ \hline
MTNI & 0.9859 & 16 & 41600 & 4.0477e-11 & 161.4059\\ \hline
GTNI & 0.9859 & 3 & 6860 & 1.6140e-12 & 25.5504\\ \hline
IGTNI & 0.9859 & 3 & 6137 & 3.5401e-12 & 22.3650\\ \hline
GNNI & 0.9859 & 2 & / & 3.1140e-12 & 6.5572\\ \hline
\end{tabular}
\end{center}
}
\end{table}

\begin{table}[htbp]
{\footnotesize
\caption{Result of Example 3}\label{table:example 3}
\begin{center}
\begin{tabular}{|c|c|c|c|c|c|}\hline
method & $\lambda$ & Outer Iter & Inner Iter & Residual & Time(s)\\ \hline
MTNI & 0.6712 & 7 & 19475 & 5.0431e-14 & 0.9461\\ \hline
GTNI & 0.6712 & 4 & 379 & 4.4267e-14 & 0.0443\\ \hline
IGTNI & 0.6712 & 6 & 400 & 1.8631e-14 & 0.0366\\ \hline
GNNI & 0.6712 & 3 & / & 2.2748e-14 & 0.0083\\ \hline
GEAP & 0.6712 & 6 & / & 6.4047e-15 & 0.0074\\ \hline
AG & 0.6712 & 10 & / & 3.3375e-14 & 0.0042\\ \hline
\end{tabular}
\end{center}
}
\end{table}

\begin{table}[htbp]
{\footnotesize
\caption{Result of Example 4}\label{table:example 4}
\begin{center}
\begin{tabular}{|c|c|c|c|c|c|}\hline
method & $\lambda$ & Outer Iter & Inner Iter & Residual & Time(s)\\ \hline
MTNI  & 51.7310 & 15 & 6189 & 4.2891e-14 & 24.6596\\ \hline
GTNI  & 51.7310 & 13 & 5650 &  6.0176e-14 & 21.8478\\ \hline
IGTNI & 51.7310 & 13 & 4432 &  7.7549e-14 & 17.0098\\ \hline
GNNI  & 51.7310 & 7 & / &  1.5861e-15 & 0.4592\\ \hline
GEAP & 51.7310  & 187 & / & 9.8886e-14 & 1.4453\\ \hline
AG & 51.7310  & 57 & / & 1.6767e-14 & 1.1369\\ \hline
NQZ  & 51.7310 & 245 & / &  9.8212e-14 & 0.9080\\ \hline
\end{tabular}
\end{center}
}
\end{table}

\subsection{Computing the positive eigenpair for nonlinear eigenvalue problem with eigenvector nonlinearity (NEPv)}
\
In this part, We consider to apply our methods on the nonlinear eigenvalue problem with eigenvector nonlinearity (NEPv) and compare the numerical results with other methods. The NEPv is originated from the Bose-Einstein condensates (BECs), which can be regarded as a special case of tensor generalized eigenvalue problem.

In condensed matter physics, a Bose–Einstein condensate (BEC) is a state of matter that is typically formed when a gas of bosons at very low densities is cooled to temperatures very close to absolute zero. Under such conditions, a large fraction of bosons occupy the same quantum ground state. Suppose that this ground state can be represented by a wave function $\psi(\mathbf{x},t)$. Then $\psi$ is the solution of the following energy functional minimization problem under normalization constraints \cite[page 10]{MR3005624}:
\begin{equation}\label{equ:BEC minimization}
\left\{
\begin{aligned}
&\min~E\big(\psi(\cdot,t)\big)=\int_{\mathbb{R}^d}\big[\frac{1}{2}|\nabla\psi(\mathbf{x},t)|^2+V(\mathbf{x}|\psi(\mathbf{x},t)|^2)+\frac{\beta}{2}|\psi(\mathbf{x},t)|^4\big]d\mathbf{x},\\
&~\mathrm{s.t.}\int_{\mathbb{R}^d}|\psi(\mathbf{x},t)|^2d\mathbf{x}=1,~E\big(\psi(\mathbf{x},t)\big)<\infty,\\
\end{aligned}
\right.
\end{equation}
where $\mathbf{x}\in\mathbb{R}^d$ is the spatial coordinate vector ($d=1,2,3$), $V(\mathbf{x})$ is an external trapping potential, and the given real constant $\beta$ is the dimensionless interaction coefficient, see \cite[page 12]{MR3005624}.

The equation governing the motion of the condensate can be derived by \cite{MR3005624,MR4353495}
\begin{equation}\label{equ:GPE}
\left\{
\begin{aligned}
&\text{\i}\partial_t\psi(\mathbf{x},t)=-\frac{1}{2}\Delta\psi(\mathbf{x},t)+V(\mathbf{x})\psi(\mathbf{x},t)+\beta|\psi(\mathbf{x},t)|^2\psi(\mathbf{x},t),\\
&\int_{\mathbb{R}^d}|\psi(\mathbf{x},t)|^2d\mathbf{x}=1,\\
\end{aligned}
\right.
\end{equation}
which is a nonlinear Schr\"{o}dinger equation (NLSE) with cubic nonlinearity, known as the Gross-Pitaevskii equation (GPE). Here ``\i'' denotes the imaginary unit.

Using the finite difference discretization \cite{MR2983017}, one-dimensional case of the BEC problem \eqref{equ:BEC minimization} can be transformed into a nonconvex quartic optimization problem over a spherical constraint:
\begin{equation}\label{equ:nonconvex quartic optimization}
\left\{
\begin{aligned}
&\min\limits_{\mathbf{u}\in\mathbb{R}^{N-2}}\frac{\alpha}{2}\sum\limits_{i=1}^{N-2}u_i^4+\mathbf{u}^\top B\mathbf{u},\\
&\mathrm{s.t.}~\|\mathbf{u}\|_2^2=1,\\
\end{aligned}
\right.
\end{equation}
where $\alpha=\beta/h$, $h=(b-a)/(N-1)$, $[a,b]$ is the computational domain, and $N$ is the total number of partition points on $[a,b]$. The vector $\mathbf{u}=\sqrt{h}\Psi$, where $\Psi=(\psi_2,\ldots,\psi_{N-1})^\top$ and $\psi_j$ is the numerical approximation of $\psi$ on the partition point $x_j\in[a,b]$ for $j=1,\ldots,N$. The elements of $B=(b_{jk})\in\mathbb{R}^{(N-2)\times(N-2)}$ are given by
\begin{displaymath}
b_{jk}=\left\{
\begin{aligned}
&\frac{1}{h^2}+V(x_j),&j=k,\\
&-\frac{1}{2h^2},&|j-k|=1,\\
&0,&\text{otherwise}.\\
\end{aligned}
\right.
\end{displaymath}

Wu \emph{et al.} \cite{MR3704854} and Tian \emph{et al.} \cite{MR4129007} used Newton methods to compute the ground states of BECs. Huang \emph{et al.} \cite[section 3]{MR4390567} mentioned that the optimization problem \eqref{equ:nonconvex quartic optimization} is equivalent to the nonlinear eigenvalue problem
\begin{equation}\label{equ:NEPv}
\left\{
\begin{aligned}
&\alpha\A\mathbf{u}^3+B\mathbf{u}=\lambda\mathbf{u},\\\
&\|\mathbf{u}\|^2=1,\\
\end{aligned}
\right.
\end{equation} 
where $\A$ equals to the unit tensor $\mathcal{I}$. According to \cite[Lemma 1]{MR4390567}, there exists a unique eigenpair $(\lambda,\mathbf{u})$ with $\mathbf{u}>0$, and $\lambda>0$ is the smallest eigenvalue of NEPv \eqref{equ:NEPv}.

Recall that the definition of the identity tensor $\mathcal{E}$ is $\mathcal{E}\mathbf{x}^{m-1}=\|\mathbf{x}\|^{m-2}\mathbf{x}$ for all $\mathbf{x}\in\mathbb{R}^n$. Using this definition, we can rewrite \eqref{equ:NEPv} as
\begin{equation}\label{equ:NEPv rewritten}
\left\{
\begin{aligned}
&\mathcal{E}\mathbf{u}^3=\frac{1}{\lambda}(\alpha\A+\mathcal{E}\times_1B)\mathbf{u}^3,\\
&\|\mathbf{u}\|_2^2=1,\\
\end{aligned}
\right.
\end{equation}
which is a generalized eigenvalue problem for the tensor pair $(\mathcal{E},\alpha\A+\mathcal{E}\times_1B)$. Multiplying both sides by a constant $c=\max\limits_jb_{jj}$, \eqref{equ:NEPv rewritten} becomes
\begin{equation}\label{equ:NEPv rewritten 2}
\left\{
\begin{aligned}
&c\mathcal{E}\mathbf{u}^3=\frac{c}{\lambda}(\alpha\A+\mathcal{E}\times_1B)\mathbf{u}^3,\\
&||\mathbf{u}||_2^2=1.\\
\end{aligned}
\right.
\end{equation}
Obviously, the tensor $c\mathcal{E}$ is nonnegative and weakly irreducible. When $\beta$ is large enough, there exists a vector $\mathbf{v}\in\mathbb{R}^{N-2}$ satisfying $\mathbf{v}>0$ such that $(\alpha\A+\mathcal{E}\times_1B)\mathbf{v}^3>c\mathcal{E}\mathbf{v}^3$. For any $(i_2,i_3,i_4)\neq(i,i,i)$, we can easily see that
\begin{displaymath}
\begin{aligned}
(\alpha\A+\mathcal{E}\times_1B)_{i,i_2,i_3,i_4}&=\sum\limits_je_{ji_2i_3i_4}b_{ij}=b_{ii}e_{ii_2i_3i_4}+\sum\limits_{j\neq i}e_{ji_2i_3i_4}b_{ij}\\
&\leq b_{ii}e_{ii_2i_3i_4}\leq ce_{ii_2i_3i_4}=(c\mathcal{E})_{ii_2i_3i_4}.\\
\end{aligned}
\end{displaymath}
Therefore the tensor pair $(c\mathcal{E},\alpha\A+\mathcal{E}\times_1B)$ is a generalized $\mathcal{M}$-tensor pair and hence the unique eigenpair $(\frac{c}{\lambda},\mathbf{u})$ can be computed by our algorithms.

{\bf Example 5 \cite[Example 1]{MR4353495}} Consider the finite difference approximation with a grid size $h=2L/(N+1)$ of \eqref{equ:GPE} with Dirichlet boundary conditions on $[-L,L]\times[-L,L]$, where $L>0$ is large enough, i.e.,
\begin{displaymath}
\frac{\beta}{h^2}\diag(\mathbf{u}^{[2]})\mathbf{u}+B\mathbf{u}=\lambda\mathbf{u},~\mathbf{u}^\top\mathbf{u}=1,
\end{displaymath}
where $\mathbf{u}\in\mathbb{R}^n$, $n=N^2$. The matrix $B=A+V$ where $A=I\otimes L_h+L_h\otimes I$ is a negative 2D Laplacian matrix with
\begin{displaymath}
L_h=\frac{1}{h^2}
\begin{bmatrix}
2&-1& & & \\
-1&2&-1& & \\
 &\ddots&\ddots&\ddots& \\
 & &-1&2&-1\\
 & & &-1&2\\
\end{bmatrix}
\in\mathbb{R}^{N\times N}
\end{displaymath}
and $V=h^2\diag(1^2+1^2,1^2+2^2,\ldots,N^2+N^2)$ is the discretization of the harmonic potential $V(x,y)=x^2+y^2$. We compare our method GNNI with the Newton-Root-Finding Iteration (NRI) \cite{MR4353495} and the Newton-Noda Iteration (NNI) for NEPv \cite{MR4353495,DL22} in this example. For NNI, the linear system in each iteration can be ill-conditioned and is hard to solve without an appropriate preconditioner. Therefore, we use MATLAB function `bicgstab' to solve the linear system with tolerance as $10^{-9}$ and maximum iteration number as 200. For GNNI, the linear systems can be solved directly by MATLAB function `mldivide' (`$\backslash$') but we found that using `bicgstab' is faster when $n$ is relatively large. So we use `bicgstab' only on large examples for this method. For NRI, the linear system in the inner iterations has a tridiagonal (or block tridiagonal) structure, so we can solve it efficiently by the block tridiagonal LU factorization. In addition, the NRI method needs an initial interval $[a,b]$ for $\lambda$, we regard the initial $\lambda$ as known constants for this method in our experiments. 

The identity tensor $\mathcal{E}\in T_{4,n}$ can be generated by \cite[Property 2.4]{MR2854605}
\begin{equation}\label{equ:definition of identity tensor}
e_{i_1i_2i_3i_4}=\frac{1}{4!}\sum\limits_{p\in\Pi_4}\delta_{i_{p(1)}i_{p(2)}}\delta_{i_{p(3)}i_{p(4)}}
\end{equation}
for $i_1,i_2,i_3,i_4\in\{1,\ldots,n\}$, where $\delta$ is the standard Kronecker delta, i.e., $\delta_{ij}=1$ if $i=j$ and $\delta_{ij}=0$ if $i\neq j$. The results are shown in Table \ref{table:example 5}.

\begin{table}[htbp]
{\footnotesize
\caption{Result of Example 5}\label{table:example 5}
\begin{center}
\begin{tabular}{|c|c|c|c|c|c|}\hline
\multicolumn{6}{|l|}{$n=15^2$, $\beta=100000$, $L=8$}\\\hline
method & $\lambda$ & Outer Iter & Inner Iter & Residual & Time(s)\\ \hline
GNNI & 610.0432 & 5 & / & 3.5411e-13 & 0.0090\\ \hline
NRI & 610.0432 & 3 & [12,13,13] & 2.2609e-13 & 0.0465\\ \hline
NNI & 610.0432 & 5 & / & 2.4747e-12 & 0.0301\\ \hline
\multicolumn{6}{|l|}{$n=63^2$, $\beta=200000$, $L=8$}\\\hline
method & $\lambda$ & Outer Iter & Inner Iter & Residual & Time(s)\\ \hline
GNNI & 976.5971 & 4 & / & 1.9504e-12 & 1.9694\\ \hline
NRI & 976.5971 & 3 & [16,16,16] & 4.1177e-13 & 4.8632\\ \hline
NNI & 976.5971 & 4 & / & 6.8873e-13 & 2.3075\\ \hline
\multicolumn{6}{|l|}{$n=99^2$, $\beta=200000$, $L=8$}\\\hline
method & $\lambda$ & Outer Iter & Inner Iter & Residual & Time(s)\\ \hline
GNNI & 968.5025 & 4 & / & 2.7739e-12 & 18.0728\\ \hline
NRI & 968.5025 & 3 & [18,18,18] & 6.0493e-13 & 34.7647\\ \hline
NNI & 968.5025 & 4 & / & 1.0452e-12 & 19.4114\\ \hline
\end{tabular}
\end{center}
}
\end{table}

{\bf Example 6 \cite[Example 3]{DL22}} Consider the modified Gross-Pitaevskii equation (MGPE)
\begin{equation}\label{equ:MGPE}
A\mathbf{u}+V\mathbf{u}+\frac{\beta}{h^2}\mathbf{u}^{[3]}+2\frac{\alpha}{h^2}A\mathbf{u}^{[3]}=\lambda\mathbf{u},~\mathbf{u}^\top\mathbf{u}=1
\end{equation}
with an optical potential $V(x,y)=\frac{1}{2}(x^2+y^2)+40\big(\sin^2(\frac{\pi x}{2})+\sin^2(\frac{\pi y}{2})\big)$ on the domain $[-2,2]^2$, where $A=\frac{1}{2}(I\otimes L_h+L_h\otimes I)$. This equation can also be rewritten into a tensor generalized eigenvalue problem 
\begin{displaymath}
\left\{
\begin{aligned}
&c\mathcal{E}\mathbf{u}^3=\frac{c}{\lambda}\big(2\frac{\alpha}{h^2}\mathcal{I}\times_1A+\frac{\beta}{h^2}\mathcal{I}+\mathcal{E}\times_1(A+V)\big)\mathbf{u}^3,\\
&\|\mathbf{u}\|_2^2=1,\\
\end{aligned}
\right.
\end{displaymath}
where $c$ is a positive constant. Denote $\mathcal{A}=c\mathcal{E}$, $\mathcal{B}=2\frac{\alpha}{h^2}\mathcal{I}\times_1A+\frac{\beta}{h^2}\mathcal{I}+\mathcal{E}\times_1(A+V)$ and let $c=\max\big(\diag(A+V)\big)$, then we can easily see that $b_{ii_2i_3i_4}\leq a_{ii_2i_3i_4}$ for any $i=1,\ldots,n$ and $(i_2,i_3,i_4)\neq(i,i,i)$. Besides, there exsits a vector $\mathbf{v}>0$ such that $(\B-\A)\mathbf{v}^3>0$ when $\beta$ is large enough. Therefore, the MGPE also satisfies our assumptions and the unique positive eigenpair of \eqref{equ:MGPE} can be found by our methods. We compare the GNNI method with the NNI method \cite{DL22} in this example, the results are shown in Table \ref{table:example 6}.

\begin{table}[htbp]
{\footnotesize
\caption{Result of Example 6}\label{table:example 6}
\begin{center}
\begin{tabular}{|c|c|c|c|c|c|}\hline
\multicolumn{6}{|l|}{$n=63^2$, $\beta=100000$, $\alpha=100$, $L=2$}\\\hline
method & $\lambda$ & Outer Iter & Inner Iter & Residual & Time(s)\\ \hline
GNNI & 7.9343e+03 & 6 & / & 1.7693e-11 & 3.5730\\ \hline
NNI & 7.9343e+03 & 4 & / & 7.6481e-11 & 3.6068\\ \hline
\end{tabular}
\end{center}
}
\end{table}

\section{Conclusions}
In this paper, the Noda iteration(NI) method has been developed for computing the Perron pair for the generalized $\mathcal{M}$-tensor pair. We prove that MTNI, GTNI, IGTNI, and GNNI are convergent based on the techniques in \cite{CVLX,DW,LGL,ZQZ}. We test our methods on randomly generated tensor pairs, hypergraph eigenproblem as well as NEPv and the convergence on accuracy was illustrated. Acceleration of the methods may be a topic of future study. Specifically, we need to develop a faster method for solving the $\mathcal{M}$-tensor equation $(\rho_{k-1}\B-\A)\mathbf{y}_k^{m-1}=(\B-\A)\mathbf{x}_{k-1}^{m-1}$ and the choice of parameter $\varepsilon$ needs to be dicussed. For \Cref{alg:GNNI}, other ways for choosing the step size and some variations of Newton method need to be considered. In view of \Cref{alg:MTNI} and \Cref{alg:GTNI}, another work that needs to be done is to consider a more general iteration formula as $(\rho_{k-1}\B-\A)\mathbf{y}_k^{m-1}=(\alpha_{k-1}\A+\beta_{k-1}\B)\mathbf{x}_{k-1}^{m-1}$. As we can see from the numerical experiments, \Cref{alg:GTNI} performs faster than \Cref{alg:MTNI}. How to choose $\alpha_{k-1}$, $\beta_{k-1}$ in each iteration to make the method more efficient needs further discussion.

\section*{Acknowledgements}
The authors would like to thank the handling editor and the reviewers for their detailed  comments on our presentation. We also thank Prof. Qingzhi Yang of Nankai University, Prof. Ching-Sung Liu of National University of Kaohsiung, Prof. Xinming Wu of Fudan University, and Prof. Hehu Xie of Chinese Academy of Sciences for their inspiration and introducing their reprints and preprints \cite{MR4353495,MR4390567,GLL,LGL,Ch20,LGL16,DL22,MR3704854,MR4129007}.

\bibliographystyle{abbrv}
\bibliography{references}

\begin{thebibliography}{10}

\bibitem{TensorToolbox}
B.~Bader, T.~Kolda, et~al.
\newblock {$\rm{MATLAB}$ Tensor Toolbox}, 2021.
\newblock Version 3.2.1.

\bibitem{MR3005624}
W.~Bao and Y.~Cai.
\newblock Mathematical theory and numerical methods for {B}ose-{E}instein
  condensation.
\newblock {\em Kinetic and Related Models}, 6(1):1--135, 2013.

\bibitem{MR2983017}
W.~Bao and Y.~Cai.
\newblock Optimal error estimates of finite difference methods for the
  {G}ross-{P}itaevskii equation with angular momentum rotation.
\newblock {\em Mathematics of Computation}, 82(281):99--128, 2013.

\bibitem{BOD95}
R.~B. Bapat, D.~D. Olesky, and P.~Van Den~Driessche.
\newblock Perron-{F}robenius theory for a generalized eigenproblem.
\newblock {\em Linear and Multilinear Algebra}, 40(2):141--152, 1995.

\bibitem{Cai07}
D.~Cai, X.~He, and J.~Han.
\newblock Spectral regression: {A} unified subspace learning framework for
  content-based image retrieval.
\newblock In {\em Proceedings of the 15th ACM international conference on
  Multimedia}, pages 403--412, 2007.

\bibitem{Cai18}
H.~Cai, V.~W. Zheng, and K.~C.-C. Chang.
\newblock A comprehensive survey of graph embedding: {P}roblems, techniques,
  and applications.
\newblock {\em IEEE Transactions on Knowledge and Data Engineering},
  30(9):1616--1637, 2018.

\bibitem{CPZ08}
K.~C. Chang, K.~Pearson, and T.~Zhang.
\newblock Perron-{F}robenius theorem for nonnegative tensors.
\newblock {\em Commun. Math. Sci.}, 6(2):507--520, 2008.

\bibitem{CPZ}
K.~C. Chang, K.~Pearson, and T.~Zhang.
\newblock On eigenvalue problems of real symmetric tensors.
\newblock {\em J. Math. Anal. Appl.}, 350(1):416--422, 2009.

\bibitem{che2017neural}
M.~Che, A.~Cichocki, and Y.~Wei.
\newblock Neural networks for computing best rank-one approximations of tensors
  and its applications.
\newblock {\em Neurocomputing}, 267:114--133, 2017.

\bibitem{MR4333576}
M.~Che and Y.~Wei.
\newblock {\em Theory and Computation of Complex Tensors and Its Applications}.
\newblock Springer, Singapore, 2020.

\bibitem{CHZ}
L.~Chen, L.~Han, and L.~Zhou.
\newblock Computing tensor eigenvalues via homotopy methods.
\newblock {\em SIAM J. Matrix Anal. Appl.}, 37(1):290--319, 2016.

\bibitem{CHZ17}
L.~Chen, L.~Han, and L.~Zhou.
\newblock Linear homotopy method for computing generalized tensor eigenpairs.
\newblock {\em Front. Math. China}, 12(6):1303--1317, 2017.

\bibitem{CVLX}
X.~S. Chen, S.-W. Vong, W.~Li, and H.~Xu.
\newblock Noda iterations for generalized eigenproblems following
  {P}erron-{F}robenius theory.
\newblock {\em Numer. Algorithms}, 80(3):937--955, 2019.

\bibitem{Chen17}
Y.~Chen, L.~Qi, and X.~Zhang.
\newblock The {F}iedler vector of a {L}aplacian tensor for hypergraph
  partitioning.
\newblock {\em SIAM J. Sci. Comput.}, 39(6):A2508--A2537, 2017.

\bibitem{CHNS}
W.-K. Ching, X.~Huang, M.~K. Ng, and T.-K. Siu.
\newblock {\em Markov {C}hains: {M}odels, {A}lgorithms and {A}pplications},
  volume 189 of {\em Int. Ser. Oper. Res. Manag. Sci.}
\newblock New York, NY: Springer, 2nd edition, 2013.

\bibitem{Cooper}
J.~Cooper and A.~Dutle.
\newblock Spectra of uniform hypergraphs.
\newblock {\em Linear Algebra Appl.}, 436(9):3268--3292, 2012.

\bibitem{CDN}
C.-F. Cui, Y.-H. Dai, and J.~Nie.
\newblock All real eigenvalues of symmetric tensors.
\newblock {\em SIAM J. Matrix Anal. Appl.}, 35(4):1582--1601, 2014.

\bibitem{DW13}
W.~Ding, L.~Qi, and Y.~Wei.
\newblock {$\mathcal{M}$}-tensors and nonsingular {$\mathcal{M}$}-tensors.
\newblock {\em Linear Algebra Appl.}, 439(10):3264--3278, 2013.

\bibitem{DW15}
W.~Ding and Y.~Wei.
\newblock Generalized tensor eigenvalue problems.
\newblock {\em SIAM J. Matrix Anal. Appl.}, 36(3):1073--1099, 2015.

\bibitem{DW16}
W.~Ding and Y.~Wei.
\newblock Solving multi-linear systems with {$\mathcal{M}$}-tensors.
\newblock {\em J. Sci. Comput.}, 68(2):689--715, 2016.

\bibitem{DW}
W.~Ding and Y.~Wei.
\newblock {\em Theory and {C}omputation of {T}ensors: {M}ulti-dimensional
  {A}rrays}.
\newblock Amsterdam: Elsevier/Academic Press, 2016.

\bibitem{DL22}
C.-E. Du and C.-S. Liu.
\newblock Newton-{N}oda iteration for computing the ground states of nonlinear
  {S}chr\"{o}dinger equations.
\newblock {\em SIAM J. Sci. Comput.}, 44(4):A2370--A2385, 2022.

\bibitem{Friedland13}
S.~Friedland, S.~Gaubert, and L.~Han.
\newblock Perron-{F}robenius theorem for nonnegative multilinear forms and
  extensions.
\newblock {\em Linear Algebra Appl.}, 438(2):738--749, 2013.

\bibitem{Fuji}
T.~Fujimoto.
\newblock A generalization of the {Frobenius} theorem.
\newblock {\em Toyama University Economic Review}, 23(2):269--274, 1979.

\bibitem{Gaubert04}
S.~Gaubert and J.~Gunawardena.
\newblock The {P}erron-{F}robenius theorem for homogeneous, monotone functions.
\newblock {\em Trans. Amer. Math. Soc.}, 356(12):4931--4950, 2004.

\bibitem{GCV}
X.~Ge, X.~S. Chen, and S.-W. Vong.
\newblock Inexact generalized {N}oda iterations for generalized eigenproblems.
\newblock {\em J. Comput. Appl. Math.}, 366:112418, 12, 2020.

\bibitem{GLL}
C.-H. Guo, W.-W. Lin, and C.-S. Liu.
\newblock A modified {N}ewton iteration for finding nonnegative
  {$Z$}-eigenpairs of a nonnegative tensor.
\newblock {\em Numer. Algorithms}, 80(2):595--616, 2019.

\bibitem{HHLQ}
S.~Hu, Z.-H. Huang, C.~Ling, and L.~Qi.
\newblock On determinants and eigenvalue theory of tensors.
\newblock {\em J. Symbolic Comput.}, 50:508--531, 2013.

\bibitem{HQ}
S.~Hu and L.~Qi.
\newblock The {L}aplacian of a uniform hypergraph.
\newblock {\em J. Comb. Optim.}, 29(2):331--366, 2015.

\bibitem{HQX}
S.~Hu, L.~Qi, and J.~Xie.
\newblock The largest {L}aplacian and signless {L}aplacian {H}-eigenvalues of a
  uniform hypergraph.
\newblock {\em Linear Algebra Appl.}, 469:1--27, 2015.

\bibitem{MR4353495}
P.~Huang and Q.~Yang.
\newblock Newton-based methods for finding the positive ground state of
  {G}ross-{P}itaevskii equations.
\newblock {\em Journal of Scientific Computing}, 90(1):Paper No. 49, 23, 2022.

\bibitem{MR4390567}
P.~Huang, Q.~Yang, and Y.~Yang.
\newblock Finding the global optimum of a class of quartic minimization
  problem.
\newblock {\em Computational Optimization and Applications}, 81(3):923--954,
  2022.

\bibitem{Kelley}
C.~T. Kelley.
\newblock {\em Iterative {M}ethods for {L}inear and {N}onlinear {E}quations},
  volume~16 of {\em Frontiers in Applied Mathematics}.
\newblock Society for Industrial and Applied Mathematics (SIAM), Philadelphia,
  PA, 1995.
\newblock With separately available software.

\bibitem{MR2854605}
T.~G. Kolda and J.~R. Mayo.
\newblock Shifted power method for computing tensor eigenpairs.
\newblock {\em SIAM Journal on Matrix Analysis and Applications},
  32(4):1095--1124, 2011.

\bibitem{Kolda}
T.~G. Kolda and J.~R. Mayo.
\newblock An adaptive shifted power method for computing generalized tensor
  eigenpairs.
\newblock {\em SIAM J. Matrix Anal. Appl.}, 35(4):1563--1581, 2014.

\bibitem{L05}
L.~Lim.
\newblock Singular values and eigenvalues of tensors: a variational approach.
\newblock In {\em 1st IEEE International Workshop on Computational Advances in
  Multi-Sensor Adaptive Processing, 2005.}, pages 129--132. IEEE, 2005.

\bibitem{Ch20}
C.-S. Liu.
\newblock Exact and inexact iterative methods for finding the largest eigenpair
  of a weakly irreducible nonnegative tensor.
\newblock {\em J. Sci. Comput.}, 91(3):Paper No. 78, 24, 2022.

\bibitem{LGL16}
C.-S. Liu, C.-H. Guo, and W.-W. Lin.
\newblock A positivity preserving inverse iteration for finding the {P}erron
  pair of an irreducible nonnegative third order tensor.
\newblock {\em SIAM J. Matrix Anal. Appl.}, 37(3):911--932, 2016.

\bibitem{LGL}
C.-S. Liu, C.-H. Guo, and W.-W. Lin.
\newblock Newton-{N}oda iteration for finding the {P}erron pair of a weakly
  irreducible nonnegative tensor.
\newblock {\em Numer. Math.}, 137(1):63--90, 2017.

\bibitem{mo2020time}
C.~Mo, X.~Wang, and Y.~Wei.
\newblock Time-varying generalized tensor eigenanalysis via {Z}hang neural
  networks.
\newblock {\em Neurocomputing}, 407:465--479, 2020.

\bibitem{MoFu}
M.~Morishima and T.~Fujimoto.
\newblock The {F}robenius theorem, its {S}olow-{S}amuelson extension and the
  {K}uhn-{T}ucker theorem.
\newblock {\em J. Math. Econom.}, 1(2):199--205, 1974.

\bibitem{NQZ09}
M.~Ng, L.~Qi, and G.~Zhou.
\newblock Finding the largest eigenvalue of a nonnegative tensor.
\newblock {\em SIAM J. Matrix Anal. Appl.}, 31(3):1090--1099, 2009.

\bibitem{NQB}
G.~Ni, L.~Qi, and M.~Bai.
\newblock Geometric measure of entanglement and {U}-eigenvalues of tensors.
\newblock {\em SIAM J. Matrix Anal. Appl.}, 35(1):73--87, 2014.

\bibitem{Noda}
T.~Noda.
\newblock Note on the computation of the maximal eigenvalue of a non-negative
  irreducible matrix.
\newblock {\em Numer. Math.}, 17:382--386, 1971.

\bibitem{Nussbaum88}
R.~D. Nussbaum.
\newblock Hilbert's projective metric and iterated nonlinear maps.
\newblock {\em Mem. Amer. Math. Soc.}, 75(391):iv+137, 1988.

\bibitem{Q05}
L.~Qi.
\newblock Eigenvalues of a real supersymmetric tensor.
\newblock {\em J. Symbolic Comput.}, 40(6):1302--1324, 2005.

\bibitem{QCC}
L.~Qi, H.~Chen, and Y.~Chen.
\newblock {\em Tensor {E}igenvalues and {T}heir {A}pplications}, volume~39 of
  {\em Advances in Mechanics and Mathematics}.
\newblock Springer, Singapore, 2018.

\bibitem{QL18}
L.~Qi and Z.~Luo.
\newblock {\em Tensor {A}nalysis: {S}pectral {T}heory and {S}pecial {T}ensors},
  volume 151 of {\em Other Titles Appl. Math.}
\newblock Philadelphia, PA: Society for Industrial {and} Applied Mathematics
  (SIAM), 2017.

\bibitem{Schatz}
M.~D. Schatz, T.~M. Low, R.~A. van~de Geijn, and T.~G. Kolda.
\newblock Exploiting symmetry in tensors for high performance: multiplication
  with symmetric tensors.
\newblock {\em SIAM J. Sci. Comput.}, 36(5):C453--C479, 2014.

\bibitem{SJY}
L.~Sun, S.~Ji, and J.~Ye.
\newblock Hypergraph spectral learning for multi-label classification.
\newblock In {\em Proceedings of the 14th ACM SIGKDD international conference
  on Knowledge discovery and data mining}, pages 668--676, 2008.

\bibitem{MR4129007}
T.~Tian, Y.~Cai, X.~Wu, and Z.~Wen.
\newblock Ground states of spin-{$F$} {B}ose-{E}instein condensates.
\newblock {\em SIAM Journal on Scientific Computing}, 42(4):B983--B1013, 2020.

\bibitem{MR3704854}
X.~Wu, Z.~Wen, and W.~Bao.
\newblock A regularized {N}ewton method for computing ground states of
  {B}ose-{E}instein condensates.
\newblock {\em Journal of Scientific Computing}, 73(1):303--329, 2017.

\bibitem{You19}
L.~You, X.~Huang, and X.~Yuan.
\newblock Sharp bounds for spectral radius of nonnegative weakly irreducible
  tensors.
\newblock {\em Front. Math. China}, 14(5):989--1015, 2019.

\bibitem{YYXSZ}
G.~Yu, Z.~Yu, Y.~Xu, Y.~Song, and Y.~Zhou.
\newblock An adaptive gradient method for computing generalized tensor
  eigenpairs.
\newblock {\em Comput. Optim. Appl.}, 65(3):781--797, 2016.

\bibitem{ZQZ}
L.~Zhang, L.~Qi, and G.~Zhou.
\newblock {$\mathcal{M}$}-tensors and some applications.
\newblock {\em SIAM J. Matrix Anal. Appl.}, 35(2):437--452, 2014.

\bibitem{ZYL17}
N.~Zhao, Q.~Yang, and Y.~Liu.
\newblock Computing the generalized eigenvalues of weakly symmetric tensors.
\newblock {\em Comput. Optim. Appl.}, 66(2):285--307, 2017.

\end{thebibliography}

\newpage

\section*{Appendix (MATLAB codes)}
In order to run the following codes correctly, the Tensor Toolbox \cite{TensorToolbox} needs to be added to the path.
\subsection*{ExampleGenerator.m}
\

\%\% Example 1 (random 3x3x3)

m = 3;

n = 3;

I = tenzeros(n$*$ones(1,m));

for i = 1:n

~~~~I(i$*$ones(1,m)) = 1;

end

R = tenrand(n$*$ones(1,m));

epsilon = 0.01;

c = (1 + epsilon) $*$ max(ttsv(R,ones(n,1),-1));

C = c $*$ I - R;

A = tenrand(n$*$ones(1,m));

B = A + C;

save('A(example 1 random 3x3x3).mat','A');

save('B(example 1 random 3x3x3).mat','B');

\%\% Example 2 (random 50x50x50x50)

m = 4;

n = 50;

I = tenzeros(n$*$ones(1,m));

for i = 1:n

~~~~I(i$*$ones(1,m)) = 1;

end

R = tenrand(n$*$ones(1,m));

epsilon = 0.01;

c = (1 + epsilon) $*$ max(ttsv(R,ones(n,1),-1));

C = c $*$ I - R;

A = tenrand(n$*$ones(1,m));

B = A + C;

save('A(example 2 random 50x50x50x50).mat','A');

save('B(example 2 random 50x50x50x50).mat','B');

\%\% Example 3 (random 4x4x4x4x4x4 symmetric)

m = 6;

n = 4;

R = rand(n$*$ones(1,m));

R = tensor(symtensor(tensor(round(R,4))));

I = tenzeros(size(R));

for i = 1:n

~~~~I(i$*$ones(1,m)) = 1;

end

A = rand(n$*$ones(1,m));

A = tensor(symtensor(tensor(round(A,4))));

gamma = max([1.01 $*$ max(ttsv(R,ones(n,1),-1)),max(
R(:)-A(:))$*$n\^\,(m-1)]);

B = gamma $*$ I - R + A;

save('A(example 3 random 4x4x4x4x4x4 symmetric).mat','A');

save('B(example 3 random 4x4x4x4x4x4 symmetric).mat','B');

\%\% Example 4 (hypergraph 50x50x50x50)

m = 4;

n = 50;

omega = 1;

D = tenzeros(n$*$ones(1,m));

for i = 1:5

~~~~D(i$*$ones(1,m)) = n - 2 -i;

end

D(6$*$ones(1,m)) = 12;

D(7$*$ones(1,m)) = 14;

for i = 8:n-2

~~~~D(i$*$ones(1,m)) = 15;

end

D((n-1)$*$ones(1,m)) = 10;

D(n$*$ones(1,m)) = 5;

C = tenzeros(n$*$ones(1,m));

for i1 = 1:5

~~~~for i2 = i1+1:n-2

~~~~~~~~C(i1,i2,i2+1,i2+2) = factorial(m) / factorial(m-1);

~~~~end

end

C = tensor(symtensor(C));

A = omega $*$ D + C;

I = zeros(n$*$ones(1,m));

for i = 1:n

~~~~I(i,i,i,i) = 1;

end

B = tensor(100 $*$ I);

save('A(example 4 hypergraph 50x50x50x50).mat','A');

save('B(example 4 hypergraph 50x50x50x50).mat','B');

\%\% Example 5 (Nonlinear eigenvalue--2D case)

m = 4;

N = 15;

n = N\^\,2;

L = 8;

h = 2$*$L / (N+1);

beta = 100000;

beta = beta / h\^\,2;

L = 1/h\^\,2 $*$ (2$*$eye(N)-diag(ones(N-1,1),1)-diag(ones(N-1,1),-1));

V = zeros(n,n);

for i = 1:N

~~~~for j = 1:N

~~~~~~~~V((i-1)$*$N+j,(i-1)$*$N+j) = h\^\,2 $*$ (i\^\,2+j\^\,2);

~~~~end

end

A = kron(eye(N),L)+kron(L,eye(N));

B = A + V;

kappa = sqrt(norm(A,1)*norm(A,inf)) + beta;

save('data(example 5 NEPv).mat','A','B','beta','m','n','kappa');

\%\% Example 6 (Nonlinear eigenvalue--MGPE)

m = 4;

N = 63;

n = N\^\,2;

L = 2;

h = 2$*$L / (N+1);

beta = 100000;

beta = beta / h\^\,2;

alpha = 100;

alpha = alpha / h\^\,2;

L = 1/h\^\,2 $*$ (2$*$eye(N)-diag(ones(N-1,1),1)-diag(ones(N-1,1),-1));

V = zeros(n,n);

for i = 1:N

~~~~for j = 1:N

~~~~~~~~V((i-1)$*$N+j,(i-1)$*$N+j) = 1/2 $*$ (i\^\,2+j\^\,2) + 40 $*$ (sin(pi$*$i/2)\^\,2+sin(pi$*$j/2)\^\,2);
    
~~~~end

end

A = 1/2 $*$ (kron(eye(N),L)+kron(L,eye(N)));

B = A + V;

kappa = sqrt(norm(A,1)$*$norm(A,inf)) + beta;

save('data(example 6 MGPE).mat','A','B','beta','m','n','kappa');

\subsection*{MTNI.m}
\

clearvas;

clc;

\%\% Generate Examples

\%\% Example 1

load("A(example 1 random 3x3x3).mat");

load("B(example 1 random 3x3x3).mat");

\%\% Example 2

\% load("A(example 2 random 50x50x50x50).mat");

\% load("B(example 2 random 50x50x50x50).mat");

\%\% Example 3 (random 4x4x4x4x4x4 symmetric)

\% load("A(example 3 random 4x4x4x4x4x4 symmetric).mat");

\% load("B(example 3 random 4x4x4x4x4x4 symmetric).mat");

\%\% Example 4 (hypergraph 50x50x50x50)

\% load("A(example 4 hypergraph 50x50x50x50).mat");

\% load("B(example 4 hypergraph 50x50x50x50).mat");

\%\% Initialization

tic;

m = length(size(A));

N = size(A);

n = N(1);

tol = 1e-13;

C = double(B - A);

D = zeros(n$*$ones(1,m));

d = zeros(n,1);

for i = 1 : n

~~~~d(i) = C(i,i,i); \% for m=3

\%~~~~d(i) = C(i,i,i,i); \% for m=4

~~~~D(i,i,i) = d(i); \% for m=3

\%~~~~D(i,i,i,i) = d(i); \% for m=4

end

b = ones(n,1);

x = ones(n,1);

x\_old = x;

Temp = tensor(D - C);

C = tensor(C);

b\_temp = double(ttsv(Temp,x,-1)) + b;

x\_new = (b\_temp ./ d) .\^\, (1/(m-1));

while norm(x\_new - x\_old,2) $>$ tol

~~~~x\_old = x\_new;

~~~~b\_temp = double(ttsv(Temp,x\_old,-1)) + b;

~~~~x\_new = (b\_temp ./ d) .\^\, (1/(m-1));

end

x = x\_new / norm(x\_new,2);

temp1 = ttsv(A,x,-1);

temp2 = ttsv(B,x,-1);

temp3 = temp2 - temp1;

rho\_max = max(temp1 ./ temp2);

rho = rho\_max;

y = ones(n,1);

\%\%

epsilon = 0.01; \% for example 1,4

\% epsilon = 0.005; \% for example 2,3

delta = 1e-13; \% for example 1,3

\% delta = 1e-15; \% for example 2,4

d = zeros(n,1);

MaxIter = 50;

Differ\_rho = zeros(1,MaxIter);

Res\_rho = zeros(1,MaxIter);

Res = zeros(1,MaxIter);

count = zeros(1,MaxIter);

for k = 1 : MaxIter

~~~~M = double(rho $*$ B - A);

~~~~D = zeros(n$*$ones(1,m));

~~~~for i = 1 : n

~~~~~~~~d(i) = M(i,i,i); \% for m=3

\%~~~~~~~~d(i) = M(i,i,i,i); \% for m=4

~~~~~~~~D(i,i,i) = d(i); \% for m=3

\%~~~~~~~~D(i,i,i,i) = d(i); \% for m=4

~~~~end

~~~~y\_old = y;

~~~~r = temp3;

~~~~Temp = tensor(D - M);

~~~~b\_temp = ttsv(Temp,y\_old,-1) + r;

~~~~y\_new = (b\_temp ./ d) .\^\, (1/(m-1));

~~~~count(k) = 0;

~~~~while norm(y\_new - y\_old,2) $>$ delta

~~~~~~~~y\_old = y\_new;

~~~~~~~~b\_temp = ttsv(Temp,y\_old,-1) + r;

~~~~~~~~y\_new = (b\_temp ./ d) .\^\, (1/(m-1));

~~~~~~~~count(k) = count(k) + 1;

~~~~~~~~if count(k) $>=$ 3000 \&\& k == 1

~~~~~~~~~~~~break;

~~~~~~~~end

~~~~end

~~~~y = y\_new;

~~~~rho\_old = rho\_max;

~~~~tau = min(temp3 ./ ttsv(C,y,-1));

~~~~rho = rho - (1 - rho) $*$ tau / (1 - tau);

~~~~rho = (1 + epsilon) $*$ rho;

~~~~x = y / norm(y,2);

~~~~temp1 = ttsv(A,x,-1);

~~~~temp2 = ttsv(B,x,-1);

~~~~temp3 = temp2 - temp1;

~~~~s\_max = max(temp1 ./ temp3);

~~~~rho\_max = s\_max / (1 + s\_max);

~~~~s\_min = min(temp1 ./ temp3);

~~~~rho\_min = s\_min / (1 + s\_min);

~~~~Differ\_rho(k) = rho\_old - rho\_max;

~~~~Res\_rho(k) = abs(rho\_max-rho\_min)/rho\_max;

~~~~Res(k) = norm(temp1-rho\_max$*$temp2,2);

\%~~~~if Res(k) $<$ tol  \% for symmetric case

\%~~~~~~~~break;

\%~~~~end

~~~~if Res\_rho(k) $<$ tol

~~~~~~~~break;

~~~~end

end

time=toc;

\subsection*{GTNI.m}
\

clearvas;

clc;

\%\% Generate Examples

\%\% Example 1

load("A(example 1 random 3x3x3).mat");

load("B(example 1 random 3x3x3).mat");

\%\% Example 2

\% load("A(example 2 random 50x50x50x50).mat");

\% load("B(example 2 random 50x50x50x50).mat");

\%\% Example 3 (random 4x4x4x4x4x4 symmetric)

\% load("A(example 3 random 4x4x4x4x4x4 symmetric).mat");

\% load("B(example 3 random 4x4x4x4x4x4 symmetric).mat");

\%\% Example 4 (hypergraph 50x50x50x50)

\% load("A(example 4 hypergraph 50x50x50x50).mat");

\% load("B(example 4 hypergraph 50x50x50x50).mat");

\%\% Initialization

tic;

m = length(size(A));

N = size(A);

n = N(1);

tol = 1e-13;

x = ones(n,1);

x = x / norm(x,2);

temp1 = ttsv(A,x,-1);

temp2 = ttsv(B,x,-1);

rho\_max = max(temp1 ./ temp2);

rho\_min = min(temp1 ./ temp2);

rho = 1;

y = ones(n,1);

\%\%

delta = 1e-13; \% for example 1,2

\%delta = 1e-15; \% for example 3,4

d = zeros(n,1);

MaxIter = 50;

Differ\_rho = zeros(1,MaxIter);

Res\_rho = zeros(1,MaxIter);

Res = zeros(1,MaxIter);

count = zeros(1,MaxIter);

Epsilon = zeros(1,MaxIter);

for k = 1 : MaxIter

~~~~M = double(rho $*$ B - A);

~~~~D = zeros(n$*$ones(1,m));

~~~~for i = 1 : n

~~~~~~~~d(i) = M(i,i,i); \% for m=3

\%~~~~~~~~d(i) = M(i,i,i,i); \% for m=4

~~~~~~~~D(i,i,i) = d(i); \% for m=3

\%~~~~~~~~D(i,i,i,i) = d(i); \% for m=4

~~~~end

~~~~y\_old = y;

~~~~r = temp1;

~~~~Temp = tensor(D - M);

~~~~b\_temp = ttsv(Temp,y\_old,-1) + r;

~~~~y\_new = (b\_temp ./ d) .\^\, (1/(m-1));

~~~~count(k) = 0;

~~~~while norm(y\_new - y\_old,2)$>$delta

~~~~~~~~y\_old = y\_new;

~~~~~~~~b\_temp = ttsv(Temp,y\_old,-1) + r;

~~~~~~~~y\_new = (b\_temp ./ d) .\^\, (1/(m-1));

~~~~~~~~count(k) = count(k) + 1;

~~~~end

~~~~y = y\_new;

~~~~Epsilon(k) = 1;

\%~~~~Epsilon(k) = 0.01; \% for example 4

~~~~temp = 1-min(temp1./(ttsv(A,y,-1)+temp1));

~~~~while 1

~~~~~~~~if (1+Epsilon(k)) $*$ temp $<=$ 1

~~~~~~~~~~~~break;

~~~~~~~~else

~~~~~~~~~~~~Epsilon(k) = Epsilon(k)/2;

~~~~~~~~end

~~~~end

~~~~epsilon = Epsilon(k);

~~~~rho = (1+epsilon) $*$ rho $*$ temp;

~~~~x = y / norm(y,2);

~~~~temp1 = ttsv(A,x,-1);

~~~~temp2 = ttsv(B,x,-1);

~~~~rho\_old = rho\_max;

~~~~rho\_max = max(temp1 ./ temp2);

~~~~rho\_min = min(temp1 ./ temp2);

~~~~Differ\_rho(k) = rho\_old - rho\_max;

~~~~Res\_rho(k) = abs(rho\_max-rho\_min)/rho\_max;

~~~~Res(k) = norm(temp1-rho\_max$*$temp2,2);

\%~~~~if Res(k) $<$ tol  \% for symmetric case

\%~~~~~~~~break;

\%~~~~end

~~~~if Res\_rho(k) $<$ tol

~~~~~~~~break;

~~~~end

end

time=toc;

\subsection*{IGTNI.m}
\

clearvas;

clc;

\%\% Generate Examples

\%\% Example 1

load("A(example 1 random 3x3x3).mat");

load("B(example 1 random 3x3x3).mat");

\%\% Example 2

\% load("A(example 2 random 50x50x50x50).mat");

\% load("B(example 2 random 50x50x50x50).mat");

\%\% Example 3 (random 4x4x4x4x4x4 symmetric)

\% load("A(example 3 random 4x4x4x4x4x4 symmetric).mat");

\% load("B(example 3 random 4x4x4x4x4x4 symmetric).mat");

\%\% Example 4 (hypergraph 50x50x50x50)

\% load("A(example 4 hypergraph 50x50x50x50).mat");

\% load("B(example 4 hypergraph 50x50x50x50).mat");

\%\% Initialization

tic;

m = length(size(A));

N = size(A);

n = N(1);

tol = 1e-13;

x = ones(n,1);

x = x / norm(x,2);

temp1 = ttsv(A,x,-1);

temp2 = ttsv(B,x,-1);

rho\_max = max(temp1 ./ temp2);

rho\_min = min(temp1 ./ temp2);

rho = 1;

y = ones(n,1);

\%\%

d = zeros(n,1);

MaxIter = 50;

Differ\_rho = zeros(1,MaxIter);

Res\_rho = zeros(1,MaxIter);

Res = zeros(1,MaxIter);

count = zeros(1,MaxIter);

gama = zeros(1,MaxIter);

f = zeros(1,MaxIter);

Epsilon= zeros(1,MaxIter);

for k = 1 : MaxIter

~~~~M = double(rho $*$ B - A);

~~~~D = zeros(n$*$ones(1,m));

~~~~for i = 1 : n

~~~~~~~~d(i) = M(i,i,i); \% for m=3

\%~~~~~~~~d(i) = M(i,i,i,i); \% for m=4

~~~~~~~~D(i,i,i) = d(i); \% for m=3

\%~~~~~~~~D(i,i,i,i) = d(i); \% for m=4

~~~~end

~~~~y\_old = y;

~~~~r = temp1;

~~~~gama(k) = min((rho\_max-rho\_min)/rho\_max,1e-3);

~~~~f(k) = max(gama(k)$*$min(r),1e-12);

~~~~Temp = tensor(D - M); 

~~~~b\_temp = ttsv(Temp,y\_old,-1) + r + f(k);

~~~~y\_new = (b\_temp ./ d) .\^\, (1/(m-1));

~~~~count(k) = 0;

~~~~while norm(y\_new-y\_old,2) $>$ 1e-13 \% for example 1,4

\%~~~~while norm(y\_new-y\_old,2) $>$ 1e-12 \% for example 2

\%~~~~while norm(y\_new-y\_old,2) $>$ 1e-14 \% for example 3

~~~~~~~~y\_old = y\_new;

~~~~~~~~b\_temp = ttsv(Temp,y\_old,-1) + r + f(k);

~~~~~~~~y\_new = (b\_temp ./ d) .\^\, (1/(m-1));

~~~~~~~~count(k) = count(k) + 1;

~~~~end

~~~~y = y\_new;

~~~~Epsilon(k) = 1;

\%~~~~Epsilon(k) = 0.01; \% for example 4

~~~~temp = 1-min((temp1+f(k))./(ttsv(A,y,-1)+temp1+f(k)));

~~~~while 1

~~~~~~~~if (1+Epsilon(k)) $*$ temp $<=$ 1

~~~~~~~~~~~~break;

~~~~~~~~else

~~~~~~~~~~~~Epsilon(k) = Epsilon(k)/2;

~~~~~~~~end

~~~~end

~~~~epsilon = Epsilon(k);

~~~~rho = (1+epsilon) $*$ rho $*$ temp;

~~~~x = y / norm(y,2);

~~~~temp1 = ttsv(A,x,-1);

~~~~temp2 = ttsv(B,x,-1);

~~~~rho\_old = rho\_max;

~~~~rho\_max = max(temp1 ./ temp2);

~~~~rho\_min = min(temp1 ./ temp2);

~~~~Differ\_rho(k) = rho\_old - rho\_max;

~~~~Res\_rho(k) = abs(rho\_max-rho\_min)/rho\_max;

~~~~Res(k) = norm(temp1-rho\_max$*$temp2,2);

\%~~~~if Res(k) $<$ tol  \% for symmetric case

\%~~~~~~~~break;

\%~~~~end

~~~~if Res\_rho(k) $<$ tol

~~~~~~~~break;

~~~~end

end

time = toc;

\subsection*{GNNI.m}
\

clearvas;

clc;

\%\% Load Examples

\%\% Example 1

load("A(example 1 random 3x3x3).mat");

load("B(example 1 random 3x3x3).mat");

m = length(size(A));

N = size(A);

n = N(1);

for i = 1:n

~~~~A(i,:,:) = tensor(symtensor(A(i,:,:)));

~~~~B(i,:,:) = tensor(symtensor(B(i,:,:)));

end

\%\% Example 2

\% load("A(example 2 random 50x50x50x50).mat");

\% load("B(example 2 random 50x50x50x50).mat");

\% m = length(size(A));

\% N = size(A);

\% n = N(1);

\% for i = 1:n

\%~~~~A(i,:,:,:) = tensor(symtensor(A(i,:,:,:)));

\%~~~~B(i,:,:,:) = tensor(symtensor(B(i,:,:,:)));

\% end

\%\% Example 3 (random 4x4x4x4x4x4 symmetric)

\% load("A(example 3 random 4x4x4x4x4x4 symmetric).mat");

\% load("B(example 3 random 4x4x4x4x4x4 symmetric).mat");

\% m = length(size(A));

\% N = size(A);

\% n = N(1);

\%\% Example 4 (hypergraph 50x50x50x50)

\% load("A(example 4 hypergraph 50x50x50x50).mat");

\% load("B(example 4 hypergraph 50x50x50x50).mat");

\% m = length(size(A));

\% N = size(A);

\% n = N(1);

\%\% Initialization

tic;

tol = 1e-13;

C = double(B - A);

D = zeros(n$*$ones(1,m));

d = zeros(n,1);

for i = 1 : n

~~~~d(i) = C(i,i,i); \% for m=3

\%~~~~d(i) = C(i,i,i,i); \% for m=4

~~~~D(i,i,i) = d(i); \% for m=3

\%~~~~D(i,i,i,i) = d(i); \% for m=4

end

b = ones(n,1);

x = ones(n,1);

x\_old = x;

Temp = tensor(D - C);

C = tensor(C);

b\_temp = double(ttsv(Temp,x,-1)) + b;

x\_new = (b\_temp ./ d) .\^\, (1/(m-1));

while norm(x\_new - x\_old,2) $>$ tol

~~~~x\_old = x\_new;

~~~~b\_temp = double(ttsv(Temp,x\_old,-1)) + b;

~~~~x\_new = (b\_temp ./ d) .\^\, (1/(m-1));

end

x = x\_new / norm(x\_new,2);

T = double(ttsv(A,x,-2));

T\_B = double(ttsv(B,x,-2));

temp1 = T $*$ x;

temp2 = T\_B $*$ x;

rho = max(temp1 ./ temp2);

\%\%

MaxIter = 50;

Differ\_rho = zeros(1,MaxIter);

Res\_rho = zeros(1,MaxIter);

Res = zeros(1,MaxIter);

R = zeros(MaxIter,n);

theta = zeros(1,MaxIter);

for k = 1: MaxIter

~~~~M = rho $*$ B - A;

~~~~b = temp2;

~~~~J\_x = (m - 1) $*$ (rho $*$ T\_B - T);

~~~~w = J\_x $\backslash$ b;

~~~~y = w / norm(w,2);

~~~~theta(k) = 1;

~~~~while 1

~~~~~~~~x\_new = (m-2) $*$ x + theta(k) $*$ y;

~~~~~~~~r = abs(ttsv(M,x\_new,-1));

~~~~~~~~R(k,1:n) = r - theta(k) $*$ b / (2$*$norm(w));  \% m=3

\%~~~~~~~~R(k,1:n) = r - theta(k) $*$ b / (norm(w));  \% m$>=$4

~~~~~~~~if R(k,1:n)$>=$0 \& ttsv(C,x\_new,-1)$>$0

~~~~~~~~~~~~break;

~~~~~~~~else

~~~~~~~~~~~~theta(k) = theta(k)/2;

~~~~~~~~end

~~~~end

~~~~x = x\_new / norm(x\_new,2);

~~~~rho\_old = rho;

~~~~T = double(ttsv(A,x,-2));

~~~~T\_B = double(ttsv(B,x,-2));

~~~~temp1 = T $*$ x;

~~~~temp2 = T\_B $*$ x;

~~~~rho\_max = max(temp1 ./ temp2);

~~~~rho\_min = min(temp1 ./ temp2);

~~~~rho = rho\_max;

~~~~Differ\_rho(k) = rho\_old - rho;

~~~~Res\_rho(k) = abs(rho\_max-rho\_min)/rho\_max;

~~~~Res(k) = norm(temp1-rho\_max$*$temp2,2);

~~~~if Res\_rho(k) $<$ tol

~~~~~~~~break;

~~~~end

\%~~~~if Res(k) $<$ tol    \% for symmetric cases

\%~~~~~~~~break;

\%~~~~end   

end

time = toc;
\subsection*{GNNI\_NEPv.m (for Example 5)}
\

clearvas;

clc;

\%\% Load Example

\%\% Example 5 (Nonlinear eigenvalue--2D case)

load("data(example 5 NEPv).mat");

bb = diag(B);

I = eye(n);

tt = max(bb);

\%\% Initialization

tic;

tol = 1e-13;

d = (beta - tt) $*$ ones(n,1) + diag(B);

b = ones(n,1);

x = ones(n,1);

x\_old = x;

temp = x\_old.\^\,(m-1);

b\_temp = bb .$*$ temp - tt $*$ temp - norm(x\_old)\^\,(m-2) $*$ (B $*$ x\_old - tt $*$ x\_old) + b;

x\_new = (b\_temp ./ d) .\^\, (1/(m-1));

while norm(x\_new - x\_old,2) $>$ 1e-1

~~~~x\_old = x\_new;

~~~~temp = x\_old.\^\,(m-1);

~~~~b\_temp = bb .$*$ temp - tt $*$ temp - norm(x\_old)\^\,(m-2) $*$ (B $*$ x\_old - tt $*$ x\_old) + b;
    
~~~~x\_new = (b\_temp ./ d) .\^\, (1/(m-1));

end

x = x\_new / norm(x\_new,2);

temp1 = tt $*$ x;

temp2 = beta $*$ x.\^\,(m-1) + B $*$ x;

rho = max(temp1 ./ temp2);

\%\%

MaxIter = 50;

Res\_rho = zeros(MaxIter,1);

Res = zeros(MaxIter,1);

R = zeros(MaxIter,n);

theta = zeros(MaxIter,1);

for k = 1: MaxIter

~~~~b = temp2;

~~~~T = 1/3 $*$ I + 2/3 $*$ (x $*$ x');

~~~~T\_B = beta $*$ diag(x.\^\,2) + 1/3$*$B + 2/3$*$(B$*$x)$*$x';

~~~~T = tt $*$ T;

~~~~J\_x = (m - 1) $*$ (rho $*$ T\_B - T);

~~~~w = bicgstab(J\_x,b,1e-9,200);

~~~~y = w / norm(w,2);

~~~~theta(k) = 1;

~~~~x\_new = (m-2) $*$ x + theta(k) $*$ y;
    
~~~~x = x\_new / norm(x\_new,2);
    
~~~~rho\_old = rho;
    
~~~~temp1 = tt $*$ x;
    
~~~~temp2 = beta $*$ x.\^\,(m-1) + B $*$ x;
    
~~~~rho\_max = max(temp1 ./ temp2);
    
~~~~rho\_min = min(temp1 ./ temp2);
    
~~~~rho = rho\_max;

~~~~Res\_rho(k) = abs(rho\_max-rho\_min)/rho\_max;

~~~~Res(k) = norm(temp1-rho\_max$*$temp2,2);

~~~~if Res\_rho(k) $<$ tol

~~~~~~~~break;

~~~~end

\%~~~~if Res(k) $<$ tol    \% for symmetric cases

\%~~~~~~~~break;

\%~~~~end  

end

time = toc;
\subsection*{GNNI\_MGPE.m (for Example 6)}
\

clearvas;

clc;

\%\% Load Example

\%\% Example 6 (Nonlinear eigenvalue--MGPE)

load("data(example 6 MGPE).mat");

bb = diag(B);

tt = max(bb);

aa = diag(A);

I = eye(n);

Beta = beta $*$ I + 2 $*$ alpha $*$ A;

\%\% Initialization

tic;

tol = 1e-12;

d = (beta - tt) $*$ ones(n,1) + bb + 2 $*$ alpha $*$ aa;

x = ones(n,1);

x = x / norm(x);

temp1 = tt $*$ x;

temp2 = beta $*$ x.\^\,(m-1) + 2$*$alpha$*$A$*$x.\^\,(m-1) + B $*$ x;

rho = max(temp1 ./ temp2);

\%\%

MaxIter = 50;

Res\_rho = zeros(1,MaxIter);

Res = zeros(1,MaxIter);

R = zeros(MaxIter,n);

theta = zeros(1,MaxIter);

for k = 1: MaxIter

~~~~b = temp2;

~~~~T\_B = diag(beta$*$xx) + A.$*$repmat(2$*$alpha$*$xx',n,1) + 1/3 $*$ B;

~~~~T\_B = T\_B + 2/3 $*$ (B$*$x) $*$ x';

~~~~T = 1/3 $*$ tt $*$ I + 2/3 $*$ tt $*$ (x $*$ x'); 

~~~~J\_x = (m - 1) $*$ (rho $*$ T\_B - T);

~~~~w = bicgstab(J\_x,b,1e-9,200);

~~~~y = w / norm(w,2);

~~~~theta(k) = 1;

~~~~x\_new = (m-2) $*$ x + theta(k) $*$ y;

~~~~x = x\_new / norm(x\_new,2);

~~~~rho\_old = rho;

~~~~temp1 = tt $*$ x;

~~~~temp2 = beta $*$ x.\^\,(m-1) + 2$*$alpha$*$A$*$x.\^\,(m-1) + B$*$x;

~~~~rho\_max = max(temp1 ./ temp2);

~~~~rho\_min = min(temp1 ./ temp2);

~~~~rho = rho\_max;

~~~~Res\_rho(k) = abs(rho\_max-rho\_min)/rho\_max;

~~~~Res(k) = norm(temp1-rho\_max$*$temp2,2);

~~~~if Res\_rho(k) $<$ tol

~~~~~~~~break;

~~~~end

\%~~~~if Res(k) $<$ tol    \% for symmetric cases

\%~~~~~~~~break;

\%~~~~end   

end

time = toc;
\subsection*{Algorithms for comparison}
\subsubsection*{GEAP.m}
\

clearvas;

clc;

\%\% Load Examples

\%\% Example 3 (random 4x4x4x4x4x4 symmetric)

load("A(example 3 random 4x4x4x4x4x4 symmetric).mat");

load("B(example 3 random 4x4x4x4x4x4 symmetric).mat");

\%\% Example 4 (hypergraph 50x50x50x50)

\% load("A(example 4 hypergraph 50x50x50x50).mat");

\% load("B(example 4 hypergraph 50x50x50x50).mat");

\%\%

tic;

m = length(size(A));

N = size(A);

n = N(1);

x = ones(n,1);

x = x / norm(x,2);

temp1 = double(ttsv(A,x,-2));

temp2 = double(ttsv(B,x,-2));

temp3 = temp1 $*$ x;

temp4 = temp2 $*$ x;

temp5 = x' $*$ temp3;

temp6 = x' $*$ temp4;

beta = 1;

tau = 1e-6;

tol = 1e-13;

MaxIter = 50; \% for example 3

\% MaxIter = 200; \% for example 4

Lambda = zeros(MaxIter,1);

H = zeros(n);

Alpha = zeros(MaxIter,1);

Res = zeros(MaxIter,1);

for k = 1 : MaxIter

~~~~Lambda(k) = temp5 / temp6;

~~~~H = m\^\,2$*$temp5/(temp6)\^\,3$*$2$*$(temp4$*$temp4')+m/
    temp6$*$((m-1)$*$temp1+temp5$*$(eye(n)

~~~~~+(m-2)$*$(x$*$x'))+m$*$(temp3$*$x'+x$*$temp3'))-m/(temp6)\^\,2$*$((m-1)$*$temp5$*$temp2

~~~~~+m$*$(temp3$*$temp4'+temp4$*$temp3')+m$*$temp5$*$(x$*$temp4'+temp4$*$x'));
    
~~~~e = eig(beta$*$H);
    
~~~~Alpha(k) = beta $*$ max(0,(tau-min(e))/m);
    
~~~~x = beta $*$ (temp3-Lambda(k)$*$temp4+(Alpha(k)+Lambda(
    k))$*$temp6$*$x);
    
~~~~x = x / norm(x,2);

~~~~temp1 = double(ttsv(A,x,-2));

~~~~temp2 = double(ttsv(B,x,-2));

~~~~temp3 = temp1 $*$ x;

~~~~temp4 = temp2 $*$ x;

~~~~temp5 = x' $*$ temp3;

~~~~temp6 = x' $*$ temp4;

~~~~Res(k) = norm(temp3-Lambda(k)$*$temp4,2);

~~~~if Res(k) $<$ tol 

~~~~~~~~break;

~~~~end

end

lambda = Lambda(k);

time = toc;
\subsubsection*{AG.m}
\

clearvas;

clc;

\%\% Load Examples

\%\% Example 3 (random 4x4x4x4x4x4 symmetric)

load("A(example 3 random 4x4x4x4x4x4 symmetric).mat");

load("B(example 3 random 4x4x4x4x4x4 symmetric).mat");

\%\% Example 4 (hypergraph 50x50x50x50)

\% load("A(example 4 hypergraph 50x50x50x50).mat");

\% load("B(example 4 hypergraph 50x50x50x50).mat");

\%\%

tic;

m = length(size(A));

N = size(A);

n = N(1);

x = ones(n,1);

x = x/norm(x,2);

Temp1 = ttsv(A,x,-1);

temp1 = x' $*$ Temp1;

Temp2 = ttsv(B,x,-1);

temp2 = x' $*$ Temp2;

rho = 0.001; \% for example 3

\% rho = 0.5; \% for example 4

tol = 1e-13;

MaxIter = 50; \% for example 3

\% MaxIter = 100; \% for example 4

Res = zeros(1,MaxIter);

for k = 1:MaxIter

~~~~lambda = temp1 / temp2;

~~~~g = m/temp2 $*$ (Temp1 - lambda$*$Temp2);

~~~~if k == 1

~~~~~~~~alpha = 1/norm(g);

~~~~else

~~~~~~~~alpha = min(1/norm(g),norm(x-x\_old)/norm(g-g\_old));

~~~~end

~~~~x\_new = sqrt(1-alpha\^\,2$*$norm(g)\^\,2)$*$x+alpha$*$g;

~~~~count = 0;

~~~~while ttsv(A,x\_new,0)/ttsv(B,x\_new,0) $<$ lambda+rho$*$alpha$*$norm(g)\^\,2
    
~~~~~~~~alpha = alpha / 2;
    
~~~~~~~~x\_new = sqrt(1-alpha\^\,2$*$norm(g)\^\,2)$*$x+alpha$*$g;
    
~~~~~~~~count = count + 1;
    
\%~~~~~~~~if count $>=$ 2  \% for example 4
    
\%~~~~~~~~~~~~break;
    
\%~~~~~~~~end
    
~~~~end
    
~~~~Res(k) = norm(Temp1 - lambda $*$ Temp2,2);
    
~~~~g\_old = g;
    
~~~~x\_old = x;
    
~~~~x = x\_new;
    
~~~~Temp1 = ttsv(A,x,-1);
    
~~~~temp1 = x' $*$ Temp1;
    
~~~~Temp2 = ttsv(B,x,-1);
    
~~~~temp2 = x' $*$ Temp2;
    
~~~~lambda\_old = lambda;
    
~~~~lambda = temp1 / temp2;
    
~~~~if Res(k) $<$ tol
    
~~~~~~~~break;
    
~~~~end

end

time = toc;
\subsubsection*{NQZ.m}
\

clearvars;

clc;

\%\% Load Example

\%\% Example 4 (hypergraph 50x50x50x50)

load("A(example 4 hypergraph 50x50x50x50).mat");

m = length(size(A));

N = size(A);

n = N(1);

\%\%

tic;

x = ones(n,1);

x = x / norm(x,2);                                     

y = double(ttsv(A,x,-1));

tol = 1e-13;                                                

MaxIter = 300;

Res\_rho = zeros(1,MaxIter);

Res = zeros(1,MaxIter);

for k = 1 : MaxIter

~~~~x = y.\^\,(1/(m-1));

~~~~x = x / norm(x,2);

~~~~y = double(ttsv(A,x,-1));

~~~~lambda\_ub = max(y ./ x.\^\,(m-1));

~~~~lambda\_lb = min(y ./ x.\^\,(m-1));

~~~~lambda = lambda\_ub;

~~~~Res\_rho(k) = abs(lambda\_ub - lambda\_lb)/lambda\_ub;

~~~~Res(k) = norm(y - lambda $*$ x.\^\,(m-1));

~~~~if Res(k) $<$ tol

~~~~~~~~break;

~~~~end

end

time = toc;
\subsubsection*{NNI\_NEPv.m (for Example 5)}
\

clearvars;

clc;

\%\% Load Example

\%\% Example 5 (Nonlinear eigenvalue--2D case)

load("data(example 5 NEPv).mat");

I = eye(n);

\%\% Initialization

tic;

u = ones(n,1);

u = u / norm(u);

AAu = (beta $*$ diag(u.\^\,2) + B) $*$ u;

lambda = min(AAu./u);

MaxIter = 30;

theta = zeros(MaxIter,1);

Res\_NNI = zeros(MaxIter,1);

\%\%

for k = 1:MaxIter

~~~~Temp = [B-lambda$*$I+3$*$beta$*$diag(u.\^\,2),-u;-u',0];

~~~~temp = [AAu-lambda$*$u;1/2$*$(1-u'$*$u)];

~~~~delta = bicgstab(Temp,-temp,1e-9,200);

~~~~theta(k) = 1;

~~~~while 1

~~~~~~~~w = u + theta(k) $*$ delta(1:n);

~~~~~~~~w = w / norm(w);

~~~~~~~~WW = diag(w.\^\,2);

~~~~~~~~h = (beta$*$WW + B - lambda$*$I) $*$ w;

~~~~~~~~if h $>$ 0

~~~~~~~~~~~~break;

~~~~~~~~else

~~~~~~~~~~~~theta(k) = theta(k) / 2;

~~~~~~~~end

~~~~end

~~~~u = w;

~~~~AAu = beta $*$ diag(u.\^\,2) $*$ u + B $*$ u;

~~~~lambda = min(AAu./u);

~~~~Res\_NNI(k) = norm(AAu-lambda$*$u) / ((kappa+abs(lambda))$*$norm(u));
    
~~~~if Res\_NNI(k) $<$ 1e-11
    
~~~~~~~~break;
    
~~~~end

end

time = toc;
\subsubsection*{NNI\_MGPE.m (for Example 6)}
\

clearvars;

clc;

\%\% Load Example

\%\% Example 6 (Nonlinear eigenvalue--MGPE)

load("data(example 6 MGPE).mat");

I = eye(n);

\%\% Initialization

tic;

u = ones(n,1);

u = u / norm(u);

uuu = u.\^\,3;

AAu = beta $*$ uuu + 2 $*$ alpha $*$ A $*$ uuu + B $*$ u;

lambda = min(AAu./u);

MaxIter = 30;

theta = zeros(MaxIter,1);

Res\_NNI = zeros(MaxIter,1);

\%\%

for k = 1:MaxIter

~~~~uu = u.\^\,2;

~~~~Temp = [B-lambda$*$I+diag(3$*$beta$*$uu)+A.$*$repmat(6$*$alpha$*$uu',n,1),-u;-u',0];
    
~~~~temp = [AAu-lambda$*$u;1/2$*$(1-u'$*$u)];

~~~~delta = bicgstab(Temp,-temp,1e-9,200);

~~~~theta(k) = 1;

~~~~while 1

~~~~~~~~w = u + theta(k) $*$ delta(1:n);

~~~~~~~~w = w / norm(w);

~~~~~~~~www = w.\^\,3;

~~~~~~~~h = beta$*$www + 2$*$alpha$*$A$*$www + (B - lambda$*$I)$*$w;

~~~~~~~~if h $>$ 0

~~~~~~~~~~~~break;

~~~~~~~~else

~~~~~~~~~~~~theta(k) = theta(k) / 2;

~~~~~~~~end

~~~~end

~~~~u = w;

~~~~uuu = u.\^\,3;

~~~~AAu = beta $*$ uuu + 2 $*$ alpha $*$ A $*$ uuu + B $*$ u;

~~~~lambda = min(AAu./u);

~~~~Res\_NNI(k) = norm(AAu-lambda$*$u) / ((kappa+abs(lambda))$*$norm(u));

~~~~if Res\_NNI(k) $<$ 1e-10

~~~~~~~~break;

~~~~end

end

time = toc;
\subsubsection*{NRI\_NEPv.m (for Example 5)}
\

clearvars;

clc;

\%\% Load Example

\%\% Example 5 (Nonlinear eigenvalue--2D case)

load("data(example 5 NEPv).mat");

I = eye(n);

\%\% Initialization

tic;

u = ones(n,1);

lambda = 1000;

MaxIter = 30;

Res\_NRI = zeros(MaxIter,1);

count = zeros(MaxIter,1);

\%\%

for k = 1:MaxIter

~~~~u = ones(n,1);

~~~~for l = 1:100

~~~~~~~~u\_old = u;

~~~~~~~~Temp = 3 $*$ beta $*$ diag(u.\^\,2) + B - lambda $*$ I;

~~~~~~~~temp = 2 $*$ beta $*$ u.\^\,3;

~~~~~~~~u = triblocksolve(Temp,temp,N);

~~~~~~~~count(k) = count(k) + 1;

~~~~~~~~if (norm(u-u\_old)+norm(beta$*$u.\^\,3+B$*$u-lambda$*$u))/norm(u) $<$ 1e-10
        
~~~~~~~~~~~~break;
        
~~~~~~~~end        
~~~~end
    
~~~~Temp = 3 $*$ beta $*$ diag(u.\^\,2) + B - lambda $*$ I;
    
~~~~temp = triblocksolve(Temp,u,N);
    
~~~~lambda = lambda - abs(u'$*$u-1) / (2$*$u'$*$temp);
   
~~~~Res\_NRI(k) = norm(beta$*$u.\^\,3+B$*$u-lambda$*$u);
   
~~~~if Res\_NRI(k) $<$ 1e-11
   
~~~~~~~~break;
   
~~~~end

end

time = toc;

\end{document}